\theoremstyle{definition}
\newtheorem{Definition}{Definition}[subsection]
\theoremstyle{plain}
\newtheorem{Theorem}[Definition]{Theorem}
\theoremstyle{plain}
\newtheorem{Proposition}[Definition]{Proposition}
\theoremstyle{plain}
\newtheorem{Lemma}[Definition]{Lemma}
\theoremstyle{plain}
\newtheorem{Corollary}[Definition]{Corollary}
\theoremstyle{plain}
\theoremstyle{definition}
\newtheorem{Example}[Definition]{Example}
\theoremstyle{remark}
\newtheorem{Remark}[Definition]{Remark}
\theoremstyle{plain}
\newcommand{\thistheoremname}{}
\newtheorem*{genericthm*}{\thistheoremname}
\newenvironment{namedthm*}[1]
  {\renewcommand{\thistheoremname}{#1}%
   \begin{genericthm*}}
  {\end{genericthm*}}
\title{Rigid and Separable Algebras in Fusion 2-Categories}
\author{Thibault D. D\'ecoppet}
\date{June 2023}
    \noindent\textsc{Mathematical Institute, University of Oxford, Oxford OX2 6GG, United Kingdom}\\
\begin{document}

\bibliographystyle{alpha}

    \maketitle
    \hspace{1cm}
    \begin{abstract}
        Rigid monoidal 1-categories are ubiquitous throughout quantum algebra and low-dimensional topology. We study a generalization of this notion, namely rigid algebras in an arbitrary monoidal 2-category. Examples of rigid algebras include $G$-graded fusion 1-categories, and $G$-crossed fusion 1-categories. We explore the properties of the 2-categories of modules and of bimodules over a rigid algebra, by giving a criterion for the existence of right and left adjoints. Then, we consider separable algebras, which are particularly well-behaved rigid algebras. Specifically, given a fusion 2-category, we prove that the 2-categories of modules and of bimodules over a separable algebra are finite semisimple. Finally, we define the dimension of a connected rigid algebra in a fusion 2-category, and prove that such an algebra is separable if and only if its dimension is non-zero.
    \end{abstract}
    
\tableofcontents

\section*{Introduction}
\addcontentsline{toc}{section}{Introduction}

Rigid monoidal 1-categories, that is monoidal 1-categories with right and left duals, have first appeared in \cite{Saa} following ideas of Grothendieck. Since then, these objects have found applications in many different areas of mathematics such as algebraic geometry \cite{Del}, representation theory \cite{Dri}, quantum algebra \cite{ENO}, and low-dimensional topology \cite{TV}. In all of these applications, rigidity may be thought of as a categorical finiteness assumption. When specialized to fusion 1-categories, i.e. finite semisimple rigid monoidal 1-categories with simple unit (introduced in \cite{ENO}), this idea can be made precise via the cobordism hypothesis of Baez-Dolan \cite{BD1} (see also \cite{L}). Namely, the cobordism hypothesis identifies symmetric monoidal functors, from a higher category of cobordisms to a higher category $\mathscr{C}$, with fully dualizable objects in $\mathscr{C}$. As explained in \cite{L}, full dualizability is a strong finiteness condition. Further, fusion 1-categories over an algebraically closed field of characteristic zero have been shown in \cite{DSPS13} to be fully dualizable objects of an appropriate symmetric monoidal 3-category of finite tensor 1-categories, denoted by $\mathbf{TC}$.

More recently, Gaitsgory has introduced in \cite{G} a slightly different notion of rigidity. A linear monoidal 1-category $\mathcal{A}$ is rigid in the sense of \cite{G} if the monoidal product of $\mathcal{A}$ has a right adjoint as an $\mathcal{A}$-$\mathcal{A}$-bimodule functor. The advantage of this second definition of rigidity is that it makes sense in any monoidal 2-category. In order to make sure that this definition is sensible, let $\mathbf{2Vect}$ denote the symmetric monoidal 2-category of finite semisimple 1-categories (over an algebraically closed field of characteristic zero). Then, it was established in \cite{BJS} that an algebra in $\mathbf{2Vect}$, that is a finite semisimple monoidal 1-category, is rigid in the sense of \cite{G} if and only if it is a multifusion 1-category, i.e.\ has right and left duals. Motivated by the quest for interesting fully dualizable objects initiated by the cobordism hypothesis, we are interested in replacing $\mathbf{2Vect}$ by an arbitrary finite semisimple monoidal 2-category.

Particularly interesting examples of finite semisimple monoidal 2-categories are given by the fusion 2-categories introduced in \cite{DR} over an algebraically closed field of characteristic zero. Familiar examples include $\mathbf{2Vect}_G$, the 2-category of finite semisimple 1-categories graded by a finite group $G$, and $\mathbf{Mod}(\mathcal{B})$, the 2-category of finite semisimple module categories over the braided fusion 1-category $\mathcal{B}$. It is instructive to look at rigid algebras in these fusion 2-categories. Namely, rigid algebras in $\mathbf{2Vect}_G$ correspond exactly to $G$-graded mutlifusion 1-categories, and rigid algebras in $\mathbf{Mod}(\mathcal{B})$ are precisely multifusion 1-categories equipped with a central functor from $\mathcal{B}$. Furthermore, by \cite{KTZ}, the monoidal 2-category $\mathscr{Z}(\mathbf{2Vect}_G)$, the Drinfeld center of $\mathbf{2Vect}_G$, is finite semisimple. One finds that rigid algebras in $\mathscr{Z}(\mathbf{2Vect}_G)$ are exactly $G$-crossed multifusion 1-categories, of which the so-called $G$-crossed braided fusion 1-categories are examples. All of these examples have been extensively studied in the fusion 1-category literature (for instance, see \cite{DGNO} and \cite{BJS}), but always separately. Our approach allows us to study these different objects simultaneously as rigid algebras in finite semisimple monoidal 2-categories.

We have yet another motivation for studying rigid algebras in fusion 2-categories. Namely, these 2-categories categorify (in the sense of Baez-Dolan, see \cite{BD2}) the notion of fusion 1-categories. Now, just as modules are ubiquitous in the study of rings, finite semisimple module 1-categories are essential in the theory of fusion 1-categories. But, by a classical result of Ostrik (see \cite{O}), such a finite semisimple module 1-category over a fusion 1-category $\mathcal{A}$ is equivalent to the category of modules over an algebra in $\mathcal{A}$. In \cite{D4}, we have proven a categorified version of this result, that is, we have proven that a finite semisimple module 2-category over a fusion 2-category $\mathfrak{C}$ is equivalent to the 2-category of modules over a rigid algebra in $\mathfrak{C}$. Furthermore, categorifying the main result of \cite{DSPS13}, it has been conjectured that fusion 2-categories over an algebraically closed field of characteristic zero are fully dualizable objects of the symmetric monoidal 4-category of fusion 2-categories, finite semisimple bimodule 2-categories, etc. Proving this conjecture requires having a very thorough understanding of the properties of finite semisimple (bi)module 2-categories. Through the aforementioned categorification of Ostrik's theorem, this understanding is closely linked to studying rigid algebras in fusion 2-categories and their associated 2-categories of (bi)modules.

Before explaining the properties of the 2-categories of (bi)modules over a rigid algebra, we need to explain the definition in more detail. To this end, let $\mathfrak{C}$ be a monoidal 2-category with monoidal product $\Box$. An algebra in $\mathfrak{C}$ is an object $A$ of $\mathfrak{C}$ equipped with a unit, a multiplication $m:A\Box A\rightarrow A$, and coherent 2-isomorphisms witnessing unitality and associativity (these objects are called pseudo-monoids in \cite{DS}). Following \cite{G} (see also \cite{JFR}), we say that an algebra $A$ is rigid if $m$ has a right adjoint $m^*$ as a map of $A$-$A$-bimodules. Then, fixing a rigid algebra $A$, we can explore the properties of $\mathbf{Mod}_{\mathfrak{C}}(A)$, the 2-category of (right) $A$-modules in $\mathfrak{C}$. In particular, given $f:M\rightarrow N$ an $A$-module 1-morphism, we prove that if $f$ has a left adjoint in $\mathfrak{C}$, then it has a left adjoint as an $A$-module 1-morphism. This generalizes many different results found in the literature (for instance, see section 3.3 of \cite{EO} and theorem 5.2.1 of \cite{BJS}). A similar result holds in $\mathbf{Bimod}_{\mathfrak{C}}(A)$, the 2-category of $A$-$A$-bimodules in $\mathfrak{C}$. In fact, under some assumptions on $\mathfrak{C}$, which hold if $\mathfrak{C}$ is a finite semisimple monoidal 2-category, we can precisely characterize rigid algebras in terms of a property of $\mathbf{Bimod}_{\mathfrak{C}}(A)$, as follows.

\begin{namedthm*}{Theorem \ref{thm:modulerightdajoints}}
Let $\mathfrak{C}$ be a monoidal 2-category that has left and right adjoints for 1-morphisms, and let $A$ be an arbitrary algebra in $\mathfrak{C}$. Then, $A$ is rigid if and only if $\mathbf{Bimod}_{\mathfrak{C}}(A)$ has right adjoints. If either of these conditions is satisfied, $\mathbf{Bimod}_{\mathfrak{C}}(A)$ has left adjoints, and $\mathbf{Mod}_{\mathfrak{C}}(A)$ has left and right adjoints.
\end{namedthm*}

Now, whilst multifusion 1-categories over an algebraically closed field of characteristic zero are fully dualizable objects of the symmetric monoidal 3-category $\mathbf{TC}$ of finite tensor 1-categories, this result no longer holds over an algebraically closed field of positive characteristic. One of the many key insights of \cite{DSPS13} is that, in order to remedy this issue, we ought to consider separable multifusion 1-categories. Namely, separable multifusion 1-categories are much better behaved than general ones. For instance, given a multifusion 1-category $\mathcal{A}$, its Drinfeld center $\mathcal{Z}(\mathcal{A})$, the 1-category of $\mathcal{A}$-$\mathcal{A}$-bimodule endofunctors of $\mathcal{A}$, is finite semisimple if and only if $\mathcal{A}$ is separable. Furthermore, a separable multifusion 1-category is a fully dualizable object of $\mathbf{TC}$ over any algebraically closed field.

Abstracting the definition of a separable multifusion 1-category, we are lead to consider the notion of separable algebra in a general finite semisimple monoidal 2-category, that is a rigid algebra $A$ such that the counit of the adjunction between $m$ and $m^*$ has a section as an $A$-$A$-bimodule 2-morphism. One may generalize this story further. Namely, working over an arbitrary field, we gave a definition of compact semisimple 2-category in \cite{D5}, which specializes to that of finite semisimple 2-category when the base field is algebraically closed. In this general context, by analogy with the case of separable multifusion 1-categories, one expects that separable algebras have particularly nice properties. This turns out to be true. As an example, given a separable algebra $A$ in a compact semisimple monoidal 2-category $\mathfrak{C}$, we show that $\mathbf{Mod}_{\mathfrak{C}}(A)$ is a compact semisimple 2-category. Using this fact, one can show that, over an algebraically closed field, a fusion 2-category is a fully dualizable object of a certain symmetric monoidal 4-category if and only if a canonical rigid algebra associated to it is separable (see \cite{D9} for the construction). This provides an approach to solving the aforementioned conjecture asserting that every fusion 2-category over an algebraically closed field of characteristic zero is a fully dualizable object in a certain symmetric monoidal 4-category. Namely, this conjecture is reduced to proving that some rigid algebras are separable. Thus, it is important to characterize separable algebras in a finite semisimple monoidal 2-category precisely. This is achieved in a slightly more general context in the following result.

\begin{namedthm*}{Theorem \ref{thm:characterizationseparablealgebra}}
Let $A$ be a rigid algebra in a compact semisimple monoidal 2-category $\mathfrak{C}$, and write $Z(A)$ for the 1-category of $A$-$A$-bimodule endomorphisms of $A$ in $\mathfrak{C}$. Then, $A$ is separable if and only if $Z(A)$ is finite semisimple. If either of these conditions is satisfied, $\mathbf{Bimod}_{\mathfrak{C}}(A)$ is a compact semisimple 2-category.
\end{namedthm*}

It was proven in \cite{DSPS13} that, if we work over an algebraically closed field of characteristic zero, every fusion 1-category is separable. In fact, they prove that a fusion 1-category $\mathcal{A}$ over an algebraically closed field is separable if and only if $\mathrm{dim}(\mathcal{A})$, its categorical (or global) dimension, is non-zero. The fact that every fusion 1-category is a fully dualizable object of $\mathbf{TC}$ then follows from the result established in \cite{ENO} that the categorical dimension of a fusion 1-category over an algebraically closed field of characteristic zero does not vanish. As observed above, studying the dualizability properties of fusion 2-categories is intimately linked to understanding when rigid algebras are separable, it is therefore important to ask whether this story can be adapted to an abstract compact semisimple monoidal 2-category. More precisely, given a compact semisimple monoidal 2-category $\mathfrak{C}$ over an arbitrary field, one can define connected rigid algebras in $\mathfrak{C}$ as those rigid algebras whose unit 1-morphism is simple. Over an algebraically closed field, connected rigid algebras in $\mathfrak{C}=\mathbf{2Vect}$ are exactly fusion 1-categories. This motivates the conjecture made in \cite{JFR} that any connected rigid algebra in a fusion 2-category over an algebraically closed field of characteristic zero is separable. We note that their conjecture is stronger than the conjecture asserting that every fusion 2-category is fully dualizable over an algebraically closed field of characteristic zero.

Towards a proof of the conjecture of \cite{JFR}, we associate a scalar $\mathrm{Dim}_{\mathfrak{C}}(A)$ to every connected rigid algebra $A$ in a finite semisimple monoidal 2-category $\mathfrak{C}$ over an algebraically closed field. We wish to emphasize that the definition of $\mathrm{Dim}_{\mathfrak{C}}(A)$ makes heavy use of the properties of the 2-category of bimodules over a rigid algebra obtained previously. Further, we also note that, while the definition of $\mathrm{Dim}_{\mathfrak{C}}(A)$ makes sense over an arbitrary field, it is in general not well-defined. We use the scalar $\mathrm{Dim}_{\mathfrak{C}}(A)$ to characterize when $A$ is separable as follows.

\begin{namedthm*}{Theorem \ref{thm:dimensionseparable}}
Over an algebraically closed field, a connected rigid algebra $A$ in a finite semisimple monoidal 2-category $\mathfrak{C}$ is separable if and only if its dimension $\mathrm{Dim}_{\mathfrak{C}}(A)$ is non-zero.
\end{namedthm*}

\noindent Moreover, we prove that if $\mathfrak{C}=\mathbf{2Vect}$ and the base field is algebraically closed, then our notion of dimension coincides with the categorical dimension of a fusion 1-category. Thus, the above theorem does generalize the fact recalled above that fusion 1-categories of non-zero global dimension are separable. Further, this reduces the conjecture of \cite{JFR} to proving that every connected rigid algebra in a fusion 2-category over an algebraically closed field of characteristic zero has non-zero dimension. We end by proving that, over an algebraically closed field of characteristic zero, every connected rigid algebra in $\mathbf{2Vect}_G$, or $\mathbf{Mod}(\mathcal{B})$ is separable, and that every strongly connected rigid algebra in $\mathscr{Z}(\mathbf{2Vect}_G)$ is separable.

\subsection*{Outline}

In section \ref{sec:prelim}, we begin by explaining the graphical language that we will use to work with monoidal 2-categories in the present article. We go on to recall the definitions of algebras and right modules in a monoidal 2-category, as well as those of left modules and bimodules. Then, we review the notions of Cauchy complete 2-categories and of compact semisimple 2-categories. We end this preliminary section by supplying many examples of algebras in compact semisimple monoidal 2-categories.

Next, in section \ref{sec:rigid}, we unravel the definition of a rigid algebra in a monoidal 2-category $\mathfrak{C}$, and identify rigid algebras in the compact semisimple monoidal 2-categories $\mathbf{2Vect}$ of 2-vector spaces, $\mathbf{2Vect}_G$ of 2-vector spaces graded by the finite group $G$, $\mathbf{Mod}(\mathcal{B})$ of separable module categories over the braided fusion 1-category $\mathcal{B}$, and $\mathscr{Z}(\mathbf{2Vect}_G)$, the Drinfeld center of $\mathbf{2Vect}_G$. For instance, over an algebraically closed field, rigid algebras in $\mathbf{2Vect}$ are precisely multifusion 1-categories. Then, we go on to show that if the underlying monoidal 2-category $\mathfrak{C}$ has left adjoints, so do the 2-categories of modules and of bimodules over a rigid algebra in $\mathfrak{C}$. Furthermore, if $\mathfrak{C}$ has right and left adjoints, we prove that an algebra in $\mathfrak{C}$ is rigid if and only if its 2-category of bimodules has right adjoints.

In section \ref{sec:separable}, given a compact semisimple monoidal 2-category $\mathfrak{C}$, we use the results of the previous section to precisely characterize separable algebras in $\mathfrak{C}$ as those algebras whose associated 2-categories of bimodules are compact semisimple. Then, we go on to define the dimension of a connected rigid algebra in $\mathfrak{C}$, and prove that such an algebra is separable if and only if its dimension is non-zero. A fusion 1-category $\mathcal{C}$ is a connected rigid algebra in $\mathbf{2Vect}$; we show that the dimension of $\mathcal{C}$ in $\mathbf{2Vect}$ is exactly the categorical (or global) dimension of the fusion 1-category $\mathcal{C}$. We use these results to give many examples of separable algebras in the compact semisimple monoidal 2-categories $\mathbf{2Vect}_G$, $\mathbf{Mod}(\mathcal{B})$, and $\mathscr{Z}(\mathbf{2Vect}_G)$.

Finally, in appendix \ref{sec:diag}, we give the string diagram manipulations, on which section \ref{sec:rigid} relies.

\subsection*{Acknowledgments}

I am particularly thankful to Christopher Douglas for his guidance during this project. In addition, I would also like to thank David Reutter and Christoph Weis for our conversations around the content of this article. Finally, I am grateful towards Theo Johnson-Freyd for pointing out a gap in the proof of theorem \ref{thm:dimensionseparable}.

\section{Preliminaries}\label{sec:prelim}

\subsection{Graphical Conventions}

Throughout this article, we will be working within certain (weak) 2-categories. In this context, there is a well-established graphical calculus of string diagram (see \cite{GS}). More precisely, regions correspond to objects, strings to 1-morphisms, and coupons to 2-morphisms. We take the convention that our string diagrams are read from top to bottom (composition of 1-morphisms) and from left to right (composition of 2-morphisms). We use the symbol $1$ to denote the identity 1-morphism on a given object, but we will usually omit such 1-morphisms. As an example, the composite of the 2-morphisms $\gamma:f\Rightarrow g$ and the 1-morphism $h$ in some 2-category $\mathfrak{C}$, provided it is defined, is depicted using the following diagram:

\newlength{\prelim}
\settoheight{\prelim}{\includegraphics[width=30mm]{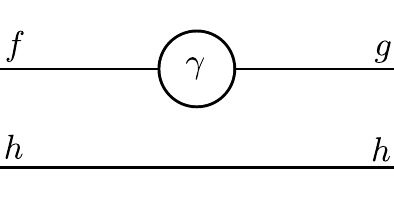}}

\begin{center}
    \includegraphics[width=30mm]{Pictures/prelim/graph/f.pdf}.
\end{center}

In fact, we will work with a monoidal 2-category $\mathfrak{C}$, that is a 2-category $\mathfrak{C}$ equipped with a 2-functor $\Box:\mathfrak{C}\times\mathfrak{C}\rightarrow \mathfrak{C}$, and unit object $I$ in $\mathfrak{C}$, as well as various adjoint 2-natural equivalence and modifications ensuring the unitality of $I$ and that $\Box$ is coherently associative (see \cite{SP}, \cite{GS}, or \cite{D4} for details). Thanks to the coherence theorem for monoidal 2-categories proven by Gurski in \cite{Gur}, every monoidal 2-category is equivalent to one that is strict cubical. Recall that a strict cubical monoidal 2-category $\mathfrak{C}$ is a strict 2-category, equipped with a monoidal structure for which the unit $I$ is strict, and $\Box$ is strictly associative. In particular, we may omit the symbol $I$ as well as the parentheses. Further, the 2-functor $\Box$ becomes strict when it is restricted to either variable. However, it is not a strict 2-functor in general. Namely, given a pair of composable 1-morphisms $f_1,f_2$, and $g_1,g_2$ in $\mathfrak{C}$, the interchanger $$\phi^{\Box}_{(f_2,g_2),(f_1,g_1)}:(f_2\Box g_2)\circ (f_1\Box g_1)\cong (f_2\circ f_1)\Box (g_2\circ g_1),$$ the 2-isomorphism ensuring that $\Box$ respects composition, is non-trivial in general. The strict cubical hypothesis on $\mathfrak{C}$ guarantees that $\phi^{\Box}_{(f_2,g_2),(f_1,g_1)}$ is an identity 2-morphism if either $f_2=1$ or $g_1=1$. Given 1-morphisms $f$ and $g$ in $\mathfrak{C}$, the composite $$(\phi^{\Box}_{(1,g),(f,1)})^{-1}\cdot\phi^{\Box}_{(f,1),(1,g)}:(f\Box 1)\circ (1\Box g)\cong (1\Box g)\circ (f\Box 1)$$ will be depicted in our graphical language by the string diagram below on the left, and its inverse by that on the right: $$\begin{tabular}{c c c c}
\includegraphics[width=20mm]{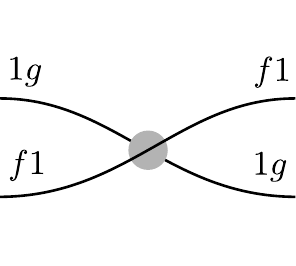},\ \ \ \   & \ \ \ \  \includegraphics[width=20mm]{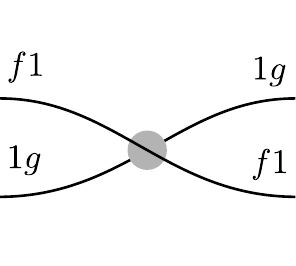}.
\end{tabular}$$ Note that we have omitted the symbol $\Box$ to make the diagrams more easily readable.

We will be interested in studying right (and left) adjoints for 1-morphisms in the 2-category $\mathfrak{C}$. If $f$ is a 1-morphism in $\mathfrak{C}$ that has a right adjoint $f^*$, we will use string diagram depicted below on the left for the unit $\eta^f$ of this adjunction, and the one on the right for the counit $\epsilon^f$: $$\begin{tabular}{c c}
\includegraphics[width=20mm]{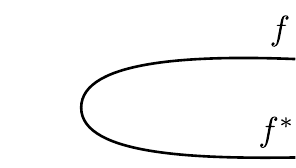},\ \ \ \ \ \ \ \ 
\ \ \ \ \ \ \ \ \includegraphics[width=20mm]{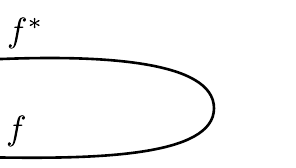}.
\end{tabular}$$ Note that the triangle identities for the adjunction correspond graphically to the so-called snake-equations. We use a similar convention for left adjoints. Finally, given $f$ as above, the 1-morphism $1f$ obtained by taking the monoidal product of $f$ with some identity 1-morphism has a canonical right adjoint given by $1f^*$ with unit $1\eta^f$ and counit $1\epsilon^f$. We shall always consider adjuntions data of this form on such a 1-morphism, and we adopt the analogous convention for the 1-morphism $f1$. Finally, given two 1-morphisms $f$ and $g$ in $\mathfrak{C}$ with right adjoints $f^*$ and $g^*$ together with a 2-morphism $\gamma:f\Rightarrow g$, we set

\settoheight{\prelim}{\includegraphics[width=30mm]{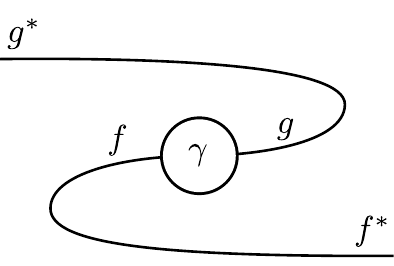}}

\begin{center}
\begin{tabular}{@{}cc@{}}

\raisebox{0.45\prelim}{$\gamma^*:=$} &
\includegraphics[width=30mm]{Pictures/prelim/graph/rightadjoint2morphism.pdf}.
\end{tabular}
\end{center}

\subsection{Algebras and Right Modules}

Throughout, we assume that $\mathfrak{C}$ is strict cubical. We begin by recalling the definition of an algebra (also called a pseudo-monoid in \cite{DS}). For the definition presented using the same graphical calculus in a general monoidal 2-category, we invite the reader to consult \cite{D4}.

\begin{Definition}\label{def:algebra}
An algebra in $\mathfrak{C}$ consists of:
\begin{enumerate}
    \item An object $A$ of $\mathfrak{C}$;
    \item Two 1-morphisms $m:A\Box A\rightarrow A$ and $i:I\rightarrow A$;
    \item Three 2-isomorphisms
\end{enumerate}
\begin{center}
\begin{tabular}{@{}c c c@{}}
$\begin{tikzcd}[sep=small]
A \arrow[rrrr, equal] \arrow[rrdd, "i1"'] &  & {} \arrow[dd, Rightarrow, "\lambda"', near start, shorten > = 1ex] &  & A \\
                                   &  &                           &  &   \\
                                   &  & AA, \arrow[rruu, "m"']     &  &  
\end{tikzcd}$

&

$\begin{tikzcd}[sep=small]
AAA \arrow[dd, "1m"'] \arrow[rr, "m1"]    &  & AA \arrow[dd, "m"] \\
                                            &  &                      \\
AA \arrow[rr, "m"'] \arrow[rruu, Rightarrow, "\mu", shorten > = 2.5ex, shorten < = 2.5ex] &  & A,                   
\end{tikzcd}$

&

$\begin{tikzcd}[sep=small]
                                  &  & AA \arrow[rrdd, "m"] \arrow[dd, Rightarrow, "\rho", shorten > = 1ex, shorten < = 2ex] &  &   \\
                                  &  &                                             &  &   \\
A \arrow[rruu, "1i"] \arrow[rrrr,equal] &  & {}                                          &  & M,
\end{tikzcd}$

\end{tabular}
\end{center}

satisfying:

\begin{enumerate}
\item [a.] We have:
\end{enumerate}

\settoheight{\prelim}{\includegraphics[width=52.5mm]{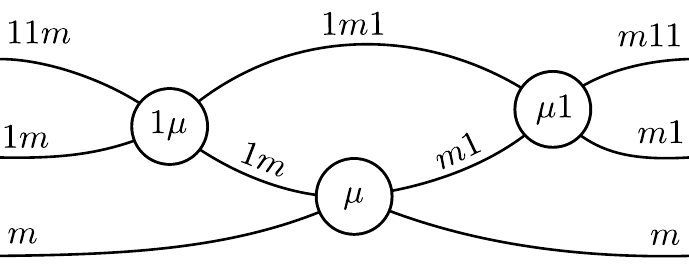}}

\begin{equation}\label{eqn:algebraassociativity}
\begin{tabular}{@{}ccc@{}}

\includegraphics[width=52.5mm]{Pictures/prelim/algebra/associativity1.pdf} & \raisebox{0.45\prelim}{$=$} &
\includegraphics[width=45mm]{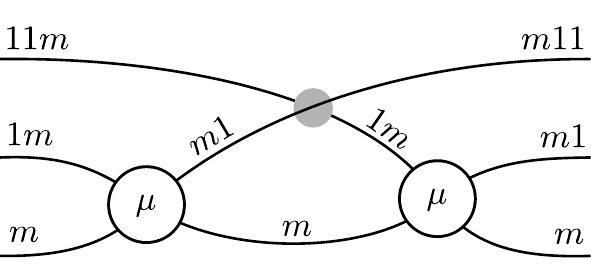},

\end{tabular}
\end{equation}

\begin{enumerate}
\item [b.] We have:
\end{enumerate}

\settoheight{\prelim}{\includegraphics[width=22.5mm]{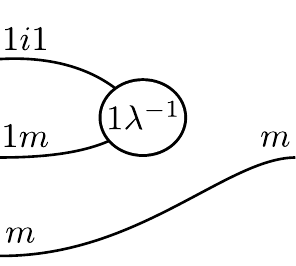}}

\begin{equation}\label{eqn:algebraunitality}
\begin{tabular}{@{}ccc@{}}

\includegraphics[width=22.5mm]{Pictures/prelim/algebra/unitality1.pdf} & \raisebox{0.45\prelim}{$=$} &

\includegraphics[width=37.5mm]{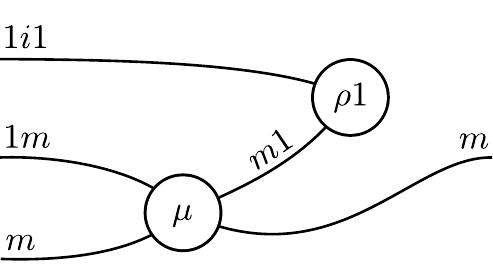}.

\end{tabular}
\end{equation}
\end{Definition}

We will use the following coherence properties in what follows.

\begin{Lemma}
Given any algebra $A$, the following two equalities hold:

\settoheight{\prelim}{\includegraphics[width=37.5mm]{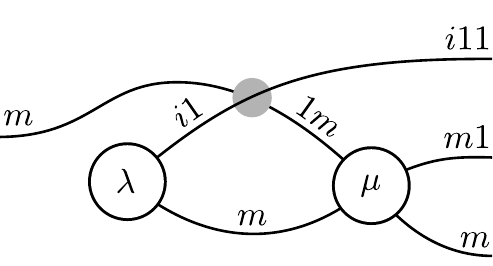}}

\begin{equation}\label{eqn:coherenceleft}
\begin{tabular}{@{}ccc@{}}
\includegraphics[width=37.5mm]{Pictures/prelim/coherenceleft1.pdf}&
\raisebox{0.45\prelim}{$=$} &
\includegraphics[width=22.5mm]{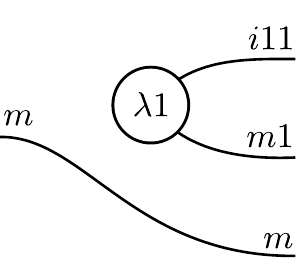},
\end{tabular}
\end{equation}

\settoheight{\prelim}{\includegraphics[width=30mm]{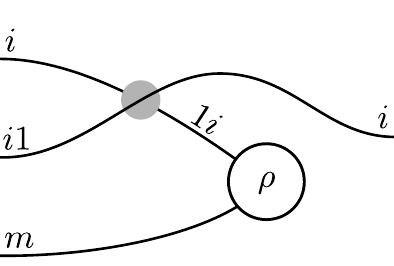}}

\begin{equation}\label{eqn:coherencemiddle}
\begin{tabular}{@{}ccc@{}}
\includegraphics[width=30mm]{Pictures/prelim/coherencemixte1.pdf}&
\raisebox{0.45\prelim}{$=$} &
\includegraphics[width=22.5mm]{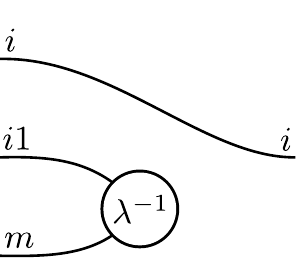}.
\end{tabular}
\end{equation}

\end{Lemma}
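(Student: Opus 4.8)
These two identities are the $2$-categorical incarnations of the classical coherence lemmas of Mac Lane and Kelly for monoidal categories: equation~\eqref{eqn:coherenceleft} is a ``left unit triangle'' obtained by feeding the unit $i$ into an outer tensor factor, and equation~\eqref{eqn:coherencemiddle} is a mixed-unit identity relating $\lambda$, $\mu$ and $\rho$. Since an algebra in $\mathfrak{C}$ is exactly a pseudo-monoid, the data $(\lambda,\mu,\rho)$ together with the axioms \eqref{eqn:algebraassociativity} (pentagon for $\mu$) and \eqref{eqn:algebraunitality} (triangle) constitute a ``monoidal structure one categorical level up'', and the plan is simply to transport Kelly's diagrammatic proof to the setting of $2$-morphisms in $\mathfrak{C}$.

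Concretely, for \eqref{eqn:coherenceleft} I would instantiate the associativity axiom \eqref{eqn:algebraassociativity} with the unit $i\colon I\to A$ whiskered into the leftmost factor, so that the relevant source object becomes (after suppressing strict units) the image of $AA$ under $i11$. Three of the five regions of this pentagon then collapse by applications of $\lambda$; comparing the two resulting composites and cancelling the invertible $2$-isomorphism $\mu$ (suitably whiskered by an identity $1$-morphism) yields exactly \eqref{eqn:coherenceleft}. The strict cubical hypothesis on $\mathfrak{C}$ is what makes this feasible: all interchangers $\phi^{\Box}$ appearing in these whiskerings are identities because one of the two factors being composed is an identity $1$-morphism, so the only structural $2$-isomorphisms one must track are $\lambda,\mu,\rho$ themselves.

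For \eqref{eqn:coherencemiddle} the argument follows the same template, now inserting the unit $i$ into a central factor: one instantiates \eqref{eqn:algebraassociativity} accordingly, rewrites two of its regions using the triangle axiom \eqref{eqn:algebraunitality} (together, if convenient, with the instance of \eqref{eqn:coherenceleft} just proved), and cancels an invertible whiskering of $\rho$ (or, symmetrically, of $\lambda$) to obtain the claim. I expect the only genuine obstacle to be bookkeeping: one must be scrupulous about the placement and orientation of the structure $2$-isomorphisms and about which interchangers truly vanish under the strict cubical assumption, so that each cancellation is of a bona fide invertible $2$-morphism of $\mathfrak{C}$. This is a finite, mechanical string-diagram chase of precisely the shape of Kelly's original coherence computation, and I would present it as such, with the explicit diagram manipulations carried out in the same style as those in the appendix.
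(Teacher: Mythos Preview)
Your proposal is correct: these are precisely the pseudo-monoid analogues of Kelly's derived unit coherences, and the argument you sketch (instantiate the pentagon \eqref{eqn:algebraassociativity} with the unit inserted in an outer, respectively middle, slot and cancel invertible whiskerings using the triangle \eqref{eqn:algebraunitality}) is the standard one.

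The only difference from the paper is that the paper does not carry out any of this: its entire proof is a citation to section~6.3 of \cite{Hou}, where exactly this Kelly-style computation for pseudo-monoids is performed. So your approach is not an alternative route but rather an explicit unpacking of what the cited reference contains. If you present it, the main thing to be careful about is what you already flagged: under the strict cubical hypothesis only those interchangers with an identity in the appropriate slot vanish, so when you whisker $i$ into a middle position some nontrivial interchangers may survive and must be tracked before you can legitimately cancel.
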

\begin{proof}
See section 6.3 of \cite{Hou}.
\end{proof}

Let us now recall the definition of a right $A$-module in $\mathfrak{C}$. Once, again the definition in a general monoidal 2-category may be found in \cite{D4}.

\begin{Definition}\label{def:module}
A right $A$-module in $\mathfrak{C}$ consists of:
\begin{enumerate}
    \item An object $M$ of $\mathfrak{C}$;
    \item A 1-morphism $n^M:M\Box A\rightarrow M$;
    \item Two 2-isomorphisms
\end{enumerate}
\begin{center}
\begin{tabular}{@{}c c@{}}
$\begin{tikzcd}[sep=small]
MAA \arrow[dd, "1m"'] \arrow[rr, "n^M1"]    &  & MA \arrow[dd, "n^M"] \\
                                            &  &                      \\
MA \arrow[rr, "n^M"'] \arrow[rruu, Rightarrow, "\nu^M", shorten > = 2.5ex, shorten < = 2.5ex] &  & M,                   
\end{tikzcd}$

&

$\begin{tikzcd}[sep=small]
                                  &  & MA \arrow[rrdd, "n^M"] \arrow[dd, Rightarrow, "\rho^M", shorten > = 1ex, shorten < = 2ex] &  &   \\
                                  &  &                                             &  &   \\
M \arrow[rruu, "1i"] \arrow[rrrr,equal] &  & {}                                          &  & M,
\end{tikzcd}$
\end{tabular}
\end{center}

satisfying:

\begin{enumerate}
\item [a.] We have:
\end{enumerate}

\settoheight{\prelim}{\includegraphics[width=52.5mm]{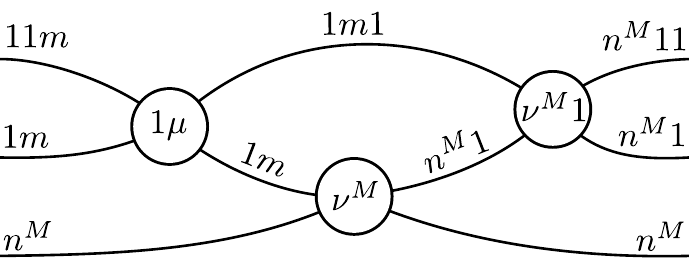}}

\begin{equation}\label{eqn:moduleassociativity}
\begin{tabular}{@{}ccc@{}}

\includegraphics[width=52.5mm]{Pictures/prelim/module/associativity1.pdf} & \raisebox{0.45\prelim}{$=$} &
\includegraphics[width=45mm]{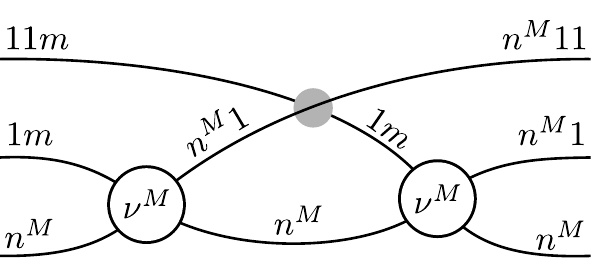},

\end{tabular}
\end{equation}

\begin{enumerate}
\item [b.] We have:
\end{enumerate}

\settoheight{\prelim}{\includegraphics[width=22.5mm]{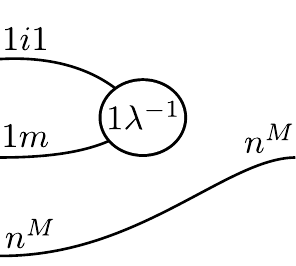}}

\begin{equation}\label{eqn:moduleunitality}
\begin{tabular}{@{}ccc@{}}

\includegraphics[width=22.5mm]{Pictures/prelim/module/unitality1.pdf} & \raisebox{0.45\prelim}{$=$} &

\includegraphics[width=37.5mm]{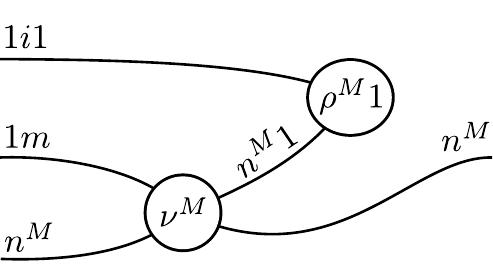}.

\end{tabular}
\end{equation}
\end{Definition}

We will need the following coherence result for right $A$-modules. As we have found no proof in the literature, we include one for completeness.

\begin{Lemma}\label{lem:coherenceright}
Given any right $A$-module $M$, we have the following equality:

\settoheight{\prelim}{\includegraphics[width=37.5mm]{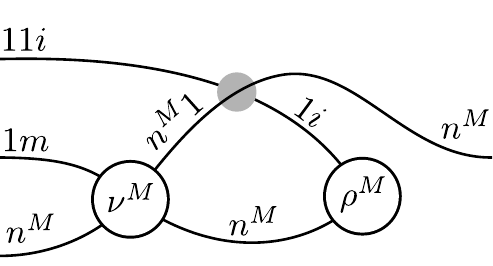}}

\begin{equation}\label{eqn:coherenceright}
\begin{tabular}{@{}ccc@{}}
\includegraphics[width=37.5mm]{Pictures/prelim/coherenceright1.pdf}&
\raisebox{0.45\prelim}{$=$} &
\includegraphics[width=22.5mm]{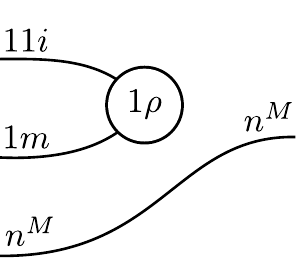}.
\end{tabular}
\end{equation}
\end{Lemma}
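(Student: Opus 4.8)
The plan is to run the module-theoretic version of Kelly's classical argument: once the module associativity (``pentagon'') axiom \eqref{eqn:moduleassociativity} is assumed, the single module unitality (``triangle'') axiom \eqref{eqn:moduleunitality} already propagates to all the remaining unit-insertion coherences, of which \eqref{eqn:coherenceright} is one. Concretely, I would start from \eqref{eqn:moduleassociativity}, which is an equality of $2$-morphisms between $1$-morphisms $M\Box A\Box A\Box A\to M$ built out of $\nu^M$, and whisker this equality on the source by a $1$-morphism of the form $1_M\Box 1_A\Box i\Box 1_A$, i.e.\ feed the unit $i\colon I\to A$ into the $A$-slot which gets filled in the statement of \eqref{eqn:coherenceright}. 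Because $m\circ(i1)$ and $m\circ(1i)$ are trivialised by the algebra unitality $2$-isomorphisms $\lambda$ and $\rho$, while $n^M\circ(1i)$ is trivialised by the module unitality $2$-isomorphism $\rho^M$, several of the whiskered faces of the resulting pentagon collapse, and one is left with an equality whose two sides are, up to invertible structure $2$-morphisms, the two sides of \eqref{eqn:coherenceright}.

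Next comes the cleanup. The part of each composite that lives purely in $A\Box A$ or $A\Box A\Box A$ is rewritten using the already-available algebra coherences \eqref{eqn:coherenceleft} and \eqref{eqn:coherencemiddle}, together with the algebra axioms \eqref{eqn:algebraassociativity} and \eqref{eqn:algebraunitality}; the part involving $M$ is handled purely by the module axioms \eqref{eqn:moduleassociativity} and \eqref{eqn:moduleunitality}. Finally, since $\nu^M$, $\rho^M$, $\mu$, $\lambda$, $\rho$ are all $2$-isomorphisms, one may cancel the common invertible factors appearing on both sides and read off exactly \eqref{eqn:coherenceright}.

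The genuine difficulty here is bookkeeping rather than anything conceptual. Since $\mathfrak{C}$ is strict cubical but $\Box$ is not a strict $2$-functor, every time two whiskered $1$-morphisms are slid past one another an interchanger $\phi^{\Box}$ (a crossing of strings in our graphical calculus) is created or destroyed, and the order in which coupons are horizontally composed genuinely matters; so the string-diagram manipulation must be carried out carefully, much as is done for the algebra coherences (see section 6.3 of \cite{Hou}). I would therefore present the schematic reduction ``pentagon $+$ triangle $\Rightarrow$ \eqref{eqn:coherenceright}'' together with the explicit chain of diagram moves tracking all interchangers. I note that the slicker route of realising $M$ as an algebra map into an internal endomorphism algebra $[M,M]$ is not available in this setting, since $\mathfrak{C}$ is not assumed monoidal closed, which is precisely why the direct diagrammatic argument is needed.
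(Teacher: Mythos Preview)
Your proposal is correct and is essentially the same strategy the paper uses: both run the Kelly argument, obtaining \eqref{eqn:coherenceright} by feeding a unit into the module pentagon \eqref{eqn:moduleassociativity} and collapsing the resulting faces with the available unitality axioms. The paper organises this in two steps, first extracting an auxiliary equality from \eqref{eqn:moduleunitality}, \eqref{eqn:moduleassociativity}, and \eqref{eqn:algebraunitality} and then using that to finish, and in particular it never needs to invoke \eqref{eqn:algebraassociativity}, \eqref{eqn:coherenceleft}, or \eqref{eqn:coherencemiddle}; your cleanup phase is therefore slightly heavier than necessary, but not wrong.
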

\begin{proof}
We begin by proving the following equality:

\settoheight{\prelim}{\includegraphics[width=45mm]{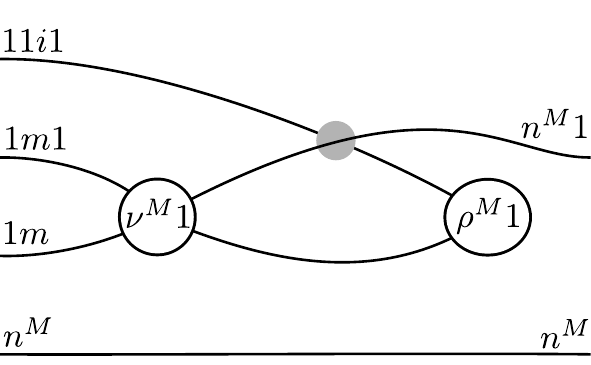}}

\begin{center}
\begin{tabular}{@{}ccc@{}}
\includegraphics[width=45mm]{Pictures/coherenceright/equation1.pdf}&
\raisebox{0.45\prelim}{$=$} &
\includegraphics[width=30mm]{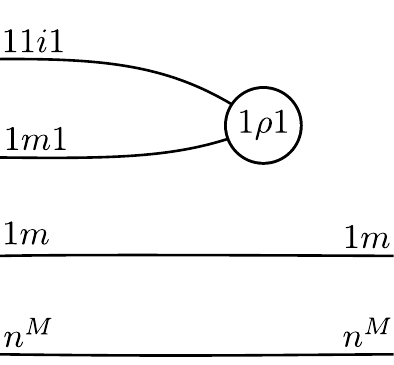}.
\end{tabular}
\end{center}

In order to establish this equality, we use the diagrams depicted in appendix \ref{sub:coherencerightdiagrams}. Let us begin with figure \ref{fig:cohright1}. Applying equation (\ref{eqn:moduleunitality}) to the blue coupon, we arrive at figure \ref{fig:cohright2}. Then, moving the coupon labelled $1\lambda^{-1}$ up, we get to figure \ref{fig:cohright3}. Using equation (\ref{eqn:moduleassociativity}) on the blue coupons yields figure \ref{fig:cohright4}. Finally, equation (\ref{eqn:algebraunitality}) applied to the blue coupons brings us to figure \ref{fig:cohright5}. The desired equality follows from that between figures \ref{fig:cohright1} and \ref{fig:cohright5} by cancelling the bottom coupons labelled $\nu^M$.

Let us now prove that the coherence equation (\ref{eqn:coherenceright}) holds. Figure \ref{fig:cohright10} depicts the left hand-side of equation (\ref{eqn:coherenceright}). Creating a pair of cancelling coupons labelled $\rho^{M^{-1}}$ and $\rho^M$ in the blue region, we obtain figure \ref{fig:cohright11}. Moving the coupon labelled $\rho^{M^{-1}}$ to the left and the top string attached to it up, we arrive at figure \ref{fig:cohright12}. Now, we can use the equality derived above on the blue coupons to get to figure \ref{fig:cohright13}. Bringing the indicated string down gets us to figure \ref{fig:cohright14}. Finally, we can cancel the pair of blue coupons, and this brings us to the right hand-side of equation (\ref{eqn:coherenceright}).
\end{proof}

\begin{Remark}
As $A$ is canonically a right $A$-module, we get a coherence relation for $A$.
\end{Remark}

We also recall the definitions of 1-morphisms and of 2-morphisms of right $A$-modules.

\begin{Definition}\label{def:modulemap}
Let $M$ and $N$ be two right $A$-modules. A right $A$-module 1-morphism consists of a 1-morphism $f:M\rightarrow N$ in $\mathfrak{C}$ together with an invertible 2-morphism

$$\begin{tikzcd}[sep=small]
MA \arrow[dd, "f1"'] \arrow[rr, "n^M"]    &  & M \arrow[dd, "f"] \\
                                            &  &                      \\
NA \arrow[rr, "n^N"'] \arrow[rruu, Rightarrow, "\psi^f", shorten > = 2.5ex, shorten < = 2.5ex] &  & N,                   
\end{tikzcd}$$

subject to the coherence relations:

\begin{enumerate}
\item [a.] We have:
\end{enumerate}

\settoheight{\prelim}{\includegraphics[width=52.5mm]{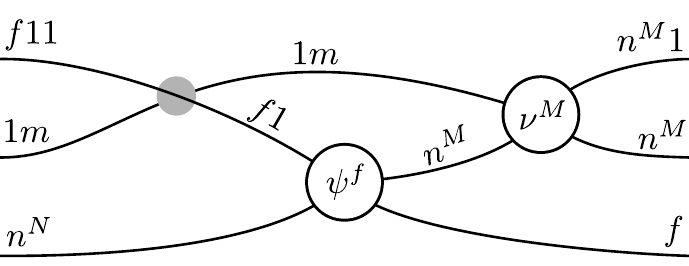}}

\begin{equation}\label{eqn:modulemapassociativity}
\begin{tabular}{@{}ccc@{}}

\includegraphics[width=52.5mm]{Pictures/prelim/module/map1.pdf} & \raisebox{0.45\prelim}{$=$} &

\includegraphics[width=52.5mm]{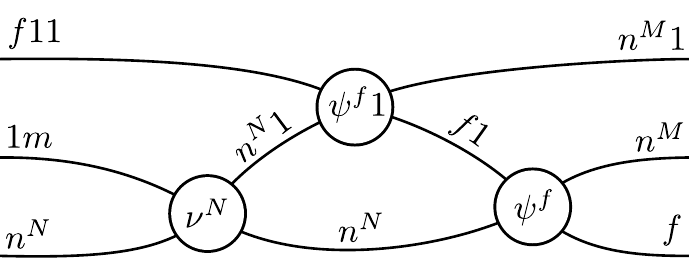},

\end{tabular}
\end{equation}

\begin{enumerate}
\item [b.] We have:
\end{enumerate}

\settoheight{\prelim}{\includegraphics[width=30mm]{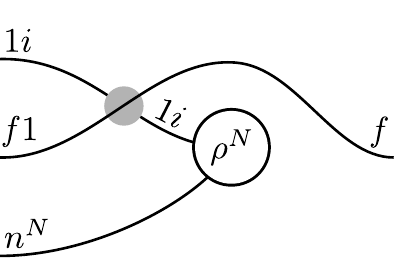}}

\begin{equation}\label{eqn:modulemapunitality}
\begin{tabular}{@{}ccc@{}}

\includegraphics[width=30mm]{Pictures/prelim/module/map3.pdf} & \raisebox{0.45\prelim}{$=$} &

\includegraphics[width=30mm]{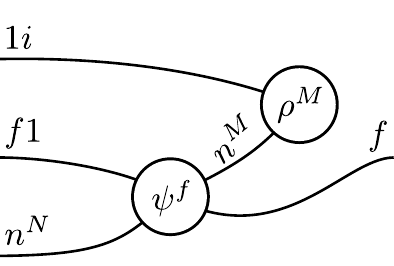}.

\end{tabular}
\end{equation}
\end{Definition}

\begin{Definition}\label{def:moduleintertwiner}
Let $M$ and $N$ be two right $A$-modules, and $f,g:M\rightarrow M$ two right $A$-module 1-morphisms. A right $A$-module 2-morphism $f\Rightarrow g$ is a 2-morphism $\gamma:f\Rightarrow g$ in $\mathfrak{C}$ that satisfies the following equality:

\settoheight{\prelim}{\includegraphics[width=30mm]{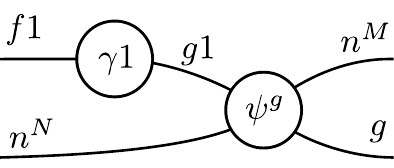}}

\begin{center}
\begin{tabular}{@{}ccc@{}}

\includegraphics[width=30mm]{Pictures/prelim/module/2morphism1.pdf} & \raisebox{0.45\prelim}{$=$} &

\includegraphics[width=30mm]{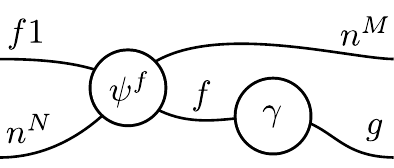}.

\end{tabular}
\end{center}
\end{Definition}

Finally, let us recall lemma 3.2.10 of \cite{D4}.

\begin{Lemma}
Right $A$-modules, right $A$-module 1-morphisms, and right $A$-module 2-morphisms form a 2-category, which we denote by $\mathbf{Mod}_{\mathfrak{C}}(A)$.
\end{Lemma}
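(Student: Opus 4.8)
The plan is to verify the three defining pieces of data of a 2-category---composition, identities, and the coherence isomorphisms (associator, unitors)---all lifted from the ambient 2-category $\mathfrak{C}$, and to check that the 2-categorical axioms (interchange, pentagon, triangle) reduce to those already known in $\mathfrak{C}$ once we confirm that the module coherence relations are preserved under each construction.

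First I would set up the composition. Given right $A$-module 1-morphisms $(f,\psi^f):M\to N$ and $(g,\psi^g):N\to P$, I would define the composite to be $g\circ f$ in $\mathfrak{C}$ equipped with the 2-isomorphism obtained by pasting $\psi^f$ and $\psi^g$ (i.e. $\psi^{g\circ f} := (g\psi^f)\cdot(\psi^g (f1))$ in string-diagram form, using the interchanger of $\Box$ where $f1$ and $g1$ meet). One must check this composite satisfies relations (\ref{eqn:modulemapassociativity}) and (\ref{eqn:modulemapunitality}); this is a routine string-diagram pasting argument, stacking the two hexagon/square relations for $f$ and $g$ and using the interchanger coherence of the strict cubical structure. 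The identity 1-morphism on $M$ is $1_M$ with $\psi^{1_M}$ the identity 2-morphism (using that $1_M\Box 1_A = 1_{M\Box A}$ is a genuine identity by strictness), which trivially satisfies the relations. For 2-morphisms, vertical and horizontal composition are inherited from $\mathfrak{C}$; I would check that a vertical composite $\gamma'\cdot\gamma$ of module 2-morphisms is again a module 2-morphism by pasting the two instances of the Definition~\ref{def:moduleintertwiner} equation, and similarly for the horizontal composite $\gamma\Box 1$-type operation, here using naturality of the interchanger. The associator and unitors of $\mathbf{Mod}_{\mathfrak{C}}(A)$ are taken to be those of $\mathfrak{C}$; the content is that an associator 2-isomorphism $(h\circ g)\circ f \cong h\circ(g\circ f)$ in $\mathfrak{C}$ is automatically a module 2-morphism, which again is a diagram chase using the definition of the pasted $\psi$'s and the interchanger.

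The main obstacle---really the only nontrivial point---is bookkeeping the interchangers $\phi^{\Box}$ correctly throughout, since $\Box$ is not a strict 2-functor; all the pasting identities are ``obvious'' only after one is careful that the $f1$, $g1$ strings compose via the nontrivial 2-isomorphism $\phi^{\Box}_{(f,1),(g,1)}$ (which, by the strict cubical hypothesis recalled in the graphical conventions, is in fact an identity here since the second slots are identities---so in practice this simplifies considerably). Once that observation is made, every verification is a formal consequence of the axioms of $\mathfrak{C}$ together with the module-map coherence relations, so I would not grind through each one. Indeed, since this is lemma 3.2.10 of \cite{D4}, I would simply cite that reference, noting that the proof is a direct check, and that the analogous statements for left modules and bimodules follow identically.

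\begin{proof}
This is lemma 3.2.10 of \cite{D4}. The composition of right $A$-module 1-morphisms $(f,\psi^f)$ and $(g,\psi^g)$ is given by $g\circ f$ together with the evident pasting of $\psi^f$ and $\psi^g$; the identity 1-morphism on $M$ is $1_M$ with identity structure 2-isomorphism; and the composition of 2-morphisms, as well as the coherence data, are inherited from $\mathfrak{C}$. Checking the coherence relations (\ref{eqn:modulemapassociativity}) and (\ref{eqn:modulemapunitality}) for the composite, that composites of module 2-morphisms are again module 2-morphisms, and that the associator and unitors of $\mathfrak{C}$ are module 2-morphisms, are all routine string-diagram manipulations using the axioms recorded above.
\end{proof}
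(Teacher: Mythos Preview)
Your proposal is correct and matches the paper exactly: the paper gives no proof at all for this lemma, merely recalling it as lemma 3.2.10 of \cite{D4}. Your sketch of the routine verification (composition via pasting of the $\psi$'s, identities and coherence inherited from $\mathfrak{C}$, with the strict cubical hypothesis making the interchanger bookkeeping trivial) is accurate and in fact more detailed than what the paper provides.
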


\subsection{Left Modules and Bimodules}\label{sub:leftmodulesbimodules}

Let $A$ be an algebra in a strict cubical monoidal 2-category $\mathfrak{C}$. We review the definition of a left $A$-module in $\mathfrak{C}$.

\begin{Definition}\label{def:leftmodule}
A left $A$-module in $\mathfrak{C}$ consists of:
\begin{enumerate}
    \item An object $M$ of $\mathfrak{C}$;
    \item A 1-morphism $l^M:A\Box M\rightarrow M$;
    \item Two 2-isomorphisms
\end{enumerate}
\begin{center}
\begin{tabular}{@{}c c@{}}
$\begin{tikzcd}[sep=small]
M \arrow[rrrr, equal] \arrow[rrdd, "i1"'] &  & {} \arrow[dd, Rightarrow, "\lambda^M"', near start, shorten > = 1ex] &  & M \\
                                   &  &                           &  &   \\
                                   &  & AM, \arrow[rruu, "l^M"']     &  &  
\end{tikzcd}$&

$\begin{tikzcd}[sep=small]
AAM \arrow[dd, "1l^M"'] \arrow[rr, "m1"]    &  & AM \arrow[dd, "l^M"] \\
                                            &  &                      \\
AM \arrow[rr, "l^M"'] \arrow[rruu, Rightarrow, "\kappa^M", shorten > = 2.5ex, shorten < = 2.5ex] &  & M,                   
\end{tikzcd}$
\end{tabular}
\end{center}

satisfying:

\begin{enumerate}
\item [a.] We have:
\end{enumerate}

\settoheight{\prelim}{\includegraphics[width=52.5mm]{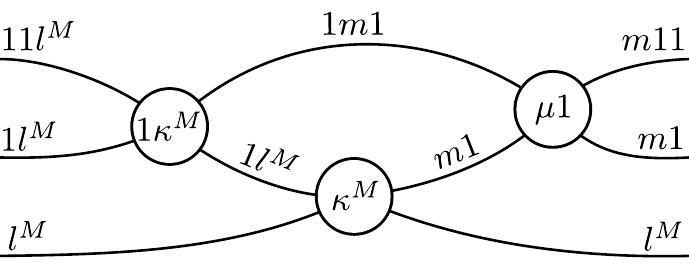}}

\begin{equation}\label{eqn:leftmoduleassociativity}
\begin{tabular}{@{}ccc@{}}

\includegraphics[width=52.5mm]{Pictures/prelim/leftmodule/associativity1.pdf} & \raisebox{0.45\prelim}{$=$} &
\includegraphics[width=45mm]{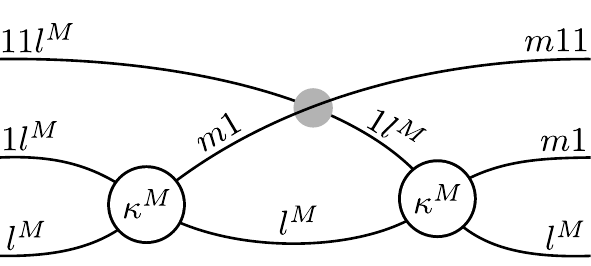},

\end{tabular}
\end{equation}

\begin{enumerate}
\item [b.] We have:
\end{enumerate}

\settoheight{\prelim}{\includegraphics[width=22.5mm]{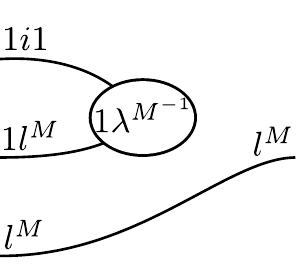}}

\begin{equation}\label{eqn:leftmoduleunitality}
\begin{tabular}{@{}ccc@{}}

\includegraphics[width=22.5mm]{Pictures/prelim/leftmodule/unitality1.pdf} & \raisebox{0.45\prelim}{$=$} &

\includegraphics[width=37.5mm]{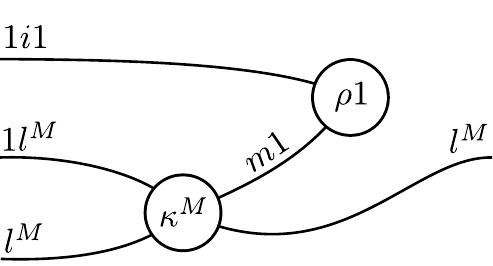}.

\end{tabular}
\end{equation}
\end{Definition}

We also review the definitions 1-morphism and 2-morphism of left $A$-modules.

\begin{Definition}\label{def:leftmodulemap}
Let $M$ and $N$ be two left $A$-modules. A left $A$-module 1-morphism consists of a 1-morphism $f:M\rightarrow N$ in $\mathfrak{C}$ together with an invertible 2-morphism

$$\begin{tikzcd}[sep=small]
AM \arrow[dd, "1f"'] \arrow[rr, "l^M"]    &  & M \arrow[dd, "f"] \\
                                            &  &                      \\
AN \arrow[rr, "l^N"'] \arrow[rruu, Rightarrow, "\chi^f", shorten > = 2.5ex, shorten < = 2.5ex] &  & N,                   
\end{tikzcd}$$

subject to the coherence relations:

\begin{enumerate}
\item [a.] We have:
\end{enumerate}

\settoheight{\prelim}{\includegraphics[width=45mm]{Pictures/prelim/module/map1.pdf}}

\begin{equation}\label{eqn:leftmodulemapassociativity}
\begin{tabular}{@{}ccc@{}}

\includegraphics[width=45mm]{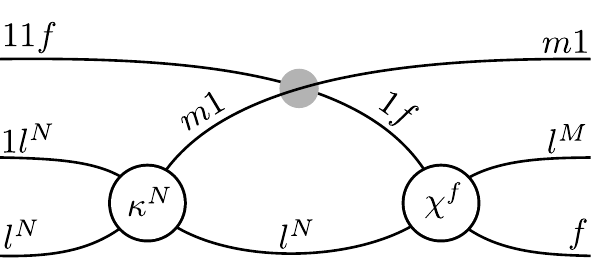} & \raisebox{0.45\prelim}{$=$} &

\includegraphics[width=45mm]{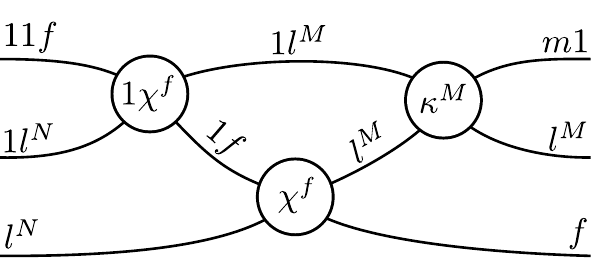},

\end{tabular}
\end{equation}

\begin{enumerate}
\item [b.] We have:
\end{enumerate}

\settoheight{\prelim}{\includegraphics[width=30mm]{Pictures/prelim/module/map3.pdf}}

\begin{equation}\label{eqn:leftmodulemapunitality}
\begin{tabular}{@{}ccc@{}}

\includegraphics[width=30mm]{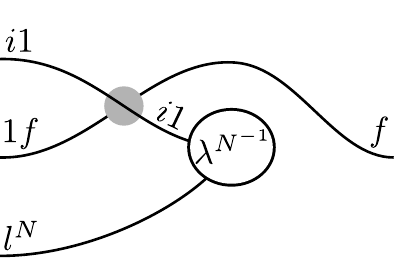} & \raisebox{0.45\prelim}{$=$} &

\includegraphics[width=30mm]{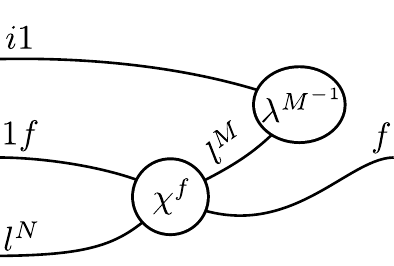}.

\end{tabular}
\end{equation}
\end{Definition}

\begin{Definition}\label{def:leftmoduleintertwiner}
Let $M$ and $N$ be two left $A$-modules, and $f,g:M\rightarrow M$ two left $A$-module 1-morphisms. A left $A$-module 2-morphism $f\Rightarrow g$ is a 2-morphism $\gamma:f\Rightarrow g$ in $\mathfrak{C}$ that satisfies the following equality:

\settoheight{\prelim}{\includegraphics[width=30mm]{Pictures/prelim/module/2morphism1.pdf}}

\begin{center}
\begin{tabular}{@{}ccc@{}}

\includegraphics[width=30mm]{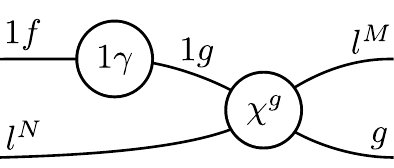} & \raisebox{0.45\prelim}{$=$} &

\includegraphics[width=30mm]{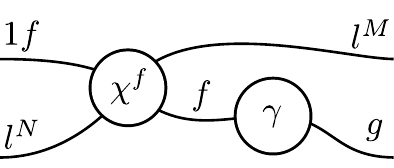}.

\end{tabular}
\end{center}
\end{Definition}

Let now $A$ and $B$ be two algebras in the strict cubical monoidal 2-category $\mathfrak{C}$. In the definition below, we write $m^A$ for the multiplication 1-morphism of $A$ and $m^B$ for that of $B$. We now recall the definition of an $A$-$B$-bimodule in $\mathfrak{C}$. Below, we review the definitions of 1-morphisms and 2-morphisms of $A$-$B$-bimodules.

\begin{Definition}\label{def:bimodule}
An $A$-$B$-bimodule in $\mathfrak{C}$ consists of:
\begin{enumerate}
    \item An object $P$ of $\mathfrak{C}$;
    \item The data $(P,l^P,\lambda^P,\kappa^P)$ of a left $A$-module structure on $P$;
    \item The data $(P,n^P,\nu^P,\rho^P)$ of a right $B$-module structure on $P$;
    \item A 2-isomorphism
\end{enumerate}

$$\begin{tikzcd}[sep=small]
APB \arrow[dd, "1n^P"'] \arrow[rr, "l^P1"]    &  & PB \arrow[dd, "n^P"] \\
                                            &  &                      \\
AP \arrow[rr, "l^P"'] \arrow[rruu, Rightarrow, "\beta^P", shorten > = 2.5ex, shorten < = 2.5ex] &  & M,                   
\end{tikzcd}$$

satisfying:

\begin{enumerate}
\item [a.] We have
\end{enumerate}

\settoheight{\prelim}{\includegraphics[width=52.5mm]{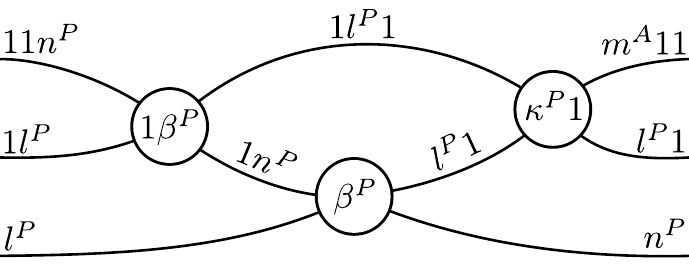}}

\begin{equation}\label{eqn:leftbimoduleassociativity}
\begin{tabular}{@{}ccc@{}}

\includegraphics[width=52.5mm]{Pictures/prelim/leftmodule/bimodule1.pdf} & \raisebox{0.45\prelim}{$=$} &
\includegraphics[width=45mm]{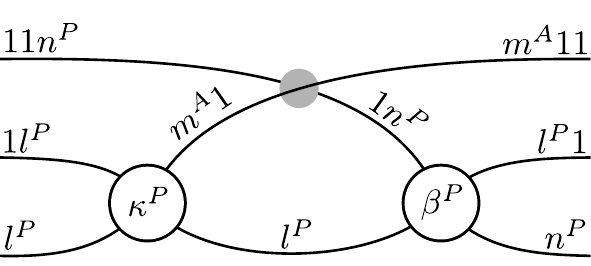},

\end{tabular}
\end{equation}

\begin{enumerate}
\item [b.] We have:
\end{enumerate}

\settoheight{\prelim}{\includegraphics[width=52.5mm]{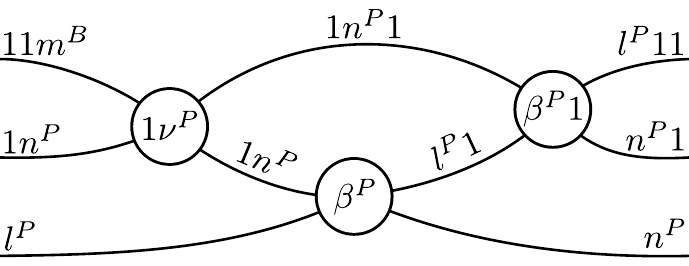}}

\begin{equation}\label{eqn:rightbimoduleassociativity}
\begin{tabular}{@{}ccc@{}}

\includegraphics[width=52.5mm]{Pictures/prelim/leftmodule/bimodule3.pdf} & \raisebox{0.45\prelim}{$=$} &
\includegraphics[width=45mm]{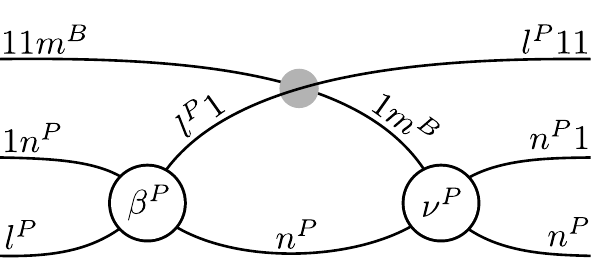}.

\end{tabular}
\end{equation}
\end{Definition}

\begin{Definition}\label{def:bimodulemap}
Let $P$ and $Q$ be two $A$-$B$-bimodules. An $A$-$B$-bimodule 1-morphism consists of a 1-morphism $f:P\rightarrow Q$ in $\mathfrak{C}$ together with the data $(f,\xi^f)$ of a left $A$-module structure and $(f, \psi^f)$ of a right $B$-module structure satisfying:

\settoheight{\prelim}{\includegraphics[width=45mm]{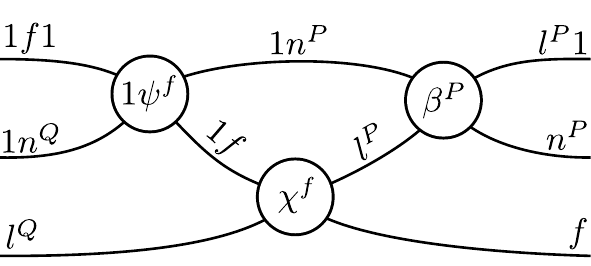}}

\begin{equation}\label{eqn:bimodulemapassociativity}
\begin{tabular}{@{}ccc@{}}

\includegraphics[width=45mm]{Pictures/prelim/leftmodule/bimodulemap1.pdf} & \raisebox{0.45\prelim}{$=$} &

\includegraphics[width=52.5mm]{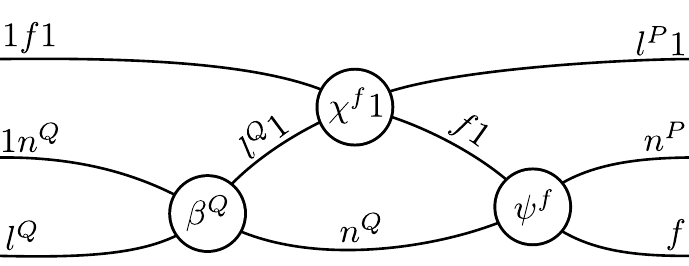}.

\end{tabular}
\end{equation}
\end{Definition}

\begin{Definition}\label{def:bimoduleintertwiner}
Let $P$ and $Q$ be two $A$-$B$-bimodules, and $f,g:P\rightarrow Q$ two $A$-$B$-bimodule 1-morphisms. An $A$-$B$-bimodule 2-morphism $f\Rightarrow g$ is a 2-morphism $\gamma:f\Rightarrow g$ in $\mathfrak{C}$, which is both a left $A$-module 2-morphism and a right $B$-module 2-morphism.
\end{Definition}

We also record the follwoing result.

\begin{Lemma}
Given two algebras $A$ and $B$ in $\mathfrak{C}$, $A$-$B$-bimodules, $A$-$B$-bimodule 1-morphisms, and $A$-$B$-bimodule 2-morphisms form a 2-category, which we denote by $\mathbf{Bimod}_{\mathfrak{C}}(A,B)$.
\end{Lemma}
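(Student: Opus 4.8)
The plan is to verify directly that the prescribed data assembles into a (strict) 2-category, leveraging the fact, recorded above, that right $A$-modules form the 2-category $\mathbf{Mod}_{\mathfrak{C}}(A)$, together with its evident left-handed analogue for left $A$-modules, so that the "one-sided" bookkeeping is imported for free and only the mixed datum $\beta^P$ and the mixed coherence \eqref{eqn:bimodulemapassociativity} require new work. Throughout, one uses that $\mathfrak{C}$ is strict cubical, so that composition of $1$-morphisms is strictly associative and unital and pasting of $2$-morphisms is strictly associative.

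First I would define composition of $A$-$B$-bimodule $1$-morphisms. Given $f\colon P\to Q$ and $g\colon Q\to R$, the composite has underlying $1$-morphism $g\circ f$ in $\mathfrak{C}$, left $A$-module structure $2$-isomorphism the pasting of $\chi^f$ and $\chi^g$ (the composite used in the 2-category of left $A$-modules), and right $B$-module structure the pasting of $\psi^f$ and $\psi^g$ (the composite used in $\mathbf{Mod}_{\mathfrak{C}}(B)$). The one-sided coherences \eqref{eqn:modulemapassociativity}, \eqref{eqn:modulemapunitality}, \eqref{eqn:leftmodulemapassociativity}, \eqref{eqn:leftmodulemapunitality} for $g\circ f$ hold because they hold in those two 2-categories. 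The only genuinely new check is that the resulting pair satisfies the mixed relation \eqref{eqn:bimodulemapassociativity}: one inserts the defining pastings, applies \eqref{eqn:bimodulemapassociativity} first for $f$ (which brings in $\beta^Q$) and then for $g$ (which carries it over to $\beta^R$), and reorganises the diagram into the required shape using naturality of the interchangers and the interchange relations available in a strict cubical $\mathfrak{C}$. The identity $1$-morphism on a bimodule $P$, equipped with identity structure $2$-isomorphisms, trivially satisfies \eqref{eqn:modulemapassociativity}, \eqref{eqn:modulemapunitality}, \eqref{eqn:leftmodulemapassociativity}, \eqref{eqn:leftmodulemapunitality}, and \eqref{eqn:bimodulemapassociativity}.

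Next I would treat $2$-morphisms. An $A$-$B$-bimodule $2$-morphism is, by definition, a $2$-morphism of $\mathfrak{C}$ that is simultaneously a left $A$-module and a right $B$-module $2$-morphism; each of these conditions is closed under vertical composition and under whiskering by $1$-morphisms (since this is so in the 2-categories of left, resp. right, modules), so vertical composites of bimodule $2$-morphisms, horizontal composites of bimodule $2$-morphisms with bimodule $1$-morphisms, and identity $2$-morphisms on bimodule $1$-morphisms are all again bimodule $2$-morphisms. Finally, all of the 2-category axioms reduce to equalities that already hold in $\mathfrak{C}$: the axioms involving $2$-morphisms (associativity and unitality of vertical composition, and the interchange law) follow because the forgetful assignment $\mathbf{Bimod}_{\mathfrak{C}}(A,B)\to\mathfrak{C}$ is faithful on $2$-morphisms, and the axioms involving only $1$-morphisms (strict associativity and unitality of composition) follow by direct inspection of the composite structure $2$-isomorphisms, using strictness of $\mathfrak{C}$ and of pasting.

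I expect the only step requiring any care to be the verification that the composite structure $(\chi^{g\circ f},\psi^{g\circ f})$ satisfies \eqref{eqn:bimodulemapassociativity}; everything else is a transport of the corresponding statements along the forgetful assignments to $\mathbf{Mod}_{\mathfrak{C}}(A)$, its left-handed analogue, and $\mathbf{Mod}_{\mathfrak{C}}(B)$. Alternatively, one could avoid even this check by observing that $A$-$B$-bimodules in $\mathfrak{C}$ are precisely left $A$-modules internal to $\mathbf{Mod}_{\mathfrak{C}}(B)$ regarded as a left $\mathfrak{C}$-module 2-category, and invoking the "modules form a 2-category" statement there; but the direct verification above is short enough to carry out by hand, so we follow that route.
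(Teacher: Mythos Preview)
Your proposal is correct and essentially carries out what the paper leaves implicit: the paper's own proof consists of the single remark that the verification is easy and left to the reader, together with the observation that strict cubicality of $\mathfrak{C}$ makes $\mathbf{Bimod}_{\mathfrak{C}}(A,B)$ a strict 2-category. Your outline is precisely the kind of detailed check the reader is expected to supply, and your emphasis on the mixed coherence \eqref{eqn:bimodulemapassociativity} as the only genuinely new step beyond the one-sided module 2-categories is apt.
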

\begin{proof}
The proof is easy and left to the reader. We only point out that, as we have assumed that $\mathfrak{C}$ is strict cubical, $\mathbf{Bimod}_{\mathfrak{C}}(A,B)$ is in fact a strict 2-category.
\end{proof}

\subsection{Compact Semisimple 2-Categories}

As we will use it later, we begin by recalling from \cite{D1} the unpacked definitions of a 2-condensation and of 2-condensation monad (see \cite{GJF} for the original definitions). As is extensively explained in the aforementionned references, these objects are categorifications of the notions of split idempotent, and idempotent respectively.

\begin{Definition}
Let $\mathfrak{C}$ be a 2-category. A 2-condensation in $\mathfrak{C}$ consists of two objects $A$ and $B$ in $\mathfrak{C}$, two 1-morphisms $f:A\rightarrow B$ and $g:B\rightarrow A$, and two 2-morphisms $\phi:f\circ g\Rightarrow Id_B$ and $\gamma:Id_B\Rightarrow f\circ g$ such that $\phi\cdot \gamma = Id_{Id_B}$.
\end{Definition}

The data of 2-condensation as in the above definition induces a 2-condensation monad on the object $A$. We spell out what this means below.

\begin{Definition}
Let $\mathfrak{C}$ be a 2-category. A 2-condensation monad in $\mathfrak{C}$ consists of an object $A$ together with a 1-morphism $e:A\rightarrow A$, and 2-morphisms $\mu:e\circ e\Rightarrow e$ and $\delta:e\Rightarrow e\circ e$ such that $\mu$ is associative, $\delta$ is coassociative, the Frobenius relations hold, and $\mu\cdot\delta = Id_e$.
\end{Definition}

Let $\mathds{k}$ be a field, and let $\mathfrak{C}$ be a $\mathds{k}$-linear 2-category that is locally Cauchy complete, meaning that its $Hom$-categories have direct sums and splittings of idempotents. We say that $\mathfrak{C}$ is Cauchy complete if it has direct sum for objects and every 2-condensation monad can be extended to a 2-condensation.

We now recall the definition of a semisimple 2-category from \cite{DR}. Then, we will review the definition of a compact semisimple 2-category introduced in \cite{D5}, which is a generalization of the notion of finite semisimple 2-category over an algebraically closed field of characteristic zero introduced in \cite{DR}, and developed in \cite{D1}.

\begin{Definition}
A $\mathds{k}$-linear 2-category $\mathfrak{C}$ is called semisimple if it is locally semisimple, has left and right adjoints for 1-morphisms, and is Cauchy complete.
\end{Definition}

Let $\mathfrak{C}$ be a semisimple 2-category. We call an object $C$ in $\mathfrak{C}$ simple if $Id_C$ is a simple object of the semisimple category $End_{\mathfrak{C}}(C)$. We say that two simple objects $C$ and $D$ in $\mathfrak{C}$ are connected if there exists a nonzero 1-morphism between them. Thanks to the categorical Schur lemma (see \cite{D5} lemma 1.1.5), this defines an equivalence relation on the set of simple objects of $\mathfrak{C}$, and we write $\pi_0(\mathfrak{C})$ for the quotient.

\begin{Definition}
A semisimple $\mathds{k}$-linear 2-category $\mathfrak{C}$ is called compact if it is locally finite semisimple and $\pi_0(\mathfrak{C})$ is finite.
\end{Definition}

If $\mathfrak{C}$ is locally finite semisimple and the set of equivalence classes of simple object of $\mathfrak{C}$ is in fact finite, then we call $\mathfrak{C}$ a finite semisimple 2-category. As explained in \cite{D5}, over a general field, there exists no finite semisimple 2-category. However, by corollary 2.2.3 of \cite{D5}, if $\mathds{k}$ is algebraically closed or real closed, then every compact semisimple 2-category is finite. Finally, let us also recall the following definition.

\begin{Definition}
A tensor 2-category is a rigid monoidal 2-category. A multifusion 2-category is a finite semisimple tensor 2-category. A fusion 2-category is a multifusion 2-category whose monoidal unit is a simple object.
\end{Definition}

\subsection{Examples}

Let $\mathds{k}$ be a field. We give various examples of algebras in monoidal 2-categories, and in compact semisimple monoidal 2-categories over $\mathds{k}$.

\begin{Example}\label{ex:algebrasfinite}
Let $\mathds{k}$ be an arbitrary field. Recall that a $\mathds{k}$-linear 1-category is called finite if it is equivalent to the 1-category $Mod(R)$ of finite dimensional left $R$-modules with $R$ a finite dimensional $\mathds{k}$-algebra. We write $\mathbf{FinCat}$ for the 2-category of finite $\mathds{k}$-linear 1-categories, right exact functors, and natural transformations. It is well-known that the Deligne tensor product $\boxtimes$, defined by $Mod(R)\boxtimes Mod(S)\simeq Mod(R\otimes_{\mathds{k}} S)$ for any finite dimensional $\mathds{k}$-algebras $R$ and $S$, endows $\mathbf{FinCat}$ with a symmetric monoidal structure (for instance, see \cite{EGNO}). It is easy to check that algebras in $\mathbf{FinCat}$ are precisely finite monoidal 1-categories, whose monoidal product is right exact in both arguments. Further, fixing such a finite monoidal 1-category $\mathcal{A}$, right $\mathcal{A}$-modules in $\mathbf{FinCat}$ are exactly finite right $\mathcal{A}$-module 1-categories, for which the action is right exact in both variables.
\end{Example}

\begin{Example}\label{ex:algebras2Vect}
Let us call a $\mathds{k}$-linear 1-category $\mathcal{C}$ is perfect if it is finite semisimple and the endomorphism algebra of any object is separable. This corresponds to requiring that $\mathcal{C}$ is equivalent to $Mod(R)$ with $R$ a separable $\mathds{k}$-algebra. We write $\mathbf{2Vect}$ for the compact semisimple 2-category of perfect 1-categories, linear functors, and natural transformations. As separable $\mathds{k}$-algebras are closed under $\otimes_{\mathds{k}}$, we find that the Deligne tensor product preserves perfect 1-categories. This shows that $\mathbf{2Vect}$ is a symmetric monoidal full sub-2-category of $\mathbf{FinCat}$. Algebras in $\mathbf{2Vect}$ correspond precisely to perfect monoidal 1-categories. Fixing such an algebra $\mathcal{A}$, right $\mathcal{A}$-modules in $\mathbf{2Vect}$ are exactly perfect right $\mathcal{A}$-module 1-categories.
\end{Example}

\begin{Remark}
If we assume that the field $\mathds{k}$ is perfect, then every finite semisimple 1-category is perfect, so that $\mathbf{2Vect}$ coincides with the symmetric monoidal 2-category of finite semisimple 1-categories. In fact, it follows from the fact that a $\mathds{k}$-algebra $R$ is separable if and only $R\otimes R$ is semisimple that a finite semisimple 1-category $\mathcal{A}$ is perfect if and only if $\mathcal{A}\boxtimes\mathcal{A}$ is finite semisimple. However, for an arbitrary field, $\mathbf{FinCat_{ss}}$, the full sub-2-category on the finite semisimple 1-categories, is not closed under the Deligne tensor product. This last 2-category also fails to be a semisimple 2-category, though it is Cauchy complete as we now prove for later use.
\end{Remark}

\begin{Lemma}\label{lem:sscatCauchy}
The 2-category $\mathbf{FinCat_{ss}}$ is Cauchy complete.
\end{Lemma}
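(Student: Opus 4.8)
The plan is to verify the two defining conditions of Cauchy completeness for $\mathbf{FinCat_{ss}}$ separately: local Cauchy completeness (which is in fact part of the hypothesis, since $\mathbf{FinCat_{ss}}$ is locally finite semisimple), the existence of direct sums of objects, and the extension of every 2-condensation monad to a 2-condensation. Direct sums of objects are immediate: the direct sum $\mathcal{C}\boxplus\mathcal{D}$ of two finite semisimple $\mathds{k}$-linear 1-categories (objects are pairs, $\mathrm{Hom}$ is the product) is again finite semisimple, and it visibly serves as the biproduct in the 2-categorical sense. So the content of the lemma is the 2-condensation extension property.

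For that, I would take a 2-condensation monad $(\mathcal{C}, e, \mu, \delta)$ in $\mathbf{FinCat_{ss}}$: here $e\colon\mathcal{C}\to\mathcal{C}$ is a $\mathds{k}$-linear right exact (hence exact, by semisimplicity) functor, together with natural transformations $\mu\colon e\circ e\Rightarrow e$ and $\delta\colon e\Rightarrow e\circ e$ satisfying associativity, coassociativity, the Frobenius relations, and $\mu\cdot\delta = \mathrm{Id}_e$. I would construct the "condensed" object $\mathcal{D}$ as the category whose objects are pairs $(c, p)$ with $c\in\mathcal{C}$ and $p\colon e(c)\to e(c)$ an idempotent compatible with the comonad/monad structure in the appropriate sense — more precisely, I would follow the standard recipe realizing $\mathcal{D}$ as the category of "$e$-modules" or, dually, the idempotent-splitting of the endofunctor data, and then observe that $\mathcal{C}$ locally finite semisimple and idempotent complete forces $\mathcal{D}$ to again be finite semisimple. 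The forgetful/inclusion functors $f\colon\mathcal{C}\to\mathcal{D}$ and $g\colon\mathcal{D}\to\mathcal{C}$, built from $e$, $\mu$, $\delta$, assemble into a 2-condensation with $g\circ f \simeq e$ and $\phi\colon f\circ g\Rightarrow\mathrm{Id}_{\mathcal{D}}$, $\gamma\colon\mathrm{Id}_{\mathcal{D}}\Rightarrow f\circ g$ extracted from $\mu,\delta$, with $\phi\cdot\gamma = \mathrm{Id}$ coming from $\mu\cdot\delta = \mathrm{Id}_e$. Since $\mathbf{2Vect}$ (or $\mathbf{FinCat}$) is already known to be Cauchy complete, a cleaner route is to perform the splitting of the 2-condensation monad inside the larger Cauchy complete 2-category $\mathbf{FinCat}$ and then check that the resulting object lands back in $\mathbf{FinCat_{ss}}$; this reduces the work to the single verification that a 2-condensation monad on a finite semisimple category splits to a finite semisimple category.

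The main obstacle is precisely this last verification: one must show that the 1-category $\mathcal{D}$ obtained by splitting $(e,\mu,\delta)$ is not merely finite $\mathds{k}$-linear but finite \emph{semisimple}. The key point is that $\mathcal{D}$ is, up to equivalence, a full subcategory of $\mathcal{C}$ closed under the relevant (co)limits — specifically, it is a retract of $\mathcal{C}$ in $\mathbf{FinCat}$, with $\mathcal{C}\simeq\mathcal{D}\boxplus\mathcal{D}'$ for a complementary piece — and retracts (direct summands) of finite semisimple $\mathds{k}$-linear categories are again finite semisimple, since semisimplicity of $\mathrm{End}(x)$ for every object passes to subquotients and the finiteness of the set of simples passes to summands. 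I would isolate this as the crux and handle it by writing $\mathcal{D}$ explicitly as the image of the idempotent 2-condensation monad acting on $\mathcal{C}$, using that in a finite semisimple category every idempotent endofunctor with compatible $(\mu,\delta)$ is conjugate to projection onto a Serre (here: direct-sum) subcategory spanned by a subset of the simples, whence $\mathcal{D}$ inherits finite semisimplicity directly.
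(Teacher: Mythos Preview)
Your overall strategy is sensible up to the point where you try to verify that the split object $\mathcal{D}$ is again finite semisimple, but the argument you give for this step is wrong. You assert that $\mathcal{D}$ is a direct summand of $\mathcal{C}$, i.e.\ $\mathcal{C}\simeq\mathcal{D}\boxplus\mathcal{D}'$, and that the 2-condensation monad is ``conjugate to projection onto a subset of the simples''. Neither claim holds in general. Take $\mathcal{C}=\mathbf{Vect}$ and $e=(-)\otimes_{\mathds{k}}(\mathds{k}\times\mathds{k})$, with $\mu$ and $\delta$ coming from the separable Frobenius structure on $\mathds{k}\times\mathds{k}$. The split of this 2-condensation monad is $\mathcal{D}\simeq\mathbf{Vect}\times\mathbf{Vect}$, which has \emph{more} simple objects than $\mathcal{C}$; it is certainly not a direct summand of $\mathbf{Vect}$, nor is $e$ a projection onto any subset of simples. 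A 2-condensation is a much weaker notion than a biproduct inclusion, so ``retract in $\mathbf{FinCat}$'' does not yield the splitting $\mathcal{C}\simeq\mathcal{D}\boxplus\mathcal{D}'$ you want.

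The paper avoids this trap by passing to the Morita-equivalent description of $\mathbf{FinCat_{ss}}$ as finite semisimple $\mathds{k}$-algebras, bimodules, and bimodule maps. It then invokes a result of Gaiotto--Johnson-Freyd to reduce to \emph{unital} 2-condensation monads; unpacking, such a monad on an algebra $A$ is a $\mathds{k}$-algebra $E$ with a homomorphism $A\to E$ for which the multiplication $E\otimes_A E\to E$ admits an $E$-$E$-bimodule section. The crucial point---which replaces your incorrect direct-summand argument---is that this relative separability forces every $E$-module $M$ to be a summand of $E\otimes_A M$, and since $A$ is semisimple this makes $M$ projective over $E$; hence $E$ is semisimple. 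The 2-condensation is then realised concretely by the $A$-$E$ and $E$-$A$ bimodules $E$. Your ``split in $\mathbf{FinCat}$ and check semisimplicity'' route can be made to work, but the check has to go through this separability argument (or something equivalent), not through a nonexistent direct-sum decomposition of $\mathcal{C}$.
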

\begin{proof}
Let us begin by noting that $\mathbf{FinCat_{ss}}$ has right and left adjoints for 1-morphisms, as every functor is exact. Further, recall that $\mathbf{FinCat_{ss}}$ is equivalent to the 2-category of finite semisimple $\mathds{k}$-algebras, finite dimensional $\mathds{k}$-bimodules, and homomorphisms of bimodules. Under this perspective, it is clear that $\mathbf{FinCat_{ss}}$ is locally Cauchy complete and has direct sum for objects. It therefore only remains to prove that every 2-condensation monad splits. By theorem 3.1.4 of \cite{GJF}, it is in fact enough to check that unital 2-condensation monads splits.

Let $(A,e,\mu,\delta,\iota)$ be a unital 2-condensation monad in $\mathbf{FinCat_{ss}}$, i.e. a 2-condensation monad such that $\iota$ is a unit for $\mu$. Unfolding what this means, we find that the $A$-$A$-bimodule $e$ has the structure of a finite dimensional $\mathds{k}$-algebra, which we denote by $E$. Further, the algebra $E$ is equipped with a morphism $A\rightarrow E$ of $\mathds{k}$-algebras such that the canonical map $E\otimes_AE\rightarrow E$ is split by $\delta$ as a map of $E$-$E$-bimodules. We claim that $E$ is a semisimple algebra. Namely, let $M$ be a left $E$-module. Then, the canonical map $E\otimes_AM\rightarrow M$ splits. But, as $A$ is semisimple, $M$ viewed as a left $A$-module is a direct summand of a $A^n$ for some $n\geq 0$. This implies that $M$ is a direct summand of $E^n$, so that $E$ is semisimple.

In order to conclude the proof, let $B:= E$ as an object of $\mathbf{FinCat_{ss}}$, $f$ be the 1-morphism from $A$ to $B$ given by the $A$-$E$-bimodule $E$, $g$ the 1-morphism from $B$ to $A$ given by the $E$-$A$-bimodule $E$, $\phi$ the 2-morphism corresponding to the map of $E$-$E$-bimodules ${_EE}\otimes_A E_E\rightarrow {_EE_E}$ induced by the multiplication of $E$, and $\gamma$ the 2-morphism corresponding to the map of $E$-$E$-bimodules ${_EE_E}\rightarrow {_EE}\otimes_A E_E$ given by $\delta$. This defines a 2-condensation, which extends $(A,e,\mu,\delta,\iota)$ as desired.
\end{proof}

Example \ref{ex:algebras2Vect} can be generalized in various different directions.

\begin{Example}\label{ex:algebras2VectG}
Let us fix $G$ a finite group. We write $\mathbf{2Vect}_G$ for the compact semisimple tensor 2-category of $G$-graded perfect 1-categories. It is easy to check that algebras in $\mathbf{2Vect}_G$ are exactly given by $G$-graded perfect monoidal categories such that the monoidal structure preserves the grading. Right modules admit a similar explicit description. More generally, if we are in addition given a 4-cocycle $\omega$ for $G$ with coefficient in $\mathds{k}^{\times}$, we can use $\omega$ to twist the coherence of the monoidal 2-category $\mathbf{2Vect}_G$ (see construction 2.1.16 of \cite{DR} and \cite{Delc}). Doing so, we get another compact semisimple tensor 2-category, which we denote by $\mathbf{2Vect}_G^{\omega}$. If $H\subseteq G$ is a subgroup such that there exists a 3-cochain $\gamma:H\times H\times H\rightarrow \mathds{k}^{\times}$ with $d\gamma = \omega|_H$, then $\mathbf{Vect}_H^{\gamma}$, the monoidal 1-category of finite dimensional $H$-graded $\mathds{k}$-vector spaces with associator twisted by $\gamma$, is an algebra in $\mathbf{2Vect}_G^{\omega}$. The 2-category of right modules over $\mathbf{Vect}_H^{\gamma}$ is equivalent to $\mathbf{2Vect}_{G/H}$.
\end{Example}

\begin{Example}\label{ex:algebrasModB}
Let $\mathcal{B}$ be a braided finite semisimple tensor 1-category. Recall that a finite semisimple right $\mathcal{B}$-module 1-category is called separable if it is equivalent to the 1-category of left modules over a separable algebra in $\mathcal{B}$. We let $\mathbf{Mod}(\mathcal{B})$ denote the compact semisimple 2-category of separable right $\mathcal{B}$-module 1-categories. The relative Deligne tensor product $\boxtimes_{\mathcal{B}}$ endows this 2-category with a rigid monoidal structure, see \cite{DR} and \cite{D5}. Following \cite{BJS}, a $\mathcal{B}$-central monoidal 1-category is a monoidal linear 1-category $\mathcal{C}$ equipped with a braided monoidal functor $F:\mathcal{B}\rightarrow \mathcal{Z}(\mathcal{C})$ to the Drinfeld center of $\mathcal{C}$. This notion has also appeared under different names in \cite{DGNO}, \cite{HPT} and \cite{MPP}. Note that this provides both a right and a left $\mathcal{B}$-module structure on $\mathcal{C}$. We will call a finite semisimple $\mathcal{B}$-central monoidal 1-category $\mathcal{C}$ separable if $\mathcal{C}$ is a separable as a (left, or equivalently right) $\mathcal{B}$-module 1-category. It follows from proposition 3.2 of \cite{BJS} that algebras in $\mathbf{Mod}(\mathcal{B})$ correspond precisely to separable $\mathcal{B}$-central monoidal 1-categories. If $\mathcal{C}$ is such an algebra, it is easy to check that right $\mathcal{C}$-modules in $\mathbf{Mod}(\mathcal{B})$ are exactly finite semisimple right $\mathcal{C}$-module 1-categories, which are separable as a right $\mathcal{B}$-module 1-category.
\end{Example}

\begin{Example}\label{ex:algebrascenter}
If $\mathds{k}$ is an algebraically closed field of characteristic zero and $G$ is a finite group. Then, following \cite{BN}, we can consider the Drinfel'd center $\mathscr{Z}(\mathbf{2Vect}_G)$ of the fusion 2-category $\mathbf{2Vect}_G$, which is a braided monoidal 2-category. This Drinfeld center was shown to be a finite semisimple monoidal 2-category\footnote{It is in fact a fusion 2-category. Rigidity was established in corollary 2.2.2 of \cite{D9}.} in \cite{KTZ}. Algebras in $\mathscr{Z}(\mathbf{2Vect}_G)$ are in particular $G$-graded finite semisimple monoidal 1-categories. But, objects of this Drinfel'd center are in addition equipped with a coherent action of $G$, which is given by conjugation on grading. Thus, algebras in $\mathscr{Z}(\mathbf{2Vect}_G)$ are exactly finite semisimple $G$-crossed monoidal 1-categories (originally introduced in section 2.1 of \cite{T}, see also definition 5.1 of \cite{Gal}). This means that $G$-crossed braided fusion 1-categories are algebras in $\mathscr{Z}(\mathbf{2Vect}_G)$.
\end{Example}

\section{Properties of Rigid Algebras}\label{sec:rigid}

\subsection{Definitions in Detail}

We fix $\mathfrak{C}$ a monoidal 2-category, which we assume to be strict cubical without loss of generality. Following \cite{G} (see also \cite{JFR}), we say that an algebra $A$ in $\mathfrak{C}$ is rigid if its multiplication map $m:A\Box A\rightarrow A$ has a right adjoint as a map of $A$-$A$-bimodules. For later use, we spell out what this means in details.

\begin{Definition}
A rigid algebra in $\mathfrak{C}$ consists of:
\begin{enumerate}
    \item An algebra $A$ in $\mathfrak{C}$ as in definition \ref{def:algebra};
    \item A right adjoint $m^*:A\rightarrow A\Box A$  in $\mathfrak{C}$ to the multiplication map $m$ with unit $\eta^m$ and counit $\epsilon^m$ (depicted below as a cup and a cap);
    \item Two 2-isomorphisms
\end{enumerate}
    
\begin{center}
\begin{tabular}{cc}
$\begin{tikzcd}[sep=tiny]
AA \arrow[rrr, "m"] \arrow[ddd, "m^*1"'] &                                     &    & A \arrow[ddd, "m^*"] \\
                                                     &  &  {}    &                              \\
                                                   &       {} \arrow[ur, "\psi^r", Rightarrow]                              &  &                              \\
AAA \arrow[rrr, "1m"']                                &                                     &    & AA,                         
\end{tikzcd}$ & 
$\begin{tikzcd}[sep=tiny]
AA \arrow[rrr, "m"] \arrow[ddd, "1m^*"'] &                                     &    & A \arrow[ddd, "m^*"] \\
                                                     &  &  {}    &                              \\
                                                   &       {} \arrow[ur, "\psi^l", Rightarrow]                              &  &                              \\
AAA \arrow[rrr, "m1"']                                &                                     &    & AA;                       
\end{tikzcd}$
\end{tabular}
\end{center}
    
\begin{enumerate}
\item[] satisfying:
\item [a.] The 2-morphism $\psi^l$ endow $m^*$ with the structure of a left $A$-module 1-morphism:
\end{enumerate}

\newlength{\algebra}
\settoheight{\algebra}{\includegraphics[width=45mm]{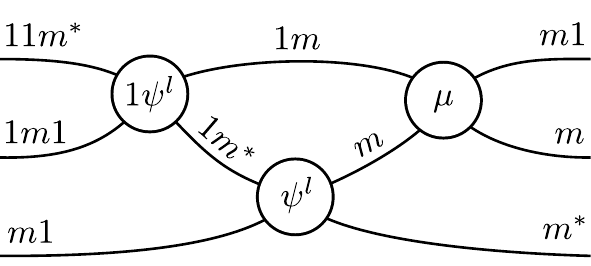}}

\begin{equation}\label{eqn:rigidleftassociativity}
\begin{tabular}{@{}ccc@{}}
\includegraphics[width=45mm]{Pictures/rigidalgebra/leftmodulemap1.pdf}&
\raisebox{0.45\algebra}{$=$} &
\includegraphics[width=45mm]{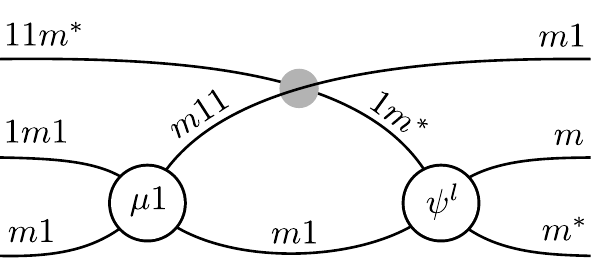},
\end{tabular}
\end{equation}

\settoheight{\algebra}{\includegraphics[width=30mm]{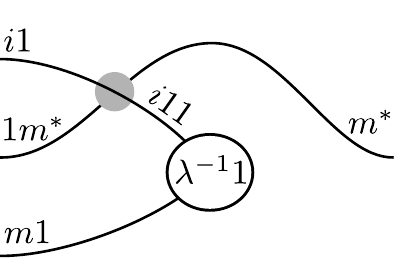}}

\begin{equation}\label{eqn:rigidleftunit}
\begin{tabular}{@{}ccc@{}}
\includegraphics[width=30mm]{Pictures/rigidalgebra/leftmodulemap3.pdf}&
\raisebox{0.45\algebra}{$=$} &
\includegraphics[width=30mm]{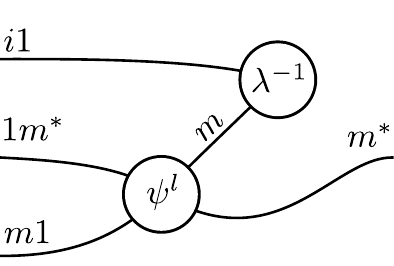},
\end{tabular}
\end{equation}

\begin{enumerate}
\item [b.] The 2-morphism $\psi^r$ endow $m^*$ with the structure of a right $A$-module 1-morphism:
\end{enumerate}

\settoheight{\algebra}{\includegraphics[width=45mm]{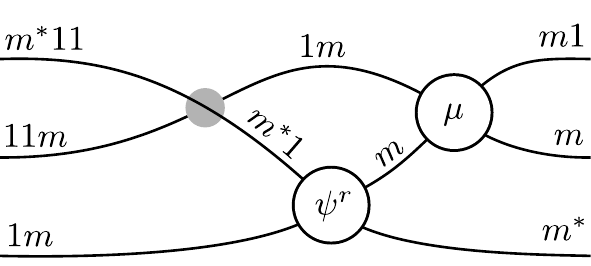}}

\begin{equation}\label{eqn:rigidrightassociativity}
\begin{tabular}{@{}ccc@{}}
\includegraphics[width=45mm]{Pictures/rigidalgebra/rightmodulemap1.pdf}&
\raisebox{0.45\algebra}{$=$} &
\includegraphics[width=45mm]{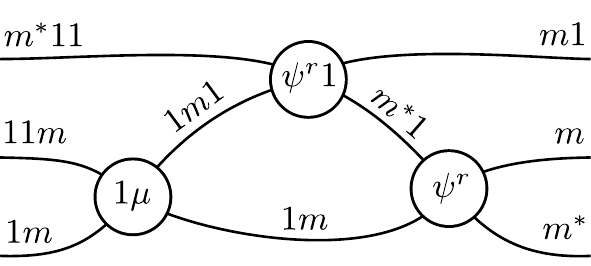},
\end{tabular}
\end{equation}

\settoheight{\algebra}{\includegraphics[width=30mm]{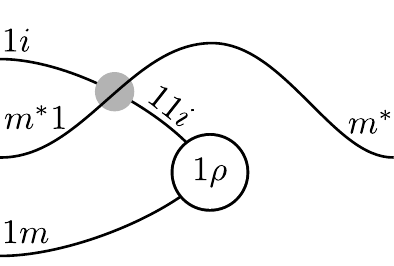}}

\begin{equation}\label{eqn:rigidrightunit}
\begin{tabular}{@{}ccc@{}}
\includegraphics[width=30mm]{Pictures/rigidalgebra/rightmodulemap3.pdf}&
\raisebox{0.45\algebra}{$=$} &
\includegraphics[width=30mm]{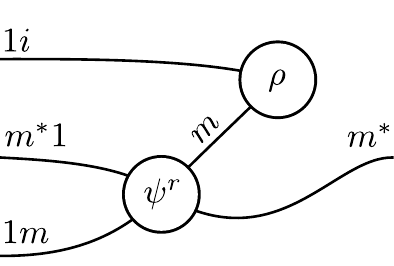},
\end{tabular}
\end{equation}

\begin{enumerate}
\item [c.] The structures of left and right $A$-module 1-morphisms on $m^*$ constructed above are compatible, i.e. they turn $m^*$ into an $A$-$A$-bimodule 1-morphism:
\end{enumerate}

\settoheight{\algebra}{\includegraphics[width=45mm]{Pictures/rigidalgebra/leftmodulemap1.pdf}}

\begin{equation}\label{eqn:rigidbimodule}
\begin{tabular}{@{}ccc@{}}
\includegraphics[width=45mm]{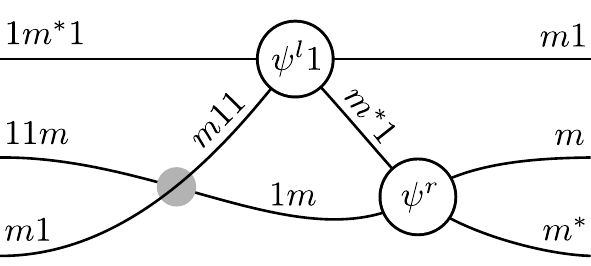}&
\raisebox{0.45\algebra}{$=$} &
\includegraphics[width=45mm]{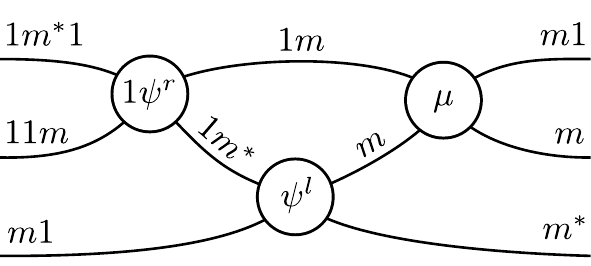},
\end{tabular}
\end{equation}

\begin{enumerate}
\item [d.] The 2-morphism $\epsilon^m$ is an $A$-$A$-bimodule 2-morphism:
\end{enumerate}

\settoheight{\algebra}{\includegraphics[width=22.5mm]{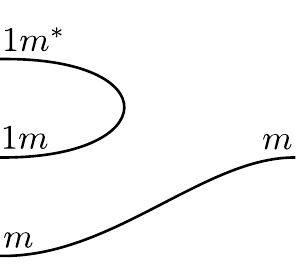}}

\begin{equation}\label{eqn:epsilonleft}
\begin{tabular}{@{}ccc@{}}
\includegraphics[width=22.5mm]{Pictures/rigidalgebra/epsilon1.pdf}&
\raisebox{0.45\algebra}{$=$} &
\includegraphics[width=45mm]{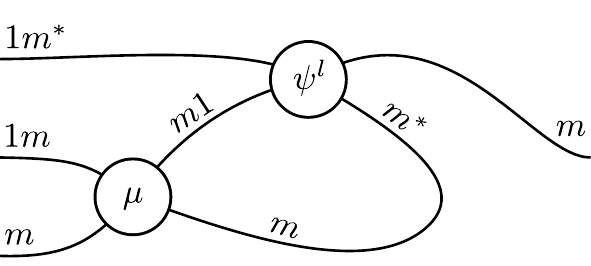},
\end{tabular}
\end{equation}

\settoheight{\algebra}{\includegraphics[width=22.5mm]{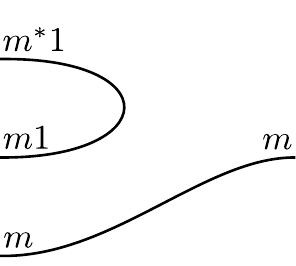}}

\begin{equation}\label{eqn:epsilonright}
\begin{tabular}{@{}ccc@{}}
\includegraphics[width=22.5mm]{Pictures/rigidalgebra/epsilon3.pdf}&
\raisebox{0.45\algebra}{$=$} &
\includegraphics[width=45mm]{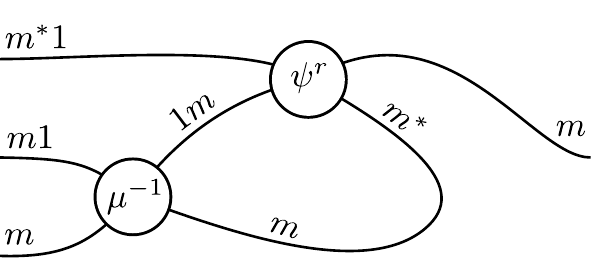},
\end{tabular}
\end{equation}

\begin{enumerate}
\item [e.] The 2-morphism $\eta^m$ is an $A$-$A$-bimodule 2-morphism:
\end{enumerate}

\settoheight{\algebra}{\includegraphics[width=45mm]{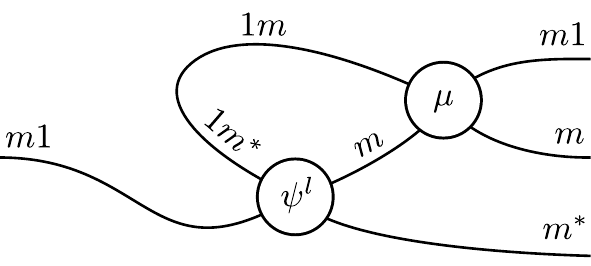}}

\begin{equation}\label{eqn:etaleft}
\begin{tabular}{@{}ccc@{}}
\includegraphics[width=45mm]{Pictures/rigidalgebra/eta1.pdf}&
\raisebox{0.45\algebra}{$=$} &
\includegraphics[width=22.5mm]{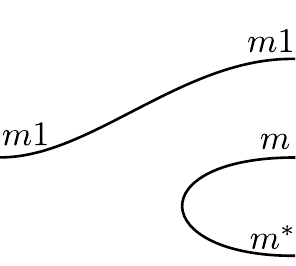},
\end{tabular}
\end{equation}

\settoheight{\algebra}{\includegraphics[width=45mm]{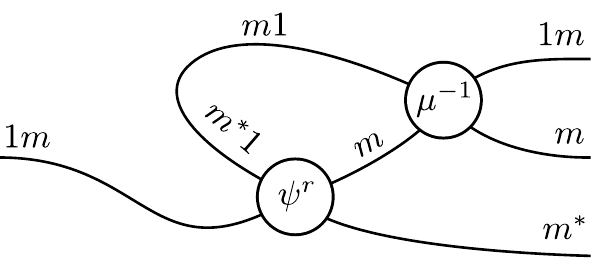}}

\begin{equation}\label{eqn:etaright}
\begin{tabular}{@{}ccc@{}}
\includegraphics[width=45mm]{Pictures/rigidalgebra/eta3.pdf}&
\raisebox{0.45\algebra}{$=$} &
\includegraphics[width=22.5mm]{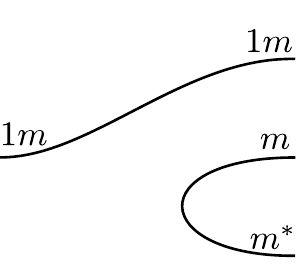}.
\end{tabular}
\end{equation}

\end{Definition}

As we now explain, many familiar objects are rigid algebra in a certain monoidal 2-category.

\begin{Example}
By proposition 1.3 of \cite{BJS}, a finite monoidal 1-category $\mathcal{A}$ is tensor, i.e.\ has right and left duals for objects, if and only if $\mathcal{A}$ is a rigid algebra in the sense of the above definition. In particular, every finite semisimple tensor 1-category is a rigid algebra in $\mathbf{FinCat}$.
\end{Example}

\begin{Example}\label{ex:gradedrigid}
Fixing $G$ a finite group, a slight elaboration on the argument of the above example shows that rigid algebras in $\mathbf{2Vect}_G$ correspond exactly to perfect $G$-graded tensor 1-categories.
\end{Example}

Let $\mathcal{B}$ be a finite semisimple braided tensor 1-category. We have seen in example \ref{ex:algebrasModB} above that algebras in $\mathbf{Mod}(\mathcal{B})$ are exactly given by separable $\mathcal{B}$-central monoidal 1-categories. We now characterize those algebras that are rigid.

\begin{Lemma}
A separable $\mathcal{B}$-central monoidal 1-category $\mathcal{C}$ is a rigid algebra in $\mathbf{Mod}(\mathcal{B})$ if $\mathcal{C}$ is tensor. If the underlying 1-category of $\mathcal{C}$ is perfect, this holds if and only if $\mathcal{C}$ is tensor.
\end{Lemma}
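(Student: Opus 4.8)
The plan is to reduce the statement to the already-established criterion for rigidity in terms of bimodules (Theorem \ref{thm:modulerightdajoints}), applied inside $\mathbf{Mod}(\mathcal{B})$. First I would recall that a monoidal $\mathcal{B}$-central category $\mathcal{C}$, viewed as an algebra in $\mathbf{Mod}(\mathcal{B})$, is rigid precisely when its multiplication $m:\mathcal{C}\boxtimes_{\mathcal{B}}\mathcal{C}\to\mathcal{C}$ admits a right adjoint as a $\mathcal{C}$-$\mathcal{C}$-bimodule functor \emph{internal to $\mathbf{Mod}(\mathcal{B})$}. The forgetful 2-functor $\mathbf{Mod}(\mathcal{B})\to\mathbf{FinCat_{ss}}$ (or to $\mathbf{FinCat}$) is faithful enough that a $\mathcal{B}$-balanced bimodule functor with a bimodule-functor adjoint downstairs lifts, provided the relevant relative tensor products are computed compatibly; the key input is that $\mathbf{Mod}(\mathcal{B})$ has all adjoints for $1$-morphisms (it is a compact semisimple, indeed fusion, $2$-category by Example \ref{ex:algebrasModB}), so Theorem \ref{thm:modulerightdajoints} applies with $\mathfrak{C}=\mathbf{Mod}(\mathcal{B})$.

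The forward implication (if $\mathcal{C}$ is tensor then it is rigid in $\mathbf{Mod}(\mathcal{B})$) I would argue as follows. Since $\mathcal{C}$ is a finite semisimple tensor $1$-category, it is a rigid algebra in $\mathbf{FinCat}$ by the Example following the Definition of rigid algebra (via proposition 1.3 of \cite{BJS}); equivalently, by Theorem \ref{thm:modulerightdajoints} applied to $\mathfrak{C}=\mathbf{FinCat}$, the bimodule $2$-category $\mathbf{Bimod}_{\mathbf{FinCat}}(\mathcal{C})$ has right adjoints. Now $\mathbf{Bimod}_{\mathbf{Mod}(\mathcal{B})}(\mathcal{C})$ is the $2$-category of $\mathcal{B}$-central $\mathcal{C}$-$\mathcal{C}$-bimodule categories, which sits as a (locally full, replete) sub-$2$-category of $\mathbf{Bimod}_{\mathbf{FinCat}}(\mathcal{C})$; the point is that the right adjoint of a $\mathcal{B}$-central bimodule functor, computed in $\mathbf{Bimod}_{\mathbf{FinCat}}(\mathcal{C})$, automatically carries a $\mathcal{B}$-central structure (mates of the central structure $2$-isomorphisms), so the adjunction lives in $\mathbf{Bimod}_{\mathbf{Mod}(\mathcal{B})}(\mathcal{C})$. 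Hence that $2$-category has right adjoints, and Theorem \ref{thm:modulerightdajoints} (now for $\mathfrak{C}=\mathbf{Mod}(\mathcal{B})$) gives that $\mathcal{C}$ is rigid in $\mathbf{Mod}(\mathcal{B})$. Alternatively, and perhaps more cleanly, I would exhibit $m^*$ directly: for a tensor category the right adjoint to $\otimes$ as a bimodule functor is $c\mapsto \bigoplus_i (c\otimes X_i^*)\boxtimes X_i$ type formula over a basis of simples, and one checks the central structure is inherited — but invoking Theorem \ref{thm:modulerightdajoints} avoids this computation.

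For the converse, assuming the underlying $1$-category of $\mathcal{C}$ is perfect: if $\mathcal{C}$ is rigid in $\mathbf{Mod}(\mathcal{B})$, then by Theorem \ref{thm:modulerightdajoints} the multiplication $m$ has a right adjoint as a $\mathcal{C}$-$\mathcal{C}$-bimodule functor in $\mathbf{Mod}(\mathcal{B})$; composing with the forgetful $2$-functor to $\mathbf{FinCat}$, $m$ has a right adjoint as a $\mathcal{C}$-$\mathcal{C}$-bimodule functor in $\mathbf{FinCat}$, i.e. $\mathcal{C}$ is a rigid algebra in $\mathbf{FinCat}$, hence tensor by proposition 1.3 of \cite{BJS}. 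The perfectness hypothesis is what guarantees $\mathcal{C}$, and the relative Deligne products built from it, stay inside $\mathbf{2Vect}$/$\mathbf{Mod}(\mathcal{B})$ so that the ambient $2$-category behaves well and the forgetful functor is defined on the nose; it is also needed so that ``$\mathcal{C}$ tensor'' is equivalent to ``$\mathcal{C}$ rigid algebra in $\mathbf{FinCat}$'' without extra finiteness caveats.

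The main obstacle I anticipate is the bookkeeping around the two relative tensor products: $\mathbf{Bimod}_{\mathbf{Mod}(\mathcal{B})}(\mathcal{C})$ uses $\boxtimes_{\mathcal{B}}$ as its underlying ``base'' product while the monoidal product of the algebra uses $\boxtimes_{\mathcal{C}}$, and one must check that an adjunction of bimodule functors is detected correctly under the forgetful $2$-functor $\mathbf{Mod}(\mathcal{B})\to\mathbf{FinCat}$ — concretely, that this $2$-functor preserves and reflects the property ``$m$ has a right adjoint as a bimodule $1$-morphism.'' Preservation is formal (any $2$-functor preserves adjunctions, and it is compatible with the bimodule structures); reflection is the delicate direction and relies on the central structure being recoverable from mates plus the faithfulness of the forgetful functor on $2$-morphisms. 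I would isolate this as a short lemma: a $1$-morphism in $\mathbf{Mod}(\mathcal{B})$ has a right adjoint iff its image in $\mathbf{FinCat}$ does, and the adjoint data lifts uniquely — which follows because $\mathbf{Mod}(\mathcal{B})$ has right adjoints for all $1$-morphisms and the forgetful functor is locally faithful and conservative.
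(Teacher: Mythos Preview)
Your proposal has a genuine gap, primarily in the converse direction, and the ``main obstacle'' you anticipate is not quite the right one.

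The forgetful $2$-functor $U:\mathbf{Mod}(\mathcal{B})\to\mathbf{FinCat}$ is only \emph{lax} monoidal: the comparison map is the quotient $G:\mathcal{C}\boxtimes\mathcal{D}\to\mathcal{C}\boxtimes_{\mathcal{B}}\mathcal{D}$. Under $U$, the algebra $\mathcal{C}$ in $\mathbf{Mod}(\mathcal{B})$ is sent to the algebra $\mathcal{C}$ in $\mathbf{FinCat}$ whose multiplication is the composite $\mathcal{C}\boxtimes\mathcal{C}\xrightarrow{G}\mathcal{C}\boxtimes_{\mathcal{B}}\mathcal{C}\xrightarrow{m}\mathcal{C}$. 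Now, $U$ certainly preserves the adjunction $m\dashv m^*$ you obtain from rigidity in $\mathbf{Mod}(\mathcal{B})$, but this gives a bimodule right adjoint to $m:\mathcal{C}\boxtimes_{\mathcal{B}}\mathcal{C}\to\mathcal{C}$, not to $m\circ G$. To conclude that $\mathcal{C}$ is rigid in $\mathbf{FinCat}$ you must show separately that $G$ has a right adjoint as a $\mathcal{C}$-$\mathcal{C}$-bimodule functor. Your proposed lemma---that a $1$-morphism in $\mathbf{Mod}(\mathcal{B})$ has a right adjoint iff its image in $\mathbf{FinCat}$ does---does not address this, because $G$ is not the image of any $1$-morphism under $U$; it \emph{is} the lax monoidal structure of $U$.

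This is exactly where the paper spends its effort and where the perfectness hypothesis enters: perfectness of $\mathcal{C}$ makes $\mathcal{C}\boxtimes\mathcal{C}$ finite semisimple, so $G$ has a plain right adjoint $G^*$; then, using that $End_{\mathcal{B}}(\mathcal{C})$ is a finite semisimple \emph{tensor} $1$-category (by separability of $\mathcal{C}$ over $\mathcal{B}$), \cite{DSPS14} equips $G^*$ with an $End_{\mathcal{B}}(\mathcal{C})$-bimodule structure, which one restricts along $\mathcal{C}\to End_{\mathcal{B}}(\mathcal{C})$. Your explanation of why perfectness is needed (``so the forgetful functor is defined on the nose'') misses this point.

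A smaller issue: in the forward direction you invoke Theorem~\ref{thm:modulerightdajoints} with $\mathfrak{C}=\mathbf{FinCat}$, but that theorem assumes $\mathfrak{C}$ has left and right adjoints for all $1$-morphisms, which $\mathbf{FinCat}$ does not (right exact functors need not have right-exact adjoints). The paper avoids this entirely: since $\mathcal{C}\boxtimes_{\mathcal{B}}\mathcal{C}$ and $\mathcal{C}$ are finite semisimple, $m$ has a plain right adjoint, and when $\mathcal{C}$ is tensor, \cite{DSPS14} directly upgrades it to a $\mathcal{C}$-$\mathcal{C}$-bimodule right adjoint. No passage through $\mathbf{Bimod}_{\mathbf{FinCat}}(\mathcal{C})$ is needed.
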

\begin{proof}
As $\mathcal{C}$ is separable as a $\mathcal{B}$-module 1-category, $\mathcal{C}\boxtimes_{\mathcal{B}}\mathcal{C}$ is finite semisimple. In particular, the canonical functor $m:\mathcal{C}\boxtimes_{\mathcal{B}}\mathcal{C}\rightarrow \mathcal{C}$ has a right adjoint. Thus, if $\mathcal{C}$ is rigid, it follows from \cite{DSPS14} that $m$ has a right adjoint as a $\mathcal{C}$-$\mathcal{C}$-bimodule functor. This proves the first part of the statement.

Let us now assume that $\mathcal{C}$ is perfect. Observe that it is enough to prove that the canonical $\mathcal{C}$-$\mathcal{C}$-bimodule functor $G:\mathcal{C}\boxtimes\mathcal{C}\rightarrow \mathcal{C}\boxtimes_{\mathcal{B}}\mathcal{C}$ has a $\mathcal{C}$-$\mathcal{C}$-bimodule right adjoint. As both the target and the domain of $G$ are finite semisimple, we find that $G$ is exact, so that it has a right adjoint $G^*$ as a linear functor. Further, by separability of $\mathcal{C}$ as a $\mathcal{B}$-module 1-category, $End_{\mathcal{B}}(\mathcal{C})$ is a finite semisimple tensor 1-category, thence it follows from \cite{DSPS14} that $G^*$ has an $End_{\mathcal{B}}(\mathcal{C})$-$End_{\mathcal{B}}(\mathcal{C})$-bimodule structure. Restricting this bimodule structure on $G^*$ along the monoidal functor $\mathcal{C}\rightarrow End_{\mathcal{B}}(\mathcal{C})$ proves the claim.
\end{proof}

\begin{Example}
If $\mathds{k}$ is an algebraically closed field of characteristic zero and $G$ is a finite group, then rigid algebras in $\mathscr{Z}(\mathbf{2Vect}_G)$ are exactly $G$-crossed multifusion 1-categories.
\end{Example}

\begin{Remark}
It is well-known that algebras in $\mathbf{Cat}$ the 2-category of 1-categories equipped with the Cartesian product as monoidal structure are precisely monoidal 1-categories. One can check directly that rigid algebras in $\mathbf{Cat}$ are all equivalent to the monoidal 1-category with one object and one morphism. This also follows from the general observation that the object underlying a rigid algebra is always self-dual. Thus, in this context, the classical notion of rigidity is more appropriate.
\end{Remark}

We also give in details the definition of a separable algebra 

\begin{Definition}
A separable algebra in $\mathfrak{C}$ is a rigid algebra $A$ in $\mathfrak{C}$ equipped with a 2-morphism $\gamma^m:Id_A\Rightarrow m\circ m^*$ such that:

\begin{enumerate}
\item [a.] The 2-morphism $\gamma^m$ is a section of $\epsilon^m$, i.e. $\epsilon^m\cdot\gamma^m = Id_{Id_A}$,
\item [b.] The 2-morphism $\gamma^m$ is an $A$-$A$-bimodule 2-morphism:
\end{enumerate}

\settoheight{\algebra}{\includegraphics[width=45mm]{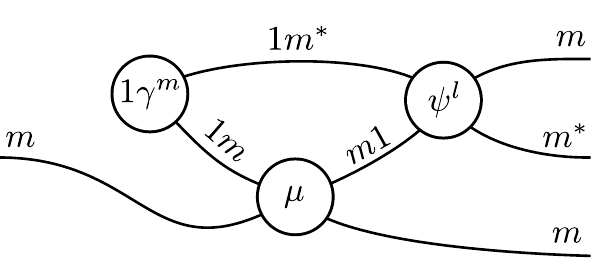}}

\begin{equation}\label{eqn:gammaleft}
\begin{tabular}{@{}ccc@{}}
\includegraphics[width=45mm]{Pictures/rigidalgebra/gamma1.pdf}&
\raisebox{0.45\algebra}{$=$} &
\includegraphics[width=22.5mm]{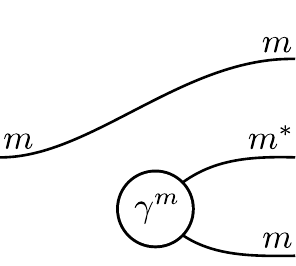},
\end{tabular}
\end{equation}

\settoheight{\algebra}{\includegraphics[width=45mm]{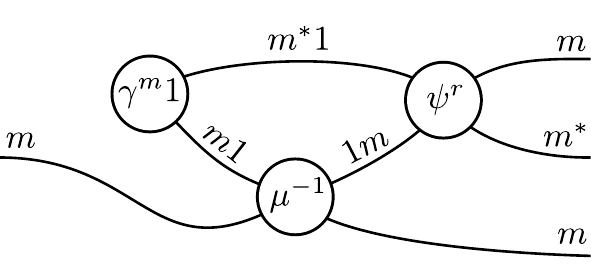}}

\begin{equation}\label{eqn:gammaright}
\begin{tabular}{@{}ccc@{}}
\includegraphics[width=45mm]{Pictures/rigidalgebra/gamma3.pdf}&
\raisebox{0.45\algebra}{$=$} &
\includegraphics[width=22.5mm]{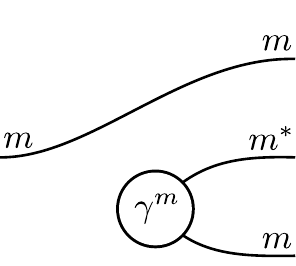}.
\end{tabular}
\end{equation}
\end{Definition}

We will explore this notion further in the next section, which will allow us to give many examples in section \ref{sub:separableexamples}.

\begin{Remark}
Any separable algebra is a 3-condensation monad in the sense of \cite{GJF}. In fact, if $\mathfrak{C}$ is a compact semisimple rigid monoidal 2-category, then it follows from the proof of theorem 3.17 of \cite{GJF} that every 3-condensation monad is equivalent to a separable algebra. We decide to work with separable algebras because they are more familiar algebraic objects.
\end{Remark}

\subsection{Adjoints in 2-Categories of (Bi)Modules}\label{sub:adjoints1morphisms}

Let us fix $A$ a rigid algebra in the strict cubical monoidal 2-category $\mathfrak{C}$, and let $M$ be a right $A$-module. Note that the 1-morphism $n^M$ has a canonical structure of a right $A$-module 1-morphism supplied by $\nu^M$. Our first goal is to show that $n^M$ has a right adjoint as a right $A$-module 1-morphism. 
We begin by defining $$p^M:= (n\Box A)\circ (M\Box m^*)\circ (M\Box i):M\rightarrow M\Box A,$$ a 1-morphism in $\mathfrak{C}$, together with a 2-morphism in $\mathfrak{C}$ given by

\settoheight{\algebra}{\includegraphics[width=75mm]{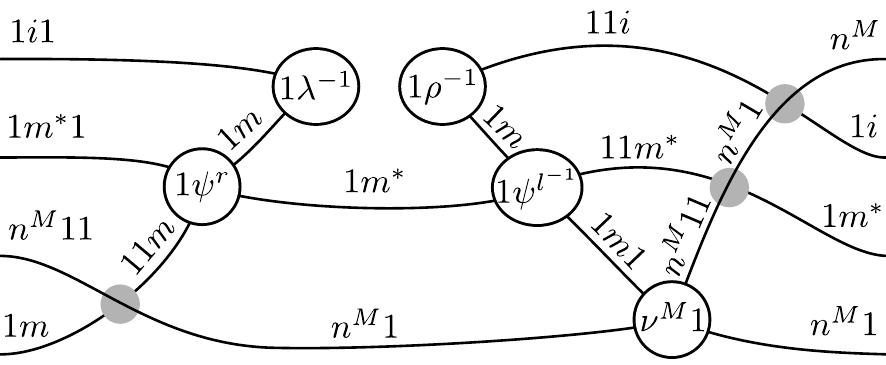}}

\begin{center}
\begin{tabular}{@{}cc@{}}

\raisebox{0.45\algebra}{$\psi^p:=$} &
\includegraphics[width=75mm]{Pictures/technicallemmas/psip.pdf}.
\end{tabular}
\end{center}

\begin{Lemma}\label{lem:modulep}
The 2-morphism $\psi^p$ endows the 1-morphism $p^M$ with the structure of a right $A$-module 1-morphism.
\end{Lemma}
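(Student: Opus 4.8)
The plan is to verify directly that the pair $(p^M,\psi^p)$ satisfies the two coherence axioms of Definition~\ref{def:modulemap}, namely the associativity relation \eqref{eqn:modulemapassociativity} and the unitality relation \eqref{eqn:modulemapunitality}, where the target module is $M\Box A$ equipped with its evident right $A$-module structure $n^{M\Box A} = M\Box m$ (the action on the last tensor factor, with structure 2-isomorphisms induced by $\mu$ and $\rho$ on $A$). The 1-morphism $p^M = (n^M\Box A)\circ(M\Box m^*)\circ(M\Box i)$ is a composite of three pieces, each of which carries an evident 2-cell turning it into a right $A$-module 1-morphism: $M\Box i$ is a module map via the interchanger, $M\Box m^*$ inherits the right $A$-module structure $\psi^r$ of $m^*$ (whiskered by $M$ on the left, which is harmless since $\mathfrak{C}$ is strict cubical), and $n^M\Box A$ is a module map via $\nu^M$ whiskered appropriately. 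The definition of $\psi^p$ displayed above is precisely the pasting of these three 2-cells, so the claim amounts to the general fact that the composite of right $A$-module 1-morphisms is again a right $A$-module 1-morphism — but since $p^M$ is built from whiskerings $X\Box(-)$ rather than honest module maps living entirely in $\mathbf{Mod}_{\mathfrak{C}}(A)$, one cannot invoke that fact as a black box and must instead check the axioms by hand using the string-diagram calculus.

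Concretely, I would proceed as follows. First, for the unitality axiom \eqref{eqn:modulemapunitality}: substitute the definition of $\psi^p$, then feed in the unit $i$ on the appropriate strand. The contribution of $M\Box i$ against the unit is controlled by the coherence relation \eqref{eqn:algebraunitality} for $A$ (or rather \eqref{eqn:coherenceright} for the $A$-module $A$), the contribution of $M\Box m^*$ is controlled by the right-unit axiom \eqref{eqn:rigidrightunit} of the rigid structure, and the contribution of $n^M\Box A$ is controlled by the right-module unitality \eqref{eqn:moduleunitality} of $M$ together with $\rho^M$. Combining these with one or two applications of the triangle (snake) identities for the adjunction $(m,m^*,\eta^m,\epsilon^m)$ should collapse the left-hand side to $\rho^{M\Box A}$-type data. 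Second, for the associativity axiom \eqref{eqn:modulemapassociativity}: again unfold $\psi^p$ on both sides; the $n^M\Box A$ piece interacts with $\nu^M$ via the right-module associativity \eqref{eqn:moduleassociativity} for $M$, the $M\Box m^*$ piece interacts via $\psi^r$ and its associativity axiom \eqref{eqn:rigidrightassociativity}, and the crucial bookkeeping is in moving the strand carrying the $m^*$ cup past the various coupons using the interchanger relations and equations \eqref{eqn:coherenceleft}, \eqref{eqn:coherencemiddle}, \eqref{eqn:coherenceright}. The strict cubical hypothesis ensures that most interchangers that appear are identities, which keeps the diagram chase tractable.

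I expect the main obstacle to be the associativity axiom: it involves three simultaneous instances of $A$-action (two from the module $M$, one from the algebra $A$ acting through $m^*$), so the string diagram has many strands and the required rearrangement mixes the rigid structure axioms \eqref{eqn:rigidrightassociativity}--\eqref{eqn:rigidrightunit} with the $A$-module axioms of $M$ and with repeated snake moves on $\eta^m/\epsilon^m$. The danger is not conceptual but combinatorial — keeping track of which interchangers are trivial and which coupons can legitimately slide past one another. I would therefore organize the proof as a sequence of labeled intermediate diagrams (in the style of the appendix figures referenced for Lemma~\ref{lem:coherenceright}), isolating at each step exactly one axiom or one snake identity, so that the bookkeeping is transparent; I would expect to defer the longest such chase to the appendix, as the paper does for its other coherence lemmas.
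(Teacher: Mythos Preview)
Your overall plan—verify \eqref{eqn:modulemapassociativity} and \eqref{eqn:modulemapunitality} by a direct string-diagram chase, deferring the figures to an appendix—is exactly what the paper does. The gap is in your description of $\psi^p$ itself. You assert that $\psi^p$ is the pasting obtained by regarding each of $M\Box i$, $M\Box m^*$, $n^M\Box A$ as a right $A$-module map (via the interchanger, $M\Box\psi^r$, and $\nu^M$ respectively), but this decomposition does not exist. For $M\Box i$ to be a module map $(M,n^M)\to(M\Box A,M\Box m)$ one would need a 2-isomorphism $(M\Box m)\circ(M\Box i\Box A)\cong(M\Box i)\circ n^M$; yet the left side is 2-isomorphic to $Id_{M\Box A}$ via $M\Box\lambda$, while the right side is not an equivalence in general. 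So $p^M$ is not a composite of module maps in the way you describe, and the paper's $\psi^p$ is genuinely more intricate: it involves \emph{both} $\psi^l$ and $\psi^r$, together with $\nu^M$ and unit coherences.

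Accordingly, the list of ingredients you anticipate is off. The paper's verification of axiom a uses, beyond \eqref{eqn:rigidrightassociativity} and \eqref{eqn:moduleassociativity}, the left-module associativity \eqref{eqn:rigidleftassociativity} and, crucially, the bimodule compatibility \eqref{eqn:rigidbimodule} for $m^*$, as well as \eqref{eqn:algebraunitality}; axiom b requires \eqref{eqn:rigidleftunit} in addition to \eqref{eqn:rigidrightunit} and two applications of \eqref{eqn:coherencemiddle}. No snake identities for $\eta^m$, $\epsilon^m$ appear in this lemma—those enter only in Lemma~\ref{lem:rigidactionrightadjoint}. So while the strategy is sound, a chase based on your picture of $\psi^p$ would not close; you need the full $A$-$A$-bimodule structure on $m^*$, not just its right-module half.
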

\begin{proof}
We need to check that $\psi^p$ satisfies the two coherence axioms of definition \ref{def:modulemap}. In order to do this, we will use the diagrams depicted in appendix \ref{sub:modulepdiagrams}. We begin by checking axiom a: Figure \ref{fig:cohpsip1} depicts the right hand-side of axiom a for $\psi^p$. Using naturality to move the coupons labelled $1\mu$ and $\nu^M11$, we find that this 2-isomorphism is equal to that depicted in figure \ref{fig:cohpsip2}. Now, we use equation (\ref{eqn:rigidrightassociativity}) on the blue coupons, and equation (\ref{eqn:moduleassociativity}) on the green ones to get that this is equal to the 2-isomorphism given in figure \ref{fig:cohpsip3}. In order to get to figure \ref{fig:cohpsip4}, we apply equation (\ref{eqn:rigidleftassociativity}) to the blue coupons, and use naturality to move the coupon labelled $\nu^M11$. Using equation (\ref{eqn:rigidbimodule}) on the blue coupons, we find that this 2-isomorphism is equal to that depicted in figure \ref{fig:cohpsip5}. Equation (\ref{eqn:algebraunitality}) shows that this string diagram represents the same 2-isomorphism as that given in figure \ref{fig:cohpsip6}. Using equation (\ref{eqn:coherenceleft}) on the blue coupons, and equation (\ref{eqn:coherenceright}) for $A$ on the green coupons, we get that this is equal to figure \ref{fig:cohpsip7}. But, figure \ref{fig:cohpsip7} manifestly depicts the left hand-side of axiom a of definition \ref{def:modulemap}.

We proceed to check  that axiom b also holds. Firstly, note that figure \ref{fig:cohupsip1} depicts the right hand-side of axiom b of definition \ref{def:modulemap}. Naturality implies that the 2-isomorphism described by this string diagram is the same as that described by figure \ref{fig:cohupsip2}. Using equation (\ref{eqn:moduleunitality}) on the blue coupons, we find that this also corresponds to the figure \ref{fig:cohupsip3}. Equation (\ref{eqn:rigidleftunit}) above applied to the blue coupons shows further that it is equal to the 2-isomorphism depicted by figure \ref{fig:cohupsip4}. Using equation \ref{eqn:coherencemiddle} twice, once on the blue coupon, and once on the green coupons, we find that figure \ref{fig:cohupsip5} also describes the same 2-isomorphism. To derive the equality with figure \ref{fig:cohupsip6}, we use equation (\ref{eqn:rigidrightunit}) on the blue coupons. Finally, naturality allows us to assert that the 2-isomorphism described by figure \ref{fig:cohupsip7} agrees with the previous ones. As this last string diagram also corresponds to the left hand-side of axiom b of definition \ref{def:modulemap}, we are done.
\end{proof}

We now define two 2-morphisms in $\mathfrak{C}$, which will be the unit and counit of the desired adjunction:

\settoheight{\algebra}{\includegraphics[width=60mm]{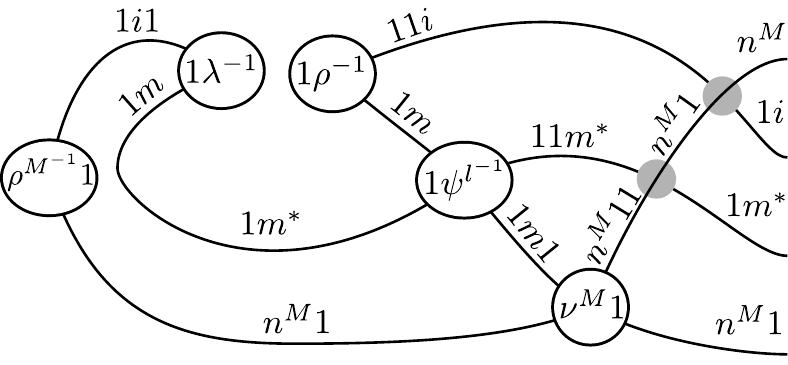}}

\begin{center}
\begin{tabular}{@{}cc@{}}

\raisebox{0.45\algebra}{$\eta^M:=$} &
\includegraphics[width=60mm]{Pictures/technicallemmas/etaM.pdf}.
\end{tabular}
\end{center}

\settoheight{\algebra}{\includegraphics[width=45mm]{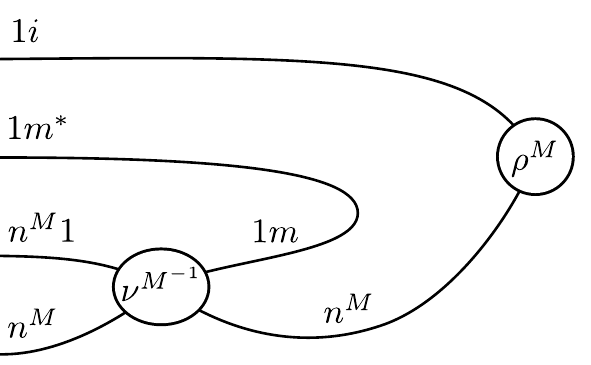}}

\begin{center}
\begin{tabular}{@{}cc@{}}

\raisebox{0.45\algebra}{$\epsilon^M:=$} &
\includegraphics[width=45mm]{Pictures/technicallemmas/epsilonM.pdf},
\end{tabular}
\end{center}

\begin{Lemma}\label{lem:moduleepsiloneta}
The 2-morphisms $\eta^M$ and $\epsilon^M$ in $\mathfrak{C}$ depicted above are right $A$-module 2-morphisms.
\end{Lemma}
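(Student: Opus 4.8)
The plan is to verify directly that the string diagrams defining $\eta^M$ and $\epsilon^M$ satisfy the module 2-morphism condition of Definition \ref{def:moduleintertwiner}, namely compatibility with the module structure 2-isomorphisms on source and target. Recall that $\eta^M$ is a 2-morphism $Id_M \Rightarrow p^M \circ n^M$ and $\epsilon^M$ is a 2-morphism $n^M \circ p^M \Rightarrow Id_M$; both the source and target of each are composites of 1-morphisms that have already been equipped with right $A$-module structures — $Id_M$ with its canonical structure, $n^M$ with $\nu^M$, and $p^M$ with $\psi^p$ constructed in Lemma \ref{lem:modulep}. So what must be checked is purely the pasting equation asserting that each of $\eta^M$ and $\epsilon^M$ intertwines these module structures, and this is a diagrammatic computation exactly of the type carried out in the proof of Lemma \ref{lem:modulep}.

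First I would treat $\epsilon^M$. I would draw the two sides of the module 2-morphism axiom: on one side, act by $A$ first (using $\nu^M$ composed with the module structure on $p^M$, i.e.\ $\psi^p$) and then apply $\epsilon^M$; on the other, apply $\epsilon^M$ first and then act by $A$ (using the canonical module structure on $Id_M$, which is trivial). Expanding $\psi^p$ and $\epsilon^M$ into their definitions in terms of $m^*$, $i$, $\eta^m$, $\epsilon^m$, $\nu^M$, $\rho^M$, and the coherence 2-isomorphisms, I would then use: the snake equations for the adjunction $(m, m^*, \eta^m, \epsilon^m)$; the fact that $\epsilon^m$ and $\eta^m$ are $A$-$A$-bimodule 2-morphisms (equations (\ref{eqn:epsilonleft})–(\ref{eqn:etaright})); the module associativity/unitality axioms (\ref{eqn:moduleassociativity}), (\ref{eqn:moduleunitality}); and the coherence relations (\ref{eqn:coherenceright}), (\ref{eqn:coherencemiddle}) for $M$ and $A$. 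The strategy mirrors Lemma \ref{lem:modulep}: push coupons past one another by naturality/interchange, collapse a pair of $m^*$-strings using a snake equation, invoke bimodularity of $\epsilon^m$ to commute it past the $A$-action, and finally clean up with the coherence lemmas until the two sides coincide.

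Then I would do the analogous computation for $\eta^M$, which is dual in flavour: here the snake equation is used in the other direction and the relevant bimodularity statement is that $\eta^m$ is an $A$-$A$-bimodule 2-morphism (equations (\ref{eqn:etaleft})–(\ref{eqn:etaright})), together with (\ref{eqn:rigidrightassociativity}), (\ref{eqn:rigidleftassociativity}) governing how $\psi^r$, $\psi^l$ interact with $m$. I expect both verifications to reduce, after the standard sequence of naturality moves and one application of a triangle identity, to an equality of two diagrams that differ only by coherence isomorphisms, closing via (\ref{eqn:coherenceright}) and (\ref{eqn:coherencemiddle}).

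The main obstacle is bookkeeping rather than conceptual: the diagrams for $\eta^M$ and $\epsilon^M$ already involve $m^*$, two structure 2-isomorphisms $\psi^r$, $\psi^l$, the adjunction unit/counit, and several coherence cells, so pasting them against the $A$-action produces large composites in which one must identify precisely where to apply each axiom and in what order. I would organize the proof exactly as Lemma \ref{lem:modulep} is organized — by relegating the explicit string-diagram chains to the appendix and narrating the sequence of moves (which coupon is moved by naturality, which blue/green coupons are rewritten by which numbered equation) — so that the reader can follow the reduction step by step. No genuinely new input beyond the rigid-algebra axioms and the established coherence lemmas should be required.
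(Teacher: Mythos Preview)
Your approach is essentially the same as the paper's: a direct diagrammatic verification of the module 2-morphism condition using the rigid-algebra axioms and the coherence lemmas. Two small corrections to your expectations, though. For $\epsilon^M$ the paper's proof is shorter than you suggest --- no snake equation is needed; the sequence is just (\ref{eqn:moduleassociativity}), (\ref{eqn:epsilonleft}), (\ref{eqn:coherenceright}), (\ref{eqn:epsilonright}), (\ref{eqn:moduleunitality}). For $\eta^M$ the computation is not ``dual in flavour'' but genuinely more involved (the paper says so explicitly), and the key steps are the bimodule compatibility (\ref{eqn:rigidbimodule}) together with (\ref{eqn:coherenceleft}), not a triangle identity; no snake equation appears there either.
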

\begin{proof}
The proof that $\epsilon^M$ is right $A$-module 2-morphisms follows by successively applying equations (\ref{eqn:moduleassociativity}), (\ref{eqn:epsilonleft}), (\ref{eqn:coherenceright}), (\ref{eqn:epsilonright}), and finally (\ref{eqn:moduleunitality}). We now prove that $\eta^M$ is right $A$-module 2-morphisms. As this is more subtle, we explain the proof in details using the figures given in appendix \ref{sub:moduleepsilonetadiagrams}. Figure \ref{fig:etamodule1} depicts the left hand-side of the equation of definition \ref{def:moduleintertwiner} for $\epsilon^M$. Using naturality, we move the indicated coupons along the blue arrows, which gets us to figure \ref{fig:etamodule2}. Then, applying equation (\ref{eqn:moduleassociativity}) to the blue coupons, we arrive at figure \ref{fig:etamodule3}. Using equation (\ref{eqn:rigidleftassociativity}) on the blue coupons, leads us to contemplate figure \ref{fig:etamodule4}. In order to get to figure \ref{fig:etamodule5}, we use equation (\ref{eqn:rigidbimodule}) on the blue coupons, as well as equation (\ref{eqn:coherenceright}) for the right $A$-module $A$ on the green ones. Applying equation (\ref{eqn:coherencemiddle}) to the blue coupons produces figure \ref{fig:etamodule6}. Now, we can use equation (\ref{eqn:coherenceleft}) in the blue region, to obtain figure \ref{fig:etamodule7}. Then, the two blue coupons are manifestly each other's inverse, so we can fuse them together to obtain figure \ref{fig:etamodule8}. Using equation (\ref{eqn:etaright}) on the blue coupons gets us to figure \ref{fig:etamodule9}. Finally, we can use naturality to move the left most coupon to the right, which produces the right hand-side of the equation of definition \ref{def:moduleintertwiner} for $\epsilon^M$. The proof is therefore complete.
\end{proof}

\begin{Lemma}\label{lem:rigidactionrightadjoint}
The right $A$-module 2-morphisms $\eta^M$ and $\epsilon^M$ witness that $p^M$ is a right adjoint of $n^M$ in the 2-category $\mathbf{Mod}_{\mathfrak{C}}(A)$ of right $A$-modules.
\end{Lemma}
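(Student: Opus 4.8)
After Lemmas~\ref{lem:modulep} and~\ref{lem:moduleepsiloneta}, what is left is to establish the two triangle identities for the prospective adjunction $n^M\dashv p^M$. I would begin with the reduction that these identities may be checked in $\mathfrak{C}$: the forgetful 2-functor $\mathbf{Mod}_{\mathfrak{C}}(A)\to\mathfrak{C}$ is faithful on 2-morphisms, and it takes the whiskerings and vertical composites computed in $\mathbf{Mod}_{\mathfrak{C}}(A)$ to those computed in $\mathfrak{C}$; since $p^M$ is a right $A$-module 1-morphism and $\eta^M,\epsilon^M$ are right $A$-module 2-morphisms by the two preceding lemmas, it therefore suffices to verify
\[
(\epsilon^M\circ n^M)\cdot(n^M\circ\eta^M)=Id_{n^M},\qquad (p^M\circ\epsilon^M)\cdot(\eta^M\circ p^M)=Id_{p^M}
\]
as equalities of 2-morphisms in $\mathfrak{C}$, where $\circ$ denotes whiskering and $\cdot$ vertical composition.

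For the first identity, I would unfold $p^M=(n^M\Box A)\circ(M\Box m^*)\circ(M\Box i)$ and the string-diagram definitions of $\eta^M$ and $\epsilon^M$, and then simplify. The intended chain of moves is: use the module-associativity 2-isomorphism $\nu^M$ (equation~\eqref{eqn:moduleassociativity}) to bring the coupon labelled $m$ next to the coupon labelled $m^*$; apply the counit triangle identity of the ambient adjunction $m\dashv m^*$ in $\mathfrak{C}$ to cancel the resulting cap--cup pair; and finally use the module-unitality 2-isomorphism $\rho^M$ (equation~\eqref{eqn:moduleunitality}), the algebra unitality~\eqref{eqn:algebraunitality}, and the coherence relations~\eqref{eqn:coherenceleft} and~\eqref{eqn:coherencemiddle} to absorb the leftover coupon labelled $i$, arriving at $Id_{n^M}$. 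The second identity follows the same pattern, except that the factor $n^M\Box A$ of $p^M$ forces the $A$-strand to pass $m^*$ through the interchangers, so one additionally uses the coherence Lemma~\ref{lem:coherenceright} for the right $A$-module $A$, the bimodule compatibility~\eqref{eqn:rigidbimodule} of $m^*$, and the other triangle identity of $m\dashv m^*$.

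The main obstacle is diagrammatic bookkeeping rather than conceptual difficulty: one must keep track of the non-trivial interchangers $\phi^{\Box}$ produced by composing the three $\Box$-factors of $p^M$, and must locate the precise positions at which to insert cancelling pairs of 2-morphisms so that a coherence relation becomes applicable---the same device used in the proof of Lemma~\ref{lem:coherenceright}. I would therefore carry out the verification through an explicit sequence of labelled figures in the appendix; beyond that, the argument is a mechanical application of Definition~\ref{def:module} and the rigid-algebra axioms together with the coherence results of Section~\ref{sec:prelim}.
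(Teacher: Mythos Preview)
Your overall strategy---reduce to checking the two triangle identities in $\mathfrak{C}$ and then simplify the string diagrams using the rigid-algebra axioms and the coherence lemmas---is exactly the paper's approach. The reduction to $\mathfrak{C}$ via faithfulness of the forgetful 2-functor is correct and harmless.

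However, the specific relations you plan to invoke do not quite match what is actually needed, and one genuinely essential ingredient is missing from your outline. For the first triangle identity, after applying~\eqref{eqn:moduleassociativity} the cap $\epsilon^m$ is still separated from the cup $\eta^m$ by an $n^M$-strand; to bring them together one must use that $\epsilon^m$ is a \emph{left} $A$-module 2-morphism, i.e.\ equation~\eqref{eqn:epsilonleft}, not merely the module axioms for $M$. Only then can the ambient triangle identity for $m\dashv m^*$ be applied. The final clean-up uses~\eqref{eqn:coherenceright} rather than~\eqref{eqn:coherenceleft} or~\eqref{eqn:coherencemiddle}. For the second triangle identity the situation is analogous but more involved: the paper uses~\eqref{eqn:etaleft} (that $\eta^m$ is a left $A$-module 2-morphism) together with the left $A$-module--map axioms for $m^*$, namely~\eqref{eqn:rigidleftassociativity} and~\eqref{eqn:rigidleftunit}; the bimodule compatibility~\eqref{eqn:rigidbimodule} you cite is not used here. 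In short, the rigid-algebra axioms that do the real work are the ones asserting that $\epsilon^m$, $\eta^m$, and $m^*$ respect the \emph{left} $A$-action, and your sketch does not mention any of these.
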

\begin{proof}
We have to check that $\epsilon^M$ and $\eta^M$ satisfy the triangle identities. In order to do so, we use the diagrams depicted in appendix \ref{sub:rigidactionrightadjointdiagrams}. One of the triangle identities corresponds to showing that the 2-isomorphism depicted in figure \ref{fig:triangle1} is the identity on $n^M$. Using equation (\ref{eqn:moduleassociativity}) on the blue coupons, we find that the 2-isomorphism depicted by this string diagram is equal to that given in figure \ref{fig:triangle2}. The latter is equal to the one depicted in figure \ref{fig:triangle3}, as can be seen by first using naturality to move the cap to the left, and then equation (\ref{eqn:epsilonleft}). Using the triangle identity to straighten the blue line, we get to the string diagram depicted in figure \ref{fig:triangle4}. Finally, the blue coupons cancel each other out by equation (\ref{eqn:modulemapunitality}), and similarly the green coupons cancel each other out by equation (\ref{eqn:coherenceright}). Thus we are left with a single string labelled $n^M$, which establishes one of the triangle identity.

We now show that the second triangle identity holds, that is we show that the 2-isomorphism depicted in figure \ref{fig:triangle10} is the identity on $p^M$. Using naturality to move some coupons, we find that this first string diagram represents the same 2-morphism as that given in figure \ref{fig:triangle11}. Now, equation (\ref{eqn:coherenceright}) applied to the blue coupon, and equation (\ref{eqn:moduleassociativity}) applied to the green ones show that this is also equal to the 2-morphism described by figure \ref{fig:triangle12}. Cancelling the two blue coupons out, we arrive at figure \ref{fig:triangle13}. We make use equation (\ref{eqn:etaleft}) to arrive at figure \ref{fig:triangle14}. Using equation (\ref{eqn:algebraunitality}) on the blue coupons, and equation (\ref{eqn:moduleunitality}) on the green ones gets us to figure \ref{fig:triangle15}. Equation (\ref{eqn:rigidleftassociativity}) proves that the 2-morphism described by this last figure is equal to that represented by figure \ref{fig:triangle16}. Now, applying equation (\ref{eqn:coherenceright}) to the blue coupons, and equation (\ref{eqn:rigidleftunit}) to the green ones imply that this is equal to figure \ref{fig:triangle17}. Finally, we can naturality followed by the triangle equation for $m$ to straighten the blue line, and equation (\ref{eqn:coherencemiddle}) on the green coupons, resulting in a string diagrams representing the identity 2-morphism on $p$. This proves the second triangle identity holds.
\end{proof}

Putting the above lemmas together, we get the following result, which generalizes  proposition 3.11 of \cite{BZBJ} (see also proposition D.2.2 of \cite{G} for a similar result in a different context).

\begin{Proposition}\label{prop:actionrightadjoint}
Let $A$ be a rigid algebra in $\mathfrak{C}$, and $M$ a right $A$-module with right action $n^M:M\Box A\rightarrow M$. Then, the right $A$-module 1-morphism $p^M:M\rightarrow M\Box A$ is a right adjoint to $n^M$ in $\mathbf{Mod}_{\mathfrak{C}}(A)$.
\end{Proposition}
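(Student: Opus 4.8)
The statement to be proven, Proposition \ref{prop:actionrightadjoint}, is in fact an immediate consequence of the three lemmas that precede it, so the role of this proof is simply to assemble them. First I would invoke Lemma \ref{lem:modulep}, which supplies the 2-morphism $\psi^p$ making $p^M$ into a genuine right $A$-module 1-morphism; this is what is needed for $p^M$ to be an object (and morphism) of $\mathbf{Mod}_{\mathfrak{C}}(A)$ rather than merely of $\mathfrak{C}$. Next I would invoke Lemma \ref{lem:moduleepsiloneta}, which tells us that the two 2-morphisms $\eta^M:M\Rightarrow p^M\circ n^M$ (strictly, $Id_M \Rightarrow n^M\circ p^M$ in the appropriate reading) and $\epsilon^M:n^M\circ p^M\Rightarrow Id_M$ defined just above are actually right $A$-module 2-morphisms, hence 2-morphisms in $\mathbf{Mod}_{\mathfrak{C}}(A)$. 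Finally I would invoke Lemma \ref{lem:rigidactionrightadjoint}, which verifies the two triangle identities for the pair $(\eta^M,\epsilon^M)$ inside $\mathbf{Mod}_{\mathfrak{C}}(A)$; note that since the forgetful 2-functor $\mathbf{Mod}_{\mathfrak{C}}(A)\to\mathfrak{C}$ is locally faithful (indeed an inclusion on 2-morphisms), it suffices to check the triangle identities in $\mathfrak{C}$, but Lemma \ref{lem:rigidactionrightadjoint} already states the conclusion at the level of $\mathbf{Mod}_{\mathfrak{C}}(A)$.

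Putting these three facts together: $p^M$ is a right $A$-module 1-morphism, $\eta^M$ and $\epsilon^M$ are right $A$-module 2-morphisms, and they satisfy the triangle identities. By the very definition of an adjunction internal to the 2-category $\mathbf{Mod}_{\mathfrak{C}}(A)$, this exhibits $p^M$ as a right adjoint of $n^M$ in $\mathbf{Mod}_{\mathfrak{C}}(A)$, which is exactly the claim. So the proof is a two-line citation of Lemmas \ref{lem:modulep}, \ref{lem:moduleepsiloneta}, and \ref{lem:rigidactionrightadjoint}.

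\textbf{Where the real work lies.} There is essentially no obstacle at the level of Proposition \ref{prop:actionrightadjoint} itself — all the substance has been pushed into the three preceding lemmas, whose proofs are the long string-diagram manipulations carried out in the appendix. If I were proving the package from scratch, the main obstacle would be Lemma \ref{lem:rigidactionrightadjoint}, specifically the second triangle identity (the one whose verification runs through figures \ref{fig:triangle10}--\ref{fig:triangle17}), since it requires chaining together module associativity, the rigidity axioms \eqref{eqn:rigidleftassociativity}, \eqref{eqn:rigidleftunit}, \eqref{eqn:etaleft}, the algebra/module coherence equations, and two applications of genuine triangle identities in $\mathfrak{C}$ for the adjunction $m\dashv m^*$. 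But for the proposition as stated, the work is already done, and the proof is purely organizational: cite the lemmas and conclude.

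\begin{proof}
Combine the preceding three lemmas. By Lemma \ref{lem:modulep}, the 2-morphism $\psi^p$ makes $p^M:M\rightarrow M\Box A$ into a right $A$-module 1-morphism, so $p^M$ is a 1-morphism in $\mathbf{Mod}_{\mathfrak{C}}(A)$. By Lemma \ref{lem:moduleepsiloneta}, the 2-morphisms $\eta^M$ and $\epsilon^M$ defined above are right $A$-module 2-morphisms, hence 2-morphisms in $\mathbf{Mod}_{\mathfrak{C}}(A)$. By Lemma \ref{lem:rigidactionrightadjoint}, the pair $(\eta^M,\epsilon^M)$ satisfies the triangle identities, and therefore exhibits $p^M$ as a right adjoint of $n^M$ in $\mathbf{Mod}_{\mathfrak{C}}(A)$.
\end{proof}
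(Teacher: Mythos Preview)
Your proposal is correct and matches the paper exactly: the paper introduces Proposition \ref{prop:actionrightadjoint} with the sentence ``Putting the above lemmas together, we get the following result'' and gives no further proof, so citing Lemmas \ref{lem:modulep}, \ref{lem:moduleepsiloneta}, and \ref{lem:rigidactionrightadjoint} is all there is to do. Indeed, Lemma \ref{lem:rigidactionrightadjoint} already states the full conclusion, so the proposition is really just a clean restatement for later citation.
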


We will use the above proposition in our proof of the following result.

\begin{Proposition}\label{prop:moduleleftdajoints}
Let $A$ be a rigid algebra in $\mathfrak{C}$, and $f:M\rightarrow N$ a 1-morphism of right $A$-modules. If $f$ has a left adjoint in $\mathfrak{C}$, then $f$ has a left adjoint in $\mathbf{Mod}_{\mathfrak{C}}(A)$.
\end{Proposition}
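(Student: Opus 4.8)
The plan is to mimic the familiar one-categorical argument: if $f\colon M\to N$ is a right $A$-module map with a left adjoint $f^{!}$ in $\mathfrak{C}$, then $f^{!}$ should carry the adjoint module structure, and the adjunction 2-morphisms should automatically be module 2-morphisms. Concretely, write $\eta^{f},\epsilon^{f}$ for the unit and counit of the adjunction $f^{!}\dashv f$ in $\mathfrak{C}$. First I would produce a candidate right $A$-module structure on $f^{!}$, that is, a 2-isomorphism $\psi^{f^{!}}\colon n^{M}\circ(f^{!}\Box A)\Rightarrow f^{!}\circ n^{N}$. This is obtained by the usual ``mate'' construction: starting from $n^{N}\circ(f\Box A)$, whisker with $f^{!}$ on the appropriate side, insert the unit $\eta^{f}$ (and $1\eta^{f}$ on the monoidal factor, using the canonical adjunction $1f^{!}\dashv 1f$ fixed in the conventions), apply $(\psi^{f})^{-1}$, and cancel with the counit $\epsilon^{f}$. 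Since $f$ is a module \emph{equivalence-data} map — here merely a module map with invertible $\psi^{f}$ — and $\eta^{f},\epsilon^{f}$ are invertible only up to the triangle identities, the resulting $\psi^{f^{!}}$ is invertible because it is a composite of invertible pieces.

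The key verification is that $\psi^{f^{!}}$ satisfies the two coherence axioms (a) and (b) of Definition \ref{def:modulemap}. Axiom (b), the unit axiom, should follow by a short string-diagram manipulation: expand $\psi^{f^{!}}$, use the module-unit axiom (\ref{eqn:modulemapunitality}) for $\psi^{f}$, then a triangle identity for the adjunction $f^{!}\dashv f$ to straighten the resulting snake. Axiom (a), the associativity axiom, is the substantive one: one expands both sides, slides the units/counits $\eta^{f},\epsilon^{f}$ (and their monoidal versions $1\eta^{f}$, $1\epsilon^{f}$) past the relevant coupons using naturality and the strict-cubical interchanger conventions, applies the module-associativity axiom (\ref{eqn:modulemapassociativity}) for $\psi^{f}$ once, and then uses the two triangle identities to cancel the inserted adjunction data. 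This is exactly the kind of diagrammatic bookkeeping done in the proofs of Lemmas \ref{lem:modulep}--\ref{lem:rigidactionrightadjoint}, so I expect it to go through by the same techniques; it is the main obstacle only in the sense of being the longest computation.

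Finally, I would check that $\eta^{f}$ and $\epsilon^{f}$ are right $A$-module 2-morphisms with respect to the structures $\psi^{p}$-type data, i.e.\ that they satisfy the equation of Definition \ref{def:moduleintertwiner}: for $\epsilon^{f}$ (resp.\ $\eta^{f}$) this amounts to compatibility of $\psi^{f}$ and $\psi^{f^{!}}$, which holds essentially \emph{by construction} of $\psi^{f^{!}}$ as the mate of $\psi^{f}$, using one triangle identity. Since the triangle identities for $f^{!}\dashv f$ already hold in $\mathfrak{C}$ and $\mathbf{Mod}_{\mathfrak{C}}(A)$ is a (strict, as $\mathfrak{C}$ is strict cubical) sub-structure on the same underlying 1- and 2-morphisms, the triangle identities are inherited verbatim. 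Hence $f^{!}$ with $\psi^{f^{!}}$, together with $\eta^{f}$ and $\epsilon^{f}$, exhibits $f^{!}$ as a left adjoint of $f$ in $\mathbf{Mod}_{\mathfrak{C}}(A)$.

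I note that Proposition \ref{prop:actionrightadjoint} is not literally needed for this particular statement about a \emph{left} adjoint of an arbitrary module map; rather, it (or its obvious left-handed analogue) would be invoked if one instead wanted to establish existence of adjoints of the action 1-morphisms themselves. The real input here is simply rigidity of $A$ entering through the fact that $\psi^{f}$ is invertible and through the coherence Lemma \ref{lem:coherenceright}, which may be needed to reassociate the monoidal-factor adjunction data $1\eta^{f},1\epsilon^{f}$ during the axiom (a) computation.
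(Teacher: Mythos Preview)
Your plan has a genuine gap at the invertibility step. The mate $\psi^{f^{!}}$ you construct is \emph{not} ``a composite of invertible pieces'': it is built by whiskering $(\psi^{f})^{-1}$ with the unit $\eta^{f}$ and the counit $\epsilon^{f}$ of the adjunction $f^{!}\dashv f$, and these 2-morphisms are not invertible. Mates of invertible 2-cells through (non-equivalence) adjunctions are not invertible in general; what you have produced is a priori only an \emph{oplax} right $A$-module structure on $f^{!}$, and the coherence checks you outline would go through for that oplax structure without ever forcing it to be strong. This is exactly the point at which rigidity of $A$ must enter, and it does not enter ``through the fact that $\psi^{f}$ is invertible'' --- that invertibility is part of Definition~\ref{def:modulemap} and holds for any algebra.

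The paper's proof uses Proposition~\ref{prop:actionrightadjoint} precisely to close this gap, contrary to your final remark. Rigidity guarantees that $n^{M}$ and $n^{N}$ admit right adjoints $p^{M}$, $p^{N}$ \emph{as right $A$-module 1-morphisms}; the paper then builds an auxiliary invertible 2-cell $\zeta^{f}$ from $\psi^{f}$ and the $p$'s, and defines $\psi^{^{*}\!f}:={^{*}(\zeta^{f})}$ using the $(\,)^{*}$ operation on 2-morphisms from the graphical conventions, which \emph{does} preserve invertibility. The substantive computation is then to show that your mate $\xi^{f}$ is the inverse of this $\psi^{^{*}\!f}$, and that computation (appendix~\ref{sub:moduleleftadjointsdiagrams}) uses the rigidity axioms (\ref{eqn:epsilonleft}) etc. So your proposal has the right skeleton but is missing the one non-formal ingredient: you need an independent construction of an inverse to the mate, and that is where $p^{M},p^{N}$ --- hence rigidity --- are required.
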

\begin{proof}
Let $f:M\rightarrow N$ be a right $A$-module 1-morphism, which has a left $^*f$ adjoint as a 1-morphism in $\mathbf{Mod}_{\mathfrak{C}}(A)$. We have to endow the 1-morphism $^*f$ in $\mathfrak{C}$ with a right $A$-module structure such that the unit $\eta^f:Id_N\Rightarrow f\circ {^*f}$ and counit $\epsilon^f:{^*f}\circ f\Rightarrow Id_M$ of this adjunction in $\mathfrak{C}$ are right $A$-module 2-morphisms. We begin by making the following definition:

\settoheight{\algebra}{\includegraphics[width=37.5mm]{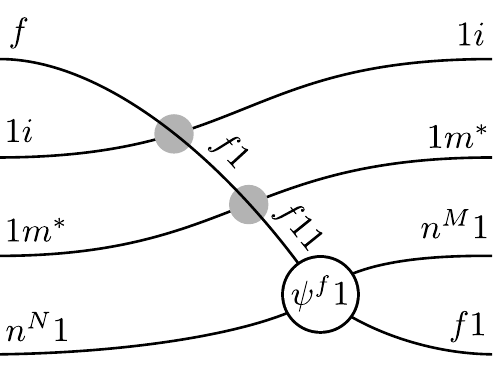}}

\begin{center}
\begin{tabular}{@{}cc@{}}

\raisebox{0.45\algebra}{$\zeta^f:=$} &
\includegraphics[width=37.5mm]{Pictures/technicallemmas/zetaf.pdf}.
\end{tabular}
\end{center}

Then, by proposition \ref{prop:actionrightadjoint}, we have explicit left adjoints for $p^M$ and $p^N$. Using these, we let $$\psi^{^*f}:= {^*(\zeta^f)}.$$ As $\zeta^f$ is clearly invertible, so is $\psi^{^*f}$. It is possible to check the coherence axioms of definition \ref{def:modulemap} for $\psi^{^*f}$ directly, but this is tedious. We give an alternative approach. Let us set

\settoheight{\algebra}{\includegraphics[width=37.5mm]{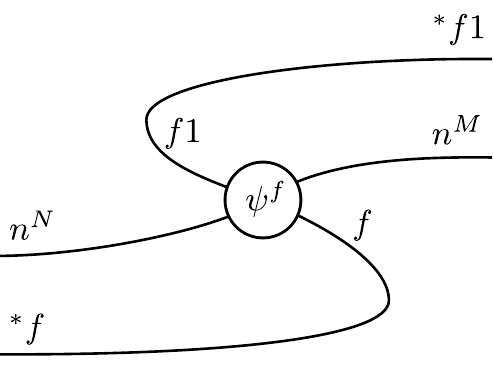}}

\begin{center}
\begin{tabular}{@{}cc@{}}

\raisebox{0.45\algebra}{$\xi^f:=$} &
\includegraphics[width=37.5mm]{Pictures/technicallemmas/xif.pdf}.
\end{tabular}
\end{center}

\noindent We claim that $\xi^f$ is the inverse of $\psi^{^*f}$. Assuming this is true, it is easy to see that $\xi^f$ satisfy the inverse of the equations giving the axioms of definition \ref{def:modulemap}, and so proves that $^*f$ is a right $A$-module map. Moreover, it is not hard to check that $\eta^f$ and $\epsilon^f$ are right $A$-module 2-morphisms.

We prove the above claim using the diagrams depicted in appendix \ref{sub:moduleleftadjointsdiagrams}. The composite $\xi^f \cdot \psi^{^*f}$ is depicted in figure \ref{fig:adjinverse1}. By moving the coupon labelled $\psi^f$ down and to the left, whilst using the triangle identity for $f$, we arrive at figure \ref{fig:adjinverse2}. Equation (\ref{eqn:modulemapassociativity}) applied to the blue coupons yields figure \ref{fig:adjinverse3}. Moving the coupon labelled $1\psi^{l^{-1}}$ to the right along the blue arrow, and using equation (\ref{eqn:moduleassociativity}) on the green coupons brings us to figure \ref{fig:adjinverse4}. Now, we can move the cap to the left along the blue arrow, and then appeal to equation (\ref{eqn:epsilonleft}) on the green coupons. Then, taking the coupon labelled $1\rho^{-1}$ to the right along the red arrows leads us to contemplate figure \ref{fig:adjinverse5}. At this point, we can move the coupon labelled $1\lambda^{-1}$ to the right along the blue arrow, straighten the green curve, and use equation (\ref{eqn:coherenceright}) on the red coupons to arrive at figure \ref{fig:adjinverse6}. Appealing to equation (\ref{eqn:modulemapunitality}) on the blue coupons produces figure \ref{fig:adjinverse7}. Finally, straightening the blue curve, and using equation (\ref{eqn:moduleunitality}) on the green coupons shows that the last figure depicts the identity 2-morphism of $n^M\circ {^*f1}$, which proves that $\xi^f \cdot \psi^{^*f}=Id_{n^M\circ {^*f1}}$.

We now consider the composite $\psi^{^*f} \cdot \xi^f$ depicted in figure \ref{fig:adjinverse10}. Dragging the coupon labelled $\psi^f$ to the right as well as straightening the green curve brings us to figure \ref{fig:adjinverse11}. Appealing to equation (\ref{eqn:modulemapassociativity}) on the blue coupons produces figure \ref{fig:adjinverse12} for us. Then, we move the indicated cap along the blue arrow, which gives us figure \ref{fig:adjinverse13} after rearranging the strings a little. Using equation (\ref{eqn:moduleassociativity}) for $N$ on the blue coupons gives figure \ref{fig:adjinverse14}. We can now apply equation (\ref{eqn:epsilonleft}) to the blue coupons, and arrive at figure \ref{fig:adjinverse15}. Straightening the curve in blue, and using equation (\ref{eqn:modulemapunitality}) on the green coupons gives figure \ref{fig:adjinverse16}. Finally, applying equation (\ref{eqn:moduleunitality}) to the blue coupons, equation (\ref{eqn:coherenceright}) applied to the green ones, and straightening the red curve shows that this last figure depicts the identity 2-morphism on ${^*f}\circ n^N$. Thus, we have prove that $\psi^{^*f} \cdot \xi^f = Id_{{^*f}\circ n^N}$, which finishes the proof of the claim, whence of the proposition.
\end{proof}

\begin{Remark}
If we take $\mathfrak{C}=\mathbf{FinCat}$, $\mathcal{A}$ a finite tensor category, and $F:\mathcal{M}\rightarrow \mathcal{N}$ a right $\mathcal{A}$-module functor between finite categories, then proposition \ref{prop:moduleleftdajoints} recovers the statement that if $F$ has a left adjoint as a linear functor (i.e. is right exact), it has a left adjoints as an $\mathcal{A}$-module functor (see section 3.3 of \cite{EO}). This also generalizes theorem 5.21 of \cite{BJS}.
\end{Remark}

Building on proposition \ref{prop:moduleleftdajoints} and its proof, we obtain the following result.

\begin{Proposition}\label{prop:bimoduleleftdajoints}
Let $A$, $B$ be rigid algebras in $\mathfrak{C}$, and $f:P\rightarrow Q$ an $A$-$B$-bimodule 1-morphism. If $f$ has a left adjoint in $\mathfrak{C}$, then it has a left adjoint as an $A$-$B$-bimodule 1-morphism.
\end{Proposition}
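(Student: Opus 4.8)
The plan is to mimic the proof of Proposition \ref{prop:moduleleftdajoints} exactly, now keeping track of both the left $A$-module structure and the right $B$-module structure simultaneously, and then checking that the two are compatible. Concretely, let $f:P\rightarrow Q$ be an $A$-$B$-bimodule 1-morphism which admits a left adjoint $^*f$ in $\mathfrak{C}$, with unit $\eta^f:Id_Q\Rightarrow f\circ{}^*f$ and counit $\epsilon^f:{}^*f\circ f\Rightarrow Id_P$. By Proposition \ref{prop:moduleleftdajoints} applied to the right $B$-module structure on $f$, we obtain a right $B$-module structure $\psi^{^*f}$ on $^*f$ for which $\eta^f$ and $\epsilon^f$ are right $B$-module 2-morphisms. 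Applying the same proposition to the left $A$-module structure $\chi^f$ on $f$ — using the mirror-image construction, i.e. the left-module analogue of $p^M$ built from $m^*$ and the left action $l^P$, whose existence follows from the left-handed version of Proposition \ref{prop:actionrightadjoint} — we obtain a left $A$-module structure $\chi^{^*f}$ on $^*f$ for which $\eta^f$ and $\epsilon^f$ are left $A$-module 2-morphisms. It then remains to verify that $(^*f,\chi^{^*f},\psi^{^*f})$ satisfies the compatibility equation (\ref{eqn:bimodulemapassociativity}) of Definition \ref{def:bimodulemap}, so that $^*f$ becomes an honest $A$-$B$-bimodule 1-morphism; once that is done, the fact that $\eta^f$ and $\epsilon^f$ are separately left $A$-module and right $B$-module 2-morphisms means they are $A$-$B$-bimodule 2-morphisms by Definition \ref{def:bimoduleintertwiner}, and the triangle identities hold in $\mathfrak{C}$, hence in $\mathbf{Bimod}_{\mathfrak{C}}(A,B)$.

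First I would set up the left-handed mirror of the machinery of Section \ref{sub:adjoints1morphisms}: for a left $A$-module $P$ with action $l^P:A\Box P\rightarrow P$, define ${}^Pp:= (A\Box n^{?})\dots$ — more precisely the left-module 1-morphism ${}^Pp:P\rightarrow A\Box P$ obtained from $m^*$ and $l^P$ by reflecting all the diagrams left-to-right — and note that the left-handed analogues of Lemmas \ref{lem:modulep}, \ref{lem:moduleepsiloneta}, \ref{lem:rigidactionrightadjoint} give that ${}^Pp$ is a right adjoint to $l^P$ in $\mathbf{Mod}_{\mathfrak{C}}^{left}(A)$. These are formal consequences of the already-proven right-handed statements applied to the monoidal 2-category $\mathfrak{C}^{\mathrm{op}}$ obtained by reversing the monoidal product (under which left $A$-modules become right $A^{\mathrm{op}}$-modules and $A^{\mathrm{op}}$ is again rigid), so no new diagrammatic work is needed. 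Then the construction of $\chi^{^*f}$ proceeds verbatim as in Proposition \ref{prop:moduleleftdajoints}: set ${}^A\zeta^f$ to be the evident 2-isomorphism comparing $l^P\circ(1{}^*f)$ with $^*f\circ l^Q$ built from $\chi^f$, and define $\chi^{^*f}:= {}^*({}^A\zeta^f)$ using the explicit left adjoints of ${}^Pp,{}^Qp$, with inverse given by the analogue of $\xi^f$.

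The main obstacle is the compatibility equation (\ref{eqn:bimodulemapassociativity}). The cleanest route, I expect, is not to expand $\chi^{^*f}$ and $\psi^{^*f}$ directly — since each is mated across an adjunction and the resulting string diagrams are unwieldy — but rather to translate the desired compatibility on $^*f$ into a statement about $f$ via mates. That is, under the adjunction $^*f\dashv f$, the compatibility 2-morphism equation for $^*f$ is mated to an equation of 2-morphisms between composites of $\chi^f$, $\psi^f$, $\beta^P$, $\beta^Q$ and the adjunction data of $m$; and the latter equation is precisely the compatibility (\ref{eqn:bimodulemapassociativity}) satisfied by $f$ (plus naturality and the rigid-algebra axioms (\ref{eqn:rigidleftassociativity})--(\ref{eqn:rigidbimodule}) relating the left and right $A$-module structures on $m^*$, which enter because ${}^Pp$ and $p^Q$ both involve $m^*$). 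Spelling this out carefully — i.e. checking that the mate of a bimodule-compatible 1-morphism under an adjunction whose unit and counit are bimodule 2-morphisms is again bimodule-compatible — is the one genuinely new computation, and it is the analogue, one categorical level up, of the standard fact that the mate of a bimodule map is a bimodule map. I would carry it out with the string-diagram conventions of the appendix, reducing to repeated use of naturality, the triangle identities for $f$ and for $m$, and equations (\ref{eqn:rigidbimodule}), (\ref{eqn:epsilonleft})--(\ref{eqn:etaright}).

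Finally, having established that $^*f$ is an $A$-$B$-bimodule 1-morphism and that $\eta^f,\epsilon^f$ are $A$-$B$-bimodule 2-morphisms, the triangle identities are inherited from $\mathfrak{C}$, so $^*f$ is a left adjoint of $f$ in $\mathbf{Bimod}_{\mathfrak{C}}(A,B)$, completing the proof.
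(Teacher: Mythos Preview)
Your proposal is correct and follows the same overall strategy as the paper: construct the right $B$-module and left $A$-module structures on $^*f$ separately via Proposition \ref{prop:moduleleftdajoints} and its mirror in $\mathfrak{C}^{\Box op}$, then verify the bimodule compatibility (\ref{eqn:bimodulemapassociativity}).

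The one place where the paper is more direct than your plan is the compatibility step. You propose to treat it as an abstract mate calculation, anticipating the need for the rigid-algebra axioms (\ref{eqn:rigidleftassociativity})--(\ref{eqn:rigidbimodule}) relating the two module structures on $m^*$. The paper bypasses all of this: it simply recalls from the proof of Proposition \ref{prop:moduleleftdajoints} that $\psi^{^*f}$ has the explicit inverse $\xi^f$, writes down the analogous explicit inverse $\theta^f$ of $\chi^{^*f}$, and then observes that conjugating the bimodule compatibility equation (\ref{eqn:bimodulemapassociativity}) for $f$ by $\eta^f$ and $\epsilon^f$ produces precisely the corresponding equation in $\xi^f$ and $\theta^f$; since these are the inverses of $\psi^{^*f}$ and $\chi^{^*f}$, this is equivalent to (\ref{eqn:bimodulemapassociativity}) for $^*f$. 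No further appeal to the rigid-algebra axioms or to the structure of $p^M$ is needed at this point --- those were already absorbed into the proof that $\xi^f$ inverts $\psi^{^*f}$. Your route would work, but it re-derives what the explicit inverses already encode.
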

\begin{proof}
The left $A$-module 1-morphism $l^P:A\Box P\rightarrow P$ has a right adjoint as a left $A$-module 1-morphism. A similar observation holds with $P$ replaced by $Q$, and these can be used to endow $^*f$ with a canonical left $A$-module structure. The details can either be checked directly, or one may appeal to proposition \ref{prop:actionrightadjoint} in the monoidal 2-category $\mathfrak{C}^{\Box op}$, that is $\mathfrak{C}$ equipped with its opposite monoidal structure. It therefore only remains to show that the canonical left $A$-module and right $B$-module structures on $^*f$ are compatible.

In order to see this, we note that, similarly to what is established in the proof of proposition \ref{prop:moduleleftdajoints}, the 2-morphism $\chi^{^*f}$ providing $^*f$ with its left $A$-module structure is the inverse of the 2-morphism $\theta^f$ given by

\settoheight{\algebra}{\includegraphics[width=37.5mm]{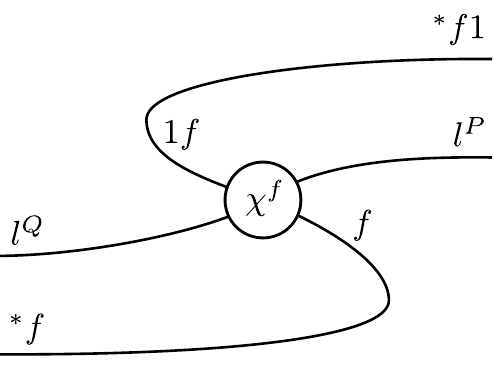}}

\begin{center}
\begin{tabular}{@{}cc@{}}

\raisebox{0.45\algebra}{$\theta^f:=$} &
\includegraphics[width=37.5mm]{Pictures/technicallemmas/thetaf.pdf}.
\end{tabular}
\end{center}

\noindent Now, using the unit $\eta^f$ and counit $\epsilon^f$ witnessing the adjunction between $^*f$ and $f$, we see that equation (\ref{def:bimodulemap}) for $f$ yields an equation involving $\xi^f$ and $\theta^f$. But, these 2-morphisms are the inverses of $\psi^{^*f}$ and of $\chi^{^*f}$ respectively, so that the aforementioned equation is equivalent to equation (\ref{def:bimodulemap}) for $^*f$.
\end{proof}

The following theorem is the main application of the above results.

\begin{Theorem}\label{thm:modulerightdajoints}
Assume that the monoidal 2-category $\mathfrak{C}$ has left and right adjoints, and let $A$ be an arbitrary algebra in $\mathfrak{C}$. Then, $A$ is rigid if and only if $\mathbf{Bimod}_{\mathfrak{C}}(A)$ has right adjoints. If either of these conditions is satisfied, $\mathbf{Bimod}_{\mathfrak{C}}(A)$ has left adjoints, and $\mathbf{Mod}_{\mathfrak{C}}(A)$ has left and right adjoints.
\end{Theorem}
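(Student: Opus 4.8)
The plan is to deduce everything from Propositions~\ref{prop:moduleleftdajoints} and~\ref{prop:bimoduleleftdajoints}, after observing that ``$A$ is rigid'' is, essentially by definition, a statement about a single $1$-morphism of $\mathbf{Bimod}_{\mathfrak{C}}(A)$. First I would record that $A\Box A$, with left $A$-action $m\Box 1$ and right $A$-action $1\Box m$, and $A$ with its regular bimodule structure, are objects of $\mathbf{Bimod}_{\mathfrak{C}}(A)$, and that the associator $\mu$ endows $m\colon A\Box A\to A$ with the structure of an $A$-$A$-bimodule $1$-morphism. By the definition recalled at the start of this subsection, $A$ is rigid exactly when $m$ has a right adjoint \emph{as a map of $A$-$A$-bimodules}; unwinding axioms (a)--(e), this says precisely that $m$, regarded as a $1$-morphism of $\mathbf{Bimod}_{\mathfrak{C}}(A)$, has a right adjoint in that $2$-category (here one uses that the forgetful $2$-functor $\mathbf{Bimod}_{\mathfrak{C}}(A)\to\mathfrak{C}$ is a $2$-functor and is injective on $2$-morphisms, so the adjunction data and the triangle identities transfer back and forth). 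In particular, if $\mathbf{Bimod}_{\mathfrak{C}}(A)$ has right adjoints then $m$ has one there, so $A$ is rigid; this is the ``if'' half of the equivalence.

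Conversely, assume $A$ is rigid. Since $\mathfrak{C}$ has left adjoints, every $1$-morphism of $\mathbf{Mod}_{\mathfrak{C}}(A)$ and of $\mathbf{Bimod}_{\mathfrak{C}}(A)$ has a left adjoint in $\mathfrak{C}$, so Proposition~\ref{prop:moduleleftdajoints} and Proposition~\ref{prop:bimoduleleftdajoints} (with $B=A$) show that $\mathbf{Mod}_{\mathfrak{C}}(A)$ and $\mathbf{Bimod}_{\mathfrak{C}}(A)$ have left adjoints. For right adjoints I would pass to the strict cubical monoidal $2$-category $\mathfrak{C}^{\mathrm{co}}$ obtained from $\mathfrak{C}$ by reversing all $2$-morphisms: it has left adjoints precisely because $\mathfrak{C}$ has right adjoints, an algebra together with its modules and bimodules in $\mathfrak{C}$ is the same datum as one in $\mathfrak{C}^{\mathrm{co}}$ with all structure $2$-isomorphisms inverted, and consequently $\mathbf{Mod}_{\mathfrak{C}^{\mathrm{co}}}(A)\cong\mathbf{Mod}_{\mathfrak{C}}(A)^{\mathrm{co}}$ and $\mathbf{Bimod}_{\mathfrak{C}^{\mathrm{co}}}(A)\cong\mathbf{Bimod}_{\mathfrak{C}}(A)^{\mathrm{co}}$. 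The crucial point is that $A$ is again rigid in $\mathfrak{C}^{\mathrm{co}}$: applying Proposition~\ref{prop:bimoduleleftdajoints} to the bimodule $1$-morphism $m$ itself, which has a left adjoint in $\mathfrak{C}$, shows that $m$ has a left adjoint inside $\mathbf{Bimod}_{\mathfrak{C}}(A)$; read in $\mathfrak{C}^{\mathrm{co}}$ this says that $m$ has a right adjoint inside $\mathbf{Bimod}_{\mathfrak{C}^{\mathrm{co}}}(A)$, which by the first paragraph applied to $\mathfrak{C}^{\mathrm{co}}$ means $A$ is rigid in $\mathfrak{C}^{\mathrm{co}}$. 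Now re-running the left-adjoint argument for $\mathfrak{C}^{\mathrm{co}}$ gives that $\mathbf{Mod}_{\mathfrak{C}}(A)^{\mathrm{co}}$ and $\mathbf{Bimod}_{\mathfrak{C}}(A)^{\mathrm{co}}$ have left adjoints, i.e.\ $\mathbf{Mod}_{\mathfrak{C}}(A)$ and $\mathbf{Bimod}_{\mathfrak{C}}(A)$ have right adjoints. In particular $\mathbf{Bimod}_{\mathfrak{C}}(A)$ has right adjoints, which completes the equivalence, and all the asserted consequences have been obtained.

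The genuine content is entirely carried by Propositions~\ref{prop:moduleleftdajoints} and~\ref{prop:bimoduleleftdajoints}, which are already available, so the remaining difficulty is bookkeeping that nonetheless demands care about variance: verifying that axioms (a)--(e) of a rigid algebra assemble exactly into ``$m$ has a right adjoint in $\mathbf{Bimod}_{\mathfrak{C}}(A)$'', and checking that rigidity, the $2$-categories of (bi)modules, and the left/right adjoint hypotheses all transport correctly along $\mathfrak{C}\mapsto\mathfrak{C}^{\mathrm{co}}$. I expect the subtlest single step to be the deduction that $A$ remains rigid in $\mathfrak{C}^{\mathrm{co}}$, since it is the one place where Proposition~\ref{prop:bimoduleleftdajoints} must be applied to $m$ itself rather than to an arbitrary bimodule $1$-morphism, and where one must be careful that the bimodule structure $2$-isomorphisms and their coherences survive the reversal of $2$-morphisms.
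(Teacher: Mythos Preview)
Your proposal is correct and follows essentially the same strategy as the paper: both arguments reduce everything to Propositions~\ref{prop:moduleleftdajoints} and~\ref{prop:bimoduleleftdajoints} together with the passage to the $2$-category $\mathfrak{C}^{\mathrm{co}}$ (the paper writes $\mathfrak{C}^{2op}$) and the identifications $\mathbf{Mod}_{\mathfrak{C}^{\mathrm{co}}}(A)\cong\mathbf{Mod}_{\mathfrak{C}}(A)^{\mathrm{co}}$ and $\mathbf{Bimod}_{\mathfrak{C}^{\mathrm{co}}}(A)\cong\mathbf{Bimod}_{\mathfrak{C}}(A)^{\mathrm{co}}$. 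Your step singling out Proposition~\ref{prop:bimoduleleftdajoints} applied to $m$ to show $A$ is rigid in $\mathfrak{C}^{\mathrm{co}}$ is slightly redundant (you already proved $\mathbf{Bimod}_{\mathfrak{C}}(A)$ has left adjoints, whence $m$ in particular has one), but this is exactly how the paper argues as well.
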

\begin{proof}
Let us begin by the following observation: We can consider the strict opcubical monoidal 2-category $\mathfrak{C}^{2op}$ obtained from $\mathfrak{C}$ by reversing the direction of the 2-morphisms. If $A$ is an algebra in $\mathfrak{C}$, then $(A, m, i, \alpha^{-1}, \rho^{-1}, \lambda^{-1})$ is an algebra in $\mathfrak{C}^{2op}$, which we denote by $A^{2op}$. Furthermore, inspection shows that $$\big(\mathbf{Mod}_{\mathfrak{C}}(A)\big)^{2op}= \mathbf{Mod}_{\mathfrak{C}^{2op}}(A^{2op})\text{ and }\big(\mathbf{Bimod}_{\mathfrak{C}}(A)\big)^{2op}= \mathbf{Bimod}_{\mathfrak{C}^{2op}}(A^{2op}).$$

Let us now assume that $A$ is rigid, it follows from proposition \ref{prop:bimoduleleftdajoints} that $\mathbf{Bimod}_{\mathfrak{C}}(A)$ has left adjoints. Via the second of the above equivalences, this implies that $\mathbf{Bimod}_{\mathfrak{C}^{2op}}(A^{2op})$ has right adjoints, so that $A^{2op}$ is a rigid algebra. Propositions \ref{prop:moduleleftdajoints} and \ref{prop:bimoduleleftdajoints} then show that $\mathbf{Mod}_{\mathfrak{C}^{2op}}(A^{2op})$ and $\mathbf{Bimod}_{\mathfrak{C}^{2op}}(A^{2op})$ have left adjoints. The first of the two equivalences above proves that $\mathbf{Mod}_{\mathfrak{C}}(A)$ has right adjoints, while the second demonstrates that $\mathbf{Bimod}_{\mathfrak{C}}(A)$ has right adjoints.

Assume that $\mathbf{Bimod}_{\mathfrak{C}}(A)$ has right adjoints. Then as $m:A\Box A\rightarrow A$ is an $A$-$A$-bimodule map, it has a right adjoint. This shows that $A$ is rigid, and completes the present proof.
\end{proof}

\section[Separable Algebras in Compact Semisimple Monoidal \texorpdfstring{\newline}{} 2-Categories]{Separable Algebras in Compact Semisimple Monoidal 2-Categories}\label{sec:separable}

Throughtout, we fix a compact semisimple monoidal 2-category $\mathfrak{C}$ over an arbitrary field $\mathds{k}$. Without loss of generality, we assume that $\mathfrak{C}$ is strict cubical.

\subsection{Characterizing Separable Algebras}

We begin by the following lemma, which elaborates on the results of the previous section.

\begin{Lemma}\label{lem:separablecondensations}
Let $A$ be a separable algebra, there exists a right $A$-module 2-morphism $\gamma^M$ such that $(M\Box A, M, n^M, p^M, \epsilon^M, \gamma^M)$ is a 2-condensation in $\mathbf{Mod}_{\mathfrak{C}}(A)$.
\end{Lemma}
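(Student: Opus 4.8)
The plan is to leverage the adjunction data $(n^M, p^M, \eta^M, \epsilon^M)$ from Proposition~\ref{prop:actionrightadjoint} together with the splitting $\gamma^m$ of $\epsilon^m$ that comes with the separable structure on $A$. Recall from the definition of a separable algebra that $\gamma^m:Id_A\Rightarrow m\circ m^*$ is an $A$-$A$-bimodule $2$-morphism with $\epsilon^m\cdot\gamma^m=Id_{Id_A}$. The first step is to write down an explicit candidate for $\gamma^M: Id_M\Rightarrow n^M\circ p^M$. Since $p^M=(n^M\Box A)\circ(M\Box m^*)\circ(M\Box i)$ and $\epsilon^M$ was built out of $\epsilon^m$, the natural guess is to take the string diagram for $\epsilon^M$, reverse it, and replace the cap $\epsilon^m$ by the cup-like $2$-morphism $\gamma^m$; concretely one sets $\gamma^M$ to be the whiskering of $\gamma^m$ by $n^M$ suitably composed with the unit/associativity $2$-isomorphisms $\nu^M$, $\rho^M$ of the module structure (mirroring the construction of $\epsilon^M$ in Section~\ref{sub:adjoints1morphisms}).

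The second step is to check that $\gamma^M$ is a right $A$-module $2$-morphism, i.e. that it satisfies the equation of Definition~\ref{def:moduleintertwiner}. This should run in close parallel to the proof in Lemma~\ref{lem:moduleepsiloneta} that $\epsilon^M$ is a module $2$-morphism: one uses the module associativity and unitality axioms (\ref{eqn:moduleassociativity}), (\ref{eqn:moduleunitality}), the fact that $\gamma^m$ is an $A$-$A$-bimodule $2$-morphism (equations (\ref{eqn:gammaleft}) and (\ref{eqn:gammaright})), the coherence relation (\ref{eqn:coherenceright}) for the right $A$-module $A$, and naturality of whiskering. Essentially every step in the $\epsilon^M$ argument has a mirror-image counterpart here with $\epsilon^m$ replaced by $\gamma^m$ and the relevant equations replaced by their ``gamma'' analogues.

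The third step is the condensation identity $\epsilon^M\cdot\gamma^M=Id_{Id_M}$. Stacking the diagram for $\gamma^M$ on top of that for $\epsilon^M$, the two copies of $n^M$ (or $p^M$) meet, and after applying naturality to slide the module-structure $2$-isomorphisms $\nu^M,\rho^M$ past one another one isolates the composite $\epsilon^m\cdot\gamma^m$ in the middle of the picture; invoking $\epsilon^m\cdot\gamma^m=Id_{Id_A}$ collapses that, and then the snake/unit relations for $\nu^M$ and $\rho^M$ (together with the module unitality axiom (\ref{eqn:moduleunitality})) straighten the remaining strings to leave $Id_{Id_M}$. Finally one assembles: with $f=n^M$, $g=p^M$, $\phi=\epsilon^M$ and $\gamma=\gamma^M$, all of which are now known to live in $\mathbf{Mod}_{\mathfrak{C}}(A)$ by Lemmas~\ref{lem:moduleepsiloneta} and the above, and with $\phi\cdot\gamma=Id$, the tuple $(M\Box A, M, n^M, p^M, \epsilon^M, \gamma^M)$ is by definition a $2$-condensation in $\mathbf{Mod}_{\mathfrak{C}}(A)$.

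I expect the main obstacle to be bookkeeping rather than conceptual: verifying that $\gamma^M$ is genuinely a module $2$-morphism requires a diagram chase of roughly the same length and delicacy as the $\epsilon^M$ computation in Lemma~\ref{lem:moduleepsiloneta}, and one must be careful that the ``mirror'' versions of the coherence equations (in particular (\ref{eqn:gammaleft}), (\ref{eqn:gammaright}) and (\ref{eqn:coherenceright})) are applied in the right places and with the correct orientation. A mild subtlety is choosing the precise whiskering so that $\gamma^M$ is compatible on the nose with the already-fixed $\epsilon^M$; picking the ``obviously dual'' diagram to $\epsilon^M$ is what makes step three essentially formal.
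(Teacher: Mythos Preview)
Your proposal is correct and follows essentially the same approach as the paper: the paper defines $\gamma^M$ exactly as you describe (the mirror of the $\epsilon^M$ diagram with $\gamma^m$ in place of $\epsilon^m$), then verifies the module $2$-morphism axiom by successively applying equations (\ref{eqn:moduleassociativity}), (\ref{eqn:gammaright}), (\ref{eqn:moduleassociativity}), (\ref{eqn:moduleunitality}), (\ref{eqn:gammaleft}), and (\ref{eqn:coherenceright}), which is the same list you give. One small calibration: the condensation identity $\epsilon^M\cdot\gamma^M = Id$ is even easier than you suggest---because $\gamma^M$ is built using the \emph{inverse} module-structure $2$-isomorphisms to those appearing in $\epsilon^M$, they cancel on the nose upon composition, leaving a whiskering of $\epsilon^m\cdot\gamma^m=Id$; the paper records this as following ``immediately from the definitions'' with no diagram chase needed.
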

\begin{proof}
We let $\gamma^M$ be the 2-morphism in $\mathfrak{C}$ given by the following string diagram:

\settoheight{\algebra}{\includegraphics[width=45mm]{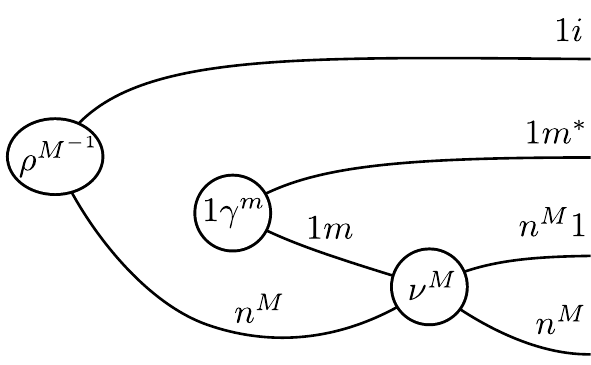}}

\begin{center}
\begin{tabular}{@{}cc@{}}

\raisebox{0.45\algebra}{$\gamma^M:=$} &
\includegraphics[width=45mm]{Pictures/technicallemmas/gammaM.pdf}.
\end{tabular}
\end{center}

\noindent It follows immediately from the definitions that $\epsilon^M\cdot\gamma^M = Id_{n^M\circ p^M}$. Thus, in order to show that $(M\Box A, M, n^M, p^M, \epsilon^M, \gamma^M)$ is a 2-condensation in $\mathbf{Mod}_{\mathfrak{C}}(A)$, it only remains to prove that $\gamma^M$ is a right $A$-module 2-morphism. Starting from the left hand-side of the equation of definition \ref{def:moduleintertwiner} for $\gamma^M$, we can successively apply equations (\ref{eqn:moduleassociativity}), (\ref{eqn:gammaright}), (\ref{eqn:moduleassociativity}), (\ref{eqn:moduleunitality}), (\ref{eqn:gammaleft}), and (\ref{eqn:coherenceright}) to get to the right hand-side. This finishes the proof of the lemma.
\end{proof}

\begin{Proposition}\label{prop:sepmodfss}
Let $A$ be a separable algebra in $\mathfrak{C}$. Then, $\mathbf{Mod}_{\mathfrak{C}}(A)$ is a compact semisimple 2-category.
\end{Proposition}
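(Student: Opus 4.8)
The plan is to verify the defining conditions of a compact semisimple 2-category for $\mathbf{Mod}_{\mathfrak{C}}(A)$ one by one: Cauchy completeness, existence of left and right adjoints for 1-morphisms, local finite semisimplicity, and finiteness of $\pi_0$. I would treat the adjoints first, since these come for free from the previous section: by Theorem~\ref{thm:modulerightdajoints}, as $\mathfrak{C}$ is compact semisimple it has left and right adjoints for 1-morphisms, hence a rigid (in particular separable) algebra $A$ has $\mathbf{Mod}_{\mathfrak{C}}(A)$ with left and right adjoints. Local finite semisimplicity is also inherited: the $\mathrm{Hom}$-categories of $\mathbf{Mod}_{\mathfrak{C}}(A)$ are categories of module 2-morphisms, which sit inside the finite semisimple $\mathrm{Hom}$-categories of $\mathfrak{C}$ as the equalizer/fixed-point subcategory cut out by a pair of linear functors; I would argue this subcategory is again finite semisimple because it is the category of modules over a (finite-dimensional, separable) algebra object obtained from the module structure, or more directly that it is closed under direct sums and retracts inside a finite semisimple category and has finitely many simples.

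The substantive point is Cauchy completeness of $\mathbf{Mod}_{\mathfrak{C}}(A)$. Direct sums of objects are straightforward (take the direct sum in $\mathfrak{C}$ with the diagonal module structure). For splitting of 2-condensation monads, I would use the hypothesis that $\mathfrak{C}$ itself is Cauchy complete: given a 2-condensation monad $(M,e,\mu,\delta)$ in $\mathbf{Mod}_{\mathfrak{C}}(A)$, forget to $\mathfrak{C}$, split it there to get an object $N$ of $\mathfrak{C}$ with $f\colon M\to N$, $g\colon N\to M$ exhibiting the 2-condensation, and then promote $N$ to an $A$-module and $f,g$ to module 1-morphisms. Promoting $N$: the right $A$-action on $N$ should be transported along the condensation, i.e.\ $n^N := f\circ n^M\circ(g\Box A)$ composed with the condensation 2-morphisms, and one checks associativity/unitality using the Frobenius relations and $\mu\cdot\delta = \mathrm{Id}_e$ together with the fact that $e$ is a module 1-morphism. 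This is where Lemma~\ref{lem:separablecondensations} enters: it guarantees that the ambient structure one needs (the canonical 2-condensation $(M\Box A,M,n^M,p^M,\epsilon^M,\gamma^M)$) is available and interacts well with module structures, so that the data splitting in $\mathfrak{C}$ can be made $A$-equivariant. Finally $\pi_0(\mathbf{Mod}_{\mathfrak{C}}(A))$ is finite: every simple $A$-module is a retract of a free module $m\Box A$ for $m$ a simple object of $\mathfrak{C}$ (using Proposition~\ref{prop:actionrightadjoint}, $n^M$ admits the module right adjoint $p^M$, and separability gives the splitting $\gamma^M$, so $M$ is a retract of $M\Box A$, hence of $\bigoplus_i m_i\Box A$ over the finitely many simples $m_i$ of $\mathfrak{C}$); since there are finitely many simples in $\mathfrak{C}$ and each $m\Box A$ has finitely many simple retracts (local finite semisimplicity), $\mathbf{Mod}_{\mathfrak{C}}(A)$ has finitely many simples up to equivalence, a fortiori finitely many components.

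The main obstacle I anticipate is the Cauchy-completeness step — specifically, checking that the split object $N$ in $\mathfrak{C}$ genuinely carries an $A$-module structure making $f$ and $g$ into module 1-morphisms, rather than merely a structure up to coherent 2-isomorphism that then requires further work. The cleanest route is probably to observe that $e$, being a 2-condensation monad in $\mathbf{Mod}_{\mathfrak{C}}(A)$, is in particular an idempotent-like endo-1-morphism compatible with the $A$-action, so its splitting in the (Cauchy complete) 2-category $\mathfrak{C}$ automatically lives in $\mathbf{Mod}_{\mathfrak{C}}(A)$ once one notes that the forgetful 2-functor $\mathbf{Mod}_{\mathfrak{C}}(A)\to\mathfrak{C}$ creates such splittings — an argument that parallels Lemma~\ref{lem:sscatCauchy} but now relativized over $A$. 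I would spell this out by transporting all six pieces of 2-condensation data along $f,g$ and verifying the Frobenius and separability axioms diagrammatically, much as in the proof of Lemma~\ref{lem:separablecondensations}; the bookkeeping is routine but must be done carefully to confirm that $\mu\cdot\delta=\mathrm{Id}_e$ in $\mathbf{Mod}_{\mathfrak{C}}(A)$ (not just in $\mathfrak{C}$) yields the needed split idempotent of module 1-morphisms.
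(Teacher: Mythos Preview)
Your allocation of effort is inverted relative to the paper, and the $\pi_0$ step contains an error.

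Cauchy completeness of $\mathbf{Mod}_{\mathfrak{C}}(A)$ needs no separability: it holds for any algebra $A$ in a Cauchy complete $\mathfrak{C}$, and the paper simply cites proposition~3.3.8 of \cite{D4}. The forgetful 2-functor creates splittings of 2-condensation monads regardless of whether $A$ is separable, so the lengthy promotion argument you outline is unnecessary, and Lemma~\ref{lem:separablecondensations} is not what makes it work.

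The step that genuinely consumes separability is local finite semisimplicity, and your sketch there is not adequate. The category $Hom_A(M,N)$ is not a full subcategory of $Hom_{\mathfrak{C}}(M,N)$: its objects carry the extra datum $\psi^f$, so ``closed under direct sums and retracts inside a finite semisimple category'' does not apply, and you have not identified the putative separable algebra whose modules it would be. The paper instead first handles free modules via the equivalence $Hom_A(C\Box A,N)\simeq Hom_{\mathfrak{C}}(C,N)$, then invokes Lemma~\ref{lem:separablecondensations} to exhibit any $M$ as a 2-condensate of the free module $M\Box A$; applying $Hom_A(-,N)$ expresses $Hom_A(M,N)$ as the splitting of a 2-condensation monad on the finite semisimple $Hom_{\mathfrak{C}}(M,N)$ inside $\mathbf{FinCat_{ss}}$, which is Cauchy complete by Lemma~\ref{lem:sscatCauchy}. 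This is where separability is actually used.

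Finally, your $\pi_0$ argument is wrong as stated: a compact semisimple 2-category need not have finitely many simple objects, only finitely many connected components (see the definitions in the paper). You cannot therefore write $M$ as a summand of $\bigoplus_i m_i\Box A$ over a finite set of simples. The paper fixes representatives $C_i$ of the finitely many classes in $\pi_0(\mathfrak{C})$, shows that every simple $A$-module receives a nonzero $A$-module map from some $C_i\Box A$, and concludes via the categorical Schur lemma that every class in $\pi_0(\mathbf{Mod}_{\mathfrak{C}}(A))$ is represented by a simple summand of some $C_i\Box A$.
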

\begin{proof}
It follows from theorem \ref{thm:modulerightdajoints} that $\mathbf{Mod}_{\mathfrak{C}}(A)$ has right and left adjoints. Further, $\mathbf{Mod}_{\mathfrak{C}}(A)$ is Cauchy complete as was proven in proposition 3.3.8 of \cite{D4}.

We now show that $\mathbf{Mod}_{\mathfrak{C}}(A)$ is locally finite semisimple. Given any right $A$-module $N$, and object $C$ of $\mathfrak{C}$, lemma 3.2.13 of \cite{D4} provides us with the following equivalence of linear 1-categories $$Hom_A(C\Box A, N)\simeq Hom_{\mathfrak{C}}(C,N).$$ By definition, the right hand-side is a finite semisimple 1-category. Now, we use the fact proven in lemma \ref{lem:separablecondensations} that every right $A$-module $M$ is the splitting of a 2-condensation monad on the free right $A$-module $M\Box A$. Together with the above equivalence and the fact that 2-condensations are preserved by every 2-functor, this implies that $Hom_A(M, N)$ is the splitting of a 2-condensation monad on $Hom_{\mathfrak{C}}(M,N)$ in $\mathbf{FinCat_{ss}}$. But we have shown in lemma \ref{lem:sscatCauchy} that $\mathbf{FinCat_{ss}}$ is Cauchy complete, so that $Hom_A(M, N)$ is indeed a finite semisimple 1-category.

We have just proven that $\mathbf{Mod}_{\mathfrak{C}}(A)$ is a semisimple 2-category, so it only remains to prove that $\pi_0(\mathbf{Mod}_{\mathfrak{C}}(A))$ is finite. Let us write $C_i$ for a finite set of simple objects of $\mathfrak{C}$ representing the equivalence classes in $\pi_0(\mathfrak{C})$. We claim that, for every simple object $M$ in $\mathbf{Mod}_{\mathfrak{C}}(A)$, there exists a non-zero 1-morphism $C_i\Box A\rightarrow M$ for some $i$. Assuming that this holds, it follows from lemma 1.1.6 of \cite{D5} that every equivalence classes in $\pi_0(\mathbf{Mod}_{\mathfrak{C}}(A))$ is represented by a simple right $A$-module summand of $C_i\Box A$ for some $i$. But, as $\mathbf{Mod}_{\mathfrak{C}}(A)$ is locally finite semisimple and $\pi_0(\mathfrak{C})$ is finite, there are only finitely many such summands.

Let us prove the claim. For any simple summand $D$ of $M$ in $\mathfrak{C}$, the composite $D\Box A\hookrightarrow M\Box A\xrightarrow{n^M} M$ is a non-zero right $A$-module 1-morphism, as precomposition with $D\Box i$ is 2-isomorphic to the inclusion $D\hookrightarrow M$. By construction, there exists a non-zero 1-morphism $C_i\rightarrow D$ for some $i$, and the composite $C_i\rightarrow M$ is non-zero. Thus, the composite $C_i\Box A\rightarrow M\Box A\xrightarrow{n^M} M$ is a non-zero right $A$-module 1-morphism. This concludes the proof of the claim.
\end{proof}

\begin{Proposition}\label{prop:sepbimodfss}
Let $A$ and $B$ be separable algebras in the compact semisimple monoidal 2-category $\mathfrak{C}$. Then, $\mathbf{Bimod}_{\mathfrak{C}}(A,B)$ is a compact semisimple 2-category.
\end{Proposition}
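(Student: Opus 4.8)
The plan is to verify the three defining conditions of a compact semisimple 2-category for $\mathbf{Bimod}_{\mathfrak{C}}(A,B)$, following the proof of proposition~\ref{prop:sepmodfss} but now working on both sides simultaneously. For the existence of left and right adjoints for 1-morphisms: as $A$ and $B$ are separable they are in particular rigid, and $\mathfrak{C}$ has all adjoints since it is compact semisimple, so proposition~\ref{prop:bimoduleleftdajoints} already gives left adjoints in $\mathbf{Bimod}_{\mathfrak{C}}(A,B)$; for right adjoints I would argue exactly as in the proof of theorem~\ref{thm:modulerightdajoints}, namely apply that theorem to $A$ and to $B$ separately to see that $\mathbf{Bimod}_{\mathfrak{C}}(A)$ and $\mathbf{Bimod}_{\mathfrak{C}}(B)$ have left adjoints, whence $A^{2op}$ and $B^{2op}$ are rigid algebras in $\mathfrak{C}^{2op}$, and then invoke proposition~\ref{prop:bimoduleleftdajoints} in $\mathfrak{C}^{2op}$ together with the identification $\big(\mathbf{Bimod}_{\mathfrak{C}}(A,B)\big)^{2op}=\mathbf{Bimod}_{\mathfrak{C}^{2op}}(A^{2op},B^{2op})$. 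Cauchy completeness of $\mathbf{Bimod}_{\mathfrak{C}}(A,B)$ may be established exactly as proposition 3.3.8 of \cite{D4}.

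The crux is local finite semisimplicity, and this is where separability of both $A$ and $B$ is used. Given an $A$-$B$-bimodule $P$, applying lemma~\ref{lem:separablecondensations} to the separable algebra $B$ realizes $P$ as the splitting of a 2-condensation monad on the free right $B$-module $P\Box B$; one checks that the structure 1- and 2-morphisms appearing there are also left $A$-module morphisms — the left $A$-action is untouched, and the right $B$-action $n^P$ is a left $A$-module 1-morphism by the bimodule axiom — so that this is in fact a 2-condensation in $\mathbf{Bimod}_{\mathfrak{C}}(A,B)$. Dually, applying lemma~\ref{lem:separablecondensations} to $A$ regarded as a separable algebra in $\mathfrak{C}^{\Box op}$ realizes $P$ as a condensate of $A\Box P$ in $\mathbf{Bimod}_{\mathfrak{C}}(A,B)$. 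Composing these two 2-condensations exhibits every $A$-$B$-bimodule $P$ as the splitting of a 2-condensation monad on the free bimodule $A\Box P\Box B$. On the other hand, two applications of lemma 3.2.13 of \cite{D4} and of its left-module counterpart yield, for every object $C$ of $\mathfrak{C}$ and every $A$-$B$-bimodule $Q$, an equivalence of linear 1-categories $Hom_{A,B}(A\Box C\Box B, Q)\simeq Hom_{\mathfrak{C}}(C,Q)$, whose right-hand side is finite semisimple because $\mathfrak{C}$ is locally finite semisimple. Putting these two facts together, $Hom_{A,B}(P,Q)$ is the splitting of a 2-condensation monad on $Hom_{\mathfrak{C}}(P,Q)$ inside $\mathbf{FinCat_{ss}}$, which is Cauchy complete by lemma~\ref{lem:sscatCauchy}; hence $Hom_{A,B}(P,Q)$ is finite semisimple and $\mathbf{Bimod}_{\mathfrak{C}}(A,B)$ is a semisimple 2-category.

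Finally, the finiteness of $\pi_0(\mathbf{Bimod}_{\mathfrak{C}}(A,B))$ goes essentially verbatim as in proposition~\ref{prop:sepmodfss}: fixing simple objects $C_i$ of $\mathfrak{C}$ representing the finitely many classes of $\pi_0(\mathfrak{C})$, for any simple $A$-$B$-bimodule $P$ one picks a simple summand $D$ of its underlying object and a nonzero 1-morphism $C_i\to D$ for some $i$, and then the induced bimodule 1-morphism $A\Box C_i\Box B\to P$ is nonzero because precomposing it with $i^A\Box C_i\Box i^B$ recovers $C_i\to D\hookrightarrow P$ up to 2-isomorphism; by lemma 1.1.6 of \cite{D5} every class of $\pi_0(\mathbf{Bimod}_{\mathfrak{C}}(A,B))$ is then represented by a simple summand of some $A\Box C_i\Box B$, of which there are only finitely many. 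The one point genuinely requiring verification beyond repeating the arguments of proposition~\ref{prop:sepmodfss} on both sides is the claim that the one-sided separability data of $A$ and of $B$ assemble into honest $A$-$B$-bimodule 2-condensations; I expect this bookkeeping to be the main, though routine, obstacle.
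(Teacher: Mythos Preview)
Your proposal is correct and is precisely the adaptation of the proof of proposition~\ref{prop:sepmodfss} that the paper leaves to the reader: you handle adjoints via proposition~\ref{prop:bimoduleleftdajoints} and the $2op$ trick, Cauchy completeness via \cite{D4}, local finite semisimplicity by composing the two one-sided 2-condensations from lemma~\ref{lem:separablecondensations} to exhibit every bimodule as a condensate of a free bimodule $A\Box P\Box B$, and finiteness of $\pi_0$ via the same argument with representatives $A\Box C_i\Box B$. The only point the paper would add is that the bookkeeping you flag---checking that the one-sided condensation data are indeed $A$-$B$-bimodule morphisms---is routine, as you anticipated.
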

\begin{proof}
The proof of proposition \ref{prop:sepmodfss} can be adapted to the bimodule case. We leave the details to the reader.
\end{proof}

\begin{Definition}
Let $B$ be an arbitrary algebra in an arbitrary monoidal 2-category $\mathfrak{D}$. The partial center of $B$, denoted by $Z(B)$, is the monoidal 1-category of $B$-$B$-bimodule endomorphisms of $B$.
\end{Definition}

\begin{Remark}
We call $Z(B)$ the partial center of $B$ to distinguish it from the full center, which is a braided algebra in the Drinfeld center of $\mathfrak{D}$. We will explore the properties of the full centre in an upcoming article, in the meantime, we refer the reader to \cite{Da} for the decategorified version of this story. Now, if $B$ is a rigid algebra in a compact semisimple 2-category, then, by theorem \ref{thm:modulerightdajoints}, $Z(B)$ is a rigid monoidal 1-category. If, in addition, $\mathfrak{D}$ is compact semisimple, we expect that $Z(B)$ is a finite 1-category.
\end{Remark}

\begin{Theorem}\label{thm:characterizationseparablealgebra}
Let $A$ be a rigid algebra in a compact semisimple monoidal 2-category $\mathfrak{C}$. Then, $A$ is separable if and only if $Z(A)$ is finite semisimple. If either of these conditions is satisfied, then $\mathbf{Bimod}_{\mathfrak{C}}(A)$ is a compact semisimple 2-category.
\end{Theorem}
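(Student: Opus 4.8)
The plan is to read both directions off results already established, viewing $Z(A)$ as the endomorphism $1$-category $End_{\mathbf{Bimod}_{\mathfrak{C}}(A)}(A)$ of the object $A$: its objects are the $A$-$A$-bimodule $1$-endomorphisms of $A$, its morphisms are the bimodule $2$-morphisms between them, and its monoidal structure is composition. Since $A$ is rigid, the data $m^{*},\eta^{m},\epsilon^{m}$ exhibit $m\colon A\Box A\to A$ as a left adjoint \emph{inside} $\mathbf{Bimod}_{\mathfrak{C}}(A)$, where $A\Box A$ carries its outer bimodule structure; in particular $\epsilon^{m}\colon m\circ m^{*}\Rightarrow Id_{A}$ is a morphism of $Z(A)$. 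Unravelling the definition of separability, $A$ is separable precisely when $\epsilon^{m}$ admits a section in $Z(A)$.

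Suppose first that $A$ is separable. Applying Proposition \ref{prop:sepbimodfss} with $B=A$ shows that $\mathbf{Bimod}_{\mathfrak{C}}(A)=\mathbf{Bimod}_{\mathfrak{C}}(A,A)$ is a compact semisimple $2$-category; being in particular locally finite semisimple, its endomorphism category $Z(A)$ of the object $A$ is finite semisimple. This proves the forward implication, and also the last sentence of the theorem in the case where $A$ is separable.

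For the converse, assume $Z(A)$ is finite semisimple. The key point is that $\epsilon^{m}$ is always an epimorphism in $Z(A)$, with no semisimplicity hypothesis needed. Indeed, let $g\colon Id_{A}\Rightarrow Y$ be a bimodule $2$-morphism with $g\cdot\epsilon^{m}=0$. Whiskering with $m$ on the right gives a $2$-morphism $m\circ m^{*}\circ m\Rightarrow Y\circ m$ equal to $(g\,m)\cdot(\epsilon^{m}\,m)$ and hence to $0$; since one of the triangle identities for the adjunction $m\dashv m^{*}$ reads $(\epsilon^{m}\,m)\cdot(m\,\eta^{m})=Id_{m}$, precomposing with $m\,\eta^{m}\colon m\Rightarrow m\circ m^{*}\circ m$ yields $g\,m=0$. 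Whiskering on the right with $1\Box i\colon A\to A\Box A$ and using the unit constraint of the algebra, namely $m\circ(1\Box i)\cong Id_{A}$, then forces $g=0$; this last computation takes place in $\mathfrak{C}$, and the forgetful functor $Z(A)\to End_{\mathfrak{C}}(A)$ is faithful, so indeed $g=0$ in $Z(A)$. Hence $\epsilon^{m}$ is an epimorphism in $Z(A)$. As $Z(A)$ is a semisimple abelian category, this epimorphism splits: there is a morphism $\gamma^{m}\colon Id_{A}\Rightarrow m\circ m^{*}$ of $Z(A)$ with $\epsilon^{m}\cdot\gamma^{m}=Id_{Id_{A}}$, and being a morphism of $Z(A)$ it is automatically an $A$-$A$-bimodule $2$-morphism. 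Thus $A$ is separable. Finally, since the two conditions are now equivalent, whenever either of them holds $A$ is separable, and Proposition \ref{prop:sepbimodfss} again yields that $\mathbf{Bimod}_{\mathfrak{C}}(A)$ is compact semisimple, completing the last sentence.

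I expect the only genuinely delicate point to be the epimorphism claim for $\epsilon^{m}$: selecting the right whiskerings and the correct triangle identity, and verifying that the cancellation legitimately descends to $Z(A)$. Everything else is a direct appeal to Proposition \ref{prop:sepbimodfss} together with the elementary fact that epimorphisms split in a semisimple abelian category.
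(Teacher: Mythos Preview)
Your proof is correct. The forward implication and the final clause are identical to the paper's argument, both resting on Proposition~\ref{prop:sepbimodfss}.

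For the converse, however, you take a genuinely different route. The paper first reduces to the case where $A$ is \emph{indecomposable}, meaning $Id_A$ is simple in $Z(A)$: it decomposes $Id_A$ into simple summands $f_1\oplus\cdots\oplus f_n$ in $Z(A)$, splits $A$ accordingly as a direct sum of rigid algebras $A_1\boxplus\cdots\boxplus A_n$ in $\mathbf{Bimod}_{\mathfrak{C}}(A)$, and then observes that in the indecomposable case any nonzero morphism to the simple object $Id_{A_j}$ splits, while $\epsilon^m\neq 0$ because $m\neq 0$. Your argument bypasses this reduction entirely: you show once and for all that $\epsilon^m$ is an epimorphism in $Z(A)$ using only the triangle identity for $m\dashv m^*$ together with the unit constraint $m\circ(1\Box i)\cong Id_A$, and then invoke that epimorphisms split in any semisimple category. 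This is cleaner and more direct. What the paper's approach buys in exchange is the explicit decomposition of a rigid algebra with semisimple partial center into a direct sum of indecomposable rigid algebras, a structural statement which may be of independent use but is not strictly needed for the theorem as stated.
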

\begin{proof}
If $A$ is separable, it follows immediately from proposition \ref{prop:sepbimodfss} that $\mathbf{Bimod}_{\mathfrak{C}}(A)$ is compact semisimple, so that, a fortiori, $Z(A)$ is finite semisimple. Now, for the backward direction, let us assume that $Z(A)$ is a finite semisimple 1-category. We first establish the result under the additional assumption that $A$ is indecomposable, i.e. that $Id_A:A\rightarrow A$ is a simple object of $Z(A)$. In that case, because $\mathbf{Bimod}_{\mathfrak{C}}(A)$ has right adjoint by theorem \ref{thm:modulerightdajoints}, the $A$-$A$-bimodule 1-morphism $m:A\Box A\rightarrow A$ has a right adjoint (as an $A$-$A$-bimodule 1-morphism), which we denote by $m^*$. Let us write $\epsilon^m$ for the counit of this adjunction. We claim that $\epsilon^m$ has a section in $\mathbf{Bimod}_{\mathfrak{C}}(A)$. Namely, we have assumed that $Id_A$ is a simple object of $Z(A)$, so that either $\epsilon^m:m\circ m^*\Rightarrow Id_A$ has a section or it is zero. But, as $m$ is not the zero map, the latter is not possible. This proves that $A$ is a separable algebra in this case.

Moving on to the general case, let us write $$Id_A = f_1\oplus ... \oplus f_n$$ for a decomposition of $Id_A$ into a direct sum of simple objects of $Z(A)$. As $\mathbf{Bimod}_{\mathfrak{C}}(A)$ is Cauchy complete (this follows from proposition 3.3.8 of \cite{D4}), we can use proposition 1.3.16 of \cite{DR} to obtain a direct sum decomposition $$A = A_1\boxplus ...\boxplus A_n$$ of $A$ in $\mathbf{Bimod}_{\mathfrak{C}}(A)$. In particular, each object $A_i$ defines an algebra in $\mathfrak{C}$ using the restriction of the algebra structure on $A$. Further, it follows from the above direct sum decomposition that the restriction of $m:A\Box A\rightarrow A$ to $A_i\Box A_j$ can only be non-zero if $i=j$. Thence, $A$ is the direct sum of the algebras $A_i$ in $\mathfrak{C}$. This implies that $Z(A) \cong Z(A_1)\oplus ...\oplus Z(A_n)$, so that $Id_{A_i}=f_i$ is simple as an $A$-$A$-bimodule if and only if it is simple as an $A_i$-$A_i$-bimodule. In particular, each $A_i$ is indecomposable. Finally, as $A$ was assumed to be rigid, so is each $A_i$. The special case proven above allows us to conclude the proof.
\end{proof}

The following result is well-known when $\mathds{k}$ is a perfect field (see corollary 2.5.9 of \cite{DSPS13}).

\begin{Corollary}
Over any field $\mathds{k}$, a perfect tensor 1-category $\mathcal{C}$ is separable as a rigid algebra in $\mathbf{2Vect}$ if and only if its Drinfel'd center $\mathcal{Z}(\mathcal{C})$ is finite semisimple.
\end{Corollary}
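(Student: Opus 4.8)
The plan is to deduce this corollary directly from Theorem \ref{thm:characterizationseparablealgebra}. First I would record that $\mathbf{2Vect}$ is a compact semisimple (symmetric) monoidal 2-category over $\mathds{k}$, as established in Example \ref{ex:algebras2Vect}, and that a perfect tensor 1-category $\mathcal{C}$ is precisely a rigid algebra in $\mathbf{2Vect}$: this is the trivial-group case of Example \ref{ex:gradedrigid}, and it also follows from proposition 1.3 of \cite{BJS} together with the description of algebras in $\mathbf{2Vect}$ as perfect monoidal 1-categories. Hence Theorem \ref{thm:characterizationseparablealgebra} applies verbatim and tells us that $\mathcal{C}$ is separable as a rigid algebra in $\mathbf{2Vect}$ if and only if its partial center $Z(\mathcal{C})$ — the monoidal 1-category of $\mathcal{C}$-$\mathcal{C}$-bimodule endomorphisms of $\mathcal{C}$ in $\mathbf{2Vect}$ — is finite semisimple.

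It then remains to identify $Z(\mathcal{C})$ with the Drinfel'd center $\mathcal{Z}(\mathcal{C})$ as linear (indeed monoidal) 1-categories. Since $\mathcal{C}$ is finite semisimple, every $\mathds{k}$-linear endofunctor of $\mathcal{C}$ is automatically exact, so the $1$-morphisms $\mathcal{C}\to\mathcal{C}$ in $\mathbf{Bimod}_{\mathbf{2Vect}}(\mathcal{C})$ are exactly the $\mathcal{C}$-$\mathcal{C}$-bimodule endofunctors of $\mathcal{C}$ in the usual sense, and passing from the ambient 2-category $\mathbf{2Vect}$ to $\mathbf{FinCat}$ does not change this Hom-category. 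I would then invoke the standard monoidal equivalence $\mathrm{Fun}_{\mathcal{C}|\mathcal{C}}(\mathcal{C},\mathcal{C})\simeq\mathcal{Z}(\mathcal{C})$, valid for any finite tensor 1-category over any field (see \cite{EGNO}, or \cite{DSPS13}): to a pair $(c,\gamma)$ in $\mathcal{Z}(\mathcal{C})$ one assigns the endofunctor $c\otimes(-)$, with its evident left $\mathcal{C}$-module structure and with right $\mathcal{C}$-module structure supplied by the half-braiding $\gamma$; conversely a bimodule endofunctor $F$ is recovered from the object $F(\mathbf{1})$ equipped with the half-braiding $c\otimes F(\mathbf{1})\cong F(c)\cong F(\mathbf{1})\otimes c$ obtained by comparing the two module structures on $F$. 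Transporting the property \emph{finite semisimple} across this equivalence gives $Z(\mathcal{C})$ finite semisimple $\iff$ $\mathcal{Z}(\mathcal{C})$ finite semisimple, and combining with the previous paragraph yields the corollary.

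I expect no genuine difficulty in this argument; the only point that warrants care is the last one, namely verifying that the classical center-versus-bimodule-endofunctors equivalence, and the reduction of $\mathbf{2Vect}$-bimodule morphisms to ordinary bimodule endofunctors, are indeed available over an arbitrary, not necessarily perfect, field $\mathds{k}$ — which they are, precisely because $\mathcal{C}$ is a finite semisimple tensor category, so that exactness of functors and the usual center computations are unaffected by imperfection of $\mathds{k}$.
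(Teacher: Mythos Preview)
Your proposal is correct and follows essentially the same route as the paper: apply Theorem \ref{thm:characterizationseparablealgebra} and then identify the partial center $Z(\mathcal{C})$ with the Drinfel'd center $\mathcal{Z}(\mathcal{C})$ via the standard equivalence between $\mathcal{Z}(\mathcal{C})$ and $\mathcal{C}$-$\mathcal{C}$-bimodule endofunctors of $\mathcal{C}$. The paper's proof is a two-line version of exactly this, so your additional verification that the ambient $\mathbf{2Vect}$ satisfies the hypotheses and that the center identification goes through over an arbitrary field is just extra care rather than a different argument.
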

\begin{proof}
Recall that $\mathcal{Z}(\mathcal{C})$ is identified with the 1-category of $\mathcal{C}$-$\mathcal{C}$-bimodule endofunctors of $\mathcal{C}$. Thus, the result follows from theorem \ref{thm:characterizationseparablealgebra}.
\end{proof}

\subsection{The Dimension of a Rigid Algebra}

\begin{Definition}
An algebra $A$ in $\mathfrak{C}$ is called connected if the unit $i:I\rightarrow A$ is a simple 1-morphism.
\end{Definition}

To help with our intuition, let us examine an example.

\begin{Example}
A perfect monoidal 1-category $\mathcal{A}$ with unit $I$ viewed as an algebra in $\mathbf{2Vect}$ is connected if and only if $End_{\mathcal{A}}(I)$ is a simple $\mathds{k}$-algebra. Note that this implies that $End_{\mathcal{A}}(I)$ is a field, as it is necessarily a commutative algebra.
\end{Example}

We now fix a connected rigid algebra $A$ in $\mathfrak{C}$. In particular, $End_{\mathfrak{C}}(i)$, the finite dimensional $\mathds{k}$-algebra of 2-endomorphisms of $i$ in $\mathfrak{C}$, is a division $\mathds{k}$-algebra. In fact, as $A$ is an algebra, $End_{\mathfrak{C}}(i)$ has two compatible composition operations, so that the product is necessarily commutative by the Eckman-Hilton argument. Thence, $End_{\mathfrak{C}}(i)$ is a finite field extension of $\mathds{k}$. Further, under the equivalence $$\begin{tabular}{ccc}$Hom_{A\mathrm{-}A}(A\Box A,A)$&$\xrightarrow{\simeq}$& $Hom_{\mathfrak{C}}(I,A)$\\ $f$&$\mapsto$&$f\circ (i\Box i)$\end{tabular}$$ the unit $i:I\rightarrow A$ corresponds to $m:A\Box A\rightarrow A$ viewed as an $A$-$A$-bimodule 1-morphism. This implies that $\mathbb{K}:=End_{A\mathrm{-}A}(m)$, the endomorphism algebra of $m$ in $\mathbf{Bimod}_{\mathfrak{C}}(A)$, is a finite field extension of $\mathds{k}$.

As $\mathfrak{C}$ is a compact semisimple 2-category, it has right adjoints. In particular, $m^*:A\rightarrow A\Box A$ has a right adjoint in $\mathfrak{C}$, which we denote by $m^{**}:A\Box A\rightarrow A$. Thanks to theorem \ref{thm:modulerightdajoints}, $m^{**}$ is right adjoint to $m^*$ as an $A$-$A$-bimodule 1-morphism. But, in any finite semisimple tensor 1-category, the right and left dual of any object are non-canonically isomorphic. This means that we have $m^{**}\cong m$ in $\mathfrak{C}$. This can be substantially refined. Namely, the equivalence above factors as $$Hom_{A\mathrm{-}A}(A\Box A,A)\rightarrow Hom_{\mathfrak{C}}(A\Box A,A)\rightarrow Hom_{\mathfrak{C}}(I,A),$$ and $m$, $m^{**}$ are both objects of the left hand-side, which become isomorphic in the middle step. As the composite is an equivalence of 1-categories, there must exist a 2-isomorphism $\mu:m^{**}\cong m$ of $A$-$A$-bimodule 1-morphisms. Note that $\mu$ is not unique, it is determined up to a non-zero scalar in $\mathbb{K}$.

Let us now fix an $A$-$A$-bimodule 2-isomorphism $\mu:m^{**}\cong m$. We define a 2-morphism $\mathrm{Tr}(\mu)$, the trace of $\mu$, by

\settoheight{\algebra}{\includegraphics[width=45mm]{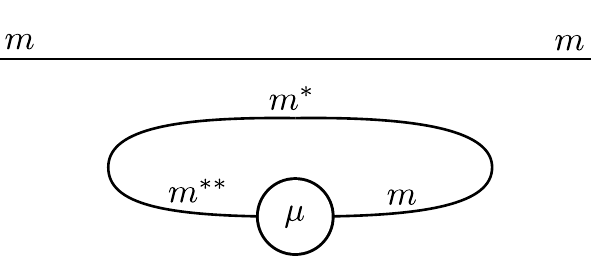}}

\begin{center}
\begin{tabular}{@{}cc@{}}

\raisebox{0.45\algebra}{$\mathrm{Tr}(\mu):=$} &
\includegraphics[width=45mm]{Pictures/dimension/tracemu.pdf}.
\end{tabular}
\end{center}

\noindent As all of the 2-morphisms used in the definition of $\mathrm{Tr}(\mu)$ are $A$-$A$-bimodule 2-morphisms, so is $\mathrm{Tr}(\mu)$. Thus, we think of $\mathrm{Tr}(\mu)$ as a scalar in $\mathbb{K}$. Analogously, we define the $A$-$A$-bimodule 2-morphism $\mathrm{Tr}((\mu^{-1})^*)$ by 

\settoheight{\algebra}{\includegraphics[width=45mm]{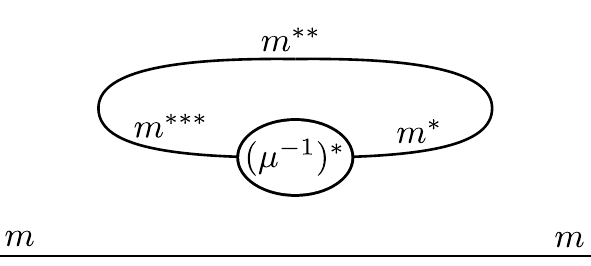}}

\begin{center}
\begin{tabular}{@{}cc@{}}

\raisebox{0.45\algebra}{$\mathrm{Tr}((\mu^{-1})^*):=$} &
\includegraphics[width=45mm]{Pictures/dimension/tracemustar.pdf}.
\end{tabular}
\end{center}

\noindent The scalars $\mathrm{Tr}(\mu)$ and $\mathrm{Tr}((\mu^{-1})^*)$ depend not only on the 2-isomorphism $\mu:m\cong m^{**}$, but also on the adjunction data for $m$, $m^*$, and $m^{**}$. However, if $\mathbb{K}\cong \mathds{k}$, which always holds if $\mathds{k}$ is algebraically closed, then by multiplying them together we obtain a scalar which is independent of any choice. This is precisely the same trick that is used to define the squared norm of a simple object in a multifusion 1-category (see definition 7.12.2 of \cite{EGNO}).

\begin{Definition}\label{def:dimension}
Provided $\mathbb{K}\cong \mathds{k}$, the dimension of a connected rigid algebra $A$ in $\mathfrak{C}$ is the scalar in $\mathds{k}$ defined by $$\mathrm{Dim}_{\mathfrak{C}}(A):=\mathrm{Tr}(\mu)\cdot \mathrm{Tr}((\mu^{-1})^*).$$
\end{Definition}

The importance of the dimension is witnessed by the following result, which is a generalization of theorem 2.6.7 of \cite{DSPS13}.

\begin{Theorem}\label{thm:dimensionseparable}
A connected rigid algebra $A$ in a compact semisimple monoidal 2-category $\mathfrak{C}$ is separable if and only if there exists a 2-isomorphism $\mu:m\cong m^{**}$ of $A$-$A$-bimodules such that $\mathrm{Tr}(\mu)$ is non-zero in $\mathbb{K}$. If $\mathbb{K}\cong \mathds{k}$, this holds if and only if $\mathrm{Dim}_{\mathfrak{C}}(A)$ is non-zero.
\end{Theorem}
\begin{proof}
Let us define a 2-morphism of $A$-$A$-bimodules $\tau$ by 

\settoheight{\algebra}{\includegraphics[width=30mm]{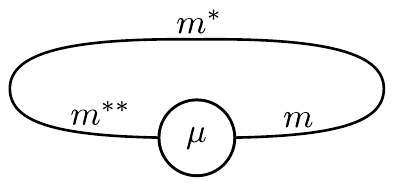}}

\begin{center}
\begin{tabular}{@{}cc@{}}

\raisebox{0.45\algebra}{$\tau:=$} &
\includegraphics[width=30mm]{Pictures/dimension/tau.pdf}.
\end{tabular}
\end{center}

\noindent As $\mathrm{Tr}(\mu)$ is a scalar in $\mathbb{K}$, if it is non-zero, then it is in fact invertible. Now, $\mathrm{Tr}(\mu)$ is obtained by precomposing $\tau$ with $m$. But, at the level of 2-morphisms in $\mathfrak{C}$, precomposition along $m$ induces a split inclusion $End_{\mathfrak{C}}(Id_A)\hookrightarrow End_{\mathfrak{C}}(m)$ of $\mathds{k}$-algebras. This implies that $\tau$ is invertible as a 2-morphism in $\mathfrak{C}$. A direct check then shows that $\tau$ is invertible as a 2-morphism of $A$-$A$-bimodules. Thus, we can set $$\gamma^m:=(\mu\circ m^*)\cdot \eta^{m^*}\cdot \tau^{-1},$$ which is an $A$-$A$-bimodule 2-morphism, and satisfies $\epsilon^m\cdot\gamma^m = Id_A$. Thus, $\gamma^m$ upgrades the rigid algebra $A$ to a separable algebra. Moreover, if $\mathds{k}$ is algebraically closed, and $\mathrm{Dim}_{\mathfrak{C}}(A)$ is non-zero, then $\mathrm{Tr}(\mu)$ is necessarily non-zero.

Conversely, if $A$ is separable, then it follows from theorem \ref{thm:characterizationseparablealgebra} that the 2-category $\mathbf{Bimod}_{\mathfrak{C}}(A)$ is compact semisimple. Moreover, as $A$ is connected, we have seen above that $End_{A\mathrm{-}A}(m)\cong End_{\mathfrak{C}}(i)$ is a field, which establishes that $m$ is a simple 1-morphism in $\mathbf{Bimod}_{\mathfrak{C}}(A)$. Now, left and right adjoints are non-canonically isomorphic in a compact semisimple 2-category. Given that $m$ is a simple $A$-$A$-bimodule, and $\gamma^m$ is non-zero, we find that $\gamma^m:Id_A\Rightarrow m\circ m^*$ is a unit witnessing that $m^*$ is left adjoint to $m$. Let us write $\delta^m:m^*\circ m\Rightarrow Id_{AA}$ for a compatible counit. Taking $\mu = Id_m$, we find that $\mathrm{Tr}(\mu) = Id_m$. This concludes the first part of the proof. 

Let us now assume that $\mathds{k}$ is algebraically closed. It is enough to show that $\mathrm{Tr}((\mu^{-1})^*)= m\circ (\delta^m\cdot\eta^m)$ is non-zero. In order to do so, we will argue that the $\mathds{k}$-linear pairing $$\begin{tabular}{c c c}
$\mathrm{P}:Hom_{A\mathrm{-}A}(m^*\circ m, Id_{AA})\times Hom_{A\mathrm{-}A}(Id_{AA}, m^*\circ m)$ & $\rightarrow$ & $End_{A\mathrm{-}A}(m)$\\
$\ \ \ \ \ (\alpha,\beta)$ & $\mapsto$ & $m\circ (\alpha \cdot \beta)$
\end{tabular}$$
is non-degenerate. Namely, recall that $\mathbb{K}=End_{A\mathrm{-}A}(m)$, but we have assumed $\mathbb{K}\cong\mathds{k}$. This implies that $Hom_{A\mathrm{-}A}(m^*\circ m, Id_{AA})\cong \mathds{k}$ and that $Hom_{A\mathrm{-}A}(Id_{AA}, m^*\circ m)\cong \mathds{k}$. Further, there is a right action
$$\begin{tabular}{c c c}
$Hom_{A\mathrm{-}A}(m^*\circ m, Id_{AA})\times End_{A\mathrm{-}A}(m)$ & $\rightarrow$ & $Hom_{A\mathrm{-}A}(m^*\circ m, Id_{AA}).$\\
$(\alpha,\gamma)$ & $\mapsto$ & $\alpha \cdot (m^*\circ \gamma)$
\end{tabular}$$ Likewise, there is a left action of $End_{A\mathrm{-}A}(m)$ on $Hom_{A\mathrm{-}A}(Id_{AA}, m^*\circ m)$, and the pairing $\mathrm{P}$ is balanced under these actions of $End_{A\mathrm{-}A}(m)$. It is therefore enough to show that $\mathrm{P}$ is non-zero. We claim that the map $$\begin{tabular}{c c c}
$Hom_{A\mathrm{-}A}(m^*\circ m, Id_{AA})$ & $\rightarrow$ & $Hom_{A\mathrm{-}A}(m\circ m^*\circ m, m).$\\
$\alpha$ & $\mapsto$ & $m\circ \alpha$
\end{tabular}$$ is non-zero. Namely, $\delta^m$ is a counit witnessing an adjunction between $m$ and $m^*$, so that $m\circ \delta^m$ is non-zero. In particular, there exists a simple $A$-$A$-bimodule 1-morphism $f:A\Box A\rightarrow A\Box A$, which is a summand of both $Id_{AA}$ and $m^*\circ m$, and such that $m\circ f$ is non-zero. This demonstrates that $\mathrm{P}$ is non-zero.
\end{proof}

\begin{Example}
Over a general field, the dimension of a connected rigid algebra might not be well-defined as we now explain. Let us take $\mathds{k}=\mathbb{R}$, and $\mathfrak{C}=\mathbf{2Vect}$. We consider the fusion 1-category $\mathcal{C}$ of $\mathbb{C}$-$\mathbb{C}$-bimodules in $\mathbf{Vect}$. We have that $End_{\mathcal{C}\mathrm{-}\mathcal{C}}(\mathcal{C})\simeq \mathbf{Vect}$. In particular, $\mathcal{C}$ is a separable algebra in $\mathbf{2Vect}$ by theorem \ref{thm:characterizationseparablealgebra}. On the other hand, for any choice of $\mathcal{C}$-$\mathcal{C}$-bimodule natural isomorphism $\mu$, the scalar $\tau$ defined in the proof of theorem \ref{thm:dimensionseparable} lies in $\mathbb{R}$. But $\mu$ is defined up to a non-zero element of $\mathbb{C}$, so that there must exist a non-zero $\mu$ such that $\mathrm{Tr}(\mu) = 0$. Nevertheless, there exists choices of $\mu$ for which $\mathrm{Tr}(\mu)\cdot \mathrm{Tr}((\mu^{-1})^*)$ is non-zero.
\end{Example}

\begin{Remark}
It has been conjectured in \cite{JFR} that every rigid algebra in a fusion 2-category over an algebraically closed field of characteristic zero is separable. Thanks to our result above, in order to establish their conjecture, it would suffice to prove that the dimension of every rigid algebra is non-zero. We discuss some examples below.
\end{Remark}

\begin{Remark}
The following question was brought to our attention by Theo Johnson-Freyd: Is the dimension of a connected rigid algebra a Morita invariant. More precisely, let $A$ and $B$ be two connected rigid algebras in a finite semisimple monoidal 2-category $\mathfrak{C}$ over an algebraically closed field. The associated 2-categories of right modules $\mathbf{Mod}_{\mathfrak{C}}(A)$, and $\mathbf{Mod}_{\mathfrak{C}}(B)$ are left $\mathfrak{C}$-module 2-categories by proposition 3.3.1 of \cite{D4}. Then, the precise question is: If $\mathbf{Mod}_{\mathfrak{C}}(A)\simeq\mathbf{Mod}_{\mathfrak{C}}(B)$ as left $\mathfrak{C}$-module 2-categories, do we have $\mathrm{Dim}_{\mathfrak{C}}(A)=\mathrm{Dim}_{\mathfrak{C}}(B)$? In the case $\mathfrak{C}=\mathbf{2Vect}$ (and $\mathds{k}$ is algebraically closed), this is a standard result in the theory of fusion 1-categories (see \cite{ENO}).
\end{Remark}

\begin{Remark}
For the sake of completeness, let us also comment on the case of non-connected rigid algebras. Let $A$ be an algebra with unit $i$ in a compact semisimple monoidal 2-category $\mathfrak{C}$. In particular, we may decompose $i = i_1\oplus ... \oplus i_n$ into a direct sum of simple 1-morphisms in $\mathfrak{C}$, and this induces a decomposition of the algebra $A$ as $$\begin{pmatrix}
_{1}A_1 & \cdots & _1A_n\\
\vdots & \ddots & \vdots\\
_nA_1&\cdots & _nA_n\\
\end{pmatrix},$$ where ${_jA_j}$ is a connected algebra with unit $i_j$, and ${_jA_k}$ is an ${_jA_j}$-${_kA_k}$-bimodule for every $j,k$. Now, if we assume that $A$ is rigid, then it is easy to check that ${_jA_j}$ is a rigid algebra for every $j$. In particular, if $\mathds{k}$ is algebraically closed and $\mathfrak{C}=\mathbf{2Vect}$, then this is precisely the well-known decomposition of a mutlifusion 1-category into a matrix of finite semisimple bimodule categories with fusion categories along the diagonal. Over an arbitrary compact semisimple 2-category, it is then possible to show that $A$ is separable if and only if ${_jA_j}$ is separable for every $j$. As the proof of this fact is non-trivial, we postpone it to a later article.
\end{Remark}

\subsection{Examples}\label{sub:separableexamples}

When working with fusion 1-categories over an algebraically closed field, there is a well-known notion of global (or categorical) dimension. We begin relating the notion of dimension we have just introduced with this classical notion.

\begin{Proposition}\label{prop:globaldimension}
Assume that $\mathds{k}$ is algebraically closed, and let $\mathcal{C}$ be a fusion 1-category with simple unit $I$, i.e.\ a connected rigid algebra in $\mathbf{2Vect}$. Then, the dimension of $\mathcal{C}$ in the sense of definition \ref{def:dimension} coincides with $\mathrm{dim}(\mathcal{C})$, its global dimension in the sense of definition 2.6.3 of \cite{DSPS13}.
\end{Proposition}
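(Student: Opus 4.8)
The plan is to unwind Definition \ref{def:dimension} entirely in the case $\mathfrak{C}=\mathbf{2Vect}$, $A=\mathcal{C}$, and to check that it reproduces the recipe of Definition 2.6.3 of \cite{DSPS13}. First I would make the relevant data concrete. Write $\{X_i\}$ for a set of representatives of the simple objects of $\mathcal{C}$. Since $\mathcal{C}$ is finite semisimple, the multiplication $m=\otimes\colon\mathcal{C}\boxtimes\mathcal{C}\to\mathcal{C}$ is exact, so its ordinary right adjoint is given by the formula $m^*(Z)\cong\bigoplus_{i,j}\mathrm{Hom}_{\mathcal{C}}(X_i\otimes X_j,Z)\otimes(X_i\boxtimes X_j)$; in particular $m^*(I)\cong\bigoplus_i X_i\boxtimes X_i^\vee$. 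By \cite{DSPS14} (equivalently, by Theorem \ref{thm:modulerightdajoints}), $m^*$ carries a canonical $\mathcal{C}$-$\mathcal{C}$-bimodule structure, and iterating the construction identifies $m^{**}$ with the functor $X\boxtimes Y\mapsto X^{**}\otimes Y^{**}$ up to the canonical structure isomorphisms of the double dual. One then writes the adjunction data $\eta^m,\epsilon^m$ and $\eta^{m^*},\epsilon^{m^*}$ explicitly in terms of the evaluation and coevaluation maps of $\mathcal{C}$ together with the Frobenius (trace) pairings forced by semisimplicity.

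Next I would fix a bimodule isomorphism $\mu\colon m^{**}\cong m$. Because $\mathcal{C}$ has simple unit, $m$ is a simple object of $\mathbf{Bimod}_{\mathbf{2Vect}}(\mathcal{C})$, so, as recalled in the discussion preceding Definition \ref{def:dimension}, such a $\mu$ exists and is unique up to a scalar in $\mathbb{K}\cong\mathds{k}$. A convenient representative is assembled from a family of isomorphisms $a_i\colon X_i\xrightarrow{\sim}X_i^{**}$ on the simple objects, extended by naturality and by the monoidal structure isomorphisms of $(-)^{**}$; the requirement that $\mu$ be a \emph{bimodule} (and not merely a plain) natural transformation translates into a compatibility relation between the $a_i$ for mutually dual simples. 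It is worth stressing that this is \emph{not} the demand that $\mathcal{C}$ be pivotal: the bimodule structure on $m^{**}$ is itself twisted by the double-dual coherence isomorphisms, and $\mu$ is a bimodule map precisely because those twists cancel.

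Finally I would compute. Substituting the explicit adjunction data of the first step and the isomorphism $\mu$ of the second into the string-diagram definitions of the $A$-$A$-bimodule $2$-morphisms $\mathrm{Tr}(\mu)$ and $\mathrm{Tr}((\mu^{-1})^*)$, each collapses to a scalar expressed entirely through the (co)evaluations of $\mathcal{C}$ and the chosen isomorphisms $a_i$; the ingredients that survive are exactly the quantum traces $\mathrm{Tr}(a_i)$ and $\mathrm{Tr}((a_i^{-1})^*)$, together with the normalizations of the units and counits. Comparing this output term by term with Definition 2.6.3 of \cite{DSPS13}, the two constructions coincide, so $\mathrm{Dim}_{\mathbf{2Vect}}(\mathcal{C})=\sum_i|X_i|^2=\mathrm{dim}(\mathcal{C})$. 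I expect the main obstacle to be precisely this last bookkeeping: one must carry the bimodule structures through the iterated adjunction, keep careful track of left versus right duals and of the double-dual twist built into $m^{**}$, and match the resulting normalizations against the conventions of \cite{DSPS13}. The fact that $\mathcal{C}$ need not admit a pivotal structure is what makes it essential throughout to work with the choice-independent product $\mathrm{Tr}(\mu)\cdot\mathrm{Tr}((\mu^{-1})^*)$ rather than with either trace on its own.
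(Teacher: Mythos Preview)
Your plan is a legitimate strategy, but it differs substantially from the paper's argument. The paper does \emph{not} unwind $m^*$ and $m^{**}$ on simple objects at all. Instead it passes to left $\mathcal{C}\boxtimes\mathcal{C}^{\otimes op}$-modules and invokes the Frobenius algebra $R$ from \cite{DSPS13} for which $\mathcal{C}\simeq Mod_{\mathcal{C}\boxtimes\mathcal{C}^{\otimes op}}(R)$. Under this identification $m$ becomes $(-)\otimes R_R$, and the Frobenius structure on $R$ makes $(-)\otimes_R R_I$ simultaneously the left and right adjoint of $m$, with (co)units given by $u,m,\Delta,\lambda$. Thus $m^{**}=m$ on the nose as bimodule functors, one may take $\mu=\mathrm{Id}$, and the two traces evaluate separately to $\mathrm{dim}(\mathcal{C})$ and $1$ (the latter because $\lambda\circ u=1$). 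No object-by-object bookkeeping is needed.

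Your approach, by contrast, aims to realize $\mu$ via a family of isomorphisms $a_i:X_i\xrightarrow{\sim}X_i^{**}$ and to watch the squared norms $|X_i|^2$ emerge directly. This is conceptually appealing because it shows explicitly how the sum $\sum_i|X_i|^2$ arises, but it carries real cost: the bimodule right adjoint of a bimodule functor is the plain right adjoint equipped with a \emph{twisted} bimodule structure, and keeping track of that twist through two iterations---together with the constraints it imposes among the $a_i$ (your $\mu$ is unique up to a single scalar, whereas the $a_i$ individually are not)---is exactly the ``bookkeeping'' you anticipate. Your identification of $m^{**}$ with $X\boxtimes Y\mapsto X^{**}\otimes Y^{**}$ is heuristically right but would need a precise formulation: as plain functors $m^{**}\cong m$, and the double-dual twist lives in the comparison of bimodule structures, not in the underlying functor. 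The paper's Frobenius trick is precisely what lets one bypass this entire layer.
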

\begin{proof}
Instead of working with $\mathcal{C}$-$\mathcal{C}$-bimodules, we will equivalently work with left $\mathcal{C}\boxtimes\mathcal{C}^{\otimes op}$-modules. As $\mathcal{C}$ is a fusion 1-category, so is $\mathcal{C}\boxtimes\mathcal{C}^{\otimes op}$. We write $I$ for the simple unit of $\mathcal{C}\boxtimes\mathcal{C}^{\otimes op}$ and $\otimes$ for its monoidal product. By subsection 2.6 of \cite{DSPS13}, there exists a Frobenius algebra $R$ in $\mathcal{C}\boxtimes\mathcal{C}^{\otimes op}$ with multiplication $m:R\otimes R\rightarrow R$, unit $u:I\rightarrow R$, comultiplication $\Delta:R\rightarrow R\otimes R$, and counit $\lambda:R\rightarrow I$ such that $\mathcal{C}$ is equivalent to $Mod_{\mathcal{C}\boxtimes\mathcal{C}^{\otimes op}}(R)$, the 1-category of right $R$-modules in $\mathcal{C}\boxtimes\mathcal{C}^{\otimes op}$, as a left $\mathcal{C}\boxtimes\mathcal{C}^{\otimes op}$-module category. By the construction given in \cite{DSPS13}, it follows that $End_R(R)$, the algebra of endomorphisms of $R$ as a left $R$-module, is given by $\mathds{k}$. Furthermore, we may assume that $\lambda\circ u = 1$, so that the $R$-$R$-bimodule map $m\circ \Delta:R\rightarrow R$ is given by multiplication by $\mathrm{dim}(\mathcal{C})$.

Now, let us also identify $\mathcal{C}\boxtimes\mathcal{C}$ with $Mod_{\mathcal{C}\boxtimes\mathcal{C}^{\otimes op}}(I)$ as a left $\mathcal{C}\boxtimes\mathcal{C}^{\otimes op}$-module 1-category. Under these identifications, the left $\mathcal{C}\boxtimes\mathcal{C}^{\otimes op}$-module functor $\otimes:\mathcal{C}\boxtimes\mathcal{C}\rightarrow \mathcal{C}$ is given by $(-)\otimes R_R$. In particular, its right adjoint (as a $\mathcal{C}\boxtimes\mathcal{C}^{\otimes op}$-module functor) is given by $(-)\otimes_R R_I:Mod_{\mathcal{C}\boxtimes\mathcal{C}^{\otimes op}}(R)\rightarrow Mod_{\mathcal{C}\boxtimes\mathcal{C}^{\otimes op}}(I)$. The unit for this adjunction is induced by the unit $u:I\rightarrow R$ of $R$, and the counit of this adjunction is induced by the multiplication $m:R\otimes R\rightarrow R$ viewed as a map of $R$-$R$-bimodules. The triangle identities follow from the fact that $R$ is an algebra.

But, $(-)\otimes_R R_I$ is also the left adjoint of $(-)\otimes R_R$. Namely, $\Delta$ supplies a map of $R$-$R$-bimodules $R\rightarrow R\otimes R$, which gives us the desired unit, and $\lambda$ provides us with the desired counit. These maps clearly satisfy the triangle identities. This means that the double right adjoint of $(-)\otimes R_R$ is $(-)\otimes R_R$ itself. Thus, we may take $\mu$ to be the natural isomorphism of left $\mathcal{C}\boxtimes\mathcal{C}^{\otimes op}$-module induced by the identity map on $R$ (viewed as a map of left $R$-modules). Using the above data, we find that $\mathrm{Tr}(\mu)=\mathrm{dim}(\mathcal{C})$ and $\mathrm{Tr}((\mu^{-1})^*)=1,$ where we have used $End_{R}(R)=\mathds{k}$. Putting everything together, we find $\mathrm{Dim}(\mathcal{C})=\mathrm{dim}(\mathcal{C})$ as desired.
\end{proof}

\begin{Remark}
If $\mathds{k}$ is algebraically closed, it follows from theorem 2.6.7 of \cite{DSPS13} that the fusion 1-category $\mathcal{C}$ is separable as an algebra in $\mathbf{2Vect}$ if and only if it is separable in the sense of definition 2.5.8 of \cite{DSPS13}. In particular, if we assume in addition that $\mathds{k}$ has characteristic zero, then every fusion category has non-zero global dimension by \cite{ENO}, and so is separable.
\end{Remark}

\begin{Proposition}\label{prop:relativedimension}
Let $\mathcal{B}$ be a perfect braided tensor 1-category with simple unit, and $\mathcal{C}$ be a separable $\mathcal{B}$-central tensor 1-category with simple unit. If $\mathcal{B}$ is a separable tensor 1-category, then $\mathcal{C}$ is separable as an algebra in $\mathbf{Mod}(\mathcal{B})$ if and only if $\mathcal{C}$ is a separable tensor 1-category.
\end{Proposition}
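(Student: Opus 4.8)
The plan is to translate each side of the asserted equivalence into a statement about a partial center using theorem~\ref{thm:characterizationseparablealgebra}, and then to compare the two $1$-categories that result. Since $\mathcal{C}$ is tensor, the characterization of rigid algebras in $\mathbf{Mod}(\mathcal{B})$ obtained above identifies $\mathcal{C}$ with a rigid algebra in the compact semisimple monoidal $2$-category $\mathbf{Mod}(\mathcal{B})$; moreover $\mathcal{C}$ is a perfect $1$-category, being a separable module category over the perfect category $\mathcal{B}$, so it is also a rigid algebra in $\mathbf{2Vect}$. Applying theorem~\ref{thm:characterizationseparablealgebra} in $\mathbf{Mod}(\mathcal{B})$ shows that $\mathcal{C}$ is separable as an algebra in $\mathbf{Mod}(\mathcal{B})$ if and only if $Z_{\mathbf{Mod}(\mathcal{B})}(\mathcal{C})$ is finite semisimple, and applying it in $\mathbf{2Vect}$ shows that $\mathcal{C}$ is a separable tensor $1$-category if and only if $Z_{\mathbf{2Vect}}(\mathcal{C})$, that is the Drinfeld center $\mathcal{Z}(\mathcal{C})$, is finite semisimple (the latter equivalence being the corollary above). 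So it suffices to prove that $Z_{\mathbf{Mod}(\mathcal{B})}(\mathcal{C})$ is finite semisimple if and only if $\mathcal{Z}(\mathcal{C})$ is.

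By definition $Z_{\mathbf{Mod}(\mathcal{B})}(\mathcal{C})$ consists of the $\mathcal{C}$-$\mathcal{C}$-bimodule endofunctors of $\mathcal{C}$ whose bimodule structure is compatible with the $\mathcal{B}$-balancing, and forgetting the balancing defines a functor $\iota\colon Z_{\mathbf{Mod}(\mathcal{B})}(\mathcal{C})\to\mathcal{Z}(\mathcal{C})$ which one checks is fully faithful. One implication is immediate: if $\mathcal{Z}(\mathcal{C})$ is finite semisimple, its full subcategory $Z_{\mathbf{Mod}(\mathcal{B})}(\mathcal{C})$ is closed under direct sums and under splittings of idempotents --- the latter because $\mathbf{Bimod}_{\mathbf{Mod}(\mathcal{B})}(\mathcal{C})$ is Cauchy complete by proposition~3.3.8 of \cite{D4} --- hence is itself finite semisimple. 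This gives the direction ``$\mathcal{C}$ separable tensor $\Rightarrow$ $\mathcal{C}$ separable in $\mathbf{Mod}(\mathcal{B})$'', and does not use that $\mathcal{B}$ is separable.

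The converse is the heart of the matter, and is where separability of $\mathcal{B}$ is indispensable; here I would pass to the full $2$-categories of bimodules and imitate the proof of proposition~\ref{prop:sepmodfss}. By theorem~\ref{thm:characterizationseparablealgebra}, the hypothesis that $\mathcal{C}$ is separable in $\mathbf{Mod}(\mathcal{B})$ makes $\mathbf{Bimod}_{\mathbf{Mod}(\mathcal{B})}(\mathcal{C})$ compact semisimple, and it suffices to deduce that $\mathbf{Bimod}_{\mathbf{2Vect}}(\mathcal{C})$ is compact semisimple, since this forces $\mathcal{Z}(\mathcal{C})=Z_{\mathbf{2Vect}}(\mathcal{C})$ to be finite semisimple. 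The link is a separable algebra: transporting the canonical algebra $R_{\mathcal{B}}=\bigoplus_{b}b\boxtimes b^{*}$ of $\mathcal{B}$ (summed over representatives of the simple objects) along the braided central functor $F\colon\mathcal{B}\to\mathcal{Z}(\mathcal{C})$ produces a connected algebra $L$ in the tensor category $\mathcal{C}\boxtimes\mathcal{C}^{\otimes op}$ with $Mod_{\mathcal{C}\boxtimes\mathcal{C}^{\otimes op}}(L)\simeq\mathcal{C}\boxtimes_{\mathcal{B}}\mathcal{C}^{\otimes op}$, the enveloping algebra of $\mathcal{C}$ in $\mathbf{Mod}(\mathcal{B})$. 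Since $\mathcal{B}$ is a separable tensor $1$-category, $R_{\mathcal{B}}$ --- and therefore its image $L$ under a monoidal functor --- is a separable algebra, and separability of $L$ makes the restriction and induction functors along $\mathcal{C}\boxtimes\mathcal{C}^{\otimes op}\to Mod(L)$ into an adjunction one of whose structure $2$-morphisms admits a section. Using the standard identification of $\mathcal{C}$-$\mathcal{C}$-bimodule categories with module categories over $\mathcal{C}\boxtimes\mathcal{C}^{\otimes op}$, this descends to a pair of adjoint $2$-functors between $\mathbf{Bimod}_{\mathbf{Mod}(\mathcal{B})}(\mathcal{C})$ and $\mathbf{Bimod}_{\mathbf{2Vect}}(\mathcal{C})$ through which every object, and every Hom-category, on the $\mathbf{2Vect}$ side arises as the splitting of a $2$-condensation monad built from the $\mathbf{Mod}(\mathcal{B})$ side; since $\mathbf{FinCat_{ss}}$ is Cauchy complete (lemma~\ref{lem:sscatCauchy}), one concludes, just as in the proof of proposition~\ref{prop:sepmodfss} (also using theorem~\ref{thm:modulerightdajoints} for adjoints and the same argument for finiteness of $\pi_0$), that $\mathbf{Bimod}_{\mathbf{2Vect}}(\mathcal{C})$ is compact semisimple. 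I expect the genuine difficulty to lie in making precise the passage from ``$L$ is a separable algebra'' to the section of the relevant $2$-morphism of bimodule $2$-categories; concretely this amounts to exhibiting the separability element of $R_{\mathcal{B}}$ inside $\mathcal{C}\boxtimes\mathcal{C}^{\otimes op}$, which over an arbitrary field plays the role of the averaging idempotent $\tfrac{1}{\dim(\mathcal{B})}\sum_{b}$ available over an algebraically closed field of characteristic zero, and which exists precisely because $\mathcal{B}$ is separable.
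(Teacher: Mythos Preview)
Your strategy is genuinely different from the paper's. The paper does not go through theorem~\ref{thm:characterizationseparablealgebra} at all; instead it uses the trace criterion of theorem~\ref{thm:dimensionseparable}. Concretely, the multiplication $T:\mathcal{C}\boxtimes\mathcal{C}\to\mathcal{C}$ in $\mathbf{2Vect}$ factors as $T=\widetilde{T}\circ Q$, where $\widetilde{T}:\mathcal{C}\boxtimes_{\mathcal{B}}\mathcal{C}\to\mathcal{C}$ is the multiplication in $\mathbf{Mod}(\mathcal{B})$ and $Q$ is the canonical quotient; moreover $Q=\mathcal{C}\boxtimes_{\mathcal{B}}P\boxtimes_{\mathcal{B}}\mathcal{C}$ with $P$ the multiplication of $\mathcal{B}$. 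Separability of $\mathcal{B}$ lets one choose $\pi:P^{**}\cong P$ with $\mathrm{Tr}(\pi)=1$, and then for any $\nu:\widetilde{T}^{**}\cong\widetilde{T}$ the composite $\mu=\nu\cdot(\mathcal{C}\boxtimes_{\mathcal{B}}\pi\boxtimes_{\mathcal{B}}\mathcal{C})$ satisfies $\mathrm{Tr}(\mu)=\mathrm{Tr}(\nu)\circ Q$. Since precomposition by $Q$ is faithful, $\mathrm{Tr}(\mu)\neq 0$ iff $\mathrm{Tr}(\nu)\neq 0$, and both directions of the equivalence follow at once from theorem~\ref{thm:dimensionseparable}.

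Your route via partial centers is conceptually reasonable, and the forward direction is essentially fine: the functor $Z_{\mathbf{Mod}(\mathcal{B})}(\mathcal{C})\to\mathcal{Z}(\mathcal{C})$ obtained by precomposing the bimodule structure isomorphisms with $Q$ is indeed fully faithful (this identifies the relative center with the full subcategory of $\mathcal{Z}(\mathcal{C})$ on those half-braidings that descend through $Q$), and a full idempotent-complete additive subcategory of a finite semisimple category is finite semisimple. You are also right that this direction does not need $\mathcal{B}$ to be separable.

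The genuine gap is in your converse. You correctly identify that the crux is to produce, from the separability datum of $\mathcal{B}$, a section exhibiting every Hom-category in $\mathbf{Bimod}_{\mathbf{2Vect}}(\mathcal{C})$ as a retract of one in $\mathbf{Bimod}_{\mathbf{Mod}(\mathcal{B})}(\mathcal{C})$; but you do not actually construct this, and the passage from ``$L$ is separable in $\mathcal{C}\boxtimes\mathcal{C}^{\otimes op}$'' to a $2$-condensation relating the two bimodule $2$-categories is precisely the nontrivial content. The paper's trace argument sidesteps this entirely: the factorization $T=\widetilde{T}\circ Q$ packages the comparison into a single scalar computation, and the separability of $\mathcal{B}$ enters only as the normalization $\mathrm{Tr}(\pi)=1$, rather than as the construction of an explicit splitting at the $2$-categorical level. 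What the paper's approach buys is symmetry (both directions come from one identity of traces) and the avoidance of the bookkeeping your sketch would require; what your approach would buy, if completed, is a more structural statement about the relation between the two bimodule $2$-categories.
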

\begin{proof}
We begin by proving that $\mathcal{C}$ is perfect. In order to do so, it is enough to show that $\mathcal{C}\boxtimes\mathcal{C}$ is a finite semisimple 1-category. Now, $\mathcal{C}$ is a separable right $\mathcal{B}$-module category, thence it follows from the definition that there exists a separable algebra $R$ in $\mathcal{B}$ such that $\mathcal{C}\simeq Mod_{\mathcal{B}}(R)$ as right $\mathcal{B}$-module 1-categories. By proposition 3.8 of \cite{DSPS14}, $\mathcal{C}\boxtimes\mathcal{C}\simeq Mod_{\mathcal{B}\boxtimes\mathcal{B}}(R\boxtimes R)$ is a separable right $\mathcal{B}\boxtimes\mathcal{B}$-module 1-category. But, $\mathcal{B}\boxtimes\mathcal{B}$ is finite semisimple, so we find that $\mathcal{C}\boxtimes\mathcal{C}$ is finite semisimple.

Let us denote by $P:\mathcal{B}\boxtimes\mathcal{B}\rightarrow \mathcal{B}$ the monoidal structure of $\mathcal{B}$, and $T:\mathcal{C}\boxtimes\mathcal{C}\rightarrow\mathcal{C}$ the monoidal structure of $\mathcal{C}$. As $\mathcal{C}$ is a $\mathcal{B}$-central tensor 1-category, we can factor the $\mathcal{C}$-$\mathcal{C}$-bimodule functor $T$ as $$\mathcal{C}\boxtimes\mathcal{C}\xrightarrow{Q}\mathcal{C}\boxtimes_{\mathcal{B}}\mathcal{C}\xrightarrow{\widetilde{T}}\mathcal{C}.$$ In fact, the canonical $\mathcal{C}$-$\mathcal{C}$-bimodule functor $Q:\mathcal{C}\boxtimes\mathcal{C}\rightarrow\mathcal{C}\boxtimes_{\mathcal{B}}\mathcal{C}$ coincides with the functor obtained by applying $\mathcal{C}\boxtimes_{\mathcal{B}} (-)\boxtimes_{\mathcal{B}}\mathcal{C}$ to $P$.

Let us pick $\pi:P^{**}\cong P$ a natural isomorphism of $\mathcal{B}$-$\mathcal{B}$-bimodule functors. As $\mathcal{B}$ is separable, we may assume that

\settoheight{\algebra}{\includegraphics[width=30mm]{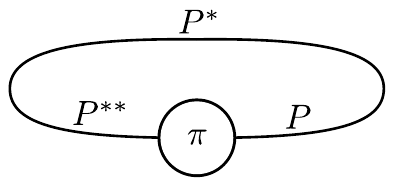}}

\begin{center}
\begin{tabular}{@{}cc@{}}
\includegraphics[width=30mm]{Pictures/dimension/tracepi.pdf} & \raisebox{0.45\algebra}{$=1$}.
\end{tabular}
\end{center}

\noindent We also pick $\nu:\widetilde{T}^{**}\cong\widetilde{T}$ a natural isomorphism of $\mathcal{C}$-$\mathcal{C}$-bimodule functors. Then, $\mu:=\nu\cdot (\mathcal{C}\boxtimes_{\mathcal{B}} \pi\boxtimes_{\mathcal{B}}\mathcal{C}):T^{**}\cong T$ is a natural isomorphism of $\mathcal{C}$-$\mathcal{C}$-bimodule functors. Thus, $\mathrm{Tr}(\mu)$ is given by

\settoheight{\algebra}{\includegraphics[width=60mm]{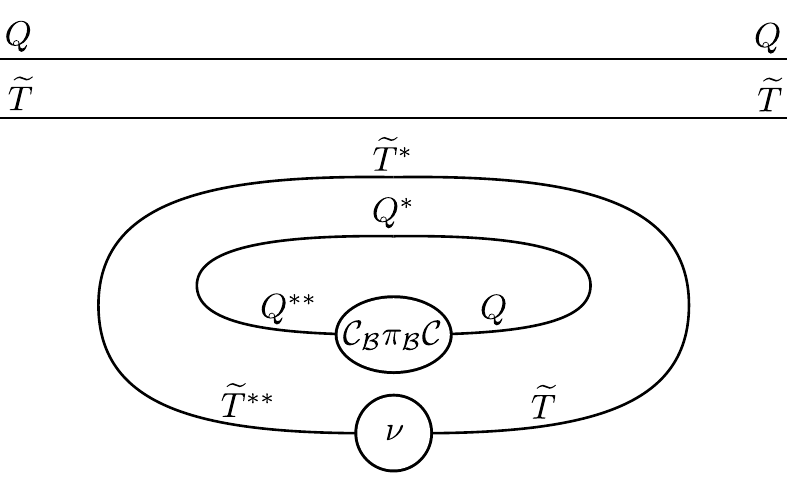}}

\begin{center}
\begin{tabular}{@{}cc@{}}
\raisebox{0.45\algebra}{$\mathrm{Tr}(\mu)=$} &\includegraphics[width=60mm]{Pictures/dimension/tracenupi.pdf}.
\end{tabular}
\end{center}

\noindent By construction, the inner loop of the diagram above evaluates to $1$, so we may remove it, and we find that $\mathrm{Tr}(\mu) = \mathrm{Tr}(\nu) \circ Q$. As precomposition by $Q$ is faithful, we find that $\mathrm{Tr}(\mu)$ is non-zero if and only if $\mathrm{Tr}(\nu)$ is non-zero, which concludes the proof of the result.
\end{proof}

\begin{Remark}
Let $\mathcal{B}$ be any braided finite semisimple tensor 1-category, then there exists connected separable algebras in $\mathbf{Mod}(\mathcal{B})$. For instance, $\mathcal{B}$ viewed as a $\mathcal{B}$-central tensor 1-category is such a connected separable algebra.
\end{Remark}

Now, let $F:\mathfrak{C}\rightarrow \mathfrak{D}$ be a monoidal linear 2-functor between finite semisimple monoidal 2-categories. As $F$ is monoidal and right adjoints are preserved by 2-functors, it follows that $F(A)$ is a rigid algebra in $\mathfrak{D}$. If we assume that $F(A)$ is connected, then it is natural to ask how the dimensions of $A$ and $F(A)$ compare, provided they are well-defined. More precisely, $F$ induces a map of $\mathds{k}$-algebras from $End_{A\mathrm{-}A}(m)\cong \mathbb{K}$ to $End_{F(A)\mathrm{-}F(A)}(F(m))\cong \mathbb{K}'$, which is necessarily a finite field extension. If we assume that $\mathbb{K}\cong\mathds{k}$ and $\mathbb{K}'\cong\mathds{k}$, which is automatic if $\mathds{k}$ is algebraically closed, then the dimensions of $A$ and $F(A)$ are both well-defined, and we have the following result.

\begin{Lemma}\label{lem:2fundimension}
Let $F:\mathfrak{C}\rightarrow \mathfrak{D}$ be a monoidal linear 2-functor between finite semisimple monoidal 2-categories, and $A$ a connected rigid algebra in $\mathfrak{C}$ such that $F(A)$ is connected, $\mathbb{K}\cong\mathds{k}$, and $\mathbb{K}'\cong\mathds{k}$. Then, we have $\mathrm{Dim}_{\mathfrak{C}}(A) = \mathrm{Dim}_{\mathfrak{D}}(F(A))$ in $\mathds{k}$.
\end{Lemma}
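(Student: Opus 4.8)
The plan is to exploit the fact, recorded in the discussion preceding Definition \ref{def:dimension}, that $\mathrm{Dim}_{\mathfrak{D}}(F(A))$ does not depend on any of the choices entering its construction — neither on the adjunction data for the multiplication of $F(A)$ and its iterated right adjoints, nor on the chosen bimodule $2$-isomorphism between that multiplication and its double right adjoint. This frees us to compute $\mathrm{Dim}_{\mathfrak{D}}(F(A))$ using data transported from $\mathfrak{C}$ along $F$, and then to match the resulting scalar with $\mathrm{Dim}_{\mathfrak{C}}(A)$ term by term.

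First I would set up the transported data. Let $\chi_{A,A}\colon F(A)\Box F(A)\to F(A\Box A)$ be the monoidal compositor of $F$, which we promote to an adjoint equivalence, and let $\phi^{F}$ denote the invertible $2$-morphisms witnessing that $F$ preserves composition of $1$-morphisms. Since $F$ is a monoidal linear $2$-functor, $F(A)$ is a rigid algebra in $\mathfrak{D}$ with multiplication $\widetilde{m}:=F(m)\circ\chi_{A,A}$. Because $F$ preserves right adjoints of $1$-morphisms, $\widetilde{m}^{*}:=\chi_{A,A}^{-1}\circ F(m^{*})$ is a right adjoint of $\widetilde{m}$ in $\mathfrak{D}$, with unit and counit obtained by whiskering $F(\eta^{m})$ and $F(\epsilon^{m})$ with the appropriate copies of $\phi^{F}$ and the adjoint-equivalence data of $\chi_{A,A}$; the triangle identities for this transported adjunction follow from those for $\eta^{m},\epsilon^{m}$ together with the coherence axioms of $F$. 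Iterating once more, $\widetilde{m}^{**}:=F(m^{**})\circ\chi_{A,A}$ is a right adjoint of $\widetilde{m}^{*}$ in $\mathfrak{D}$. As $F$ is monoidal it carries $A$-$A$-bimodule $1$- and $2$-morphisms to $F(A)$-$F(A)$-bimodule $1$- and $2$-morphisms, so that each of $\widetilde{m},\widetilde{m}^{*},\widetilde{m}^{**}$ is a bimodule $1$-morphism, and given an $A$-$A$-bimodule $2$-isomorphism $\mu\colon m^{**}\cong m$ in $\mathfrak{C}$ we obtain an $F(A)$-$F(A)$-bimodule $2$-isomorphism $\widetilde{\mu}:=F(\mu)\circ\chi_{A,A}\colon\widetilde{m}^{**}\cong\widetilde{m}$ in $\mathfrak{D}$. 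By the independence statement above, $\mathrm{Dim}_{\mathfrak{D}}(F(A))=\mathrm{Tr}(\widetilde{\mu})\cdot\mathrm{Tr}((\widetilde{\mu}^{-1})^{*})$ when computed with this transported data.

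The heart of the proof is to show that $F$, followed by whiskering with $\chi_{A,A}$, carries $\mathrm{Tr}(\mu)$ to $\mathrm{Tr}(\widetilde{\mu})$ and $\mathrm{Tr}((\mu^{-1})^{*})$ to $\mathrm{Tr}((\widetilde{\mu}^{-1})^{*})$. To see this, one applies $F$ to the string diagram defining $\mathrm{Tr}(\mu)$ and rewrites the image by inserting cancelling pairs of compositors $\phi^{F}$ and of adjoint-equivalence $2$-isomorphisms for $\chi_{A,A}$; the coherence of the monoidal $2$-functor $F$ guarantees that what remains is exactly the string diagram defining $\mathrm{Tr}(\widetilde{\mu})$ in terms of the transported data of the previous paragraph, and similarly for $\mathrm{Tr}((\mu^{-1})^{*})$. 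This bookkeeping with the coherence data of $F$ is the only genuinely delicate point; everything else is formal. It follows that, under the $\mathds{k}$-algebra homomorphism $\mathbb{K}=End_{A\mathrm{-}A}(m)\to End_{F(A)\mathrm{-}F(A)}(\widetilde{m})=\mathbb{K}'$ induced by $F$ (the one appearing in the hypotheses), the scalar $\mathrm{Tr}(\mu)$ is sent to $\mathrm{Tr}(\widetilde{\mu})$ and $\mathrm{Tr}((\mu^{-1})^{*})$ is sent to $\mathrm{Tr}((\widetilde{\mu}^{-1})^{*})$.

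To conclude, note that by hypothesis $\mathbb{K}\cong\mathds{k}$ and $\mathbb{K}'\cong\mathds{k}$, and the only $\mathds{k}$-algebra endomorphism of $\mathds{k}$ is the identity, so the homomorphism $\mathbb{K}\to\mathbb{K}'$ above is the canonical identification of both with $\mathds{k}$. Hence $\mathrm{Tr}(\widetilde{\mu})=\mathrm{Tr}(\mu)$ and $\mathrm{Tr}((\widetilde{\mu}^{-1})^{*})=\mathrm{Tr}((\mu^{-1})^{*})$ in $\mathds{k}$, and multiplying these identities gives $\mathrm{Dim}_{\mathfrak{D}}(F(A))=\mathrm{Tr}(\widetilde{\mu})\cdot\mathrm{Tr}((\widetilde{\mu}^{-1})^{*})=\mathrm{Tr}(\mu)\cdot\mathrm{Tr}((\mu^{-1})^{*})=\mathrm{Dim}_{\mathfrak{C}}(A)$, as claimed.
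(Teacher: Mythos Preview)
Your argument is correct and follows essentially the same route as the paper. The paper first invokes the coherence theorem of \cite{GPS} to strictify the underlying $2$-functor of $F$, so that only the compositor $e=\chi_{A,A}$ (promoted to an adjoint equivalence) survives; it then observes directly that in the string diagram for $\mathrm{Tr}(F(\mu)\circ e)$ the inner loop formed by $e$ and $e^{*}$ evaluates to $1$, yielding $\mathrm{Tr}(\widetilde{\mu})=\mathrm{Tr}(\mu)$. You instead keep the compositors $\phi^{F}$ explicit and absorb them via ``inserting cancelling pairs''; this is the same cancellation, just without the preliminary strictification. Your final paragraph, making explicit that the induced map $\mathbb{K}\to\mathbb{K}'$ is the identity on $\mathds{k}$, is a detail the paper leaves implicit.
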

\begin{proof}
Without loss of generality, we may assume that both $\mathfrak{C}$ and $\mathfrak{D}$ are strict cubical. Thanks to the result of section 4.10 of \cite{GPS}, we may further assume that the underlying 2-functor of $F$ is strict. In particular, $F$ sends adjoint 1-morphisms to adjoint 1-morphisms on the nose. Following definition 2.5 of \cite{SP}, the monoidal structure on the 2-functor $F:\mathfrak{C}\rightarrow \mathfrak{D}$ provides us with an adjoint equivalence $e:F(A)\Box F(A)\simeq F(A\Box A),$ whose adjoint we denote by $e^*$.

Now, note that the multiplication 1-morphism of $F(A)$ is thus given by $$n:F(A)\Box F(A)\xrightarrow{e} F(A\Box A)\xrightarrow{F(m)}F(A).$$ In particular, a right adjoint to $n$ is given $$F(A)\xrightarrow{F(m^*)} F(A\Box A)\xrightarrow{e^*}F(A)\Box F(A),$$ with the canonical unit and counit 2-morphisms. Further, as $e^{**} = e$, we find that any choice of $A$-$A$-bimodule 2-isomorphism $\mu:m^{**}\cong m$ gives an $F(A)$-$F(A)$-bimodule 2-isomorphism $F(\mu)\circ e:n^{**}\cong n$.

\settoheight{\algebra}{\includegraphics[width=60mm]{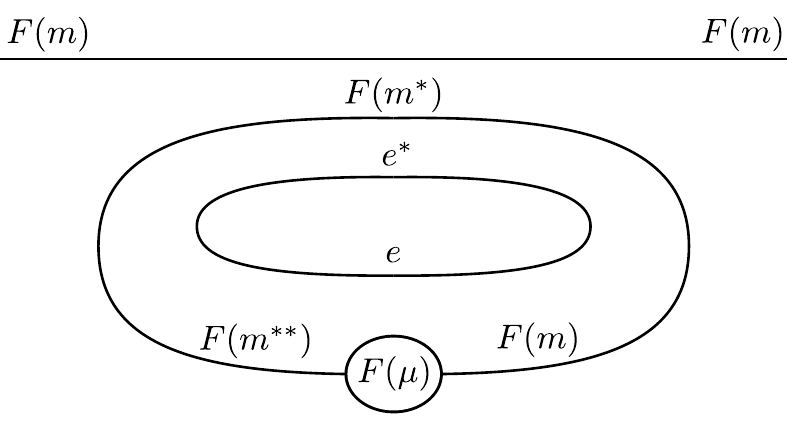}}

\begin{center}
\begin{tabular}{@{}cc@{}}
\raisebox{0.45\algebra}{$\mathrm{Tr}(F(\mu))=$} &\includegraphics[width=60mm]{Pictures/dimension/Ftrace.pdf}.
\end{tabular}
\end{center}

\noindent The inner loop evaluates to 1 as $e$ and $e^*$ form an adjoint equivalence. This shows that $\mathrm{Tr}(F(\mu))= \mathrm{Tr}(\mu)$ in $\mathds{k}$. One checks similarly that $\mathrm{Tr}(F(\mu^{-1})^{*})= \mathrm{Tr}((\mu^{-1})^{*})$ in $\mathds{k}$, thereby establishing the desired equality.
\end{proof}

Following \cite{JFR}, we will say that an algebra $A$ in a compact semisimple monoidal 2-category $\mathfrak{C}$ is strongly connected if the 1-morphism $i:I\rightarrow A$ is the inclusion of a simple summand. Let us examine some examples of strongly connected algebras. If $\mathds{k}$ is an algebraically closed field, then every connected algebra in $\mathbf{2Vect}_G$, the fusion 2-category of $G$-graded 2-vector spaces for a finite group $G$, is strongly connected. However, this is usually not the case. Namely, provided $\mathds{k}$ is, in addition, a field of characteristic zero, then strongly connected rigid algebras in $\mathscr{Z}(\mathbf{2Vect}_G)$, the Drinfel'd center of $\mathbf{2Vect}_G$, are precisely $G$-crossed fusion 1-categories. On the other hand, connected rigid algebras in $\mathscr{Z}(\mathbf{2Vect}_G)$ correspond exactly to those $G$-crossed multifusion 1-categories $\mathcal{C}$ for which the action of $G$ on $End_{\mathcal{C}}(I)$ is transitive.

The notion of a strongly connected algebra is useful because it is often preserved by monoidal 2-functors. Namely, if $F:\mathfrak{C}\rightarrow\mathfrak{D}$ is a monoidal 2-functor between compact semisimple monoidal 2-category whose monoidal units are simple, and $A$ is a strongly connected algebra in $\mathfrak{C}$, then $F(A)$ is a strongly connected algebra. Combining this observation together with lemma \ref{lem:2fundimension} gives the following result. 

\begin{Corollary}
Let $G$ be a finite group, and $\mathds{k}$ an algebraically closed field of characteristic zero. Then, every connected rigid algebra in $\mathbf{2Vect}_G$ is separable. Further, every strongly connected rigid algebra in $\mathscr{Z}(\mathbf{2Vect}_G)$ is separable.
\end{Corollary}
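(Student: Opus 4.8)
The plan is to transport every rigid algebra down to $\mathbf{2Vect}$ along a suitable forgetful monoidal $2$-functor, recognise its image as a genuine fusion $1$-category, and then invoke the classical non-vanishing of the global dimension of a fusion $1$-category in characteristic zero from \cite{ENO}. The three ingredients doing the work are Lemma \ref{lem:2fundimension} (monoidal $2$-functors preserve $\mathrm{Dim}$), Proposition \ref{prop:globaldimension} ($\mathrm{Dim}_{\mathbf{2Vect}}$ of a fusion $1$-category is its categorical dimension), and Theorem \ref{thm:dimensionseparable} (non-zero dimension $\Leftrightarrow$ separable).

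For the first assertion I would use the forgetful monoidal linear $2$-functor $U\colon\mathbf{2Vect}_G\rightarrow\mathbf{2Vect}$ that discards the $G$-grading. Both $\mathbf{2Vect}_G$ and $\mathbf{2Vect}$ are finite semisimple monoidal $2$-categories with simple monoidal unit, so by the observation recorded just above the corollary $U$ preserves strong connectedness. Since $\mathds{k}$ is algebraically closed, every connected rigid algebra $A$ in $\mathbf{2Vect}_G$ is strongly connected; hence $U(A)$ is a strongly connected rigid algebra in $\mathbf{2Vect}$, and because rigid algebras in $\mathbf{2Vect}$ are exactly multifusion $1$-categories, $U(A)$ is in fact a fusion $1$-category $\mathcal{C}$ with simple unit. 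As $\mathds{k}$ is algebraically closed, the finite field extensions $\mathbb{K}=End_{A\mathrm{-}A}(m)$ and $\mathbb{K}'=End_{\mathcal{C}\mathrm{-}\mathcal{C}}(U(m))$ both coincide with $\mathds{k}$, so the hypotheses of Lemma \ref{lem:2fundimension} hold and $\mathrm{Dim}_{\mathbf{2Vect}_G}(A)=\mathrm{Dim}_{\mathbf{2Vect}}(\mathcal{C})$. By Proposition \ref{prop:globaldimension} this equals the global dimension $\mathrm{dim}(\mathcal{C})$, which is non-zero by \cite{ENO} since $\mathds{k}$ has characteristic zero; Theorem \ref{thm:dimensionseparable} then gives that $A$ is separable. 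For the second assertion I would run the identical argument along the composite monoidal linear $2$-functor $\mathscr{Z}(\mathbf{2Vect}_G)\rightarrow\mathbf{2Vect}_G\xrightarrow{U}\mathbf{2Vect}$, whose first factor is the forgetful $2$-functor out of the Drinfeld center; as $\mathscr{Z}(\mathbf{2Vect}_G)$ is a fusion $2$-category its unit is simple, strong connectedness is again preserved, the image of a strongly connected rigid algebra $A$ is a fusion $1$-category $\mathcal{C}$, and $\mathrm{Dim}_{\mathscr{Z}(\mathbf{2Vect}_G)}(A)=\mathrm{dim}(\mathcal{C})\neq 0$ by the same chain of identities, so $A$ is separable by Theorem \ref{thm:dimensionseparable}. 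This also explains the restriction to \emph{strongly} connected algebras in the center case: a merely connected rigid algebra there need not have connected image in $\mathbf{2Vect}$.

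There is no genuine obstacle here; the care required is purely bookkeeping. One must check that the forgetful $2$-functors are honestly monoidal linear $2$-functors between \emph{finite semisimple} monoidal $2$-categories all of whose monoidal units are simple, so that both the preservation-of-strong-connectedness observation and Lemma \ref{lem:2fundimension} apply, and one must note that over an algebraically closed field the relevant endomorphism division $\mathds{k}$-algebras collapse to $\mathds{k}$, so that the scalars $\mathrm{Dim}_{\mathfrak{C}}(A)$, $\mathrm{Dim}_{\mathbf{2Vect}}(\mathcal{C})$, and $\mathrm{dim}(\mathcal{C})$ are all well-defined and comparable. Beyond the non-vanishing result of \cite{ENO} no further input is needed.
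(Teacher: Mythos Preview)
Your proposal is correct and follows essentially the same approach as the paper: the paper states the corollary immediately after observing that strongly connected algebras are preserved by monoidal $2$-functors between compact semisimple monoidal $2$-categories with simple units, and that over an algebraically closed field every connected algebra in $\mathbf{2Vect}_G$ is strongly connected, and then simply says that combining this observation with Lemma~\ref{lem:2fundimension} yields the result. Your write-up makes explicit the full chain (Lemma~\ref{lem:2fundimension}, Proposition~\ref{prop:globaldimension}, Theorem~\ref{thm:dimensionseparable}, and the non-vanishing of global dimension from \cite{ENO}) that the paper leaves implicit, and your closing remark about why only \emph{strong} connectedness suffices in the center case is exactly the point the paper makes in the paragraph preceding the corollary.
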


Now, let $\mathds{k}$ be an arbitrary field. Recall from example \ref{ex:algebras2VectG}, that given any finite group $G$ and 4-cocycle $\omega$ for $G$ with coefficients in $\mathds{k}^{\times}$, we can form the compact semisimple 2-category $\mathbf{2Vect}_{G}^{\omega}$, whose objects are $G$-graded perfect 1-categories, and with coherence data twisted by $\omega$. If $H\subseteq G$ is a subgroup, and $\gamma$ is a 3-cochain for $H$ with coefficients in $\mathds{k}^{\times}$ such that $d\gamma = \omega|_H$, then we can view $\mathbf{Vect}_H^{\gamma}$, the monoidal 1-category of $H$-graded finite dimensional $\mathds{k}$-vector spaces with associator twisted by $\gamma$ as an algebra in $\mathbf{2Vect}_{G}^{\omega}$.

\begin{Corollary}\label{cor:gradedrigiddimension}
The connected algebra $\mathbf{Vect}_H^{\gamma}$ in $\mathbf{2Vect}_G^{\omega}$ is rigid has dimension $|H|$.
\end{Corollary}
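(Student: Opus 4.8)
The statement has two parts — rigidity, and the value of the dimension — and the plan is to reduce both to the untwisted pointed case.

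Rigidity. Since $d\gamma=\omega|_H$, the coboundary $\gamma$ exhibits a monoidal $2$-equivalence $\mathbf{2Vect}_H^{\omega|_H}\simeq\mathbf{2Vect}_H$ (cohomologous twists of $\mathbf{2Vect}_G$ are monoidally equivalent; see construction~2.1.16 of \cite{DR}), carrying the algebra $\mathbf{Vect}_H^{\gamma}$ to a genuine pointed fusion $1$-category $\mathbf{Vect}_H^{\gamma'}$ with $\gamma'\in Z^3(H,\mathds{k}^{\times})$ (indeed one may take $\gamma'$ trivial). The latter is a perfect $H$-graded tensor $1$-category, hence a rigid algebra in $\mathbf{2Vect}_H$, as in Example~\ref{ex:gradedrigid}. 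Transporting back along this equivalence, and then along the grading-refinement monoidal embedding $\mathbf{2Vect}_H^{\omega|_H}\hookrightarrow\mathbf{2Vect}_G^{\omega}$ determined by $H\subseteq G$ (monoidal $2$-functors send rigid algebras to rigid algebras), shows that $\mathbf{Vect}_H^{\gamma}$ is rigid in $\mathbf{2Vect}_G^{\omega}$. Its unit object $\delta_e$ is simple, so the unit $1$-morphism is simple, $\mathbb{K}=End_{A\mathrm{-}A}(m)\cong\mathds{k}$, and $\mathrm{Dim}_{\mathbf{2Vect}_G^{\omega}}(\mathbf{Vect}_H^{\gamma})$ is well-defined.

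Dimension. The embedding $\mathbf{2Vect}_H^{\omega|_H}\hookrightarrow\mathbf{2Vect}_G^{\omega}$ is fully faithful and monoidal, so the adjoint $m^{*}$, its adjoint $m^{**}$, the $A$-$A$-bimodule isomorphism $\mu\colon m^{**}\cong m$, the units and counits, and hence the scalars $\mathrm{Tr}(\mu)$ and $\mathrm{Tr}((\mu^{-1})^{*})$, may all be computed inside $\mathbf{2Vect}_H^{\omega|_H}$, and this whole datum is carried isomorphically to $\mathbf{2Vect}_H$ by the equivalence above. Thus $\mathrm{Dim}_{\mathbf{2Vect}_G^{\omega}}(\mathbf{Vect}_H^{\gamma})=\mathrm{Dim}_{\mathbf{2Vect}_H}(\mathbf{Vect}_H^{\gamma'})$ (an instance of Lemma~\ref{lem:2fundimension} when the $2$-categories in question are finite semisimple, for instance over an algebraically closed field, and in general because equivalences and fully faithful monoidal embeddings transport the datum defining the dimension). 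It therefore remains to show $\mathrm{Dim}_{\mathbf{2Vect}_H}(\mathbf{Vect}_H^{\gamma'})=|H|$. Over an algebraically closed field this follows from Proposition~\ref{prop:globaldimension} together with $\mathrm{dim}(\mathbf{Vect}_H^{\gamma'})=\sum_{h\in H}1=|H|$. Over an arbitrary field I would instead compute directly: $m$ is $\gamma'$-twisted convolution $\delta_a\boxtimes\delta_b\mapsto\delta_{ab}$, its right adjoint is $m^{*}(\delta_h)=\bigoplus_{ab=h}\delta_a\boxtimes\delta_b$ (so $m\circ m^{*}\cong |H|\cdot Id_A$), and $m^{**}\cong m$ via the tautological $\mu$ (which one checks is a bimodule map); for compatible adjunction data the snake composite $Id_A\Rightarrow m^{**}\circ m^{*}\Rightarrow m\circ m^{*}\Rightarrow Id_A$ defining $\mathrm{Tr}(\mu)$ evaluates to the count $|H|\in\mathds{k}$, while $(\mu^{-1})^{*}=Id_{m^{*}}$ by a triangle identity, so $\mathrm{Tr}((\mu^{-1})^{*})=1$; hence $\mathrm{Dim}=|H|\cdot1=|H|$.

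I expect the main obstacle to be this last explicit computation: fixing mutually compatible units and counits for $m\dashv m^{*}\dashv m^{**}$ together with the bimodule isomorphism $\mu$, and then verifying that the two graphical traces really do evaluate to $|H|$ and $1$ — essentially re-running the mechanism behind Proposition~\ref{prop:globaldimension} so that it is manifestly valid over an arbitrary field. A secondary, routine point is to check that grading-refinement along $H\subseteq G$ and the coboundary untwisting are honest monoidal linear $2$-functors, so that the reductions above are legitimate.
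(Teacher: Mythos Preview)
Your proposal is correct and follows essentially the same strategy as the paper: reduce to the untwisted case via a monoidal linear $2$-functor built from $\gamma$, then invoke Lemma~\ref{lem:2fundimension} to transport the dimension. The paper packages your two steps (untwisting equivalence and grading-refinement embedding) into a single monoidal $2$-functor $F:\mathbf{2Vect}_H\hookrightarrow\mathbf{2Vect}_G^{\omega}$ with $F(\mathbf{Vect}_H)=\mathbf{Vect}_H^{\gamma}$, and then, rather than computing $\mathrm{Dim}_{\mathbf{2Vect}_H}(\mathbf{Vect}_H)$ directly, applies the forgetful monoidal $2$-functor $U:\mathbf{2Vect}_H\rightarrow\mathbf{2Vect}$ so that the final computation takes place in $\mathbf{2Vect}$, where Proposition~\ref{prop:globaldimension} applies on the nose. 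This last move is worth noting: as written, your appeal to Proposition~\ref{prop:globaldimension} for $\mathrm{Dim}_{\mathbf{2Vect}_H}(\mathbf{Vect}_H^{\gamma'})$ is slightly off, since that proposition is stated for $\mathbf{2Vect}$ rather than $\mathbf{2Vect}_H$; inserting the forgetful functor $U$ (and one more use of Lemma~\ref{lem:2fundimension}) fixes this cleanly and also obviates your direct computation over non-closed fields.
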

\begin{proof}
As $\gamma$ trivializes $\omega|_H$, we can use it to construct a monoidal linear 2-functor $F:\mathbf{2Vect}_H\hookrightarrow \mathbf{2Vect}_G^{\omega}$, which induces the inclusion $H\subseteq G$ on equivalence classes of simple objects. In particular, we have $F(\mathbf{Vect}_H) = \mathbf{Vect}_H^{\gamma}$. But $\mathbf{Vect}_H$ is clearly rigid as a monoidal 1-category, so that it is a connected rigid algebra in $\mathbf{2Vect}_H$ by example \ref{ex:gradedrigid}. Now, let us write $U:\mathbf{2Vect}_H\rightarrow \mathbf{2Vect}$ for the monoidal linear 2-functor forgetting the grading. The rigid algebra $U(\mathbf{Vect}_H^{\kappa})$ in $\mathbf{2Vect}$ is exactly the tensor category of finite $H$-graded $\mathds{k}$-vector spaces, which has dimension $|H|$. This proves the result.
\end{proof}

\begin{Corollary}\label{cor:bimoduletwistedgraded}
If $char(\mathds{k})$ and $|H|$ are coprime, the 2-category of $\mathbf{Vect}_H^{\gamma}$-$\mathbf{Vect}_H^{\gamma}$-bimodules in $\mathbf{2Vect}_G^{\omega}$ is compact semisimple.
\end{Corollary}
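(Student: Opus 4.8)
The plan is to combine Corollary~\ref{cor:gradedrigiddimension} with Theorems~\ref{thm:dimensionseparable} and~\ref{thm:characterizationseparablealgebra}. Set $A := \mathbf{Vect}_H^{\gamma}$, regarded as a connected rigid algebra in the compact semisimple monoidal 2-category $\mathfrak{C} := \mathbf{2Vect}_G^{\omega}$ (see Example~\ref{ex:algebras2VectG} and Corollary~\ref{cor:gradedrigiddimension}). First I would record that the dimension of $A$ is well-defined, i.e.\ that $\mathbb{K} := End_{A\mathrm{-}A}(m) \cong \mathds{k}$: the unit $i\colon I\to A$ is the inclusion of the summand $\mathbf{Vect}$ of $\mathbf{Vect}_H^{\gamma}$ graded by the unit of $H$, so $End_{\mathfrak{C}}(i)\cong End_{\mathbf{2Vect}}(\mathbf{Vect})\cong\mathds{k}$, whence $\mathbb{K}\cong End_{\mathfrak{C}}(i)\cong \mathds{k}$. (This is already implicit in the statement of Corollary~\ref{cor:gradedrigiddimension}.)

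Next, Corollary~\ref{cor:gradedrigiddimension} gives $\mathrm{Dim}_{\mathfrak{C}}(A) = |H|$. Since $\mathrm{char}(\mathds{k})$ and $|H|$ are coprime, $|H|$ is a non-zero scalar in $\mathds{k}$, so $\mathrm{Dim}_{\mathfrak{C}}(A)\neq 0$. As $\mathbb{K}\cong\mathds{k}$, the second part of Theorem~\ref{thm:dimensionseparable} applies and shows that $A = \mathbf{Vect}_H^{\gamma}$ is a separable algebra in $\mathfrak{C}$. Finally, Theorem~\ref{thm:characterizationseparablealgebra} (or, equivalently, Proposition~\ref{prop:sepbimodfss} with $A = B$) yields that $\mathbf{Bimod}_{\mathfrak{C}}(A) = \mathbf{Bimod}_{\mathbf{2Vect}_G^{\omega}}(\mathbf{Vect}_H^{\gamma})$ is a compact semisimple 2-category, which is the claim.

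This argument is essentially a bookkeeping exercise in applying the earlier results, and I do not anticipate any genuine obstacle. The only point that deserves explicit care is the verification that $\mathbb{K}\cong\mathds{k}$, so that the dimension is meaningful and the dimension criterion of Theorem~\ref{thm:dimensionseparable} is available over the (not necessarily algebraically closed) field $\mathds{k}$; this is handled by the identification of the unit 1-morphism of $\mathbf{Vect}_H^{\gamma}$ with the unit-graded component $\mathbf{Vect}$ described above.
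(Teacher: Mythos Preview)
Your proposal is correct and is precisely the argument the paper has in mind: the corollary is stated without proof because it follows immediately from Corollary~\ref{cor:gradedrigiddimension} together with Theorems~\ref{thm:dimensionseparable} and~\ref{thm:characterizationseparablealgebra}, exactly as you outline. Your explicit check that $\mathbb{K}\cong\mathds{k}$ (so that the dimension criterion applies over an arbitrary field) is a welcome clarification of a point the paper leaves implicit.
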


\appendix

\section{Diagrams}\label{sec:diag}

\subsection{Proof of lemma \ref{lem:coherenceright}}\label{sub:coherencerightdiagrams}

\vfill

\begin{figure}[!hbt]
\centering
\begin{minipage}{.5\textwidth}
  \centering
    \includegraphics[width=55mm]{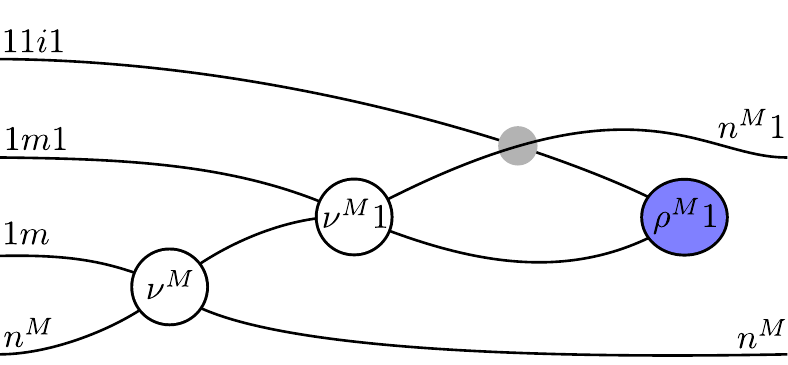}
    \caption{Equality (Part 1)}
    \label{fig:cohright1}
\end{minipage}%
\begin{minipage}{.5\textwidth}
  \centering
    \includegraphics[width=55mm]{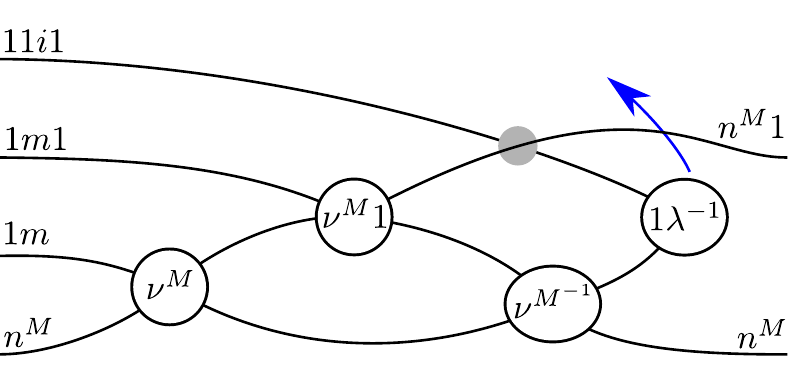}
    \caption{Equality (Part 2)}
    \label{fig:cohright2}
\end{minipage}
\end{figure}

\vfill

\begin{figure}[!hbt]
\centering
\begin{minipage}{.5\textwidth}
  \centering
    \includegraphics[width=55mm]{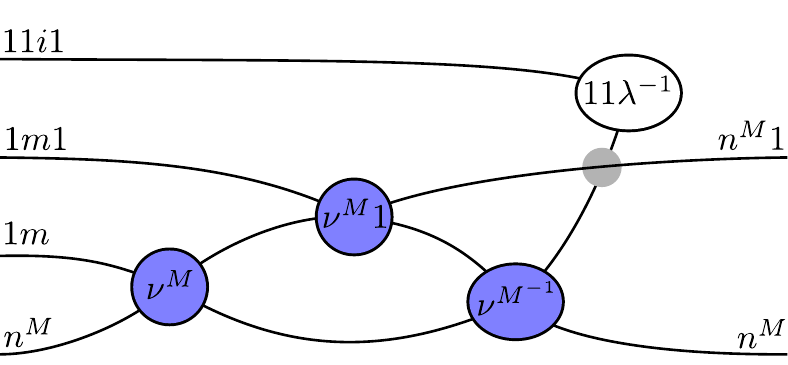}
    \caption{Equality (Part 3)}
    \label{fig:cohright3}
\end{minipage}%
\begin{minipage}{.5\textwidth}
  \centering
    \includegraphics[width=55mm]{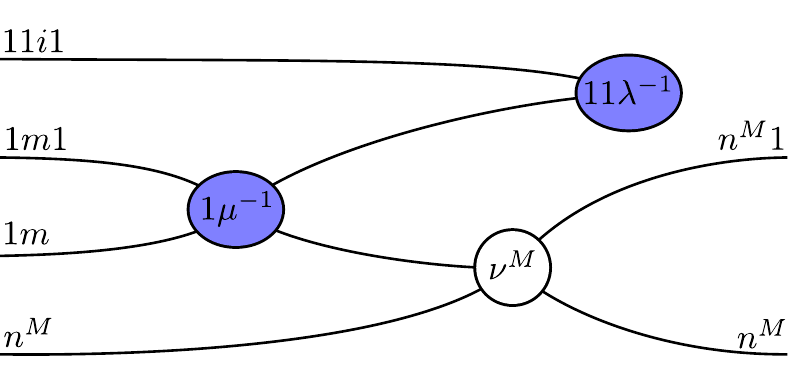}
    \caption{Equality (Part 4)}
    \label{fig:cohright4}
\end{minipage}
\end{figure}

\vfill

\newpage

\vfill

\begin{figure}[!hbt]
\centering
\begin{minipage}{.5\textwidth}
  \centering
    \includegraphics[width=55mm]{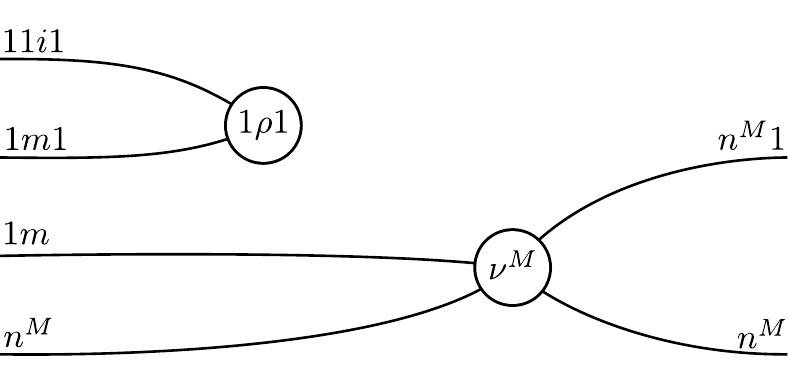}
    \caption{Equality (Part 5)}
    \label{fig:cohright5}
\end{minipage}%
\begin{minipage}{.5\textwidth}
  \centering
    \includegraphics[width=55mm]{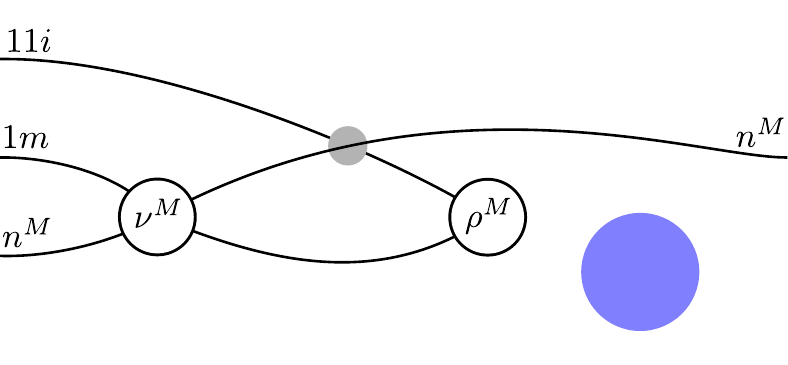}
    \caption{Coherence (Part 1)}
    \label{fig:cohright10}
\end{minipage}
\end{figure}

\vfill

\begin{figure}[!hbt]
\centering
\begin{minipage}{.5\textwidth}
  \centering
    \includegraphics[width=55mm]{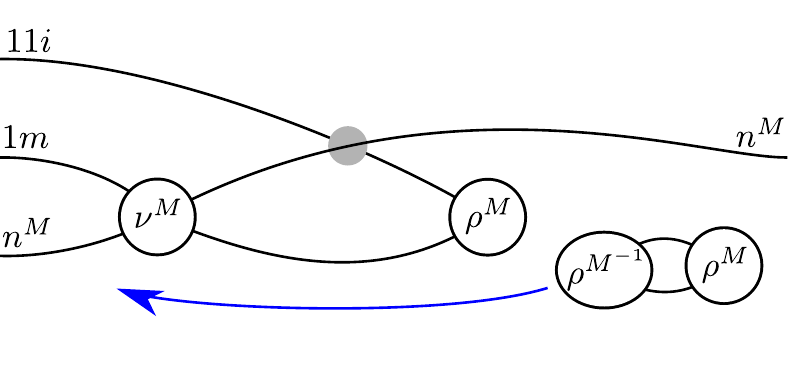}
    \caption{Coherence (Part 2)}
    \label{fig:cohright11}
\end{minipage}%
\begin{minipage}{.5\textwidth}
  \centering
    \includegraphics[width=55mm]{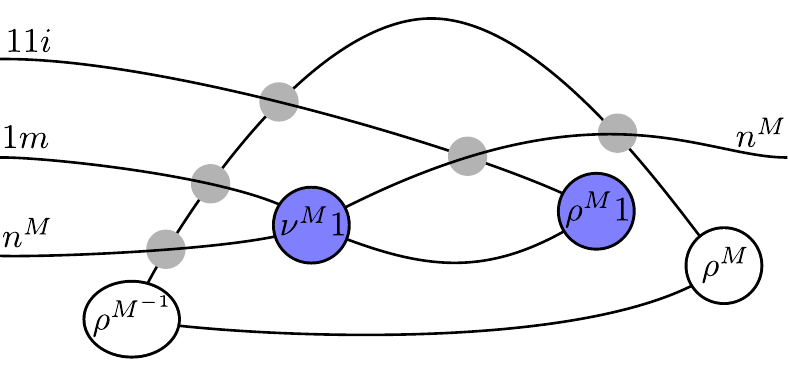}
    \caption{Coherence (Part 3)}
    \label{fig:cohright12}
\end{minipage}
\end{figure}

\vfill

\begin{figure}[!hbt]
\centering
\begin{minipage}{.5\textwidth}
  \centering
    \includegraphics[width=55mm]{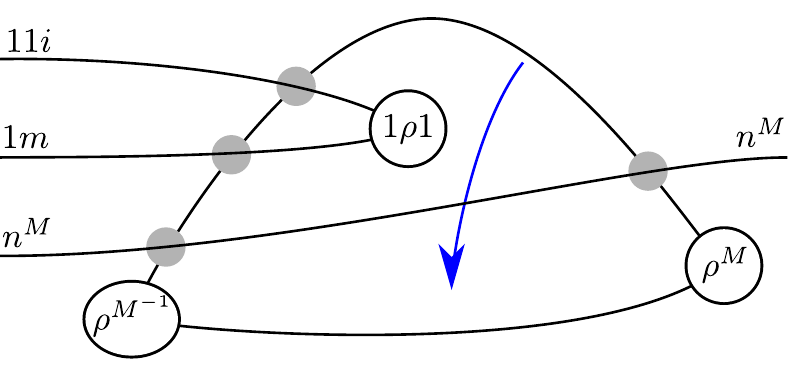}
    \caption{Coherence (Part 4)}
    \label{fig:cohright13}
\end{minipage}%
\begin{minipage}{.5\textwidth}
  \centering
    \includegraphics[width=55mm]{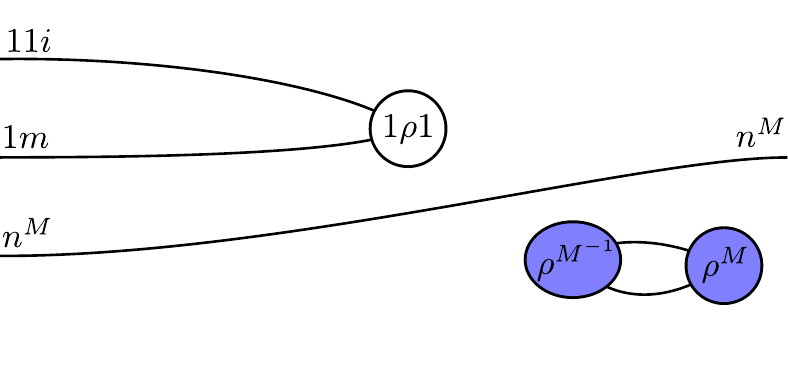}
    \caption{Coherence (Part 5)}
    \label{fig:cohright14}
\end{minipage}
\end{figure}

\vfill

\FloatBarrier

\begin{landscape}
\subsection{Proof of lemma \ref{lem:modulep}}\label{sub:modulepdiagrams}
    \vspace*{\fill}
    \begin{figure}[!hbt]
    \centering
    \includegraphics[width=150mm]{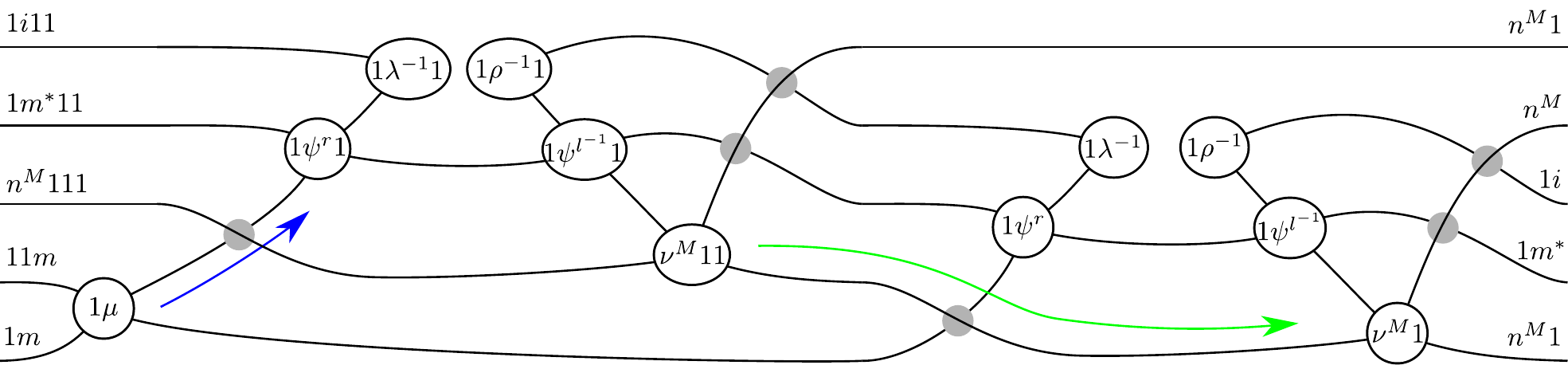}
    \caption{Axiom a (Part 1)}
    \label{fig:cohpsip1}
    \end{figure}
    \begin{figure}[!hbt]
    \centering
    \includegraphics[width=150mm]{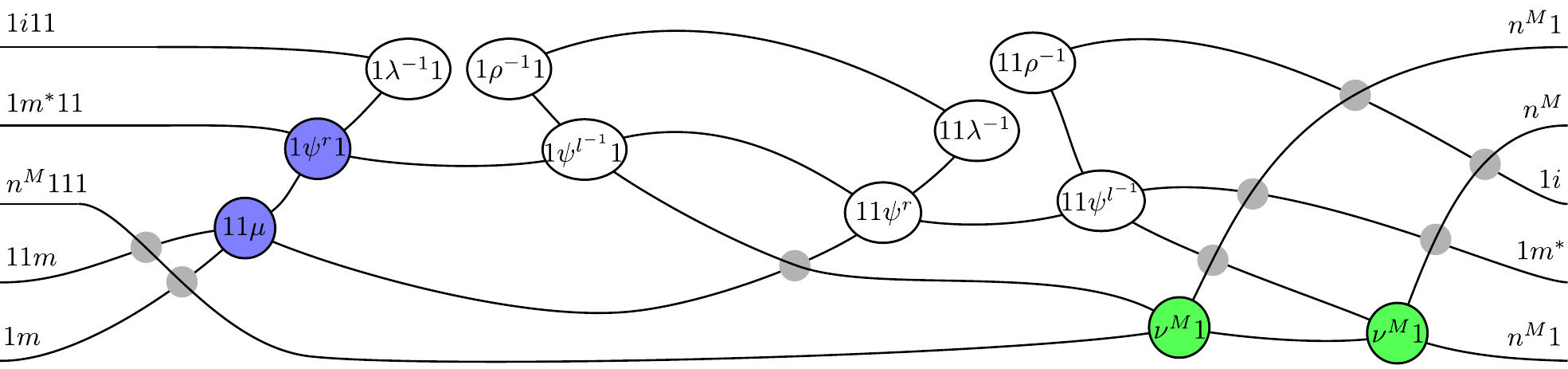}
    \caption{Axiom a (Part 2)}
    \label{fig:cohpsip2}
    \end{figure}
    \vfill
\end{landscape}

\begin{landscape}
    \vspace*{\fill}
    \begin{figure}[!hbt]
    \centering
    \includegraphics[width=150mm]{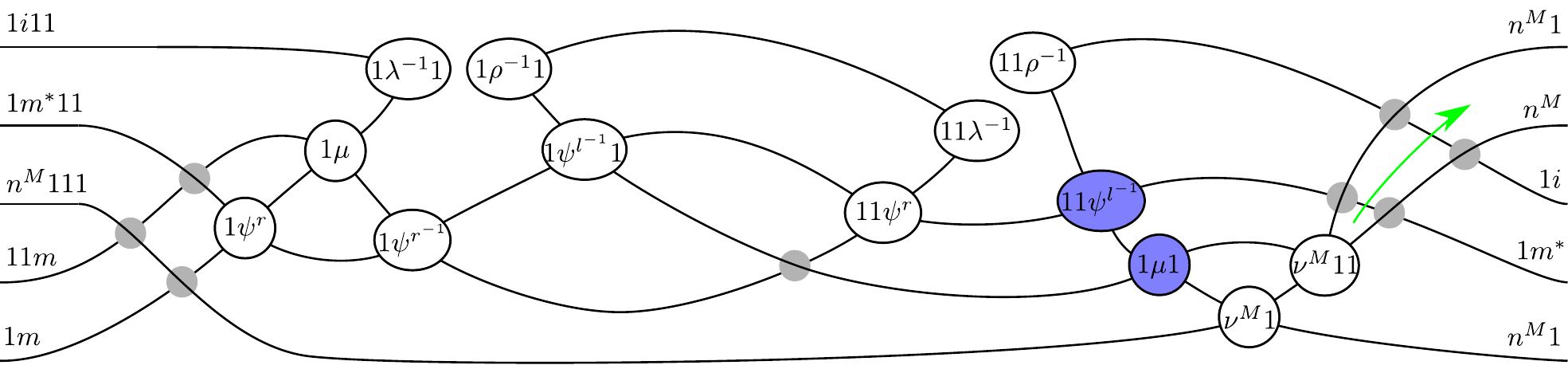}
    \caption{Axiom a (Part 3)}
    \label{fig:cohpsip3}
    \end{figure}
    \begin{figure}[!hbt]
    \centering
    \includegraphics[width=150mm]{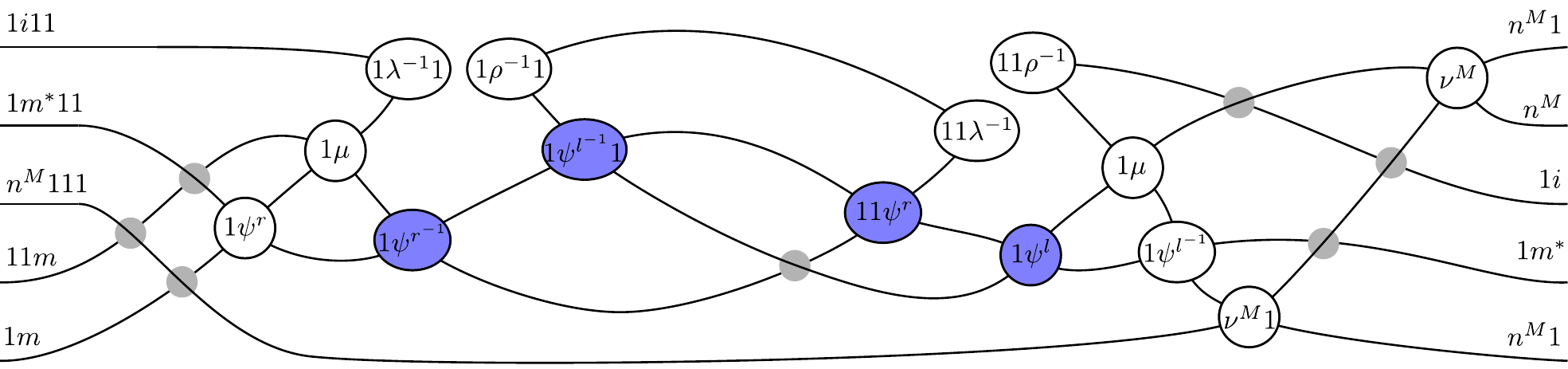}
    \caption{Axiom a (Part 4)}
    \label{fig:cohpsip4}
    \end{figure}
    \vfill
\end{landscape}

\begin{figure}[!hbt]
\centering 
\includegraphics[width=110mm]{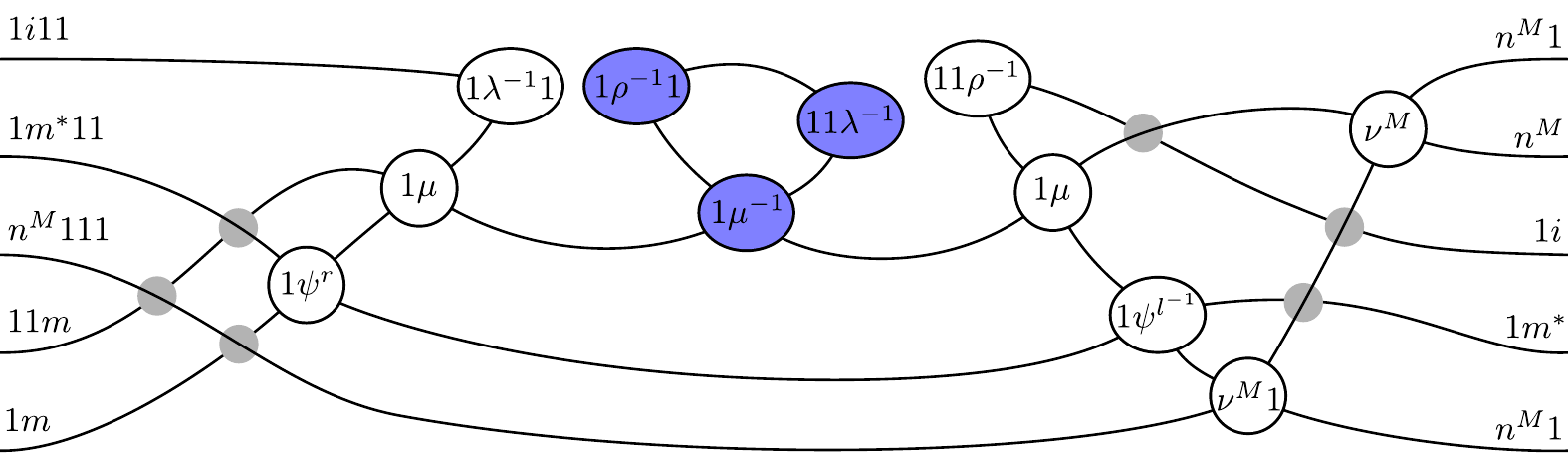}
\caption{Axiom a (Part 5)}
\label{fig:cohpsip5}
\end{figure}

\begin{figure}[!hbt]
\centering 
\includegraphics[width=95mm]{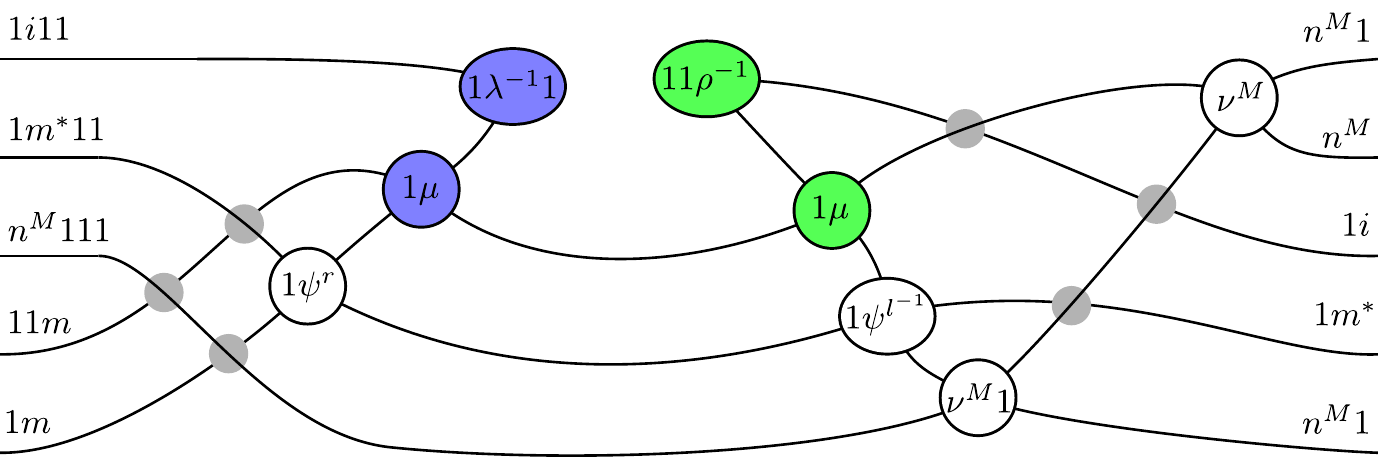}
\caption{Axiom a (Part 6)}
\label{fig:cohpsip6}
\end{figure}
    
\begin{figure}[!hbt]
\centering 
\includegraphics[width=95mm]{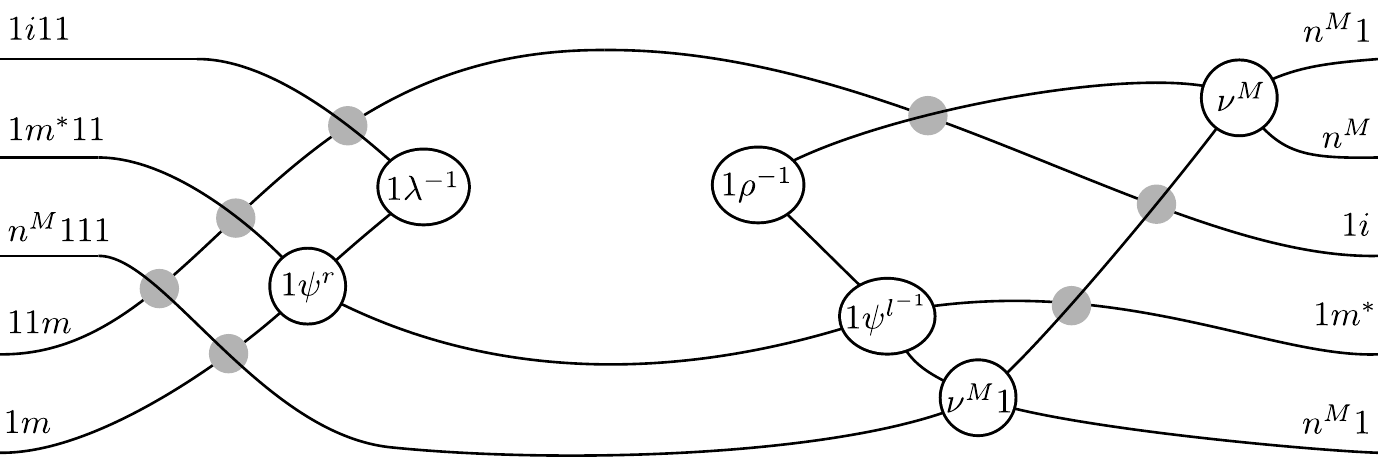}
\caption{Axiom a (Part 7)}
\label{fig:cohpsip7}
\end{figure}

\begin{figure}[!hbt]
\centering 
\includegraphics[width=75mm]{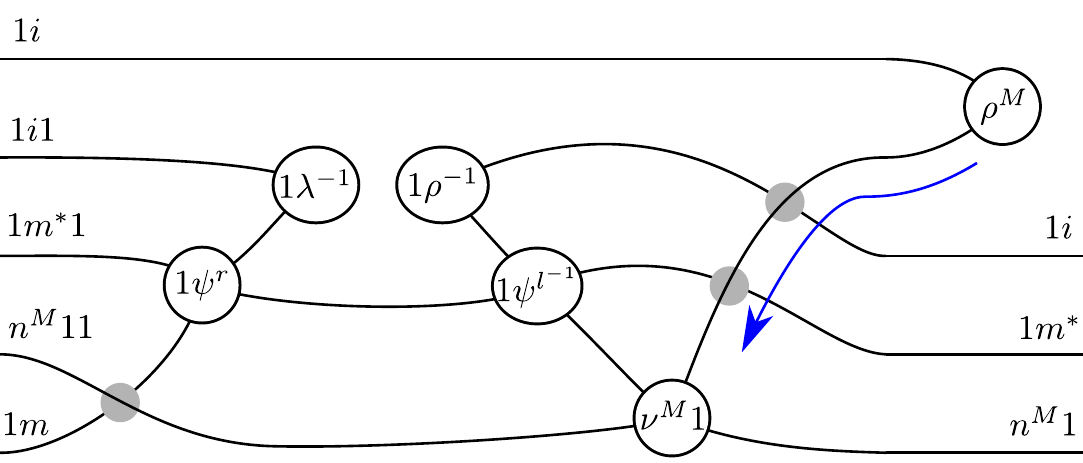}
\caption{Axiom b (Part 1)}
\label{fig:cohupsip1}
\end{figure}
    
\begin{figure}[!hbt]
\centering
\includegraphics[width=82.5mm]{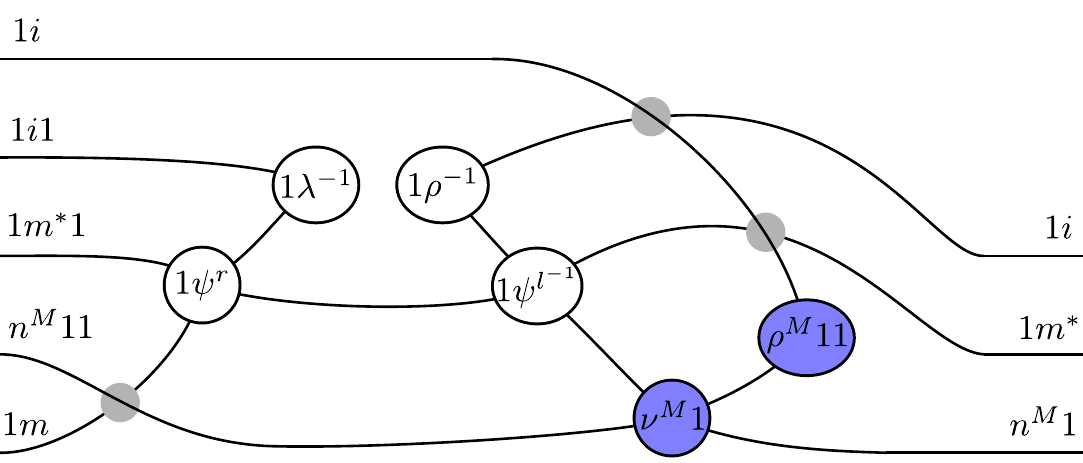}
\caption{Axiom b (Part 2)}
\label{fig:cohupsip2}
\end{figure}

\begin{figure}[!hbt]
\centering
\includegraphics[width=82.5mm]{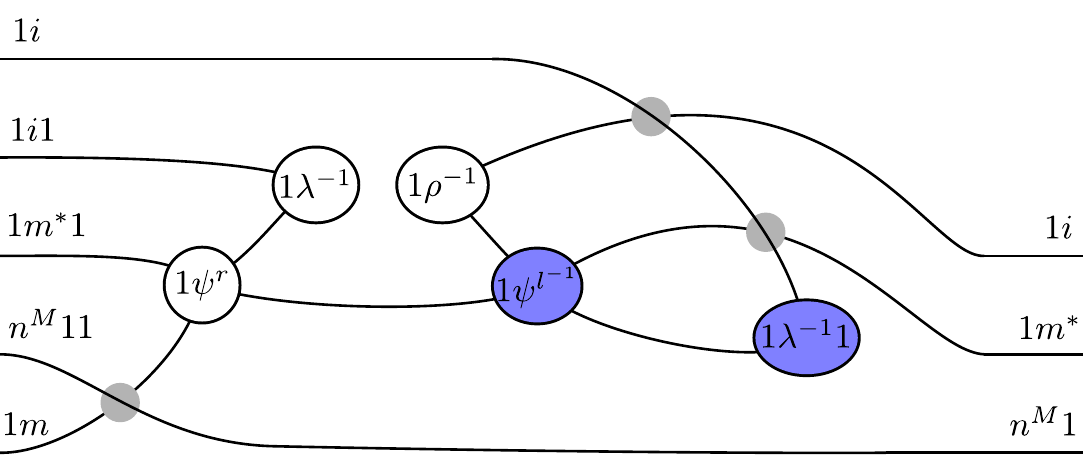}
\caption{Axiom b (Part 3)}
\label{fig:cohupsip3}
\end{figure}

\begin{figure}[!hbt]
\centering
\begin{minipage}{.6\textwidth}
  \centering
\includegraphics[width=67.5mm]{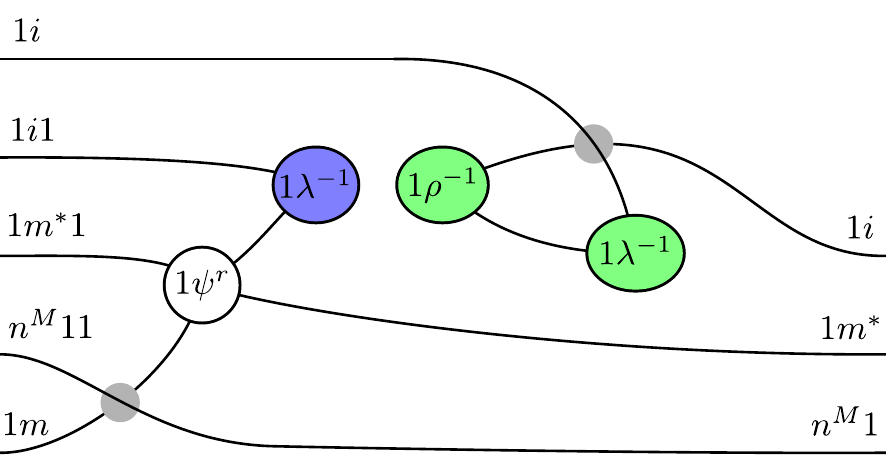}
\caption{Axiom b (Part 4)}
\label{fig:cohupsip4}
\end{minipage}%
\begin{minipage}{.4\textwidth}
  \centering
\includegraphics[width=40mm]{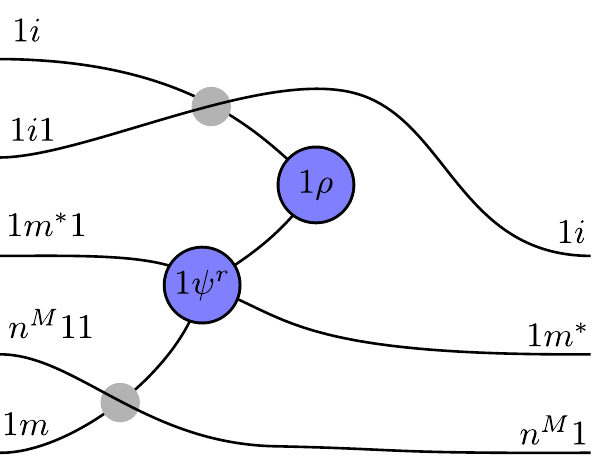}
\caption{Axiom b (Part 5)}
\label{fig:cohupsip5}
\end{minipage}
\end{figure}

\begin{figure}[!hbt]
\centering
\begin{minipage}{.5\textwidth}
  \centering
\includegraphics[width=52.5mm]{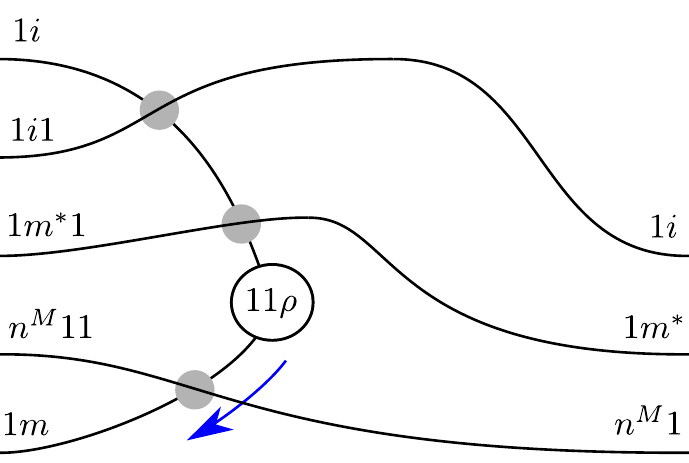}
\caption{Axiom b (Part 6)}
\label{fig:cohupsip6}
\end{minipage}%
\begin{minipage}{.5\textwidth}
  \centering
\includegraphics[width=52.5mm]{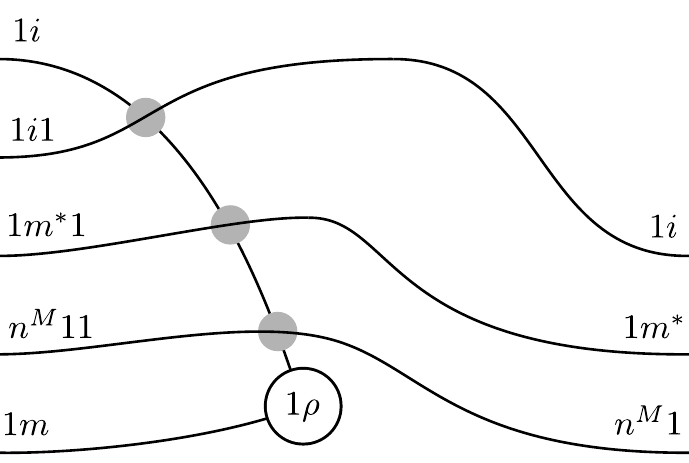}
\caption{Axiom b (Part 7)}
\label{fig:cohupsip7}
\end{minipage}
\end{figure}

\FloatBarrier

\begin{landscape}
\subsection{Proof of lemma \ref{lem:moduleepsiloneta}}\label{sub:moduleepsilonetadiagrams}
    \vspace*{\fill}
    \begin{figure}[!hbt]
    \centering
    \includegraphics[width=142.5mm]{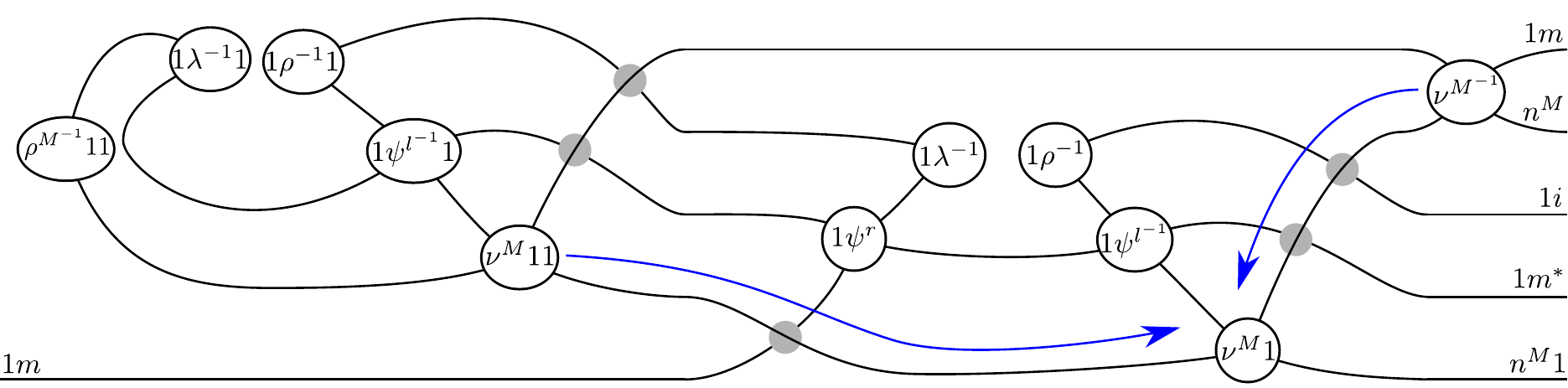}
    \caption{Module 2-Morphism (Part 1)}
    \label{fig:etamodule1}
    \end{figure}
    \begin{figure}[!hbt]
    \centering
    \includegraphics[width=142.5mm]{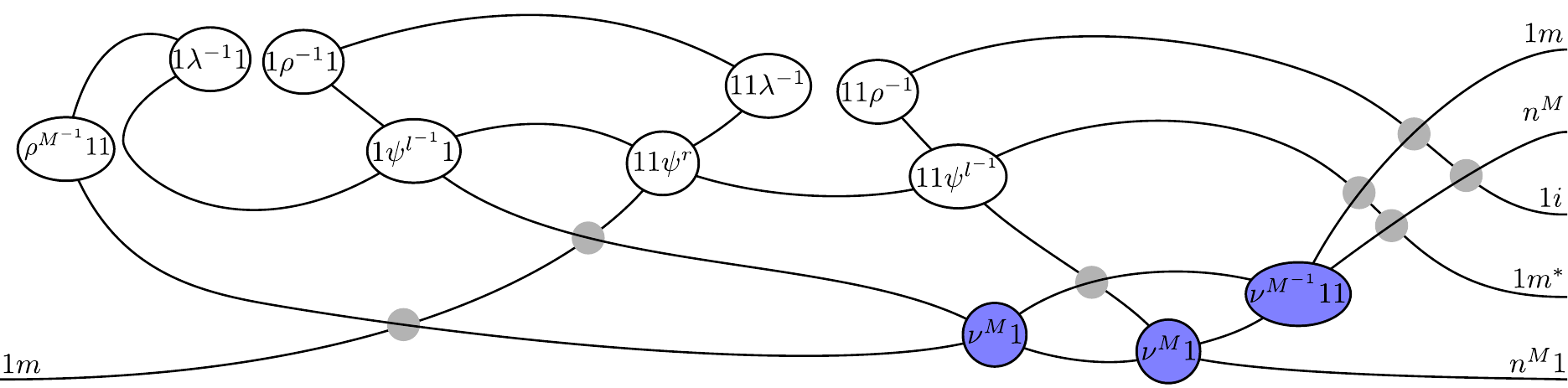}
    \caption{Module 2-Morphism (Part 2)}
    \label{fig:etamodule2}
    \end{figure}
    \vfill
\end{landscape}

\begin{landscape}
    \vspace*{\fill}
    \begin{figure}[!hbt]
    \centering
    \includegraphics[width=142.5mm]{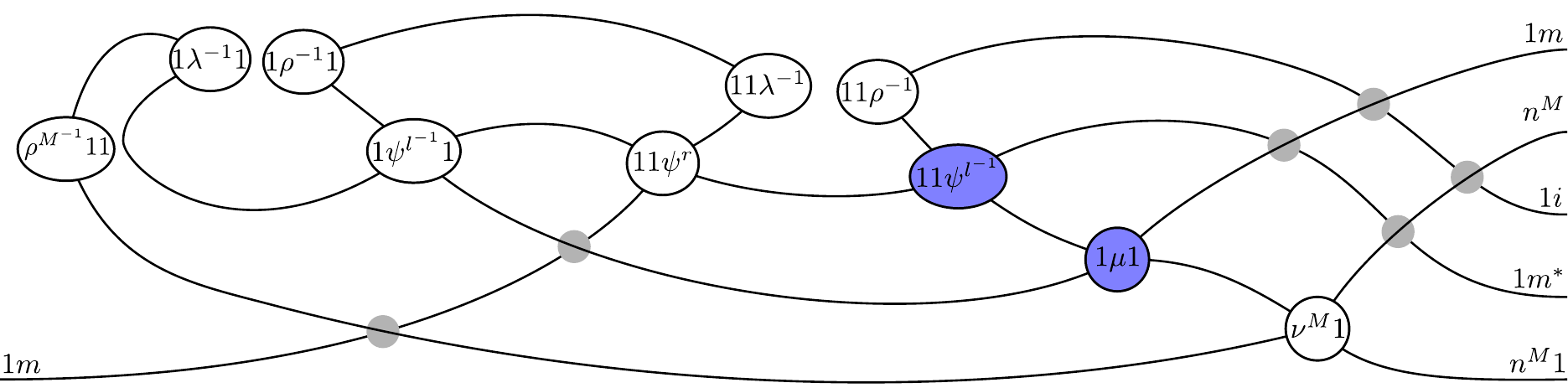}
    \caption{Module 2-Morphism (Part 3)}
    \label{fig:etamodule3}
    \end{figure}
    \begin{figure}[!hbt]
    \centering
    \includegraphics[width=142.5mm]{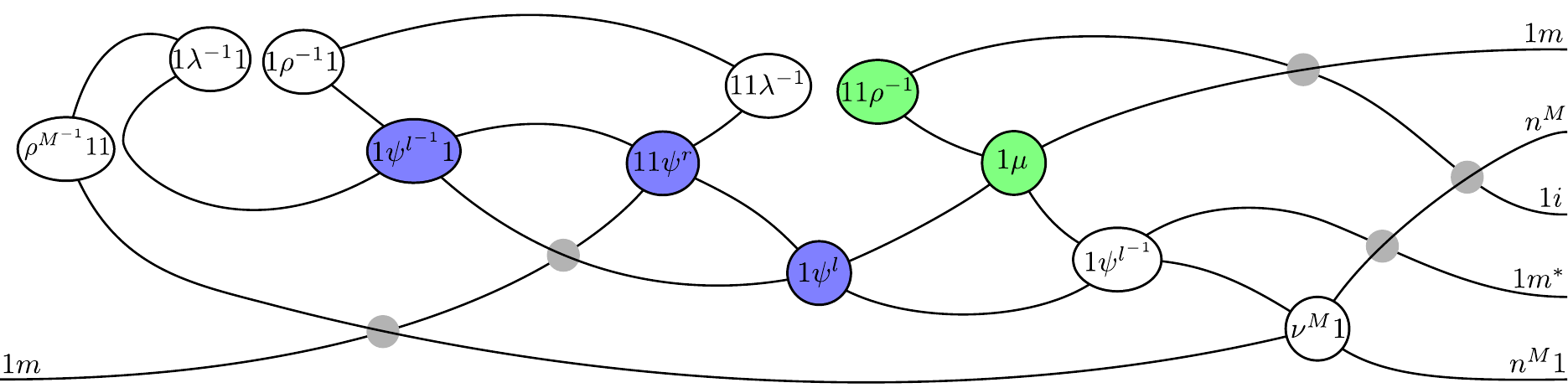}
    \caption{Module 2-Morphism (Part 4)}
    \label{fig:etamodule4}
    \end{figure}
    \vfill
\end{landscape}

\begin{figure}[!hbt]
\centering
\includegraphics[width=112.5mm]{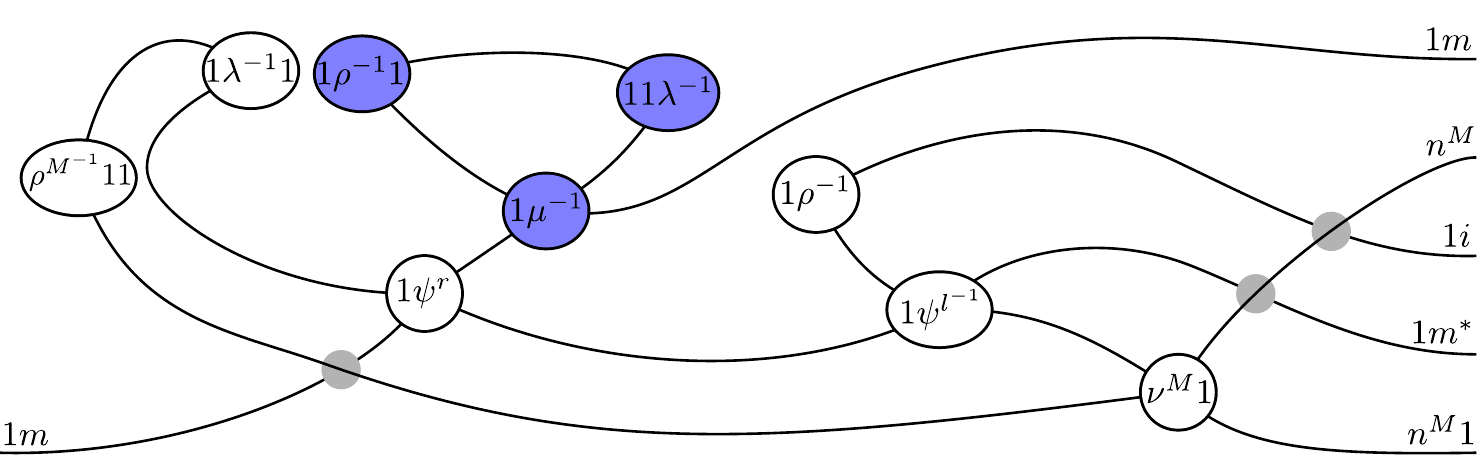}
\caption{Module 2-Morphism (Part 5)}
\label{fig:etamodule5}
\end{figure}

\begin{figure}[!hbt]
\centering
\includegraphics[width=97.5mm]{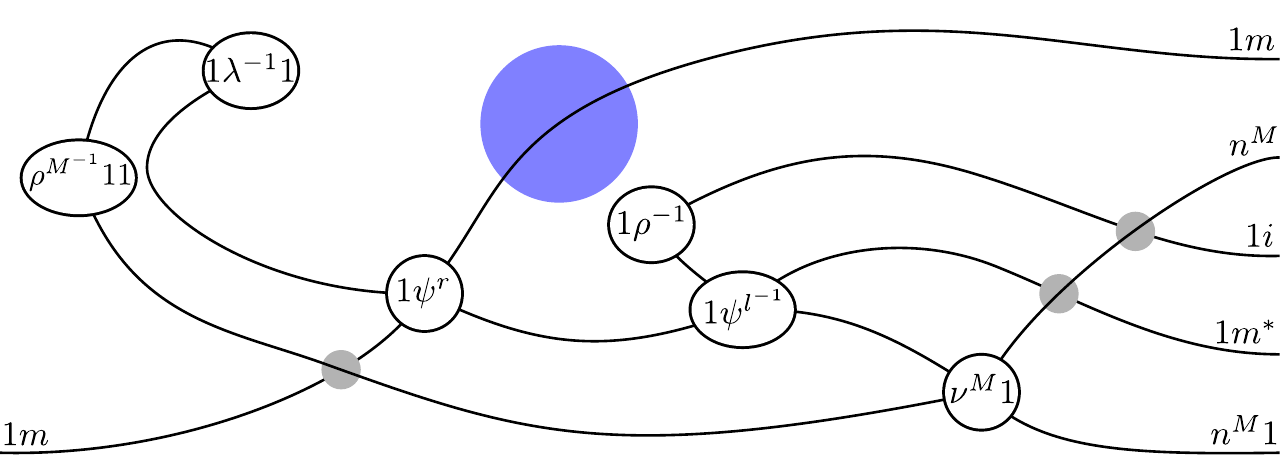}
\caption{Module 2-Morphism (Part 6)}
\label{fig:etamodule6}
\end{figure}

\begin{figure}[!hbt]
\centering
\includegraphics[width=112.5mm]{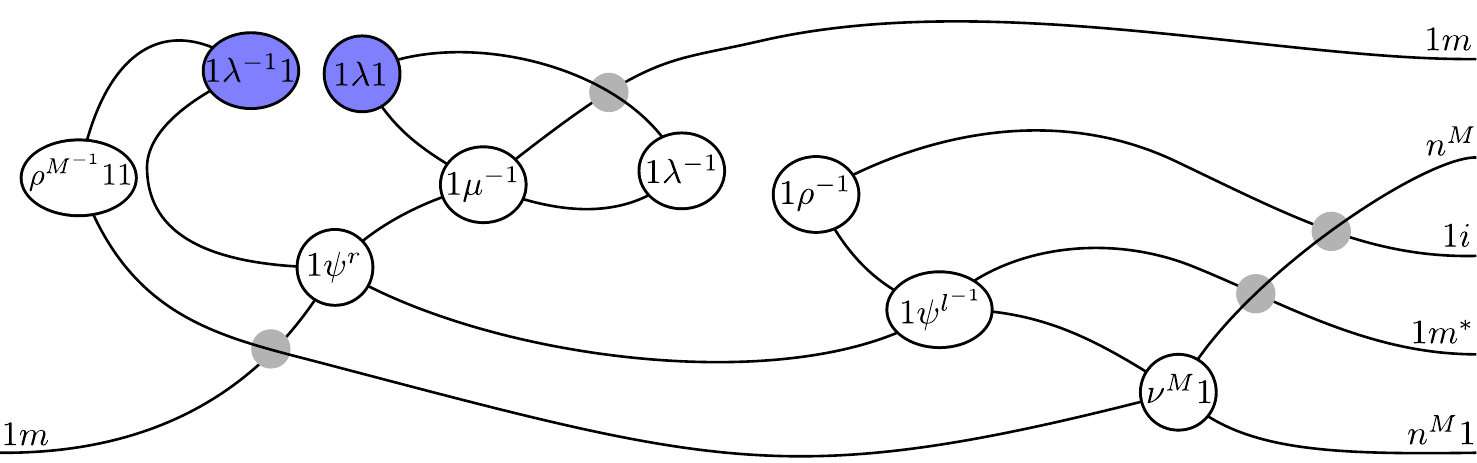}
\caption{Module 2-Morphism (Part 7)}
\label{fig:etamodule7}
\end{figure}

\begin{figure}[!hbt]
\centering
\includegraphics[width=105mm]{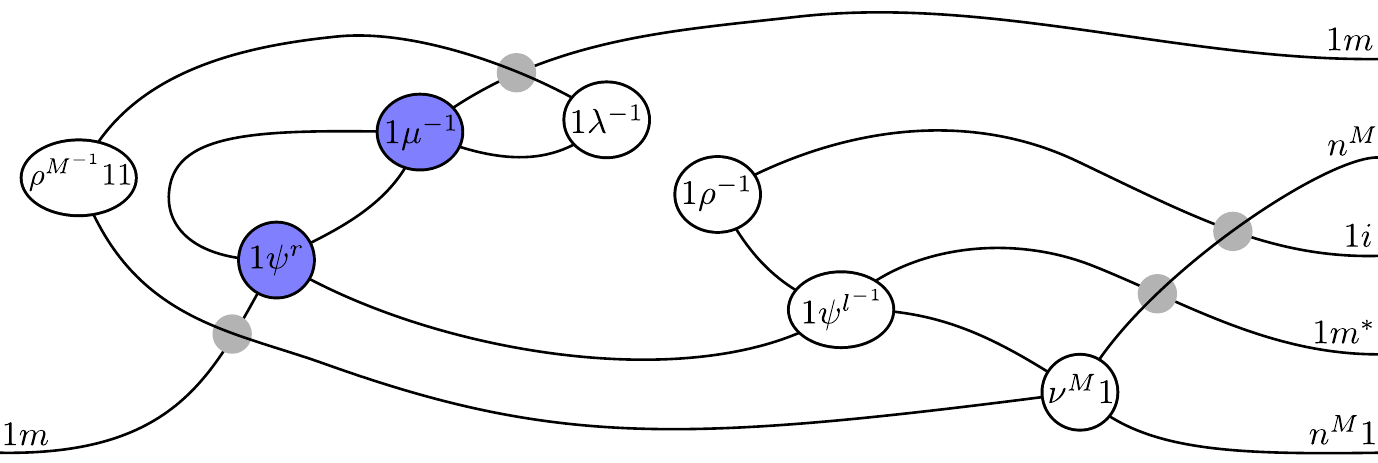}
\caption{Module 2-Morphism (Part 8)}
\label{fig:etamodule8}
\end{figure}

\begin{figure}[!hbt]
\centering
\includegraphics[width=97.5mm]{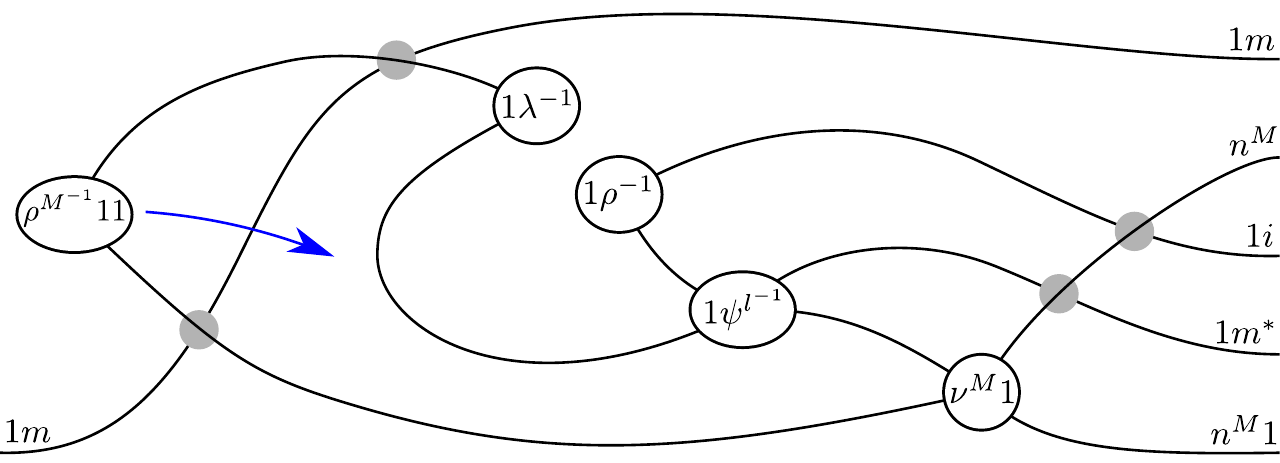}
\caption{Module 2-Morphism (Part 9)}
\label{fig:etamodule9}
\end{figure}

\FloatBarrier
\subsection{Proof of lemma \ref{lem:rigidactionrightadjoint}}\label{sub:rigidactionrightadjointdiagrams}

\begin{figure}[!hbt]
\centering
\includegraphics[width=105mm]{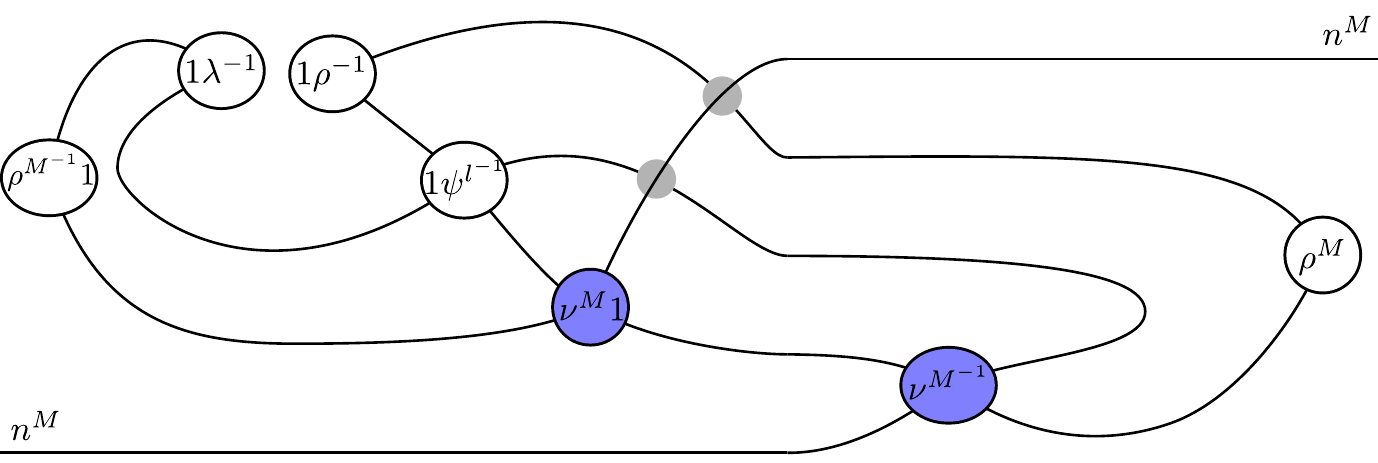}
\caption{Triangle 1 (Part 1)}
\label{fig:triangle1}
\end{figure}
    
\begin{figure}[!hbt]
\centering
\includegraphics[width=105mm]{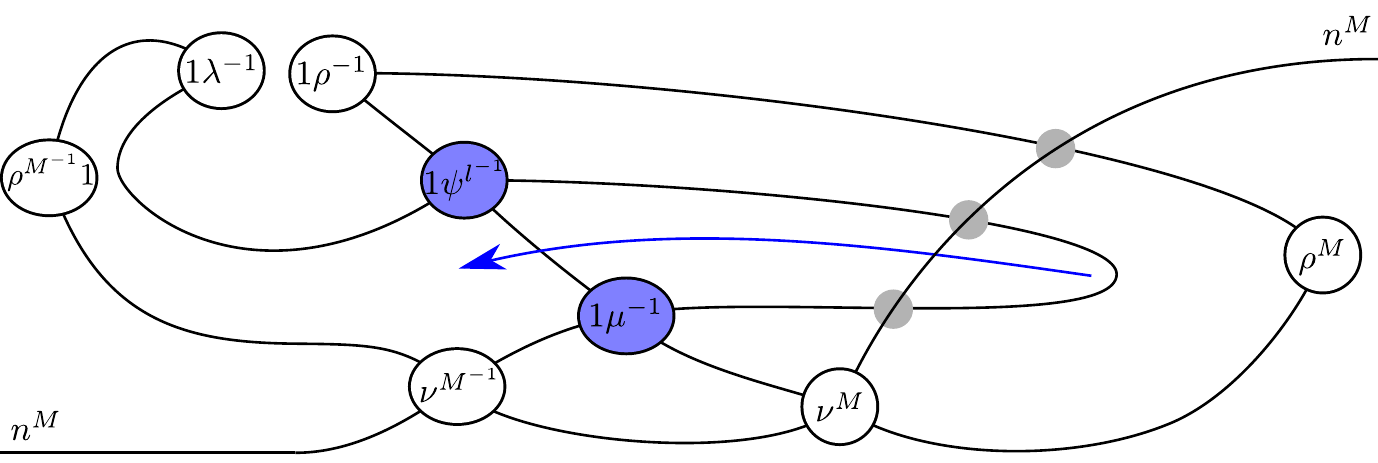}
\caption{Triangle 1 (Part 2)}
\label{fig:triangle2}
\end{figure}

\begin{figure}[!hbt]
\centering
\includegraphics[width=75mm]{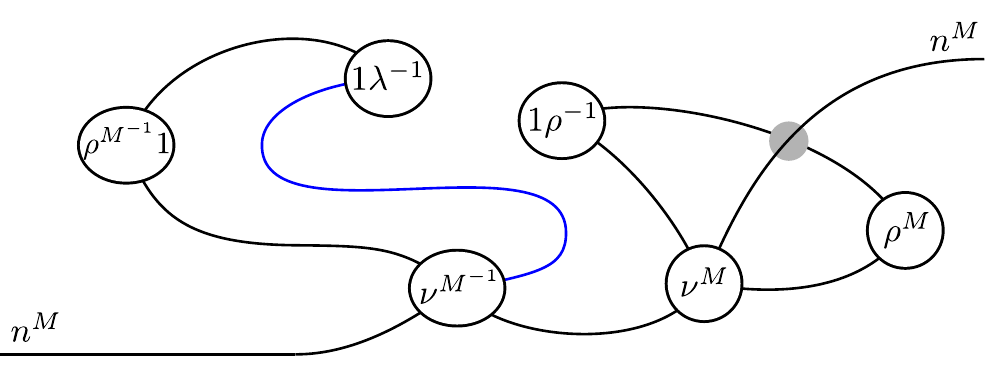}
\caption{Triangle 1 (Part 3)}
\label{fig:triangle3}
\end{figure}
    
\begin{figure}[!hbt]
\centering
\includegraphics[width=75mm]{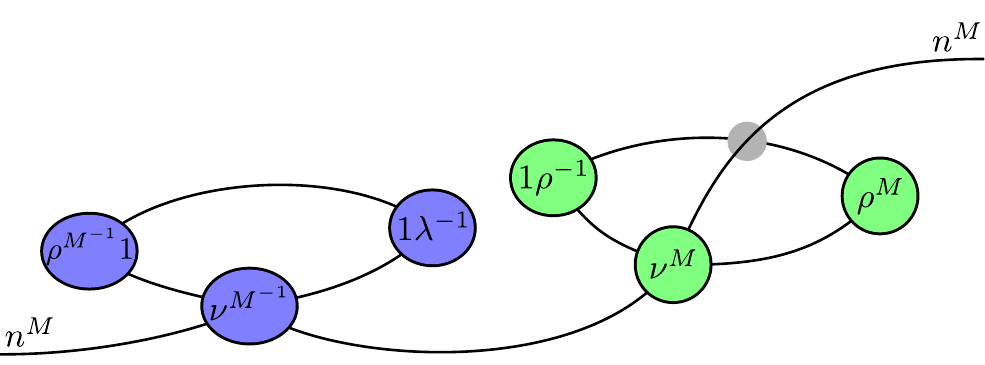}
\caption{Triangle 1 (Part 4)}
\label{fig:triangle4}
\end{figure}

\begin{figure}[!hbt]
\centering
\includegraphics[width=105mm]{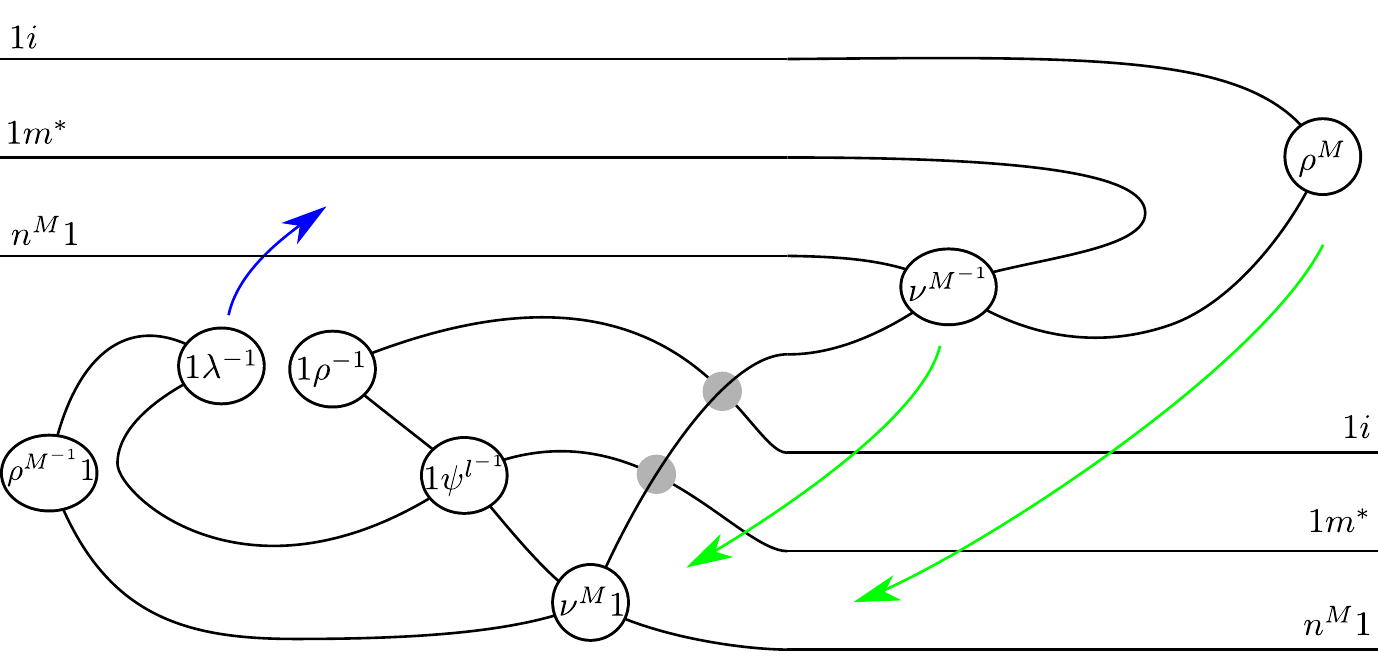}
\caption{Triangle 2 (Part 1)}
\label{fig:triangle10}
\end{figure}
    
\begin{figure}[!hbt]
\centering
\includegraphics[width=97.5mm]{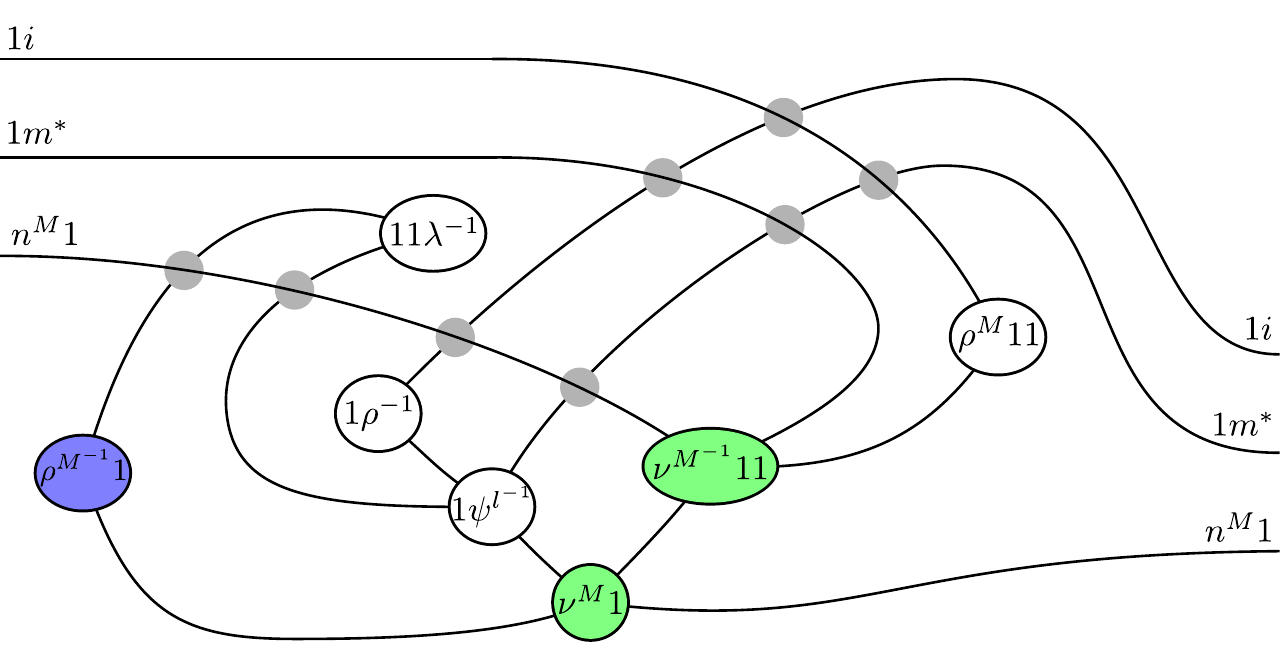}
\caption{Triangle 2 (Part 2)}
\label{fig:triangle11}
\end{figure}

\begin{figure}[!hbt]
\centering
\includegraphics[width=112.5mm]{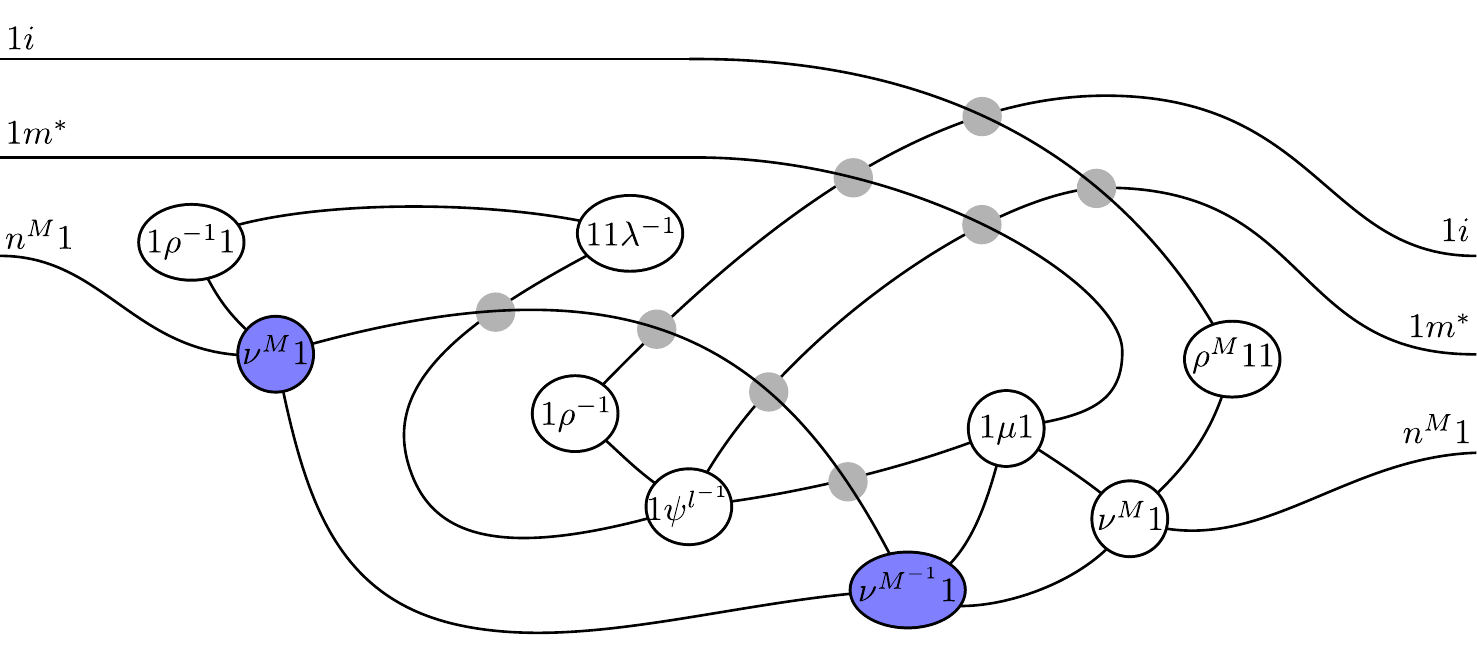}
\caption{Triangle 2 (Part 3)}
\label{fig:triangle12}
\end{figure}
    
\begin{figure}[!hbt]
\centering
\includegraphics[width=105mm]{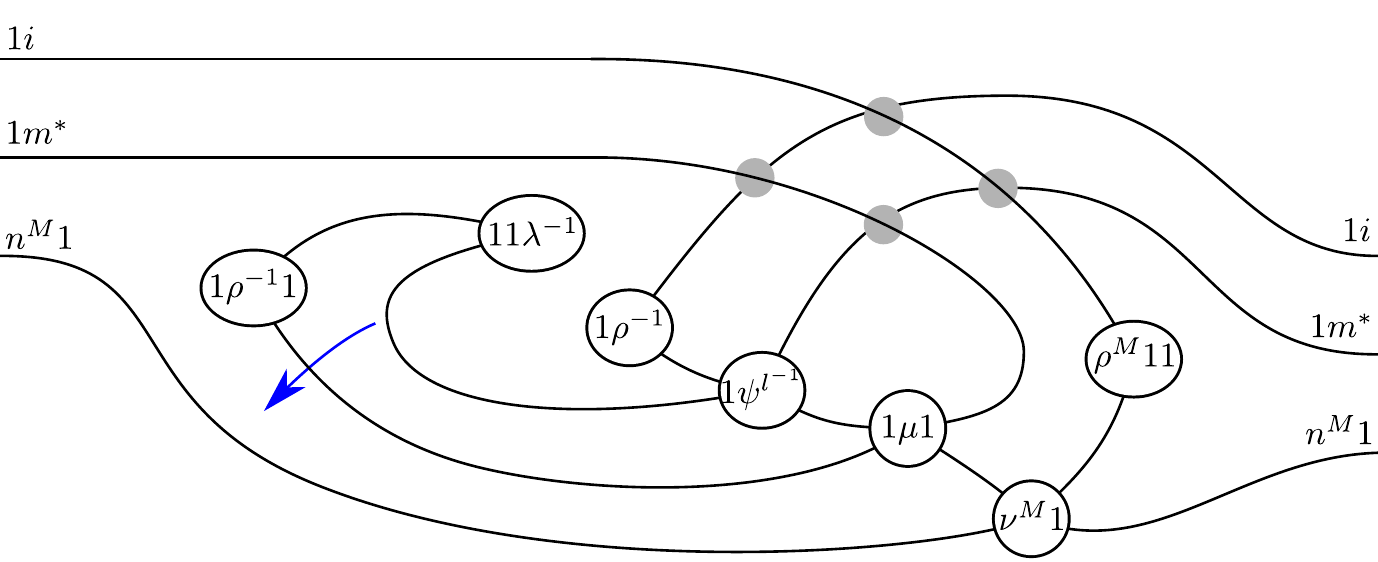}
\caption{Triangle 2 (Part 4)}
\label{fig:triangle13}
\end{figure}

\begin{figure}[!hbt]
\centering
\includegraphics[width=108.75mm]{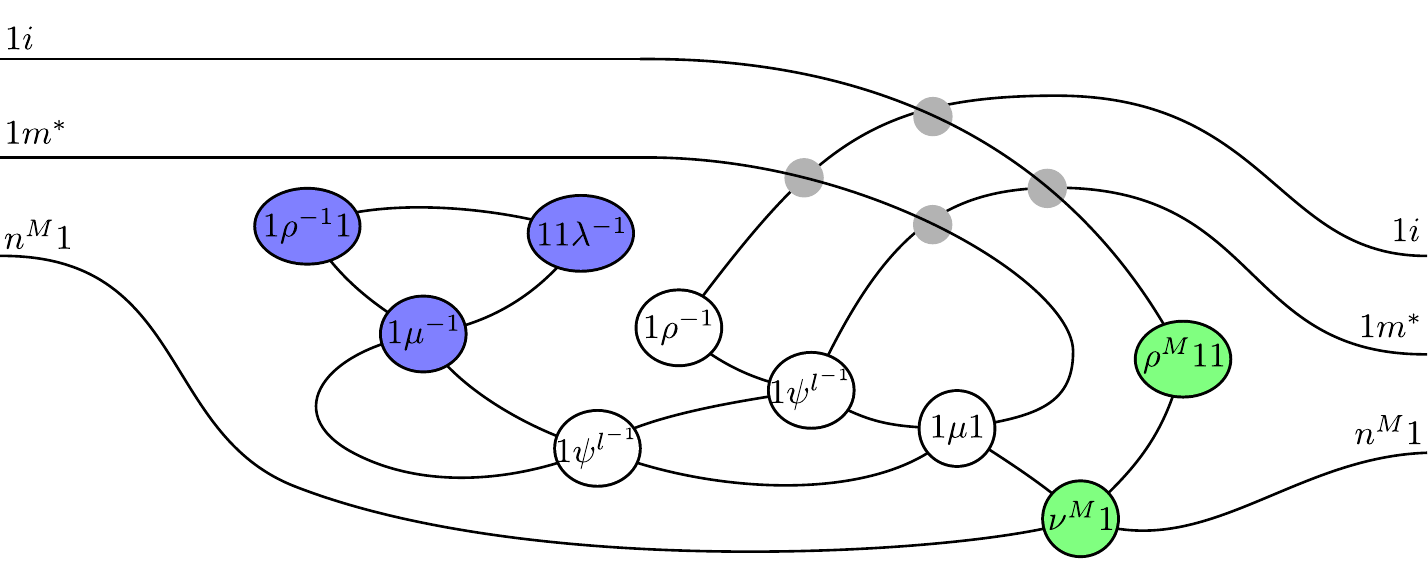}
\caption{Triangle 2 (Part 5)}
\label{fig:triangle14}
\end{figure}

\begin{figure}[!hbt]
\centering
\includegraphics[width=105mm]{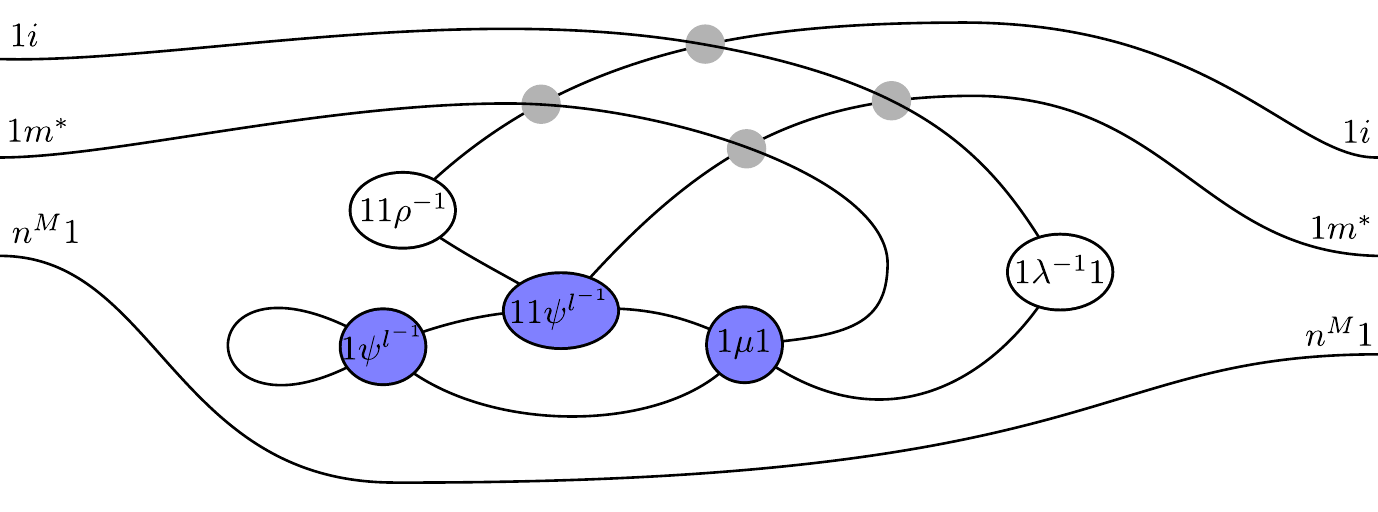}
\caption{Triangle 2 (Part 6)}
\label{fig:triangle15}
\end{figure}
    
\begin{figure}[!hbt]
\centering
\includegraphics[width=78.75mm]{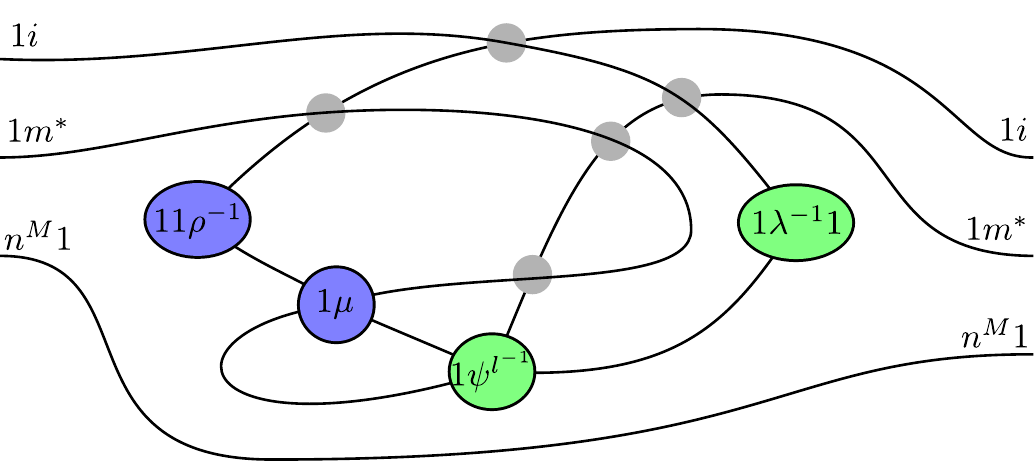}
\caption{Triangle 2 (Part 7)}
\label{fig:triangle16}
\end{figure}

\begin{figure}[!hbt]
\centering
\includegraphics[width=67.5mm]{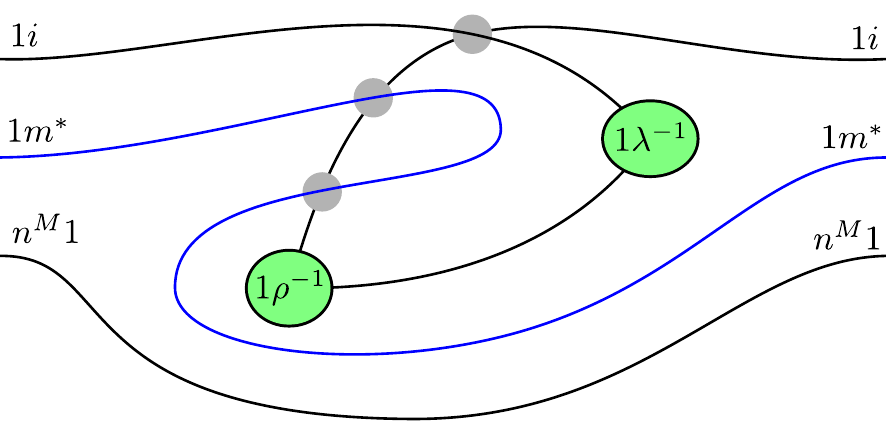}
\caption{Triangle 2 (Part 8)}
\label{fig:triangle17}
\end{figure}

\FloatBarrier

\begin{landscape}
    \vspace*{\fill}
    \subsection{Proof of proposition \ref{prop:moduleleftdajoints}}\label{sub:moduleleftadjointsdiagrams}
    \begin{figure}[!hbt]
    \centering
    \includegraphics[width=180mm]{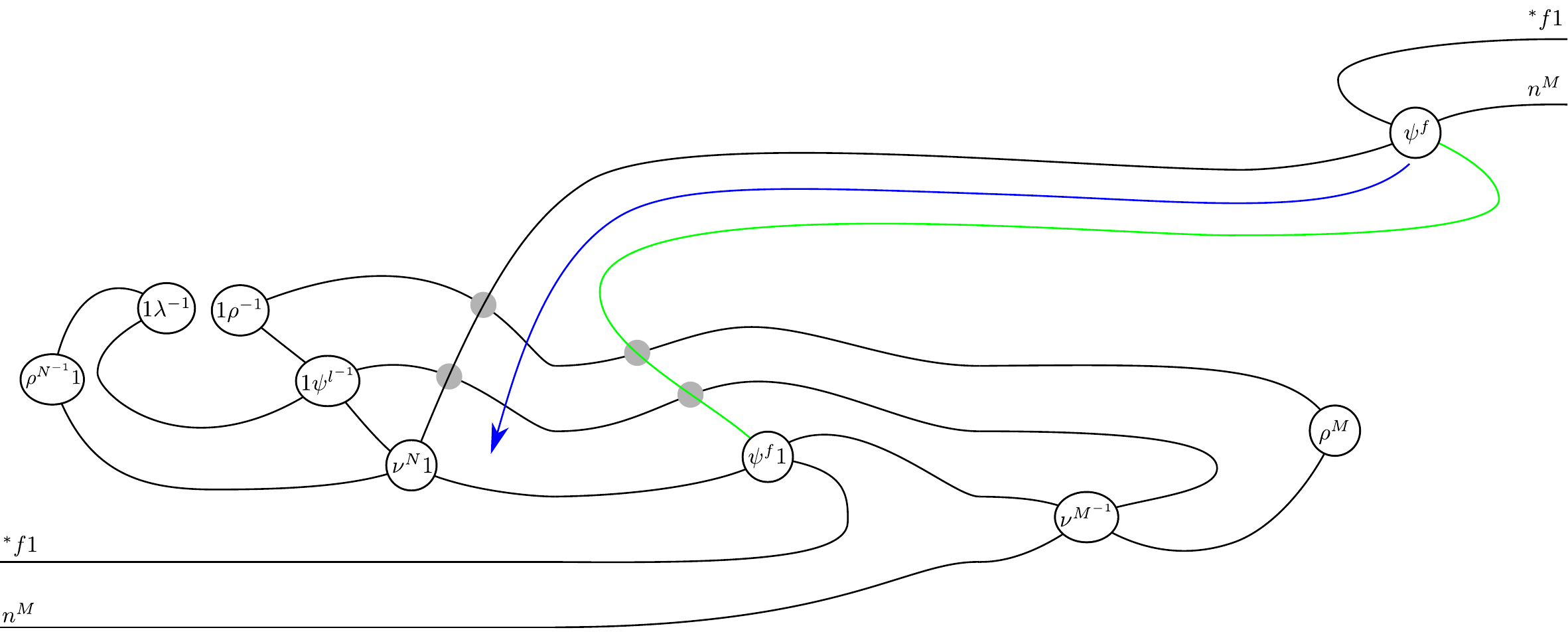}
    \caption{The 2-morphisms $\psi^{^*f}$ and $\xi^f$ are inverses. (Part I.1)}
    \label{fig:adjinverse1}
    \end{figure}
    \vfill
\end{landscape}

\begin{figure}[!hbt]
    \centering
    \includegraphics[width=115mm]{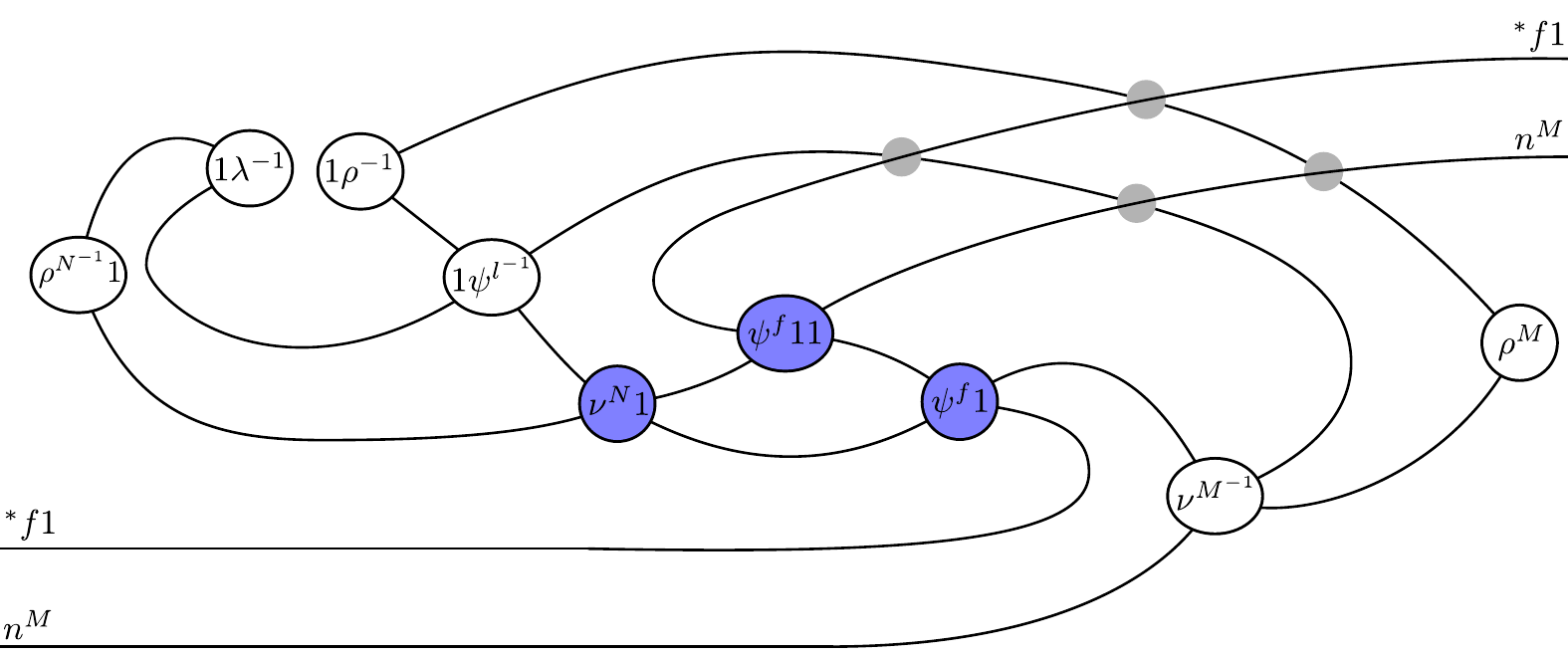}
    \caption{The 2-morphisms $\psi^{^*f}$ and $\xi^f$ are inverses. (Part I.2)}
    \label{fig:adjinverse2}
\end{figure}

\begin{figure}[!hbt]
    \centering
    \includegraphics[width=115mm]{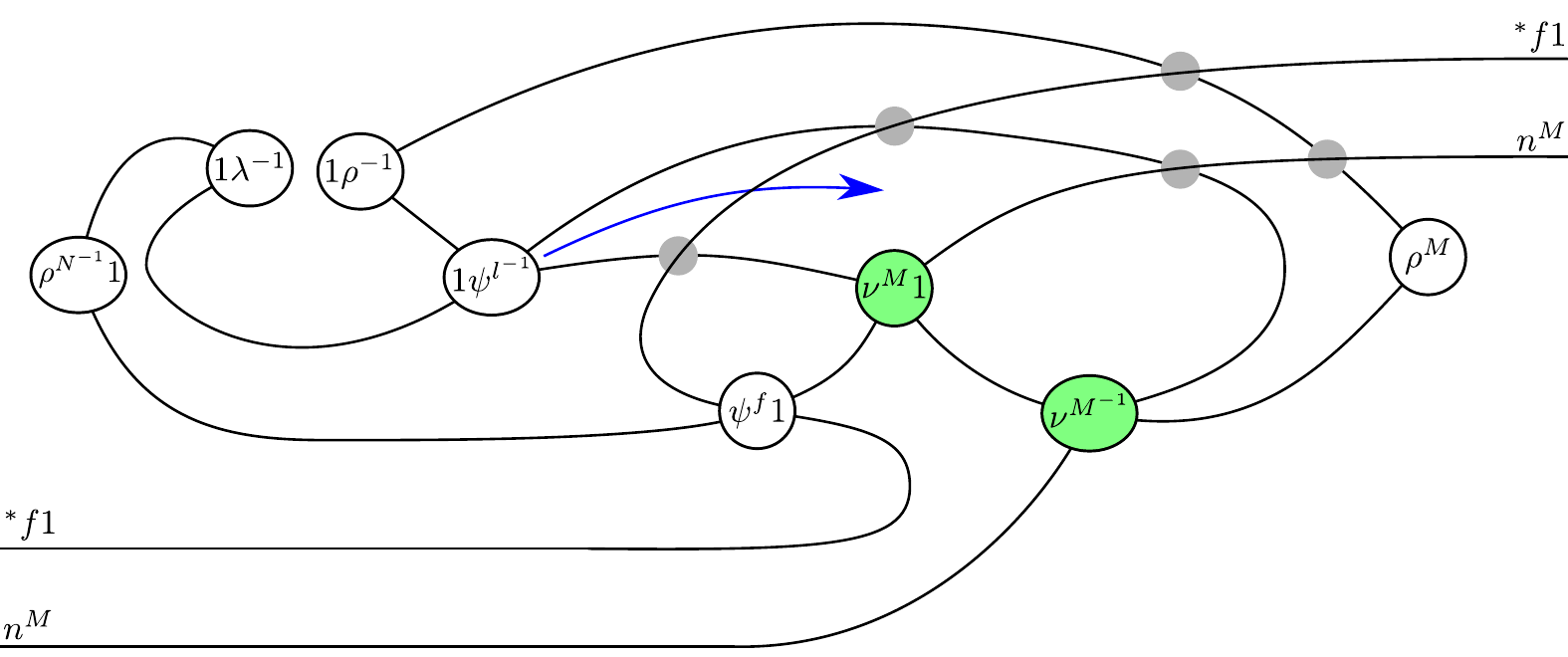}
    \caption{The 2-morphisms $\psi^{^*f}$ and $\xi^f$ are inverses. (Part I.3)}
    \label{fig:adjinverse3}
\end{figure}

\begin{figure}[!hbt]
    \centering
    \includegraphics[width=115mm]{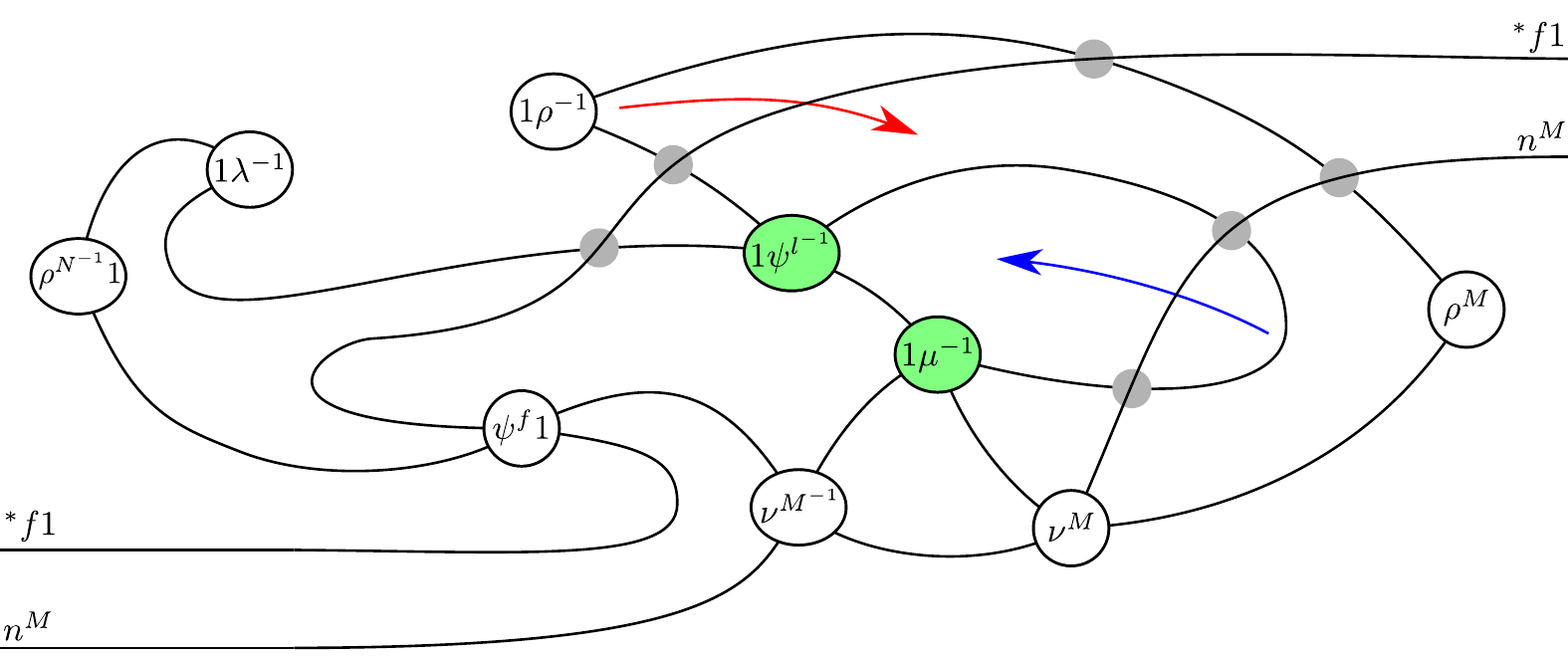}
    \caption{The 2-morphisms $\psi^{^*f}$ and $\xi^f$ are inverses. (Part I.4)}
    \label{fig:adjinverse4}
\end{figure}

\begin{figure}[!hbt]
    \centering
    \includegraphics[width=105mm]{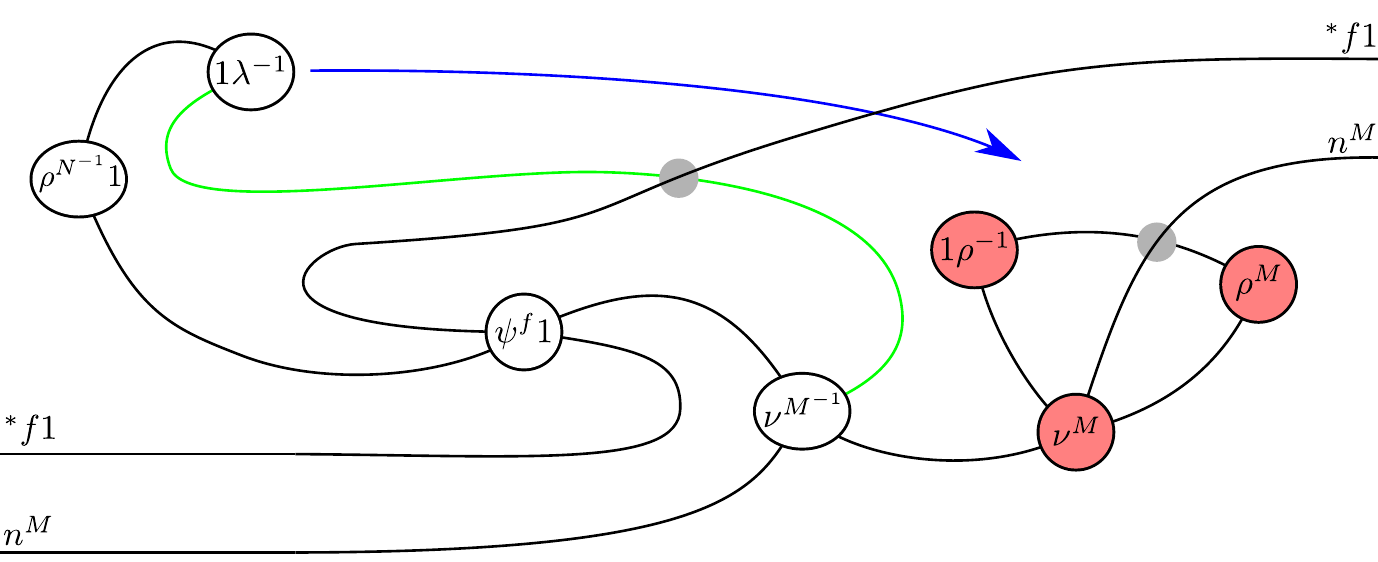}
    \caption{The 2-morphisms $\psi^{^*f}$ and $\xi^f$ are inverses. (Part I.5)}
    \label{fig:adjinverse5}
\end{figure}

\begin{figure}[!hbt]
    \centering
    \includegraphics[width=90mm]{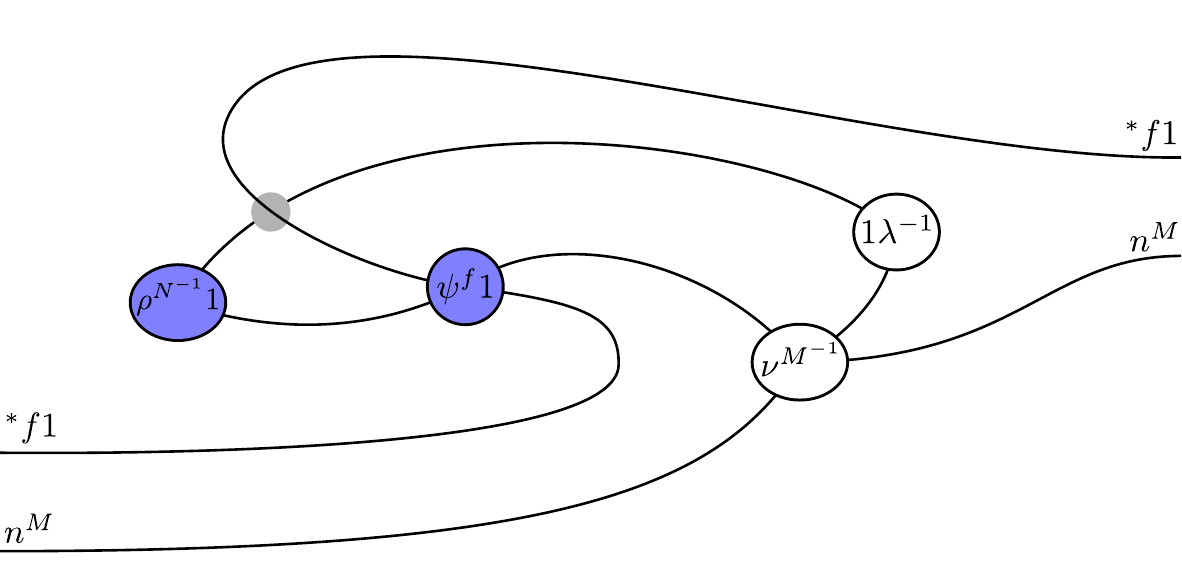}
    \caption{The 2-morphisms $\psi^{^*f}$ and $\xi^f$ are inverses. (Part I.6)}
    \label{fig:adjinverse6}
\end{figure}
    
\begin{figure}[!hbt]
    \centering
    \includegraphics[width=75mm]{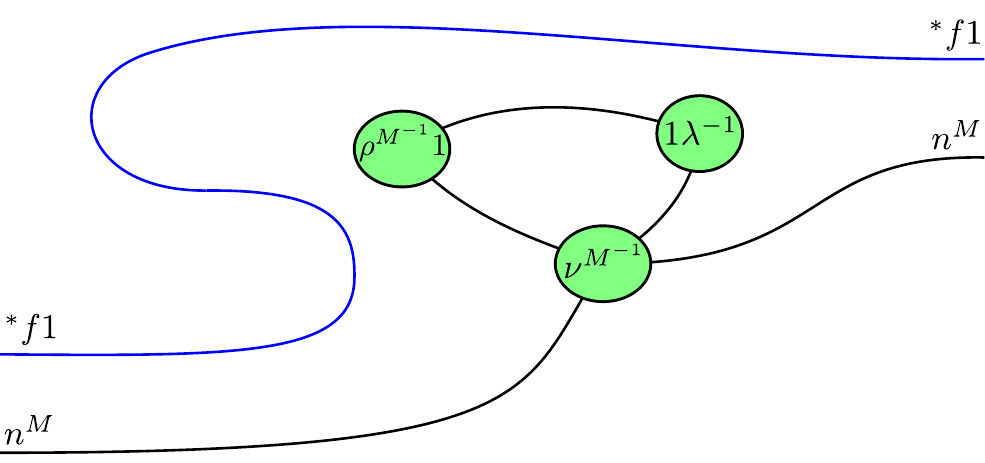}
    \caption{The 2-morphisms $\psi^{^*f}$ and $\xi^f$ are inverses. (Part I.7)}
    \label{fig:adjinverse7}
\end{figure}

\begin{landscape}
    \vspace*{\fill}
    \begin{figure}[!hbt]
    \centering
    \includegraphics[width=180mm]{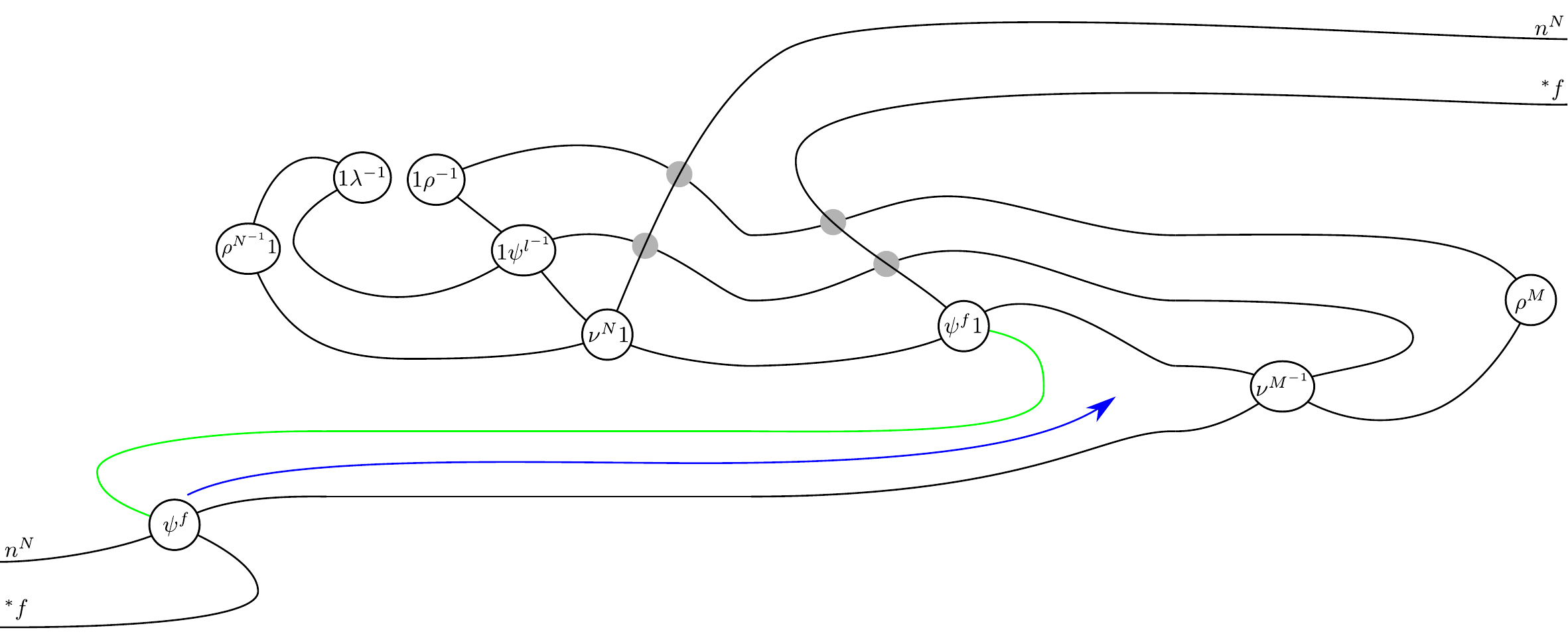}
    \caption{The 2-morphisms $\psi^{^*f}$ and $\xi^f$ are inverses. (Part II.1)}
    \label{fig:adjinverse10}
    \end{figure}
    \vfill
\end{landscape}

\begin{figure}[!hbt]
    \centering
    \includegraphics[width=120mm]{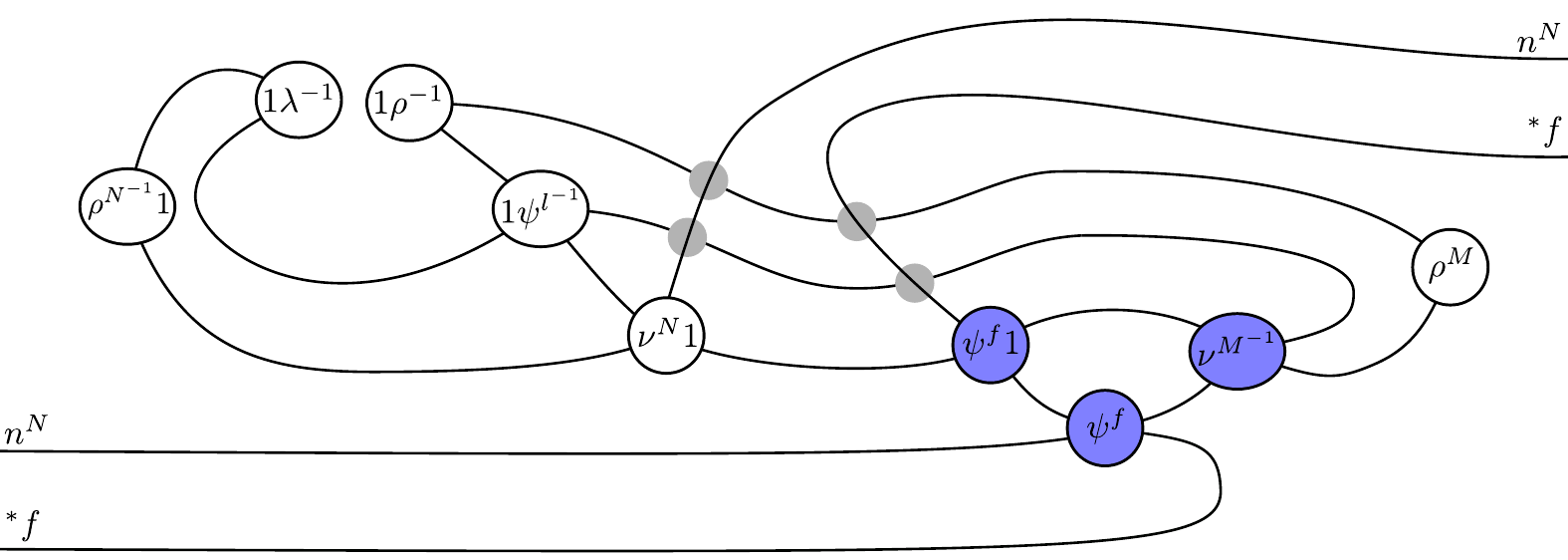}
    \caption{The 2-morphisms $\psi^{^*f}$ and $\xi^f$ are inverses. (Part II.2)}
    \label{fig:adjinverse11}
\end{figure}

\begin{figure}[!hbt]
    \centering
    \includegraphics[width=112.5mm]{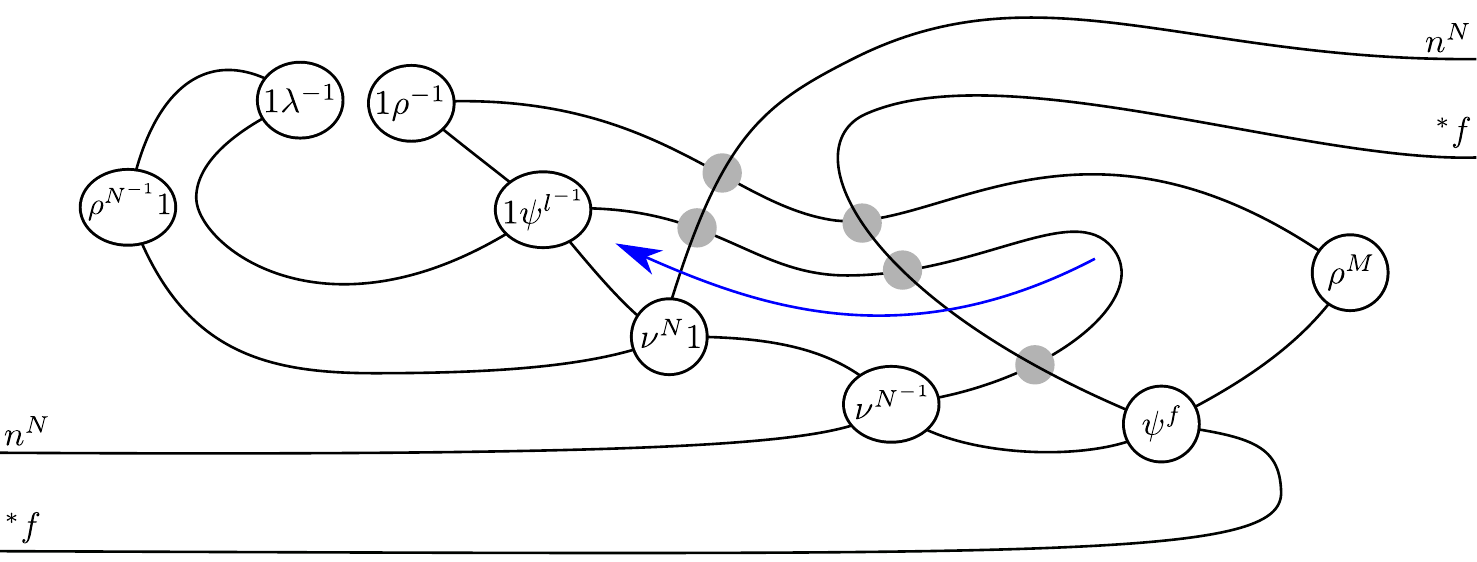}
    \caption{The 2-morphisms $\psi^{^*f}$ and $\xi^f$ are inverses. (Part II.3)}
    \label{fig:adjinverse12}
\end{figure}

\begin{figure}[!hbt]
    \centering
    \includegraphics[width=120mm]{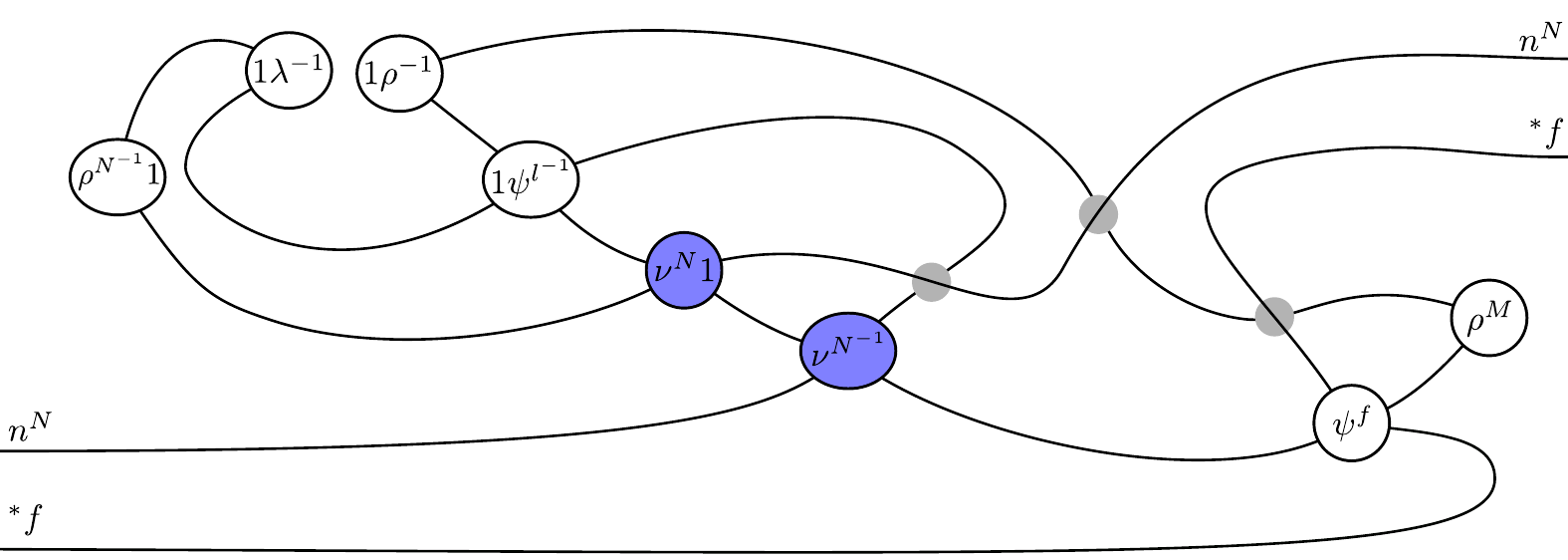}
    \caption{The 2-morphisms $\psi^{^*f}$ and $\xi^f$ are inverses. (Part II.4)}
    \label{fig:adjinverse13}
\end{figure}

\begin{figure}[!hbt]
    \centering
    \includegraphics[width=105mm]{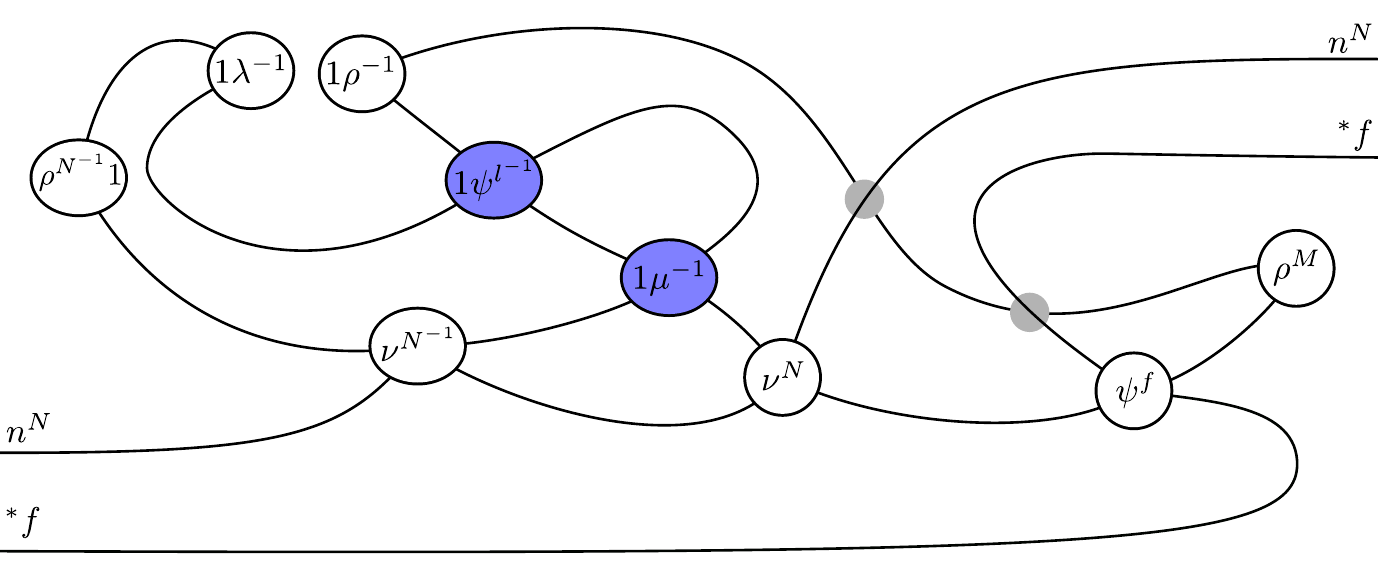}
    \caption{The 2-morphisms $\psi^{^*f}$ and $\xi^f$ are inverses. (Part II.5)}
    \label{fig:adjinverse14}
\end{figure}

\begin{figure}[!hbt]
    \centering
    \includegraphics[width=97.5mm]{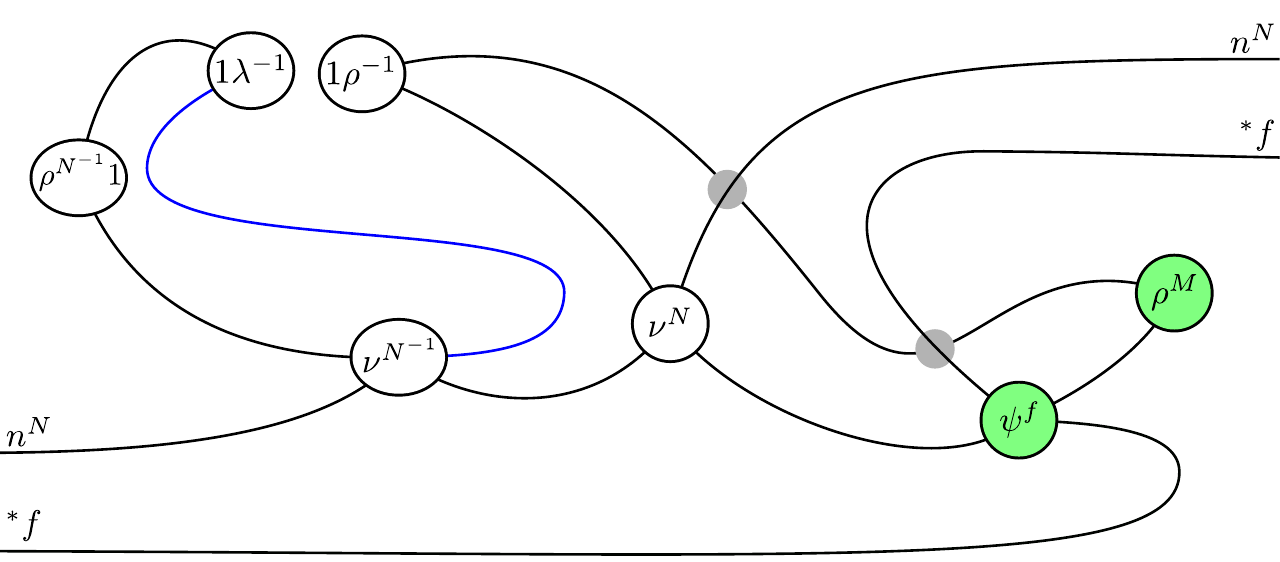}
    \caption{The 2-morphisms $\psi^{^*f}$ and $\xi^f$ are inverses. (Part II.6)}
    \label{fig:adjinverse15}
\end{figure}

\begin{figure}[!hbt]
    \centering
    \includegraphics[width=97.5mm]{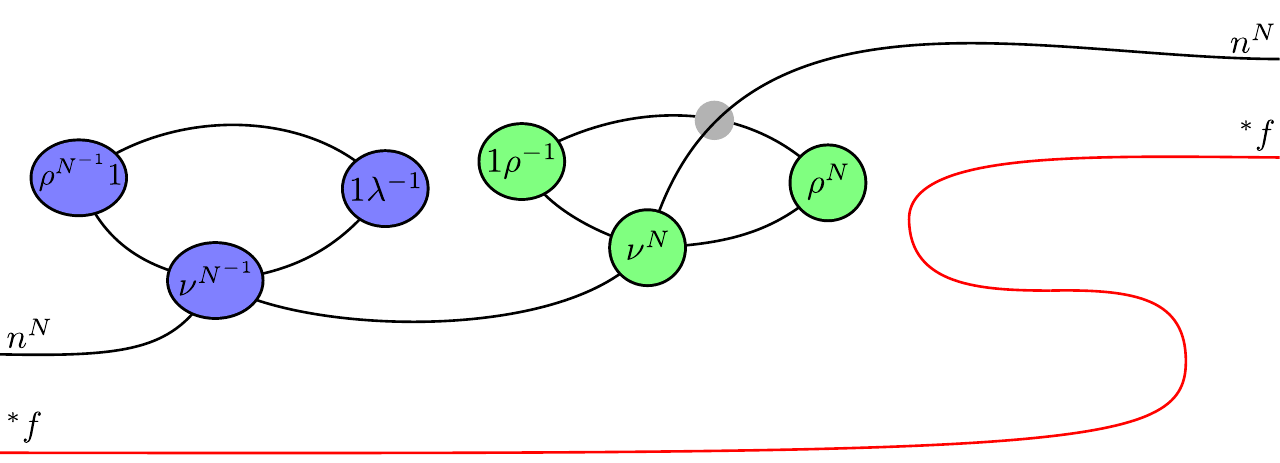}
    \caption{The 2-morphisms $\psi^{^*f}$ and $\xi^f$ are inverses. (Part II.7)}
    \label{fig:adjinverse16}
\end{figure}

\FloatBarrier

\bibliography{bibliography.bib}

\end{document}